\newcommand{\C}{\mathbb{C}}
\newcommand{\Z}{\mathbb{Z}}
\newcommand{\N}{\mathbb{N}}
\newcommand{\R}{\mathbb{R}}
\newcommand{\E}{\mathbb{E}}
\newcommand{\calI}{\mathcal{I}}
\newcommand{\calJ}{\mathcal{J}}
\newcommand{\calL}{\mathcal{L}}
\newcommand{\calM}{\mathcal{M}}
\newcommand{\scrA}{\mathscr{A}}
\newcommand{\bmo}[0]{\operatorname{bmo}}
\newcommand{\WBP}{\operatorname{WBP}}
\newcommand{\bla}{\big \langle}
\newcommand{\bra}{\big \rangle}
\newcommand{\si}{\sigma}
\newcommand{\ud}[0]{\,\mathrm{d}}
\newcommand{\esssup}[0]{\operatornamewithlimits{ess\,sup}}
\newcommand{\pair}[2]{\langle #1,#2 \rangle}
\newcommand{\ave}[1]{\langle #1\rangle}
\newcommand{\Ave}[1]{\Big\langle #1\Big\rangle}
\newcommand{\BMO}[0]{\operatorname{BMO}}
\newcommand{\supp}[0]{\operatorname{spt}}
\newcommand{\Id}[0]{\operatorname{id}}
\renewcommand{\Re}[0]{\operatorname{Re}}
\newcommand{\UMD}{\operatorname{UMD}}
\newcommand{\good}[0]{\operatorname{good}}
\newcommand{\ch}[0]{\operatorname{ch}}
\newcommand{\calD}[0]{\mathcal{D}}
\newcommand{\wt}[1]{{\widetilde{#1}}}
\theoremstyle{plain}
\newtheorem{thm}[equation]{Theorem}
\newtheorem{lem}[equation]{Lemma}
\newtheorem{prop}[equation]{Proposition}
\newtheorem{cor}[equation]{Corollary}
\theoremstyle{definition}
\newtheorem{defn}[equation]{Definition}
\theoremstyle{remark}
\newtheorem{rem}[equation]{Remark}
\numberwithin{equation}{section}
\title[Mild kernel regularity]{Modern singular integral theory with mild kernel regularity}
\author{Emil Airta}
\author{Henri Martikainen}
\author{Emil Vuorinen}
\address[E.A., H.M. \& E.V.]{Department of Mathematics and Statistics, University of Helsinki, P.O.B. 68, FI-00014 University of Helsinki, Finland}
\email{emil.airta@helsinki.fi}
\email{henri.martikainen@helsinki.fi}
\email{emil.vuorinen@helsinki.fi}
\subjclass[2010]{42B20}
\keywords{singular integrals, model operators, commutators, weighted estimates, kernel regularity, UMD spaces, multilinear analysis, multi-parameter analysis}
\thanks
{H.M. was supported by the Academy of Finland through the grants 294840 and 327271, and by the three-year research grant 75160010 of the University of Helsinki. E.A. and E.V. were supported by the Academy of Finland through the grant 327271.
All are members of the Finnish Centre of Excellence in Analysis and Dynamics Research supported by the Academy of Finland (project No. 307333).
}
\begin{document}

\begin{abstract}
We present a framework
based on modified dyadic shifts to prove multiple results of modern singular integral theory under mild kernel regularity. 
Using new optimized representation theorems we first revisit a result of Figiel concerning the $\UMD$-extensions of linear Calder\'on--Zygmund operators with mild kernel regularity 
and extend our new proof to the multilinear setting improving recent $\UMD$-valued estimates of multilinear singular integrals.  Next, we develop the product space theory of the multilinear singular integrals with modified Dini-type assumptions, and use this theory to prove bi-parameter weighted estimates and two-weight commutator estimates.
\end{abstract}

\maketitle
\tableofcontents

\section{Introduction}
The usual definition of a singular integral operator (SIO)
$$
  Tf(x)=\int_{\R^d}K(x,y)f(y)\ud y
$$
involves a H\"older-continuous kernel $K$ with a power-type continuity-modulus $t \mapsto t^{\gamma}$.
However, many results continue to hold with significantly more general assumptions.
Such kernel regularity considerations become non-trivial especially in connection with results that go beyond the classical Calder\'on--Zygmund theory -- an example is the $A_2$ theorem of Hyt\"onen \cite{Hy1} with Dini-continuous kernels by Lacey \cite{La1}. Estimates for SIOs with mild kernel regularity are, for instance, linked to the theory of rough singular integrals, see e.g. \cite{HRT}.

The fundamental question concerning the $L^2$ (or $L^p$) boundedness of an SIO $T$ is usually best answered by so-called $T1$ theorems, where the action of the operator $T$ on the constant function $1$ is key.
We study kernel regularity questions specifically in situations that are very tied to the $T1$ type arguments and the corresponding structural theory -- a prominent example is the
theory of SIOs acting on functions taking values in infinite-dimensional Banach spaces \cite{HNVW1, HNVW2}.
Another major line of investigation concerns the product space theory of SIOs, such as \cite{HPW, LMV:Bloom, OPS},
based on the bi-parameter representation theorem \cite{Ma1}. The distinction between one-parameter and multi-parameter SIOs has to do with the classification of SIOs
according to the size of the singularity of the kernel $K$. We will work both in the Banach-valued and bi-parameter settings, and all of our
abstract theory is, in addition, multilinear.

A concrete definition of kernel regularity is as follows.
It concerns the required regularity of the continuity-moduli $\omega$ appearing in the various kernel estimates, such as, 
\begin{equation*}
|K(x, y) - K(x', y)| \le \omega\Big(\frac{|x-x'|}{|x-y|}\Big) \frac{1}{|x-y|^d}, \,\, |x-x'| \le |x-y|/2.
\end{equation*}
Recently, Grau de la Herr\'an and Hyt\"onen \cite{GH} proved that the modifed Dini condition
$$
\|\omega\|_{\operatorname{Dini}_{\alpha}} := \int_0^1 \omega(t) \Big( 1 + \log \frac{1}{t} \Big)^{\alpha} \frac{dt}{t}
$$
with $\alpha = \frac{1}{2}$ is sufficient to prove a $T1$ theorem even with an underlying measure $\mu$ that can be non-doubling.
This matches the best known sufficient condition for the classical homogeneous $T1$ theorem \cite{DJ} -- such results are implicit
in Figiel \cite{Figiel2} and explicit in Deng, Yan and Yang \cite{DYY}. 
The exponent $\alpha = \frac{1}{2}$ has a fundamental feeling in all of the existing arguments -- it seems very difficult to achieve a $T1$ theorem with a weaker assumption.

The proofs of $T1$ theorems display a fundamental structural decomposition of SIOs into their cancellative parts and so-called paraproducts. It is
this structure that is extremely important for obtaining further estimates beyond the initial scalar-valued $L^p$ boundedness.
The original dyadic representation theorem of Hyt\"onen \cite{Hy2, Hy1} (extending an earlier special case of Petermichl \cite{Pe}) provides a decomposition of the cancellative part of an SIO into so-called {\em dyadic shifts}. These are suitable generalisations of dyadic martingale transforms, also known as Haar multipliers
\begin{equation}\label{eq:HaarMult}
  f=\sum_{Q\in\mathcal D}\pair{f}{h_Q}h_Q\mapsto \sum_{Q\in\mathcal D}\lambda_Q\pair{f}{h_Q}h_Q, \qquad |\lambda_Q| \le 1,
\end{equation}
where $h_Q$ is a cancellative Haar function on a cube $Q$ and $\calD$ is a dyadic grid.

In \cite{GH} a new type of representation theorem appears, where the key difference to the original representation theorems \cite{Hy2, Hy1} is that the decomposition
of the cancellative part is in terms of different operators that package multiple dyadic shifts into one and offer more efficient bounds when it comes to kernel regularity.
Some of the ideas of the decomposition in \cite{GH} are rooted in the work of Figiel \cite{Figiel1, Figiel2}.

Our work begins by giving a streamlined version of the new type of one-parameter representation theorem \cite{GH}. Importantly, we extend it to the multilinear setting \cite{DLMV1, DLMV2, DLMV3, LMOV, MV} -- more on this later. Already in the linear case we identify a useful, explicit and clear exposition of the appearing new dyadic model operators that we call modified dyadic shifts.
For example, the usual generalization of \eqref{eq:HaarMult} takes the form of a dyadic shift
$$
S_{i,j} f= \sum_{K \in \calD} \sum_{I^{(i)} = J^{(j)} = K} a_{IJK} \langle f, h_I \rangle h_J,
$$
while we replace these by the modified dyadic shifts
$$
Q_k f = \sum_{K \in \calD} \sum_{I^{(k)} = J^{(k)} = K} a_{IJK} \langle f, h_I \rangle  H_{I, J}.
$$
Here $I^{(k)} \in \calD$ is the $k$th parent of $I$. The difference is that
some more complicated functions $H_{I,J}$, which are supported on $I \cup J$, constant on the children of $I$ and $J$ and satisfy $\int H_{I,J} = 0$, appear -- on the other hand, we have $i=j=k$. 

The $\UMD$ -- unconditional martingale differences -- property of a Banach space $X$
is a well-known necessary and sufficient condition for the boundedness of various singular integrals on $L^p(\R^d;X) = L^p(X)$.
A Banach space  $X$ has the $\UMD$ property if $X$-valued martingale difference sequences converge unconditionally in $L^p$ for some (equivalently, all) $p\in (1,\infty)$.
By Burkholder \cite{Bu} and Bourgain \cite{Bo} we have that $X$ is a $\UMD$ space if and only if a particular SIO -- the Hilbert transform $Hf(x) = \operatorname{p.v.} \int_{\R} \frac{f(y) dy}{x-y}$
 -- admits an $L^p(X)$-bounded extension.
This theory quickly advanced up to the vector-valued $T1$ theorem of Figiel \cite{Figiel2}. It is important to understand that the fundamental result that all scalar-valued $L^2$ bounded SIOs can be extended to act boundedly on $L^p(X)$, $p \in (1,\infty)$, goes through this $T1$ theorem.

The deep work of Figiel also contains estimates for the required kernel regularity $\alpha$ in terms of some characteristics of the $\UMD$ space
$X$. It turns out that for completely general $\UMD$ spaces
the threshold $\alpha = 1/2$ needs to be replaced by a more complicated expression, while in the simpler function lattice case $\alpha = 1/2$ suffices by a much more elementary
argument.
 We revisit these linear results using the modified dyadic shifts
and obtain a modern proof of the following theorem. In our terminology, a Calder\'on--Zygmund operator (CZO) is an SIO that satisfies the $T1$ assumptions (equivalenty, $T \colon L^2 \to L^2$ boundedly).
\begin{thm}\label{thm:UMDlin}
Let $T$ be a linear $\omega$-CZO  and $X$ be a $\UMD$ space with type $r \in (1,2]$ and cotype $q \in [2, \infty)$.
If $\omega \in \operatorname{Dini}_{1 / \min(r, q')}$, we have
$$
\|Tf \|_{L^p(X)} \lesssim \|f\|_{L^p(X)}, \qquad p \in (1,\infty).
$$
\end{thm}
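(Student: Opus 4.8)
The plan is to run $T$ through the streamlined one-parameter representation theorem proved above, which expresses $\langle Tf,g\rangle$ as an average over random dyadic grids $\calD$ of a series $\sum_{k\ge 0}\langle Q_k^{\calD}f,g\rangle$ of modified dyadic shifts, together with paraproduct terms $\Pi_{T1}$ and $\Pi_{T^{*}1}^{*}$; the coefficients defining $Q_k^{\calD}$ are normalised so that the $k$-th block carries a factor comparable to $\int_{2^{-k-1}}^{2^{-k}}\omega(t)\,\frac{\mathrm{d}t}{t}$. Since $\sum_{k\ge0}(1+k)^{\alpha}\int_{2^{-k-1}}^{2^{-k}}\omega(t)\,\frac{\mathrm{d}t}{t}\simeq\|\omega\|_{\operatorname{Dini}_{\alpha}}$, the theorem reduces to two $L^p(X)$-bounds, uniform in the grid: $\|\Pi_b\|_{L^p(X)\to L^p(X)}\lesssim\|b\|_{\BMO}$ for the paraproducts, and $\|Q_k^{\calD}\|_{L^p(X)\to L^p(X)}\lesssim(1+k)^{1/\min(r,q')}$ for the modified shifts; here $\|T1\|_{\BMO}+\|T^{*}1\|_{\BMO}\lesssim\|T\|_{\CZ}$ because $T$ is a CZO.

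The paraproduct bound is classical for $\UMD$ spaces: a dyadic paraproduct with $\BMO$ symbol is $L^p(X)$-bounded with constant controlled by $\|b\|_{\BMO}$ and the $\UMD$ constant of $X$, as a consequence of the unconditionality of $X$-valued martingale differences (see \cite{Bo, Figiel2}). No kernel regularity is involved.

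The core of the argument is the modified-shift bound, and this is where type and cotype enter. By construction $Q_k^{\calD}f$ is an $X$-valued dyadic martingale; I would decompose $Q_k^{\calD}=\sum_{m=0}^{k}R_m$ according to the scale at which the packaged functions $H_{I,J}$ oscillate relative to the common ancestor $K=I^{(k)}=J^{(k)}$, so that each $R_m$ is a cancellative dyadic shift of complexity at most $(k,k)$ and is $L^p(X)$-bounded by the $\UMD$ property (for a fixed output generation its pieces sit on pairwise disjoint cubes). Summing the $R_m$ by the triangle inequality only gives $\|Q_k^{\calD}\|\lesssim1+k$; to reach the sharp exponent one instead randomises, using $\UMD$ to insert independent signs simultaneously on the martingale differences of $f$ and on those of $Q_k^{\calD}f$, thereby reducing the estimate to a Rademacher average of roughly $k$ uniformly bounded contributions. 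To this average one applies the type-$r$ inequality of $X$ on the output side and, dually, the type-$q'$ inequality of $X^{*}$ on the input side — the latter available because $\UMD$ spaces are $K$-convex, so that cotype $q$ of $X$ is equivalent to type $q'$ of $X^{*}$ — both used in the $X$-variable pointwise in $\R^d$, which keeps the argument uniform over $p\in(1,\infty)$. Each inequality costs a power of $k$ and, combined at the worse of the two rates, yields $\|Q_k^{\calD}\|_{L^p(X)\to L^p(X)}\lesssim(1+k)^{\max(1/r,1/q')}=(1+k)^{1/\min(r,q')}$. In the special case of a $\UMD$ function lattice the lattice square function replaces the Rademacher average and one needs only a single Cauchy--Schwarz over the $k$ intermediate scales, which is why the exponent improves to $1/2$ there.

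Finally, inserting the two bounds into the representation and summing the series in $k$ gives $\|Tf\|_{L^p(X)}\lesssim\big(\|\omega\|_{\operatorname{Dini}_{1/\min(r,q')}}+\|T\|_{\CZ}\big)\|f\|_{L^p(X)}$ for every $p\in(1,\infty)$, the paraproduct bound, the shift bound and the summation all being $p$-independent. The main obstacle is clearly the sharp growth rate $(1+k)^{1/\min(r,q')}$ for the modified shifts: obtaining the precise exponent $1/\min(r,q')$ rather than merely some polynomial in $k$ (which would force a stronger $\operatorname{Dini}$ hypothesis) is exactly the point of working with the operators $Q_k$ — which fold the classical shifts of complexity $(i,i)$, $i\le k$, into a single cancellative operator — instead of the shifts $S_{i,j}$, and it is where the interaction between the $\UMD$ property and the type/cotype geometry of $X$ has to be exploited efficiently.
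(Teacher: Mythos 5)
Your high-level architecture matches the paper: represent $T$ via the modified shifts $Q_k$ plus paraproducts, bound the paraproducts by the classical $\UMD$ result, and bound $\|Q_k\|_{L^p(X)\to L^p(X)}$ with a power of $k$ controlled by the type/cotype geometry, then sum against $\omega(2^{-k})$ using the $\operatorname{Dini}_\alpha$ hypothesis. The gaps are all in the shift estimate and in how the $p$-independence is obtained.

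First, the way you extract the rate $(k+1)^{1/\min(r,q')}$ is not the paper's mechanism and, as described, would not work. The paper does \emph{not} insert signs ``simultaneously on the martingale differences of $f$ and on those of $Q_kf$'' and apply type of $X$ on one side and type of $X^*$ on the other ``pointwise in the $X$-variable''; this conflates the type of $X$ with the type of $L^p(X)$ and does not produce a single well-defined estimate. What the paper actually does in Proposition~\ref{prop:ModShiftUMD} is decompose the sum over tops $K$ along the scale-separated sublattices $\calD_{k,l}$, $l=0,\dots,k$, of \eqref{eq:SubLattice} (not according to the oscillation scale of $H_{I,J}$ as you propose), introduce one family of random signs $\varepsilon_l$ via $\UMD$, and apply the type $s=\min(r,p)$ of the \emph{Bochner space} $L^p(X)$ exactly once to this $(k+1)$-term $\ell^s$-sum. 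The entire remaining work is to prove a uniform-in-$l$ bound $\|\sum_{K\in\calD_{k,l}}B_Kf\|_{L^p(X)}\lesssim\|f\|_{L^p(X)}$, and this relies on two tools you do not mention at all: the $\UMD$-valued decoupling estimate of Proposition~\ref{prop:decoupling} (H\"anninen--Hyt\"onen) for the off-diagonal piece $1_IH_{I,J}$, and Stein's inequality (Bourgain) for the diagonal piece $1_JH_{I,J}$. Without decoupling the uniform-in-$l$ bound is not clear.

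Second, the roles of $r$ and $q'$ are not ``combined'' and then majorised by the max; they enter in two disjoint places. The modified shift $Q_k$ appears in two dual forms \eqref{eq:Q_kFORM1} and \eqref{eq:Q_kFORM2}. Form \eqref{eq:Q_kFORM2} is bounded using type of $L^p(X)$, giving $(k+1)^{1/\min(r,p)}$; form \eqref{eq:Q_kFORM1} is handled by duality using that $X^*$ has type $q'$ (by $K$-convexity, as you say), giving $(k+1)^{1/\min(q',p')}$. Both forms occur in the representation of a CZO, so both rates appear, and the worse one wins — that is how $\min(r,q')$ arrives, not from stacking two inequalities on the same object.

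Third, your claim that the shift estimate is ``uniform over $p\in(1,\infty)$'' is precisely what the paper explicitly flags as not achievable with the given method: the rate $\min(r,p)$ degrades away from $p=2$, and a direct $p$-independent $L^p$ bound on $Q_k$ with the optimal complexity dependence is unknown. The paper instead specializes to $p=2$ (where $\min(r,2)=r$ and $\min(q',2)=q'$), deduces $\|T\|_{L^2(X)\to L^2(X)}<\infty$ after summing in $k$, gets the weak $(1,1)$ bound cheaply under $\operatorname{Dini}_0$, and recovers all $p\in(1,\infty)$ by interpolation and duality at the level of $T$, not of $Q_k$. You should replace the final step of your argument with this standard Calder\'on--Zygmund bootstrap.
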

See the main text for the exact definitions.
If $X$ is a Hilbert space, then $r = q = 2$ and we get the usual $\alpha = 1/2$. Again, this theory is relevant
for $\UMD$ spaces that go beyond the function lattices, such as, non-commutative $L^p$ spaces.

A major part of our arguments has to do with the extension of these, and other results, to the multilinear setting.
A basic model of an $n$-linear SIO $T$ in $\R^d$ is obtained by setting
\begin{equation*}\label{eq:multilinHEUR}
T(f_1,\ldots, f_n)(x) = U(f_1 \otimes \cdots \otimes f_n)(x,\ldots,x), \qquad x \in \R^d,\, f_i \colon \R^d \to \C,
\end{equation*}
where $U$ is a linear singular integral operator in $\R^{nd}$. See e.g. Grafakos--Torres \cite{GT} for the basic theory.
Multilinear SIOs appear in applications ranging from partial differential equations to complex function theory and ergodic theory.
For example, $L^p$ estimates for the homogeneous fractional derivative $D^{\alpha} f=\mathcal F^{-1}(|\xi|^{\alpha} \widehat f(\xi))$ of a product of two or more functions -- the \emph{fractional Leibniz rules} -- are used in the area of dispersive equations. 
Such estimates descend from the multilinear H\"ormander-Mihlin multiplier theorem of Coifman-Meyer \cite{CM} -- 
See e.g. Kato--Ponce \cite{KP} and Grafakos--Oh \cite{GO}.

The original bilinear representation theorem with the usual power-type continuity-modulus is by some of us together with K. Li and Y. Ou \cite{LMOV}.
An $n$-linear extension of \cite{LMOV} -- even to the operator-valued setting -- is by some of us together with K. Li and F. Di Plinio \cite{DLMV2}. 
The structural theory in the $n$-linear setting is quite delicate already in the above works, and becomes more delicate still in our current
setting of mild kernel regularity and modified dyadic operators. On the other hand, the proofs of the representation theorems appear to be now converging to their final and most elegant form and we can clean up some technicalities of \cite{LMOV}, and provide an efficient argument.

The multilinear analogue of Theorem \ref{thm:UMDlin} extending the recent work \cite{DLMV1} goes as follows.
\begin{thm}
Let $\{X_1,\ldots, X_{n+1}\}$ be a $\UMD$ H\"older tuple as in the Definition \ref{defn:productsys1}, and denote
the cotype of $X_j$ by $s_j$.
Suppose that $T$ is an $n$-linear $\omega$-CZO with 
$\omega \in \operatorname{Dini}_{\alpha}$,
where
$$
\alpha = \frac{1}{\min( (n+1)/n, s_{1}', \ldots, s_{n+1}')}.
$$
Then for all exponents $1 < p_1, \ldots, p_n \le \infty$ and $1 / r = \sum_{j=1}^n 1/p_j > 0$
we have
$$
\|T(f_1, \ldots, f_n) \|_{L^{r}(X_{n+1}^*)} \lesssim \prod_{j=1}^{n} \| f_j \|_{L^{p_j}(X_j)}.
$$
\end{thm}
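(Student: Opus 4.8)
The plan is to deduce the estimate from the multilinear representation theorem in terms of modified dyadic shifts established above. That theorem writes $T(f_1,\ldots,f_n)$ as an average over independent dyadic grids of $\sum_{k\ge 0} Q_k$ together with finitely many paraproduct terms, where $Q_k$ is an $n$-linear modified dyadic shift of complexity $k$ normalized so that
$$\|Q_k(f_1,\ldots,f_n)\|_{L^{2/n}} \lesssim \omega(2^{-k})\,\textstyle\prod_{j=1}^{n}\|f_j\|_{L^2}.$$
Since $\sum_{k\ge 0}\omega(2^{-k})\,(1+k)^{\alpha}$ is bounded by a constant times $\|\omega\|_{\operatorname{Dini}_{\alpha}}$, it suffices to prove the uniform (in the grids) bound
$$\|Q_k(f_1,\ldots,f_n)\|_{L^{r}(X_{n+1}^*)} \lesssim \omega(2^{-k})\,(1+k)^{\alpha}\prod_{j=1}^{n}\|f_j\|_{L^{p_j}(X_j)},\qquad \alpha=\tfrac{1}{\min((n+1)/n,\,s_1',\ldots,s_{n+1}')},$$
together with the $\UMD$-boundedness of the paraproduct terms; summing over $k$ and averaging over the grids then finishes the proof.

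\textbf{The single modified shift.}
The heart of the matter is the displayed modified-shift bound. By the definition of a $\UMD$ Hölder tuple it is enough to estimate the associated $(n{+}1)$-linear form $(f_1,\ldots,f_{n+1})\mapsto \langle Q_k(f_1,\ldots,f_n),f_{n+1}\rangle$ on $\prod_{j=1}^{n+1}L^{p_j}(X_j)$, which puts $X_1,\ldots,X_{n+1}$ on an equal footing and, via the Hölder-tuple framework, also absorbs the endpoint cases $p_j=\infty$ and $r\le 1$. Expanding $Q_k$ and each $H_{I,J}$ into Haar functions on the children of the cubes and grouping the resulting sum by the common $k$-th ancestor $K\in\calD$, one isolates a local operator that transports Haar data across the $k$ dyadic generations separating a cube from $K$. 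Scale by scale this passage is free: conditional expectations are uniformly bounded on each $L^{p_j}(X_j)$ by the $\UMD$ property (Stein's inequality) and martingale transforms are uniformly $\Rad$-bounded. The only loss comes from reassembling, for each factor, the contributions of the $k$ intermediate generations into a single function: bounding the plain sum over these generations by the Rademacher square function and then by one $L^{p_j}(X_j)$ norm costs the cotype-$s_j$ constant of $X_j$ together with a Hölder factor $(1+k)^{1/s_j'}$, while the $(n{+}1)$-linear Hölder combination of all the factors along the common diagonal contributes a further $(1+k)^{n/(n+1)}$. Taking the worst of these exponents gives the factor $(1+k)^{\alpha}$.

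\textbf{Paraproducts and assembly.}
The paraproduct terms appearing in the representation are handled by the standard $\UMD$-valued machinery --- a vector-valued $H^1$--$\BMO$ duality / Carleson embedding argument built on the $\UMD$ property --- which involves no complexity and hence needs no kernel regularity beyond the bare Dini condition. Combining the single-shift bound, the paraproduct bounds, and the summability $\sum_k\omega(2^{-k})(1+k)^{\alpha}\lesssim\|\omega\|_{\operatorname{Dini}_{\alpha}}$, and averaging over the grids, yields $\|T(f_1,\ldots,f_n)\|_{L^{r}(X_{n+1}^*)}\lesssim\prod_{j=1}^n\|f_j\|_{L^{p_j}(X_j)}$, as claimed.

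\textbf{Main obstacle.}
I expect the genuine difficulty to be obtaining the exponent $\alpha$ \emph{sharply} in a \emph{general} $\UMD$ space; for function lattices a much softer argument is available. The point is not to waste powers of $k$: one must exploit the mean-zero, children-adapted structure of the $H_{I,J}$ and avoid crude triangle inequalities over the $\sim 2^{dk}$ children, routing everything through Rademacher randomization so that $\UMD$ handles the scale-by-scale passage at no cost and the type/cotype inequalities are used only for the single summation over the $k$ generations. Balancing this against the $(n{+}1)$-linear Hölder combination, so that no single factor is charged more than $(1+k)^{\alpha}$, is the delicate bookkeeping that makes the threshold $\alpha=1/\min((n+1)/n,s_1',\ldots,s_{n+1}')$ come out exactly, and recovers $\alpha=1/2$ when $n=1$ and all the $X_j$ are Hilbert spaces.
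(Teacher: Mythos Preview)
Your overall architecture is correct and matches the paper: apply the representation theorem, bound each model operator $V_{k,u,\sigma}$ with a factor $(1+k)^{\alpha}$, handle paraproducts complexity-free, and sum via $\|\omega\|_{\operatorname{Dini}_{\alpha}}$.

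However, your account of \emph{where} the factor $(1+k)^{\alpha}$ comes from is wrong in two places, and this is the crux of the argument.

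First, you say that each factor $f_j$ contributes its own $(1+k)^{1/s_j'}$ via cotype and that an additional $(1+k)^{n/(n+1)}$ arises from ``the $(n{+}1)$-linear H\"older combination along the common diagonal'', and then one takes the worst exponent. This is not how the paper (Proposition~\ref{prop:MultliModUMD}) proceeds. There the loss comes from a \emph{single} use of a type inequality: one splits the sum over $K$ into $k+1$ scale-separated sublattices $\calD_{k,l}$, introduces random signs $\varepsilon_l$ via the $\UMD$ property, and then applies type $s=\min(p_{n+1}',s_{n+1}')$ of the output space $L^{p_{n+1}'}(Y(X_{n+1}))$ to bound the randomized sum by $\big(\sum_{l=0}^{k}\|\cdot\|^{s}\big)^{1/s}\lesssim (k+1)^{1/s}\sup_l\|\cdot\|$. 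The remaining task is a \emph{uniform-in-$l$}, $k$-free estimate, and this is where the real work lies: it uses the decoupling inequality (Proposition~\ref{prop:decoupling}), iterated applications of Stein's inequality peeling off one function at a time, and the H\"older-tuple product estimate of Lemma~\ref{lem:nice}. The other $s_j'$ and $p_j'$ enter the statement only because the symmetric forms of $Q_k$ place the cancellative Haar function in a different slot $j$, handled by duality via~\eqref{eq:sonice}. You do not mention decoupling at all, and without it the uniform-in-$l$ step does not go through in a general $\UMD$ space.

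Second, the number $(n+1)/n$ has nothing to do with a diagonal H\"older combination. In the paper one \emph{chooses} the single tuple $p_1=\cdots=p_{n+1}=n+1$, so that $p_j'=(n+1)/n$, proves boundedness there, and then extends to the full range $1<p_j\le\infty$, $r>1/n$ by the weak endpoint $L^1\times\cdots\times L^1\to L^{1/n,\infty}$ and interpolation. Your claim that the H\"older-tuple framework ``absorbs the endpoint cases $p_j=\infty$ and $r\le 1$'' directly is unsupported; the model-operator bounds you could prove at general $(p_1,\ldots,p_{n+1})$ would carry an $\alpha$ depending on those exponents, not the fixed $\alpha$ in the statement.
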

Until recently, vector-valued extensions of multilinear SIOs had mostly been studied in the framework of $\ell^p$ spaces and function lattices, rather than general UMD spaces -- see e.g. \cite{CUDPOU,GM,LMO,LMMOV,Nieraeth}.
Taking the work \cite{DO} much further, the paper \cite{DLMV1} finally established $L^p$
bounds for the extensions of $n$-linear SIOs with the usual H\"older modulus of continuity to tuples of $\UMD$ spaces tied by a natural product structure, such as, the composition of operators in the  Schatten-von Neumann  subclass of the algebra of bounded operators on a Hilbert space. In \cite{DLMV3} the bilinear case of \cite{DLMV1} was applied to prove $\UMD$-extensions for modulation invariant singular integrals,
such as, the bilinear Hilbert transform. With new and refined methods, we are able to prove the above Figiel type result in the multilinear setting.

We move on to the product space theory.
For example, in the linear case the one-parameter kernels that we have seen thus far are singular when $x=y$. Linear bi-parameter SIOs have kernels with singularities on
 $x_1=y_1$ or $x_2 = y_2$, where $x,y\in\R^d$ are written as $x= (x_1, x_2), y = (y_1, y_2) \in\R^{d_1}\times\R^{d_2}$ for a fixed partition $d=d_1+d_2$. For $x,y \in \C = \R \times \R$, compare e.g. the one-parameter Beurling kernel $1/(x-y)^2$ with the bi-parameter kernel $1/[(x_1-y_1)(x_2-y_2)]$ -- the product of Hilbert kernels in both coordinate directions.
A bi-parameter $T1$ theorem was first achieved by Journ\'e \cite{Jo}, and recovered by one of us \cite{Ma1} through a linear bi-parameter dyadic representation theorem. 
The multi-parameter extension of \cite{Ma1} is by Y. Ou \cite{Ou}.

In part due to the failure of bi-parameter sparse domination methods, see \cite{BCRO} (see also \cite{BP} however), representation theorems are even more important
in bi-parameter than in one-parameter.
For example, the dyadic methods have proved very fruitful in connection with bi-parameter commutators and weighted analysis, see Holmes--Petermichl--Wick \cite{HPW}, Ou--Petermichl--Strouse \cite{OPS} and \cite{LMV:Bloom}. See also \cite{EA, ALMV}. In particular, the original bi-parameter weighted estimates of Fefferman--Stein \cite{FS} and Fefferman \cite{RF1, RF2}
were quite difficult in the sense that reaching the natural $A_p$ class instead of $A_{p/2}$ required an involved bootstrapping argument.

We prove a new version of the bi-parameter representation theorem \cite{Ma1} following the new modified shift idea.
In fact, we prove an $n$-linear version thus extending the theory of \cite{LMV} concerning bilinear bi-parameter SIOs with the usual H\"older modulus of continuity.
 An inherent complication of bi-parameter analysis is the appearance of certain hybrid combinations of shifts and paraproducts that are new
compared to the one-parameter case. Therefore, in the product space setting, unlike in the one-parameter case, we need to consider new modified operators involving paraproduct type philosophy.
We mention here that in all of our settings it is possible
to recover from our results an efficient representation of SIOs
with the usual (non-modified) dyadic operators. This is because we prove that all of our modified operators can be split into a \emph{sum} of standard dyadic model operators.

Some corollaries include the weighted
boundedness of bi-parameter CZOs and new estimates for various commutators $[b,T]\colon f \mapsto bTf - T(bf)$ with mild kernel regularity.
A reason why we care about weighted estimates is that, beyond their significant intrinsic interest,
they are of fundamental use in proving other estimates, like obtaining the full multilinear range of estimates
$\prod_{j=1}^n L^{p_j}\to L^r$, $\sum_j 1/p_j = 1/r$, $1 < p_j \le \infty$, $1/n < r < \infty$,
from a single tuple $(p_1, \ldots, p_n, r)$. The very powerful multilinear extrapolation results -- see e.g.
\cite{DU, GM, LMO, LMMOV, Nieraeth} -- are behind this. In the product space setting this viewpoint is particularly useful as many of the classical one-parameter
tools are completely missing. On the other hand,  commutator estimates appear all over analysis implying e.g.
factorizations for Hardy functions \cite{CRW}, certain div-curl lemmas relevant in compensated compactness, and were recently connected to
the Jacobian problem $Ju = f$ in $L^p$ (see \cite{HyCom}).

In the linear case the next theorem extends \cite{HPW} and in the bilinear case \cite{LMV}.
\begin{thm}
Let $p_j \in (1, \infty)$ and $1/r:= \sum_{j=1}^n 1/p_j$. Let $w_j \in A_{p_j}$ be bi-parameter weights on the product space $\R^d = \R^{d_1} \times \R^{d_2}$
and define $w := \prod_{j=1}^n w_j^{r/p_j}$.
Suppose that $T$ is an $n$-linear bi-parameter $(\omega_1, \omega_2)$-CZO.
If either
\begin{enumerate}
\item $n = 1$ and $\omega_i \in \operatorname{Dini}_{1/2}$, or
\item $n \ge 2$ and $\omega_i \in \operatorname{Dini}_{1}$,
\end{enumerate}
we have
\begin{equation}\label{eq:eq6}
\|T(f_1, \ldots, f_n)\|_{L^r(w)} \lesssim \prod_{j=1}^n \|f_j\|_{L^{p_j}(w_j)}
\end{equation}
and
$$
\| [b_m,\cdots[b_2, [b_1, T]_{k_1}]_{k_2}\cdots]_{k_m}(f_1, \ldots, f_n)\|_{L^r(w)} \lesssim \prod_{i=1}^m\|b_i\|_{\bmo}  \prod_{j=1}^n \|f_j\|_{L^{p_j}(w_j)},
$$
where $\bmo$ is the little $\BMO$ space and 
$$
[b,T]_k(f_1, \ldots, f_n) = bT(f_1, \ldots, f_n) - T(f_1, \ldots, f_{k-1}, bf_k, f_{k+1}, \ldots, f_n).
$$
\end{thm}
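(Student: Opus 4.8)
The plan is to combine the new $n$-linear bi-parameter representation theorem proved earlier in the paper with weighted estimates for the standard (non-modified) bi-parameter dyadic model operators. First I would apply the representation theorem (applicable since $T$ is an $(\omega_1,\omega_2)$-CZO), writing $\pair{T(f_1,\ldots,f_n)}{g}$ as an absolutely convergent average, over pairs of random dyadic grids in $\R^{d_1}$ and $\R^{d_2}$, of modified bi-parameter dyadic model operators -- modified shifts $Q_{\kappa_1,\kappa_2}$, modified full paraproducts, and the modified partial paraproducts that are inherent to the product-space setting -- with the series weighted by the Dini coefficients $\|\omega_1\|_{\operatorname{Dini}_\alpha}$, $\|\omega_2\|_{\operatorname{Dini}_\alpha}$ for the relevant $\alpha$. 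Next, as already recorded, each such modified operator of complexity $(\kappa_1,\kappa_2)$ decomposes into a \emph{sum} of standard $n$-linear bi-parameter model operators of complexity at most $(\kappa_1,\kappa_2)$. Hence it suffices to prove \eqref{eq:eq6} for the standard model operators with a bound growing at most polynomially in the complexity, and then to check that this polynomial growth is summable against the Dini coefficients -- and it is exactly this final bookkeeping that forces $\alpha=1/2$ when $n=1$ and $\alpha=1$ when $n\ge 2$.

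For the weighted bound itself I would use the estimate $\|S(f_1,\ldots,f_n)\|_{L^r(w)}\lesssim \prod_j\|f_j\|_{L^{p_j}(w_j)}$ for a standard $n$-linear bi-parameter shift $S$ of complexity $(\kappa_1,\kappa_2)$, with constant $\lesssim(1+\kappa_1)^{\beta}(1+\kappa_2)^{\beta}$ times a power of the joint multilinear $A_{\vec p}$ characteristic, and the analogous (complexity-independent) estimates for the partial and full paraproducts; these follow from the by now standard sparse/Carleson-type arguments in the product setting, or from the cited multilinear and bi-parameter weighted literature. In the linear case $n=1$ the modified shift $Q_\kappa$ can be split so that the effective exponent is $\beta=1/2$, and then
$$
\sum_{\kappa_1,\kappa_2\ge 0}\omega_1(2^{-\kappa_1})\omega_2(2^{-\kappa_2})(1+\kappa_1)^{1/2}(1+\kappa_2)^{1/2}\sim \|\omega_1\|_{\operatorname{Dini}_{1/2}}\|\omega_2\|_{\operatorname{Dini}_{1/2}}<\infty;
$$
for $n\ge 2$ the multilinear splitting is less efficient and only yields $\beta=1$, which is why $\omega_i\in\operatorname{Dini}_1$ is imposed. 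Summing the series, together with the complexity-independent $L^r(w)$ bounds for the paraproduct parts, gives \eqref{eq:eq6}; no extrapolation is needed since the model-operator bounds already hold for the full range of admissible tuples directly.

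For the commutator estimates I would follow the standard dyadic approach. Expanding $T$ by the representation theorem, it suffices to bound $[b_m,\cdots[b_2,[b_1,U]_{k_1}]_{k_2}\cdots]_{k_m}$ for $U$ a standard bi-parameter model operator. The key step is the algebraic identity that, after inserting the bi-parameter (Chang--Fefferman) paraproduct decomposition of multiplication by $b$, the one-fold commutator $[b,U]_k$ of a shift, partial paraproduct, or full paraproduct of complexity $(\kappa_1,\kappa_2)$ can be rewritten as a finite sum of model operators of the same three types of complexity at most $(\kappa_1+1,\kappa_2+1)$, each carrying a factor $\lesssim\|b\|_{\bmo}$; here it is precisely the little $\BMO$ norm, rather than the larger product $\BMO$ norm, that makes the occurring paraproduct pieces bounded, because those pieces are one-parameter in each of the two variables separately. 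Iterating $m$ times produces a finite sum of model operators of complexity at most $(\kappa_1+m,\kappa_2+m)$ with total factor $\prod_{i=1}^m\|b_i\|_{\bmo}$; applying the weighted bounds of the previous paragraph (whose polynomial complexity growth is unaffected by the harmless shift $\kappa_i\mapsto\kappa_i+m$) and resumming against the Dini coefficients exactly as before gives the asserted commutator estimate.

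I expect the main obstacle to be the product-space bookkeeping: checking that the modified partial paraproducts -- which have no one-parameter analogue -- split into standard operators with the correct complexity dependence, that their commutators with multiplication by a $\bmo$ function again land in the admissible classes, and that in the multilinear case the resulting counting and decay interlock with the $\operatorname{Dini}_1$ hypothesis with no loss. A secondary technical point is to track the joint multilinear $A_{\vec p}$ dependence through the splitting so that the weighted constants remain of the stated polynomial type uniformly over the complexities being summed, which is what makes the direct summation legitimate.
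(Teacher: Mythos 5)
Your proposal has the right overall architecture for the $n\ge 2$ weighted bound, but it contains a genuine gap in the $n=1$ case and follows a different route than the paper for the commutator.

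\textbf{The $n=1$ weighted bound.} You write that ``the modified shift $Q_\kappa$ can be split so that the effective exponent is $\beta=1/2$,'' with the standard bi-parameter shifts carrying a polynomial complexity factor. This is not how the square root arises, and the decomposition route cannot produce it. In the paper, standard bi-parameter shifts and partial paraproducts satisfy \emph{complexity-free} weighted bounds (Propositions \ref{prop:StandShiftWeight}, \ref{prop:StandPPWeight}, \ref{prop:FullPWeight}). The polynomial growth in the decomposition approach comes entirely from the \emph{number} of standard operators in the expansion of a modified one: Lemma \ref{lem:QasSBiPar} produces roughly $k_1 k_2$ standard shifts, so the decomposition can only ever give a factor $\lesssim k_1 k_2$, i.e.\ $\operatorname{Dini}_1$. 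That is exactly what the paper does for $n\ge 2$ (Corollary \ref{cor:Dini1nlinbipar}), and there your proposal is essentially correct. The sharper $\sqrt{k_1+1}\sqrt{k_2+1}$ dependence for $n=1$ requires estimating the modified operators \emph{directly}: using $\langle g, H_{I,J}\rangle = \langle P_{K,k}g, H_{I,J}\rangle$ (identity \eqref{eq:gAndH}) together with the square-function bound $\bigl\|\bigl(\sum_K |P_{K,k}f|^2\bigr)^{1/2}\bigr\|_{L^p(w)}\lesssim\sqrt{k+1}\,\|f\|_{L^p(w)}$ (Lemma \ref{lem:PEst}/Lemma \ref{lem:standardEst}). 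This is carried out in Propositions \ref{prop:LinQkWeight} and \ref{prop:LinQpiWeight}, and is precisely the reason the paper can reach $\operatorname{Dini}_{1/2}$ in the linear case but not (so far) in the multilinear weighted case. Without this step your argument would only give $\operatorname{Dini}_1$ for $n=1$ as well, which is a strictly weaker claim than the statement.

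\textbf{The commutator bound.} Here you propose the hands-on dyadic route: insert the (Chang--Fefferman) paraproduct decomposition of multiplication by $b$ into $[b,U]_k$ for each standard model operator $U$, and reassemble the pieces into admissible model operators. The paper does something quite different and considerably more economical: it proves an abstract transference result (Theorem \ref{thm:CauchyTrick}) via the Coifman--Rochberg--Weiss Cauchy integral trick. One writes $[b,U]_1 = C_0\oint \exp(bz)U(\exp(-bz)\,\cdot\,,\ldots)\,z^{-2}\,\ud z$ and uses the weighted estimate for $U$ with conjugated weights $\exp(p_1\Re(bz))w_1$; because little $\bmo$ behaves like one-parameter $\BMO$ variable-by-variable (via \eqref{eq:BMOfixedvar}), the conjugated weight stays in bi-parameter $A_{p_1}$ for $|z|$ small, by a slice-by-slice application of the one-parameter Hyt\"onen lemma. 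This requires \emph{no} new model-operator estimates whatsoever and automatically gives the same Dini hypothesis as the base weighted bound, whereas your paraproduct expansion would require a nontrivial verification that all the bi-parameter commutator pieces (including the hybrid partial-paraproduct terms and the full paraproduct of $b$) land in the admissible classes with controlled complexity -- work that is genuinely delicate in the product setting. Your route is not wrong in principle, but it is significantly heavier and it is not what the paper does; in particular the black-box character of the Cauchy trick is what makes the little-$\bmo$, same-Dini commutator result come ``for free'' once \eqref{eq:eq6} is known.

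A minor additional point: the claim that ``no extrapolation is needed'' is not consistent with how the paper argues; extrapolation (both multilinear and $A_\infty$) is used in several places, e.g.\ Proposition \ref{prop:FullPWeight} and the reduction to $r>1$ inside the proof of Theorem \ref{thm:CauchyTrick}.
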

In the genuinely multilinear case $n\ge 2$ we were not able to establish these weighted bounds with the lowest kernel regularity $\omega_i \in \operatorname{Dini}_{1/2}$.
However, in the unweighted case $w_1 = \cdots = w_n = 1$ the estimate \eqref{eq:eq6} holds for all $n$ even with $\omega_i \in \operatorname{Dini}_{1/2}$ at least in the Banach range $r > 1$ (see Equation \eqref{eq:Dini12nlinBRange}).

In the linear case we also establish various two-weight Bloom type estimates for commutators. The result (1) extends \cite{LMV:Bloom2} and the result (2)
extends \cite{HPW} and \cite{LMV:Bloom}.
\begin{thm}
Suppose that $\R^d = \R^{d_1} \times \R^{d_2}$ is the underlying bi-parameter space, $p \in (1, \infty)$, $\mu, \lambda \in A_p(\R^d)$ are bi-parameter weights and
$\nu = \mu^{1/p} \lambda^{-1/p} \in A_2(\R^d)$ is the Bloom weight.
\begin{enumerate}
\item If $T_i$, $i = 1,2$, is a one-parameter $\omega_i$-CZO on $\R^{d_i}$, where $\omega_i \in \operatorname{Dini}_{3/2}$, then
$$
\| [T_1, [T_2, b]] \|_{L^p(\mu) \to L^p(\lambda)} \lesssim \|b\|_{\BMO_{\textup{prod}}(\nu)}.
$$
\item Suppose that $T$ is a bi-parameter $(\omega_1, \omega_2)$-CZO. Then we have
$$
\| [b_m,\cdots[b_2, [b_1, T]]\cdots]\|_{L^p(\mu) \to L^p(\lambda)} \lesssim \prod_{j=1}^m\|b_j\|_{\bmo(\nu^{1/m})}
$$
if one of the following conditions holds:
\begin{enumerate}
\item $T$ is paraproduct free and $\omega_i \in \operatorname{Dini}_{m/2+1}$;
\item $m=1$ and $\omega_i \in \operatorname{Dini}_{3/2}$;
\item $\omega_i \in \operatorname{Dini}_{m+1}$.
\end{enumerate}
\end{enumerate}
\end{thm}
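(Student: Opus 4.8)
The plan is to derive both statements from the new bi-parameter representation theorem together with the structural fact, emphasised above, that every modified dyadic model operator is a finite sum of standard dyadic shifts, paraproducts and partial paraproducts. For part (2) one applies the representation to the bi-parameter CZO $T$; for part (1), where $T_1$ and $T_2$ live on the separate factors $\R^{d_1}$ and $\R^{d_2}$, one applies the one-parameter representation in each variable and uses the tensor structure. After this reduction it suffices to bound, for each standard bi-parameter dyadic model operator $U$ of complexity $\vec k=(k_1,k_2)$, the iterated commutator $[b_m,\cdots[b_1,U]\cdots]$ --- respectively $[S^1_{k_1},[S^2_{k_2},b]]$ for one-parameter shifts in part (1) --- from $L^p(\mu)$ to $L^p(\lambda)$ by $\big(\prod_{j=1}^m\|b_j\|_{\bmo(\nu^{1/m})}\big)(1+|\vec k|)^{\alpha}$, or by $\|b\|_{\BMO_{\textup{prod}}(\nu)}(1+|\vec k|)^{\alpha}$ in part (1), and then to sum against the decay $\omega_1(2^{-k_1})\omega_2(2^{-k_2})$ supplied by the representation. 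Since $\sum_{\vec k}(1+|\vec k|)^{\alpha}\,\omega_1(2^{-k_1})\,\omega_2(2^{-k_2})\lesssim\|\omega_1\|_{\operatorname{Dini}_{\alpha}}\|\omega_2\|_{\operatorname{Dini}_{\alpha}}$, everything comes down to identifying the exponent $\alpha$ produced by the $m$ nested commutators.

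The analytic heart is a bi-parameter commutator--paraproduct decomposition. Fixing a model operator $U$ and a symbol $b$, one expands $b$ in the bi-parameter Haar system and, parameter by parameter, rewrites multiplication by $b$ as a sum of paraproduct-type and diagonal pieces; substituting this into $[b,U]$ collapses the commutator into a controlled number of compositions of $U$ --- and of its ``restrictions'' obtained by limiting Haar supports --- with one- and two-parameter paraproducts $\Pi_b,\Pi_b^{1,*},\Pi_b^{2,*},\Pi_b^{*}$, together with a resonant term handled by a key lemma of the usual type. Each building block obeys a two-weight Bloom estimate: a bi-parameter paraproduct with symbol $b$ maps $L^p(\mu)\to L^p(\lambda)$ with norm $\lesssim\|b\|_{\bmo(\nu)}$ --- the fully mixed paraproduct relevant to part (1) with norm $\lesssim\|b\|_{\BMO_{\textup{prod}}(\nu)}$ --- while a shift of complexity $\vec k$ has $L^p(\mu)\to L^p(\lambda)$ norm polynomial in $|\vec k|$; these follow the templates of \cite{HPW,OPS,LMV:Bloom,LMV:Bloom2}. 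Iterating the decomposition multiplies the symbols and raises $\alpha$ by a fixed amount at each commutator, an amount that is strictly smaller when the operator being commuted is a cancellative shift than when it is a paraproduct or partial paraproduct, while already the first commutator contributes a paraproduct factor $\Pi_b$. Tracking these increments carefully is exactly what produces the three regimes of part (2): when $T$ is paraproduct free only the cheap increments occur after the first step and one reaches $\alpha=m/2+1$; in general the expensive increment is paid repeatedly, giving $\alpha=m+1$; and for $m=1$ only one step occurs and $\alpha=3/2$ is enough. In part (1) the nested pair $[T_1,[T_2,\cdot]]$ contributes, in each variable separately, the base regularity of a one-parameter shift plus the cost of one commutator, giving $\alpha=3/2$ for each $\omega_i$, and the product-$\BMO$ norm enters because $[T_1,[T_2,b]]$ is the genuinely bi-parameter (Ferguson--Lacey type) commutator.

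Assembling the model-operator bounds with the representation decay and the displayed summation gives exactly the claimed estimates: finiteness of the series forces the stated Dini conditions on $\omega_1,\omega_2$, and the weighted Bloom norm factors collected along the way produce the claimed right-hand sides, only $\|b\|_{\BMO_{\textup{prod}}(\nu)}<\infty$, respectively $\|b_j\|_{\bmo(\nu^{1/m})}<\infty$, being used.

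The step I expect to be the main obstacle is the bi-parameter bookkeeping inside the commutator--paraproduct decomposition. Unlike in one parameter, the decomposition has to be carried out separately in each variable and generates genuinely two-parameter objects --- partial paraproducts, and paraproducts whose symbol is a Haar-coefficient sequence depending on the other variable --- with no one-parameter counterpart, so that pinning down the precise increment of $\alpha$ at each commutator step, and in particular verifying that the ``paraproduct free'' hypothesis in case (2)(a) really suppresses the faster-growing contributions, is delicate. Establishing the two-weight bounds for these mixed paraproducts with the correct symbol space --- little $\BMO$ versus product $\BMO$, each with the Bloom weight --- is the other place demanding genuine work, though it can be modelled closely on \cite{HPW,OPS,LMV:Bloom,LMV:Bloom2}.
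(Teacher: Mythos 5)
Your high-level roadmap matches the paper's: apply the representation theorem, convert the modified model operators to standard ones, bound iterated commutators with standard shifts/paraproducts polynomially in the complexity, and sum against the $\omega_1(2^{-k_1})\omega_2(2^{-k_2})$ decay. You also correctly identify the commutator--paraproduct decomposition as the engine, the role of the paraproduct-free hypothesis, and the appearance of product $\BMO$ for the Ferguson--Lacey type commutator $[T_1,[T_2,b]]$. But there is a genuine gap: you assert that the complexity-dependent two-weight bounds for the model operators ``follow the templates of \cite{HPW,OPS,LMV:Bloom,LMV:Bloom2}.'' Those templates give, per symbol, the dependence $(1+\max(u_1,v_1))(1+\max(u_2,v_2))$ --- see page 11 of \cite{LMV:Bloom2}. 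Used as-is, this yields $\operatorname{Dini}_2$ in part (1) and never beats $\operatorname{Dini}_{m+1}$ in part (2), i.e.\ you would only recover regime (c). The paper's central technical refinement is a sharper application of the weighted $H^1$--$\BMO$ duality: instead of freezing the intermediate scale $l$ between $J$ and $K$ and applying duality at each scale separately, one applies it to the full $K,L$ double sum at once, exploiting the extra $\ell^2$ structure. This upgrades the per-symbol dependence to $(1+\max(u_1,v_1))^{1/2}(1+\max(u_2,v_2))^{1/2}$, which is exactly what produces the $1/2$ in $\operatorname{Dini}_{3/2}$ and the $m/2$ in $\operatorname{Dini}_{m/2+1}$. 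You do not mention this improvement, and without it the claimed exponents are out of reach.

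There is also a bookkeeping error in your account of where the ``$+1$'' in the Dini exponents comes from. You attribute it to ``the first commutator contribut[ing] a paraproduct factor $\Pi_b$''. In fact the paraproduct pieces $A_1(b,\cdot),A_2(b,\cdot)$ of the commutator decomposition are bounded with \emph{no} complexity dependence; all complexity comes from the resonant term $A_3(b,Uf)-U(A_3(b,f))$. The additive ``$+1$'' arises instead from Lemma \ref{lem:QasSBiPar}: a modified shift $Q_{k_1,k_2}$ is a sum of $\lesssim k_1k_2$ standard shifts, and this summation multiplies the complexity dependence by $(1+k_1)(1+k_2)$, contributing $+1$ to each Dini exponent regardless of $m$ or of paraproduct-freeness. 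The correct accounting is thus $m\epsilon+1$ where $\epsilon=1/2$ (improved duality, cancellative shifts) or $\epsilon=1$ (old templates, or any time a paraproduct model operator appears with $m\geq 2$), plus $1$ from the $Q\!\to\!S$ conversion. Your version of the bookkeeping does not reproduce the exponents internally ($1+(m-1)\epsilon\neq m\epsilon+1$), even though you quote the correct final thresholds.
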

See again the main text for all of the definitions and for some additional results.
These Bloom-style two-weight estimates have recently been one of the main lines of development concerning commutators, see e.g.
 \cite{EA, ALMV, HLW, HPW, LOR1, LOR2, LMV:Bloom, LMV:Bloom2} for a non-exhaustive list.
\subsection*{Acknowledgements}
We thank Tuomas Hytönen and Kangwei Li for useful discussions.

\section{Basic notation and fundamental estimates}\label{sec:def}

Throughout this paper $A\lesssim B$ means that $A\le CB$ with some constant $C$ that we deem unimportant to track at that point.
We write $A\sim B$ if $A\lesssim B\lesssim A$.

\subsection*{Dyadic notation}
Given a dyadic grid $\calD$, $I \in \calD$ and $k \in \Z$, $k \ge 0$, we use the following notation:
\begin{enumerate}
\item $\ell(I)$ is the side length of $I$.
\item $I^{(k)} \in \calD$ is the $k$th parent of $I$, i.e., $I \subset I^{(k)}$ and $\ell(I^{(k)}) = 2^k \ell(I)$.
\item $\ch(I)$ is the collection of the children of $I$, i.e., $\ch(I) = \{J \in \calD \colon J^{(1)} = I\}$.
\item $E_I f=\langle f \rangle_I 1_I$ is the averaging operator, where $\langle f \rangle_I = \fint_{I} f = \frac{1}{|I|} \int _I f$.
\item $E_{I, k}f$ is defined via
$$
E_{I,k}f = \sum_{\substack{J \in \calD \\ J^{(k)}=I}}E_J f.
$$ 
\item $\Delta_If$ is the martingale difference $\Delta_I f= \sum_{J \in \ch (I)} E_{J} f - E_{I} f$.
\item $\Delta_{I,k} f$ is the martingale difference block
$$
\Delta_{I,k} f=\sum_{\substack{J \in \calD \\ J^{(k)}=I}} \Delta_{J} f.
$$
\item $P_{I,k}f$ is the following sum of martingale difference blocks
$$
P_{I,k}f = \sum_{j=0}^{k} \Delta_{I,j} f
=\sum_{\substack{J \in \calD \\ J \subset I \\ \ell(J) \ge 2^{-k}\ell(I)}} \Delta_{J} f.
$$
\end{enumerate}
A fundamental fact is that we have the square function estimate
\begin{equation}\label{eq:SF}
\|S_{\calD} f\|_{L^p} \sim \|f\|_{L^p}, \qquad p \in (1,\infty), \,\, S_{\calD}f := \Big( \sum_{I \in \calD} |\Delta_I f|^2 \Big)^{1/2}.
\end{equation}
See e.g. \cite{CMP, CWW} for even weighted $\|S_{\calD} f\|_{L^p(w)} \sim \|f\|_{L^p(w)}$, $w \in A_p$, square function estimates and their history.
A weight $w$ (i.e. a locally integrable a.e. positive function) belongs to the weight class $A_p(\R^d)$, $1 < p < \infty$, if
$$
[w]_{A_p(\R^d)} := \sup_{Q} \frac 1{|Q|}\int_Q w   \Bigg( \frac 1{|Q|}\int_Q w^{1-p'}\Bigg)^{p-1} < \infty,
$$
where the supremum is taken over all cubes $Q \subset \R^d$.

We will also have use for the Fefferman--Stein inequality
$$
\Big\| \Big( \sum_{k} |Mf_k|^2 \Big)^{1/2} \Big\|_{L^p} \lesssim \Big\| \Big( \sum_{k} |f_k|^2 \Big)^{1/2} \Big\|_{L^p}, \qquad p \in (1,\infty),
$$
where $M$ is the Hardy--Littlewood maximal function.
However, most of the time we can make do with the lighter Stein's inequality
$$
\Big\| \Big( \sum_{I \in \calD} |E_I f_I|^2 \Big)^{1/2} \Big\|_{L^p} \lesssim \Big\| \Big( \sum_{I \in \calD} |f_I|^2 \Big)^{1/2} \Big\|_{L^p}, \qquad p \in (1,\infty).
$$
The distinction is relevant, for example, in $\UMD$-valued analysis. We will introduce the required vector-valued machinery later.

For an interval $J \subset \R$ we denote by $J_{l}$ and $J_{r}$ the left and right
halves of $J$, respectively. We define $h_{J}^0 = |J|^{-1/2}1_{J}$ and $h_{J}^1 = |J|^{-1/2}(1_{J_{l}} - 1_{J_{r}})$.
Let now $I = I_1 \times \cdots \times I_d \subset \R^d$ be a cube, and define the Haar function $h_I^{\eta}$, $\eta = (\eta_1, \ldots, \eta_d) \in \{0,1\}^d$, by setting
\begin{displaymath}
h_I^{\eta} = h_{I_1}^{\eta_1} \otimes \cdots \otimes h_{I_d}^{\eta_d}.
\end{displaymath}
If $\eta \ne 0$ the Haar function is cancellative: $\int h_I^{\eta} = 0$. We exploit notation by suppressing the presence of $\eta$, and write $h_I$ for some $h_I^{\eta}$, $\eta \ne 0$. Notice that for $I \in \calD$ we have $\Delta_I f = \langle f, h_I \rangle h_I$ (where the finite $\eta$ summation is suppressed), $\langle f, h_I\rangle := \int fh_I$.

\section{One-parameter singular integrals}\label{sec:1par}

Let $\omega$ be a modulus of continuity: an increasing and subadditive function with $\omega(0) = 0$. 
A relevant quantity is
the modified Dini condition
\begin{equation}\label{eq:Dini}
\|\omega\|_{\operatorname{Dini}_{\alpha}} := \int_0^1 \omega(t) \Big( 1 + \log \frac{1}{t} \Big)^{\alpha} \frac{dt}{t}, \qquad \alpha \ge 0.
\end{equation}
In practice, the quantity \eqref{eq:Dini} arises as follows:
\begin{equation}\label{eq:diniuse}
\sum_{k=1}^{\infty} \omega(2^{-k}) k^{\alpha} = \sum_{k=1}^{\infty} \frac{1}{\log 2} \int_{2^{-k}}^{2^{-k+1}} \omega(2^{-k}) k^{\alpha} \frac{dt}{t} \lesssim \int_0^1 \omega(t) \Big( 1 + \log \frac{1}{t} \Big)^{\alpha} \frac{dt}{t}.
\end{equation}

For many standard arguments $\alpha = 0$ is enough. For the $T1$ type arguments we will -- at the minimum -- always need $\alpha = 1/2$. When we do $\UMD$-extensions beyond function lattices, we will need a bit higher $\alpha$ depending on the so-called type and cotype constants of the underlying $\UMD$ space $X$.
\subsection*{Multilinear singular integrals}
A function
$$
K \colon \R^{(n+1)d} \setminus \Delta \to \C, \qquad \Delta=\{x = (x_1, \dots, x_{n+1}) \in\R^{(n+1)d} \colon x_1=\dots =x_{n+1}\},
$$
is called an $n$-linear $\omega$-Calder\'on--Zygmund kernel if it holds that
$$
|K(x)| \le \frac{C_{K}}{\Big(\sum_{m=1}^{n} |x_{n+1}-x_m|\Big)^{dn}},
$$
and for all $j \in \{1, \dots, n+1\}$ it holds that
$$
|K(x)-K(x')| \le \omega\Big( \frac{|x_j-x_j'| }{ \sum_{m=1}^{n} |x_{n+1}-x_m|} \Big) 
\frac{1}{\Big(\sum_{m=1}^{n} |x_{n+1}-x_m|\Big)^{dn}}
$$
whenever $x=(x_1, \dots, x_{n+1}) \in\R^{(n+1)d} \setminus \Delta$ and 
$x'=(x_1, \dots, x_{j-1},x_j',x_{j+1},\dots x_{n+1}) \in\R^{(n+1)d}$ satisfy
$$
|x_j-x_j'| \le 2^{-1} \max_{1 \le m \le n} |x_{n+1}-x_m|.
$$

\begin{defn}
An $n$-linear operator $T$ defined on a suitable class of functions -- e.g. on the linear combinations of cubes --
is an $n$-linear $\omega$-SIO with an associated kernel $K$, if we have
$$
\langle T(f_1, \dots, f_n), f_{n+1} \rangle=
\int_{\R^{(n+1)d}}  K(x_{n+1},x_1, \dots, x_n)\prod_{j=1}^{n+1} f_j(x_j) \ud x
$$
whenever $\supp f_i \cap \supp f_j = \emptyset$ for some $i \not= j$.
\end{defn}

\begin{defn}
We say that $T$ is an $n$-linear $\omega$-CZO if the following conditions hold:
\begin{itemize}
\item $T$ is an $n$-linear $\omega$-SIO.
\item We have that
$$
\| T^{m*}(1, \ldots, 1) \|_{\BMO} := \sup_{\calD}
\sup_{I \in \calD} \Big( \frac{1}{|I|} \sum_{\substack{J \in \calD \\ J \subset I}} |\langle T^{m*}(1, \ldots, 1), h_J\rangle|^2 \Big)^{1/2} < \infty
$$
for all $m \in \{0, \ldots, n\}$. Here $T^{0*} := T$, $T^{m*}$ denotes the $m$th, $m \in \{1,\ldots,n\}$, adjoint
$$
\langle T(f_1, \dots, f_n), f_{n+1} \rangle
= \langle T^{m*}(f_1, \dots, f_{m-1}, f_{n+1}, f_{m+1}, \dots, f_n), f_m \rangle
$$
of $T$, and the pairings $\langle T^{m*}(1, \ldots, 1), h_J\rangle$ have a standard $T1$ type definition with the aid of the kernel $K$.
\item We have that
$$
\|T\|_{\WBP} := \sup_{\calD} \sup_{I \in \calD}
|I|^{-1} |\langle T(1_I, \ldots, 1_I), 1_I\rangle| < \infty.
$$
\end{itemize}
\end{defn}

\subsection*{Model operators}
Let $i=(i_1, \dots, i_{n+1})$, $i_j \in \{0,1,\ldots\}$, and let $\calD$ be a dyadic lattice in $\R^d$.
An operator $S_i$ is called an $n$-linear dyadic shift if it has the form
\begin{equation} \label{e:shift}
\langle S_i(f_1,\dots,f_n), f_{n+1}\rangle
=\sum_{K \in \calD} \langle A_K(f_1, \ldots, f_n), f_{n+1}\rangle,
\end{equation}
where
$$
\langle A_K(f_1, \ldots, f_n), f_{n+1}\rangle= 
\sum_{\substack{I_1, \dots, I_{n+1} \in \calD \\ I_j^{(i_j)}=K}}
a_{K,(I_j)}\prod_{j=1}^{n+1} \langle f_j, \wt h_{I_j} \rangle.
$$
Here $a_{K,(I_j)} = a_{K, I_1, \ldots ,I_{n+1}}$ is a scalar satisfying the normalization
$$
|a_{K,(I_j)}| \le \frac{\prod_{j=1}^{n+1} |I_j|^{1/2}}{|K|^{n}},
$$
and there exist two indices $j_0,j_1 \in \{1, \ldots, n+1\}$, $j_0 \not =j_1$, so that $\wt h_{I_{j_0}}=h_{I_{j_0}}$, $\wt h_{I_{j_1}}=h_{I_{j_1}}$ and for the remaining indices $j \not \in \{j_0, j_1\}$ we have $\wt h_{I_j} \in \{h_{I_j}^0, h_{I_j}\}$.

A modified $n$-linear shift $Q_k$, $k \in \{1, 2, \ldots\}$, has the form
\begin{equation*} 
\langle Q_k(f_1,\dots,f_n), f_{n+1} \rangle
=\sum_{K \in \calD} \langle B_K(f_1, \ldots, f_n), f_{n+1} \rangle,
\end{equation*}
where
\begin{equation}\label{e:modnshift}
\begin{split}
\langle B_K(f_1, \ldots, &f_n), f_{n+1} \rangle \\
&= 
\sum_{I_1^{(k)} =  \cdots = I_{n+1}^{(k)} =K} a_{K,(I_j)} \Big[ \prod_{j=1}^n \langle f_j, h_{I_j}^0 \rangle - 
\prod_{j=1}^n \langle f_j, h_{I_{n+1}}^0 \rangle \Big] \langle f_{n+1}, h_{I_{n+1}} \rangle,
\end{split}
\end{equation}
or $B_K$ has one of the other symmetric forms, where the role of $f_{n+1}$ is replaced by some other $f_j$. The coefficients satisfy
the same (but now $|I_1| = \ldots = |I_{n+1}|$) normalization 
$$
|a_{K,(I_j)}| \le \frac{|I_1|^{(n+1)/2}}{|K|^{n}}.
$$

An $n$-linear dyadic paraproduct $\pi = \pi_{\calD}$ also has $n+1$ possible forms, but there is no complexity associated
to them. One of the forms is
$$
\langle \pi(f_1,\ldots,f_n), f_{n+1} \rangle
=\sum_{I \in \calD} a_I \prod_{j=1}^n \langle f_j \rangle_I \langle f_{n+1}, h_I \rangle,
$$
where the coefficients satisfy the usual BMO condition
\begin{equation}\label{eq:BMO}
\sup_{I_0 \in \calD} \Big( \frac{1}{|I_0|} \sum_{I \subset I_0} |a_I|^2 \Big)^{1/2} \le 1.
\end{equation}
In the remaining $n$ alternative forms the cancellative Haar function $h_I$ is in a different position.

When we represent a CZO we will have modified dyadic shifts $Q_k$, standard dyadic shifts
of the very special form $S_{k, \ldots, k}$ and paraproducts $\pi$. Dyadic shifts $S_{k, \ldots, k}$ are simply easier versions of the operators $Q_k$.
Paraproducts do not involve a complexity parameter and are thus inherently not even relevant for the kernel regularity considerations
(we just need their boundedness).

At least in the linear situation, we can easily unify the study of shifts $S_{k, \ldots, k}$ and modified shifts $Q_k$. This viewpoint could work in the multilinear generality also (with some tensor product formalism), but we did not pursue it. We can understand a modified linear shift to have the more general form 
$Q_k$, $k = 0, 1, \ldots$, where
\begin{equation}\label{eq:Q_kFORM1}
\langle Q_k f, g \rangle = \sum_{K \in \calD} \sum_{I^{(k)} = J^{(k)} = K} a_{IJK} \langle f, h_I \rangle \langle g, H_{I, J} \rangle
\end{equation}
or
\begin{equation}\label{eq:Q_kFORM2}
\langle Q_k f, g \rangle = \sum_{K \in \calD} \sum_{I^{(k)} = J^{(k)} = K} a_{IJK} \langle f, H_{I,J} \rangle \langle g, h_J \rangle,
\end{equation}
the constants $a_{IJK}$ satisfy the usual normalization and
and the functions $H_{I,J}$ satisfy
\begin{enumerate}
\item $H_{I,J}$ is supported on $I \cup J$ and constant on the children of $I$ and $J$, i.e., we have
$$
H_{I,J} = \sum_{L \in \ch(I) \cup \ch(J)} b_L 1_L,\,\, b_L \in \R,
$$
\item $|H_{I,J}| \le |I|^{-1/2}$ and
\item $\int H_{I,J} = 0$.
\end{enumerate}
In practice we have $H_{I,J} \in \{h_J^0 - h_I^0, h_I^0 - h_J^0, h_I, h_J\}$,
but this abstract form contains enough information to bound the operators.

An important property of the functions $H_{I,J}$ is the following. Let $I^{(k)} = J^{(k)} = K$. Since $H_{I,J}$ is constant on the children of $I$ and $J$
there holds that $\langle g, H_{I,J} \rangle=\langle E_{K,k+1}g, H_{I,J} \rangle$. We have the expansion
$$
E_{K,k+1}g= E_K g
+P_{K,k} g
$$ 
and the zero average of $H_{I,J}$ over $K$ implies that $\langle E_K g,H_{I,J} \rangle =0$. 
Thus, we have the key property
\begin{equation}\label{eq:gAndH}
\langle g, H_{I,J} \rangle=\langle P_{K,k}g, H_{I,J} \rangle.
\end{equation}
Also, there clearly holds that $\langle f, h_I \rangle= \langle\Delta_{K,k}f, h_I \rangle$. Analogous steps will be taken in the general
multilinear situation as well, even though the functions $H$ do not explicitly appear.

\subsection{The threshold $\alpha=1/2$}\label{sec:scalarval}
We quickly explain the role of the regularity threshold $\alpha = 1/2$, which appears naturally in the fundamental scalar-valued theory.
The estimate of the next lemma and identities like \eqref{eq:gAndH} are at the heart of the matter.
\begin{lem}\label{lem:PEst}
Let $p \in (1, \infty)$. There holds that 
$$
\Big\| \Big( \sum_{K \in \calD} |P_{K,k}f|^2 \Big)^{1/2}  \Big\|_{L^p} \sim \sqrt{k+1} \| f \|_{L^p}, \qquad k \in \{0,1,2, \ldots\}.
$$
\end{lem}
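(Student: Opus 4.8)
The plan is to prove the two inequalities separately, treating $\lesssim$ and $\gtrsim$, and in both cases to reduce the $\ell^2$-valued square function to the scalar square function estimate \eqref{eq:SF} by exploiting that $P_{K,k}$ is a sum of at most $k+1$ martingale difference blocks and that the blocks associated to different $K$ are ``orthogonal'' in the sense of acting on disjoint families of Haar coefficients. First I would record the pointwise identity
$$
\sum_{K \in \calD} |P_{K,k}f|^2 = \sum_{K \in \calD} \Big| \sum_{j=0}^{k} \Delta_{K,j} f \Big|^2,
$$
and note that since $P_{K,k}f = \sum_{J \subset K,\, \ell(J) \ge 2^{-k}\ell(K)} \Delta_J f$, the functions $\{P_{K,k}f\}_{K \in \calD}$ group the martingale differences $\Delta_J f$ into blocks indexed by the $k$th ancestor $J^{(k)} = K$, so that each $\Delta_J f$ appears in exactly one block.

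For the upper bound $\lesssim \sqrt{k+1}\,\|f\|_{L^p}$, I would write $P_{K,k} = \sum_{j=0}^{k} \Delta_{K,j}$ and use the triangle inequality in $\ell^2$ (over $K$) together with Minkowski's inequality in $L^p(\ell^2)$ to get
$$
\Big\| \Big( \sum_{K} |P_{K,k}f|^2 \Big)^{1/2} \Big\|_{L^p} \le \sum_{j=0}^{k} \Big\| \Big( \sum_{K} |\Delta_{K,j}f|^2 \Big)^{1/2} \Big\|_{L^p}.
$$
For each fixed $j$, the blocks $\Delta_{K,j}f = \sum_{J^{(j)}=K} \Delta_J f$ have pairwise disjoint Haar supports as $K$ ranges over $\calD$, so $\big(\sum_K |\Delta_{K,j}f|^2\big)^{1/2} \le S_{\calD}f$ pointwise (indeed, by Khintchine/orthogonality one even has that this quantity is comparable to, hence bounded by, $S_\calD f$ in every $L^p$-norm); by \eqref{eq:SF} each summand is $\lesssim \|f\|_{L^p}$, and there are $k+1$ of them. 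Here one must be slightly careful: a naive triangle inequality gives only the factor $(k+1)$, not $\sqrt{k+1}$. To get the sharp $\sqrt{k+1}$ I would instead use Cauchy--Schwarz inside the square, $|P_{K,k}f|^2 = \big|\sum_{j=0}^k \Delta_{K,j}f\big|^2 \le (k+1)\sum_{j=0}^k |\Delta_{K,j}f|^2$, so that
$$
\sum_{K} |P_{K,k}f|^2 \le (k+1) \sum_{j=0}^{k} \sum_{K} |\Delta_{K,j}f|^2 = (k+1) \sum_{j=0}^{k} \sum_{J} |\Delta_J f|^2 = (k+1)^2 (S_{\calD}f)^2,
$$
wait --- this also only gives $(k+1)$ after taking square roots and $L^p$ norms. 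The genuinely sharp route is to observe that $\sum_K |P_{K,k}f|^2$ and $(k+1)(S_\calD f)^2$ are comparable \emph{in $L^{p/2}$ after taking expectations over the Haar signs}, or more elementarily: apply \eqref{eq:SF} to each $P_{\cdot,k}$-block-grouped piece and sum the \emph{squares} of the $L^p$ norms only in the Hilbert-space case $p=2$; for general $p$ one uses that $\big(\sum_{K}|P_{K,k}f|^2\big)^{1/2}$ is itself a martingale square function for the ``coarsened'' filtration obtained by grouping $k$ consecutive generations, and vector-valued/extrapolation arguments give the $\sqrt{k+1}$. I would present this via the clean identity $\big(\sum_K |P_{K,k}f|^2\big)^{1/2} = \big(\sum_{j=0}^k \sum_K |\Delta_{K,j}f|^2\big)^{1/2}$ is false in general --- the cross terms do not vanish pointwise --- so the honest argument must go through $L^p$ boundedness of a Haar multiplier / square function, and that is where the main work lies.

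For the lower bound $\gtrsim \sqrt{k+1}\,\|f\|_{L^p}$, I would test on lacunary-type functions or, cleaner, use duality: $\|f\|_{L^p} = \|S_\calD f\|_{L^p}^{\mathrm{(up\ to\ const)}}$, and build $f$ (or use a randomization/averaging over $\calD$) so that the $k+1$ generations inside each block add up coherently; the model computation is $f = \sum_{J} \epsilon_J \langle f,h_J\rangle h_J$ with signs chosen adversarially, showing $\sup \| (\sum_K |P_{K,k}f|^2)^{1/2}\|_{L^p} \gtrsim \sqrt{k+1}\|f\|_{L^p}$. The main obstacle throughout is precisely the passage from the trivial bounds (factor $k+1$ from Minkowski, or factor $1$ if one forgets the cross terms) to the sharp $\sqrt{k+1}$; I expect to handle it by interpreting $\big(\sum_K |P_{K,k}f|^2\big)^{1/2}$ as the square function of $f$ with respect to the sparsified filtration $(\calF_{(k+1)m})_m$ whose martingale differences are the blocks $P_{\cdot,k}$ applied generation-wise, invoking \eqref{eq:SF} for that filtration to get $\|f\|_{L^p}$ from $k+1$ such blocks, and a John--Nirenberg / square-function iteration to produce the $\sqrt{k+1}$ in both directions; the Fefferman--Stein and Stein inequalities quoted just before the lemma are exactly the tools that make the vector-valued manipulations legitimate.
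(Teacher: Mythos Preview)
Your proposal correctly identifies the central obstacle --- the cross terms in $|P_{K,k}f|^2=\big|\sum_{j=0}^k\Delta_{K,j}f\big|^2$ do not vanish pointwise, so neither the triangle inequality nor Cauchy--Schwarz inside the square gives the sharp $\sqrt{k+1}$ --- and you gesture at the right tools (randomization, extrapolation, vector-valued square functions). But you never land on a workable argument; the text is a sequence of attempts that you yourself discard, followed by vague pointers. The missing concrete step is the $\ell^2$-valued square function equivalence
\[
\Big\|\Big(\sum_i\sum_{I\in\calD}|\Delta_I f_i|^2\Big)^{1/2}\Big\|_{L^p}\sim\Big\|\Big(\sum_i|f_i|^2\Big)^{1/2}\Big\|_{L^p},
\]
which follows from Kahane--Khintchine (or from extrapolating the weighted $L^2$ identity $\|S_\calD f\|_{L^2(w)}\sim\|f\|_{L^2(w)}$). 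Apply this with $(f_i)=(P_{K,k}f)_{K\in\calD}$ and use the pointwise identity $\sum_{I}|\Delta_I P_{K,k}f|^2=\sum_{j=0}^k|\Delta_{K,j}f|^2$ (each $\Delta_I$ either kills $P_{K,k}f$ or returns one $\Delta_I f$); then $\sum_K\sum_{j=0}^k|\Delta_{K,j}f|^2=(k+1)(S_\calD f)^2$ pointwise, since the $J$ with $J^{(j)}=K$ are disjoint for fixed $j$, and every $I\in\calD$ is counted exactly $k+1$ times. This yields both directions of $\sim$ at once --- there is no separate lower-bound argument needed. This is exactly the paper's proof.

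Two further issues. First, your lower-bound plan is aimed at the wrong target: ``testing on lacunary $f$'' or ``choosing signs adversarially'' would show that the operator norm of $f\mapsto(\sum_K|P_{K,k}f|^2)^{1/2}$ is at least $c\sqrt{k+1}$, but the lemma asserts $\gtrsim\sqrt{k+1}\,\|f\|_{L^p}$ for \emph{every} $f$, which is a different (stronger) statement and cannot be obtained by exhibiting a single extremizer. Second, your sparsified-filtration idea is essentially correct --- $P_{K,k}f=E_{K,k+1}f-E_Kf$ really is the martingale difference for the filtration that keeps every $(k+1)$-th generation, so for each of the $k+1$ subgrids $\calD_{k,l}$ one has $\big\|(\sum_{K\in\calD_{k,l}}|P_{K,k}f|^2)^{1/2}\big\|_{L^p}\sim\|f\|_{L^p}$ --- but summing these $k+1$ estimates only combines cleanly at $p=2$, and you would still need the extrapolation/vector-valued step to reach general $p$, which brings you back to the estimate above.
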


\begin{proof}
If $f_i \in L^p$ then
\begin{equation}\label{eq:SeqSF}
\Big \| \Big( \sum_{i=0}^\infty \sum_{I \in \calD} |\Delta_I f_i |^2 \Big)^{1/2} \Big\|_{L^p}
\sim \Big \| \Big( \sum_{i=0}^\infty | f_i |^2 \Big)^{1/2} \Big\|_{L^p}.
\end{equation}
This can be proved by using random signs and the Kahane-Khinchine inequality (scalar- and $\ell^2$-valued)
or by  extrapolating the corresponding weighted $L^2$ version of \eqref{eq:SeqSF}, which just follows from $\|S_{\calD} f\|_{L^2(w)} \sim \|f\|_{L^2(w)}$, $w \in A_2$.
Recall that the classical extrapolation theorem of Rubio de Francia says that if $\|h\|_{L^{p_0}(w)} \lesssim \|g\|_{L^{p_0}(w)}$ for some
$p_0 \in (1,\infty)$ and all $w \in A_{p_0}$, then $\|h\|_{L^{p}(w)} \lesssim \|g\|_{L^{p}(w)}$ for all $p \in (1,\infty)$ and all $w \in A_{p}$.

Let $K \in \calD$. We have that
$$
\sum_{I \in \calD} | \Delta_I P_{K,k} f |^2
= \sum_{j=0}^k |\Delta_{K,j} f |^2.
$$
Thus, \eqref{eq:SeqSF} gives that
\begin{equation*}
\begin{split}
\Big\| \Big( \sum_{K \in \calD} |P_{K,k}f|^2 \Big)^{1/2}  \Big\|_{L^p}
& \sim \Big\| \Big( \sum_{K \in \calD} \sum_{j=0}^k |\Delta_{K,j} f |^2 \Big)^{1/2}  \Big\|_{L^p} \\
&=\Big\| \Big( \sum_{j=0}^k \sum_{I \in \calD}  |\Delta_{I} f |^2 \Big)^{1/2}  \Big\|_{L^p} 
\sim \sqrt{k+1} \| f \|_{L^p}.
\end{split}
\end{equation*}
\end{proof}
\begin{prop}\label{prop:Qscalar}
Suppose that $Q_k$ is an $n$-linear modified shift. Let $1 < p_j < \infty$ 
with $\sum_{j=1}^{n+1} 1/p_j = 1$.
Then we have
\begin{equation}\label{eq:eq2}
|\langle Q_k(f_1, \ldots, f_n), f_{n+1}\rangle| \lesssim (k+1)^{1/2} \prod_{j=1}^{n+1} \| f_j \|_{L^{p_j}}.
\end{equation}
\end{prop}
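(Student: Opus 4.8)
The plan is to turn $Q_k$, after two elementary manipulations, into a dyadic model operator of shift-and-paraproduct type acting on a martingale-block-modified tuple, and to extract the whole factor $(k+1)^{1/2}$ from a single use of Lemma~\ref{lem:PEst}. By the symmetry mentioned after \eqref{e:modnshift} we may assume $B_K$ is as displayed there. First I would apply the telescoping identity
\[
\prod_{j=1}^n\langle f_j,h_{I_j}^0\rangle-\prod_{j=1}^n\langle f_j,h_{I_{n+1}}^0\rangle
=\sum_{l=1}^n\Big(\prod_{j<l}\langle f_j,h_{I_j}^0\rangle\Big)\big(\langle f_l,h_{I_l}^0\rangle-\langle f_l,h_{I_{n+1}}^0\rangle\big)\Big(\prod_{j>l}\langle f_j,h_{I_{n+1}}^0\rangle\Big),
\]
which writes $Q_k=\sum_{l=1}^n Q_k^{(l)}$, so that it suffices to bound each $Q_k^{(l)}$ by $(k+1)^{1/2}\prod_{j=1}^{n+1}\|f_j\|_{L^{p_j}}$.

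Fix $l$. Since $I_l^{(k)}=I_{n+1}^{(k)}=K$ and $P_{K,k-1}g$ is the constant $\langle g\rangle_I-\langle g\rangle_K$ on each level-$k$ subcube $I$ of $K$, we get the multilinear counterpart of \eqref{eq:gAndH},
\[
\langle f_l,h_{I_l}^0\rangle-\langle f_l,h_{I_{n+1}}^0\rangle
=\langle P_{K,k-1}f_l,h_{I_l}^0\rangle-\langle P_{K,k-1}f_l,h_{I_{n+1}}^0\rangle .
\]
Substituting this and splitting into the two resulting terms, $\langle Q_k^{(l)}(f_1,\dots,f_n),f_{n+1}\rangle$ becomes a sum of two expressions, each a dyadic model operator of shift-and-paraproduct type — the cancellative Haar function on the $f_{n+1}$-slot, averages on the remaining slots, and the cubes $I_{l+1},\dots,I_n$ entering only through $a_{K,(I_j)}$ — applied to the $n$-tuple obtained from $(f_1,\dots,f_n)$ by replacing $f_l$ with the martingale block $P_{K,k-1}f_l$ in each $K$-summand. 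It then remains to run the routine, $k$-uniform estimate for such an object, the only new feature being that slot $l$ now carries the $K$-indexed sequence $(P_{K,k-1}f_l)_K$ rather than a fixed function.

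I would run the standard square-function/maximal-function argument for this model operator, keeping the $\ell^2$-sum in $K$ visible throughout. Passing absolute values inside, the normalization $|a_{K,(I_j)}|\le |I|^{(n+1)/2}|K|^{-n}$ with $|I|:=|I_j|=2^{-kd}|K|$ exactly cancels the powers of $2^{kd}$ produced by the independent cube-summations (including the trivial summations over $I_{l+1},\dots,I_n$); the non-cancellative coefficients $|\langle f_j\rangle_I|$ are controlled by $M_{\calD}f_j$, the cancellative one on the $f_{n+1}$-slot by the local $L^2$-mass of $\Delta_{K,k}f_{n+1}$, and the slot-$l$ coefficient by the local $L^2$-mass of $P_{K,k-1}f_l$, the Cauchy--Schwarz in the cube variables being arranged so that the cube carrying $P_{K,k-1}f_l$ stays tied to its $K$ (so that $\sum_K|P_{K,k-1}f_l|^2$ assembles as one function). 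Hölder with the exponents $p_1,\dots,p_{n+1}$, together with \eqref{eq:SF} for the $f_{n+1}$-slot, the $L^{p_j}$-boundedness of the maximal function (and the Fefferman--Stein / Stein inequality where $\ell^2$-sums of averages occur) for the remaining $n-1$ slots, and the square function estimate for slot $l$, then gives
\[
|\langle Q_k^{(l)}(f_1,\dots,f_n),f_{n+1}\rangle|\lesssim \Big\|\Big(\sum_{K\in\calD}|P_{K,k-1}f_l|^2\Big)^{1/2}\Big\|_{L^{p_l}}\ \prod_{j\ne l}\|f_j\|_{L^{p_j}} .
\]
Finally Lemma~\ref{lem:PEst} turns the slot-$l$ factor into $k^{1/2}\|f_l\|_{L^{p_l}}$, and summing the two sub-terms and the $n$ pieces $Q_k^{(l)}$ yields \eqref{eq:eq2}. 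The main obstacle is precisely this last step: organizing the Cauchy--Schwarz and Hölder applications so that the per-$K$ block $P_{K,k-1}f_l$ is measured exactly by its $\ell^2(K)$-square function — hence contributes a single $(k+1)^{1/2}$ via Lemma~\ref{lem:PEst} — while the cancellative slot and the $n-1$ averaging slots are estimated with no loss in $k$; this scalar $n$-linear case is the clean prototype for the more delicate vector-valued and bi-parameter versions needed later.
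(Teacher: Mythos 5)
Your proof is correct, and the overall plan — isolate exactly one $P_{K,k-1}f_l$ per term, bound the non-cancellative slots by $Mf_j$ and the $f_{n+1}$-slot by $\big(\sum_K|\Delta_{K,k}f_{n+1}|^2\big)^{1/2}$, and then extract $(k+1)^{1/2}$ from Lemma~\ref{lem:PEst} together with the Fefferman--Stein inequality — coincides with the paper's. The only real difference is the initial telescoping. You split the difference $\prod_j\langle f_j,h_{I_j}^0\rangle-\prod_j\langle f_j,h_{I_{n+1}}^0\rangle$ directly into $n$ pieces of the form $\big(\prod_{j<l}\langle f_j,h_{I_j}^0\rangle\big)\big(\langle P_{K,k-1}f_l,h_{I_l}^0\rangle-\langle P_{K,k-1}f_l,h_{I_{n+1}}^0\rangle\big)\big(\prod_{j>l}\langle f_j,h_{I_{n+1}}^0\rangle\big)$, after noting that the $E_K$ part of $\langle f_l,h_{I}^0\rangle$ drops out of the difference because $|I_l|=|I_{n+1}|$. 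The paper instead expands each of the two products separately via $\langle f_j,h_{I_j}^0\rangle=\langle E_Kf_j,h_{I_j}^0\rangle+\langle P_{K,k-1}f_j,h_{I_j}^0\rangle$ (and its $I_{n+1}$-analogue), telescopes each expansion to produce a pure-$E_K$ remainder $|I_{n+1}|^{n/2}\prod_j\langle f_j\rangle_K$, and cancels the two remainders against each other. Both routes yield $2n$ terms of equal strength (one $P_{K,k-1}$ factor each), and the normalization bookkeeping and the final maximal/square-function estimates are the same. Your telescoping is marginally more economical for this scalar estimate; the paper's $E_K+P_{K,k-1}$ expansion has the advantage of being the one reused (in refined form) in Lemma~\ref{lem:CompModStand} and in the $\UMD$-valued arguments, which is why it is introduced here.
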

\begin{proof}
We may assume that $Q_k$ has the form \eqref{e:modnshift}. Notice that if $I^{(k)} = K$ then we have
$$
\langle f, h_I^0 \rangle = \langle E_{K, k} f, h_I^0 \rangle = \langle E_K f + P_{K, k-1} f, h_I^0 \rangle. 
$$
Using this we have for $I_1^{(k)} =  \cdots = I_{n+1}^{(k)} =K$ that
\begin{align*}
\prod_{j=1}^n \langle f_j, h_{I_j}^0 \rangle = \langle P_{K, k-1} f_1, &h_{I_1}^0 \rangle \prod_{j=2}^n \langle f_j, h_{I_j}^0 \rangle
+ \langle E_K f_1, h_{I_1}^0 \rangle \langle P_{K, k-1} f_2, h_{I_2}^0 \rangle \prod_{j=3}^n \langle f_j, h_{I_j}^0 \rangle \\
&+\cdots +   \prod_{j=1}^{n-1} \langle E_K f_j, h_{I_j}^0 \rangle  \langle P_{K, k-1} f_{n}, h_{I_n}^0 \rangle + |I_{n+1}|^{n/2} \prod_{j=1}^{n} \langle f_j \rangle_K
\end{align*}
and
\begin{align*}
\prod_{j=1}^n \langle f_j, h_{I_{n+1}}^0 \rangle =& \langle P_{K, k-1} f_1, h_{I_{n+1}}^0 \rangle \prod_{j=2}^n \langle f_j, h_{I_{n+1}}^0 \rangle \\
&+ \langle E_K f_1, h_{I_{n+1}}^0 \rangle \langle P_{K, k-1} f_2, h_{I_{n+1}}^0 \rangle \prod_{j=3}^n \langle f_j, h_{I_{n+1}}^0 \rangle \\
&+\cdots +   \prod_{j=1}^{n-1} \langle E_K f_j, h_{I_{n+1}}^0 \rangle  \langle P_{K, k-1} f_{n}, h_{I_{n+1}}^0 \rangle + |I_{n+1}|^{n/2} \prod_{j=1}^{n} \langle f_j \rangle_K.
\end{align*}
We see that the last terms of these expansions cancel out in the difference $\prod_{j=1}^n \langle f_j, h_{I_j}^0 \rangle  - \prod_{j=1}^n \langle f_j, h_{I_{n+1}}^0 \rangle$.
It remains to estimate the others terms one by one. We pick the concrete (but completely representative) term $\langle E_K f_1, h_{I_1}^0 \rangle \langle P_{K, k-1} f_2, h_{I_2}^0 \rangle \prod_{j=3}^n \langle f_j, h_{I_j}^0 \rangle$ from the expansion of $\prod_{j=1}^n \langle f_j, h_{I_j}^0 \rangle$ and look at
\begin{equation}\label{eq:StaCom1}
\begin{split}
&\sum_K \sum_{I_1^{(k)} =  \cdots = I_{n+1}^{(k)} =K} \Big|a_{K,(I_j)} \langle E_K f_1, h_{I_1}^0 \rangle \langle P_{K, k-1} f_2, h_{I_2}^0 \rangle \prod_{j=3}^n \langle f_j, h_{I_j}^0 \rangle  \langle f_{n+1}, h_{I_{n+1}} \rangle\Big| \\
&\le \sum_K \sum_{I_1^{(k)} =  \cdots = I_{n+1}^{(k)} =K} \frac{1}{|K|^n} \int_{I_1} |E_K f_1|  \int_{I_2} |P_{K, k-1} f_2|  \prod_{j=3}^n \int_{I_j} |f_j|
\int_{I_{n+1}} |\Delta_{K,k} f_{n+1}| \\
&\le \sum_K \int_K \prod_{\substack{j=1 \\ j \ne 2}}^n \langle |f_j| \rangle_K \langle |P_{K, k-1} f_2| \rangle_K  |\Delta_{K,k} f_{n+1}| \\
&\le \int \prod_{\substack{j=1 \\ j \ne 2}}^n Mf_j \Big( \sum_K |M P_{K, k-1} f_2|^2 \Big)^{1/2} \Big(\sum_K |\Delta_{K,k} f_{n+1}|^2 \Big)^{1/2}.
\end{split}
\end{equation}
It remains to use H\"older's inequality, maximal function and square function estimates and Lemma \ref{lem:PEst}. We remark that
the estimate for $f_{n+1}$ is, indeed, just the usual square function estimate, since
$$
\sum_K |\Delta_{K,k} f_{n+1}|^2= \sum_I |\Delta_I f_{n+1}|^2.
$$

We now pick the corresponding term $\langle E_K f_1, h_{I_{n+1}}^0 \rangle \langle P_{K, k-1} f_2, h_{I_{n+1}}^0 \rangle \prod_{j=3}^n \langle f_j, h_{I_{n+1}}^0 \rangle$ from the expansion of $\prod_{j=1}^n \langle f_j, h_{I_{n+1}}^0 \rangle$ and look at
\begin{equation*}
\begin{split}
&\sum_K \sum_{I_1^{(k)} =  \cdots = I_{n+1}^{(k)} =K} \Big|a_{K,(I_j)} \langle E_K f_1, h_{I_{n+1}}^0 \rangle \langle P_{K, k-1} f_2, h_{I_{n+1}}^0 \rangle \prod_{j=3}^n \langle f_j, h_{I_{n+1}}^0 \rangle \langle f_{n+1}, h_{I_{n+1}} \rangle\Big| \\
&\le \sum_K \sum_{I_1^{(k)} =  \cdots = I_{n+1}^{(k)} =K} \frac{1}{|K|^n} \int_{I_{n+1}} |E_K f_1|  \int_{I_{n+1}} |P_{K, k-1} f_2|  \prod_{j=3}^n \int_{I_{n+1}} |f_j|
\int_{I_{n+1}} |\Delta_{K,k} f_{n+1}| .
\end{split}
\end{equation*}
Notice that
$$
\sum_{I_j^{(k)} = K} 1 =  \frac{1}{|I_{n+1}|} \sum_{I_j^{(k)} = K} |I_j| = \frac{|K|}{|I_{n+1}|}.
$$
We are thus left with
\begin{equation}\label{eq:StaCom2}
\begin{split}
& \sum_K \langle |f_1| \rangle_{K} \sum_{I_{n+1}^{(k)} = K} \int_{I_{n+1}}  \langle |P_{K, k-1} f_2|\rangle_{I_{n+1}}   \prod_{j=3}^n \langle |f_j| \rangle_{I_{n+1}}
|\Delta_{K,k} f_{n+1}| \\
&\le \sum_K \langle |f_1| \rangle_{K} \int_{K}  M P_{K, k-1} f_2 \prod_{j=3}^n M f_j \cdot
|\Delta_{K,k} f_{n+1}| \\
&\le \int \prod_{\substack{j=1 \\ j \ne 2}}^n Mf_j \Big( \sum_K |M P_{K, k-1} f_2|^2 \Big)^{1/2} \Big(\sum_K |\Delta_{K,k} f_{n+1}|^2 \Big)^{1/2}.
\end{split}
\end{equation}
This is the same upper bound as in the first case, and thus handled with in the same way. We are done.
\end{proof}
\begin{rem}
Proposition \ref{prop:Qscalar} considers only the Banach range boundedness of $Q_k$.
We can, in any case, upgrade the boundedness to the full range with standard methods when we consider CZOs.
This issue is discussed multiple times in what follows.
\end{rem}
Our representation of $T$ will involve $\sum_{k=0}^{\infty} \omega(2^{-k}) Q_k(f_1, \ldots, f_n)$, and thus by \eqref{eq:diniuse}
and Proposition \ref{prop:Qscalar} we will always need $\operatorname{Dini}_{1/2}$. The above proof readily generalises to so-called $\UMD$ function lattices.
In Section \ref{sec:UMD} we tackle the much deeper case of general $\UMD$ spaces.

\subsection*{Modified shifts are sums of standard shifts}
The standard linear shifts satisfy the \emph{complexity free} bound
$$
\|S_{i_1,i_2} f\|_{L^p} \lesssim \|f\|_{L^p}, \qquad p \in (1,\infty).
$$
Similar estimates hold in the multilinear generality -- for example, the following complexity free bilinear estimate is true
\[
\begin{split}
&  \quad \| S_{i_1,i_2,i_3}(f_1,f_2) \|_{L^{q_{3}}}
\lesssim   \| f_1\|_{L^{p_1}}\| f_2\|_{L^{p_2}},\\ &\forall 1<p_1,p_2\leq \infty, \quad \textstyle \frac{1}{2}<q_3<\infty, \quad  \frac{1}{p_1}+ \frac{1}{p_2}= \frac{1}{q_3}.
\end{split}
\]
Here we stated the estimates in the full range.
The quasi-Banach estimates cannot be proved via weak $(1,1)$ type estimates (or sparse domination), as those estimates are not complexity free. If desired, they can be obtained
with direct $L^p$ estimates or by extrapolation \cite{GM}.
In any case, we need roughly $nk$ shifts to represent an $n$-linear modified shift $Q_k$ as a sum of standard shifts, see Lemma \ref{lem:CompModStand} below. Therefore, these estimates lose to estimates like \eqref{eq:eq2}, and would lead to  $\operatorname{Dini}_{1}$.

On the other hand, more involved estimates,
such as some commutator estimates,
can be difficult to carry out directly with the operators $Q_k$. Bounds via the
route of representing $Q_k$ using ordinary shifts still lead to the modified Dini condition \eqref{eq:Dini} with some $\alpha$,
and this is still quite efficient. Therefore, Lemma \ref{lem:CompModStand} is of practical and philosophical use, but should not be resorted to
when more efficient estimates can be obtained by the direct study of the operators $Q_k$.

\begin{lem}\label{lem:CompModStand}
Let $Q_k$, $k \in \{1,2, \ldots\}$, be a modified $n$-linear shift of the form \eqref{e:modnshift}. Then for some $C\lesssim 1$ we have
$$
Q_k = \sum_{m=1}^n \sum_{i=0}^{k-1} S_{0, \ldots, 0, i, k, \ldots, k} -  C\sum_{m=1}^n \sum_{i=0}^{k-1} S_{0, \ldots, 0, 1, \ldots, 1, i},
$$
where in the first sum there are $m-1$ zeroes and in the second sum $m$ zeroes in the complexity of the shift.
\end{lem}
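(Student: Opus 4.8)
The plan is to reduce everything, via two elementary identities, to a careful dyadic bookkeeping. We may assume $Q_k$ has the form \eqref{e:modnshift}; the other symmetric forms are handled identically after permuting the roles of the $f_j$'s. For any $I$ with $I^{(k)}=K$ and any $g$ we will use the averaging split $\langle g,h_I^0\rangle=|I|^{1/2}\langle g\rangle_K+\langle P_{K,k-1}g,h_I^0\rangle$ together with the Haar expansion of the remainder, $\langle P_{K,k-1}g,h_I^0\rangle=\sum_{i=1}^{k}\langle h_{I^{(i)}},h_I^0\rangle\langle g,h_{I^{(i)}}\rangle$, whose summands are genuinely cancellative, sit at the ancestors $I^{(i)}$ (which satisfy $(I^{(i)})^{(k-i)}=K$, hence have complexity $k-i\in\{0,\dots,k-1\}$ with respect to $K$), and whose coefficients are of size $\le(|I|/|I^{(i)}|)^{1/2}\le 1$.

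The first step is a cancellation. Since $I_1^{(k)}=\dots=I_{n+1}^{(k)}=K$ forces a common side length $|I_1|=\dots=|I_{n+1}|$, the ``fully averaged'' terms $\prod_{j=1}^n|I_j|^{1/2}\langle f_j\rangle_K$ inside $\prod_j\langle f_j,h_{I_j}^0\rangle$ and inside $\prod_j\langle f_j,h_{I_{n+1}}^0\rangle$ coincide and vanish in the difference. Applying the one-step telescoping $\prod_j(e_j+p_j)=\prod_j e_j+\sum_{m=1}^n(\prod_{j<m}e_j)\,p_m\,(\prod_{j>m}(e_j+p_j))$ to both products and subtracting, the bracket in \eqref{e:modnshift} becomes $\sum_{m=1}^n(\prod_{j<m}e_j)\big[\,p_m^{(I_m)}\prod_{j>m}\langle f_j,h_{I_j}^0\rangle-q_m^{(I_{n+1})}\prod_{j>m}\langle f_j,h_{I_{n+1}}^0\rangle\,\big]$, where $e_j=|I_j|^{1/2}\langle f_j\rangle_K$, $p_m^{(I_m)}=\langle P_{K,k-1}f_m,h_{I_m}^0\rangle$ and $q_m^{(I_{n+1})}=\langle P_{K,k-1}f_m,h_{I_{n+1}}^0\rangle$. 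The key arithmetic point is that each averaged factor $e_j$ carries the constant $|I_j|^{1/2}|K|^{-1/2}=(|I_j|/|K|)^{1/2}$: when a slot $j<m$ is re-expressed through $\langle f_j,h_K^0\rangle$, i.e.\ as a complexity-$0$ slot, this constant exactly compensates the $|K|/|I_j|$-fold sum over the now-unused cube $I_j$ still sitting inside the coefficient $a_{K,(I_\ell)}$, so that the new coefficient continues to obey the standard-shift normalization $|a|\le\prod_\ell|I_\ell|^{1/2}/|K|^n$. (Similarly, a slot $j>m$ paired with $h_{I_{n+1}}^0$ is kept as a free cube $I_j$ constrained by the coefficient to equal $I_{n+1}$, which does not inflate the coefficient and gives complexity $k$.)

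Next I would isolate a single remainder in the $m$-th term by writing $p_m^{(I_m)}\prod_{j>m}\langle f_j,h_{I_j}^0\rangle-q_m^{(I_{n+1})}\prod_{j>m}\langle f_j,h_{I_{n+1}}^0\rangle = p_m^{(I_m)}\big[\prod_{j>m}\langle f_j,h_{I_j}^0\rangle-\prod_{j>m}\langle f_j,h_{I_{n+1}}^0\rangle\big]+\big[p_m^{(I_m)}-q_m^{(I_{n+1})}\big]\prod_{j>m}\langle f_j,h_{I_{n+1}}^0\rangle$, treating the ``clean'' second summand directly and the first by further telescoping arranged so that multi-remainder contributions do not proliferate. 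For the clean summand, expand $p_m^{(I_m)}-q_m^{(I_{n+1})}=\sum_{i=1}^{k}\big[\langle h_{I_m^{(i)}},h_{I_m}^0\rangle\langle f_m,h_{I_m^{(i)}}\rangle-\langle h_{I_{n+1}^{(i)}},h_{I_{n+1}}^0\rangle\langle f_m,h_{I_{n+1}^{(i)}}\rangle\big]$. For the $h_{I_m^{(i)}}$-piece, relabel $L:=I_m^{(i)}$ and absorb $\sum_{I_m:\,I_m^{(i)}=L}(\cdots)$ into a new coefficient — the count $|L|/|I_m|$ against the compensating factor $(|I_m|/|L|)^{1/2}$ again keeps the normalization — which yields precisely a shift $S_{0,\dots,0,k-i,k,\dots,k}$ with $m-1$ zeros; letting $i$ run over $\{1,\dots,k\}$ (so $k-i$ runs over $\{0,\dots,k-1\}$) produces the first family. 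For the $h_{I_{n+1}^{(i)}}$-piece one instead reorganizes the sum around $I_{n+1}^{(i)}$ and, after a similar reindexing — here the surviving $\langle f_j\rangle_K$-factors must be unwound by one further martingale level, which is what generates the complexity-$1$ entries — one obtains the second family $S_{0,\dots,0,1,\dots,1,i}$; the constant $C$ collects the finitely many, uniformly bounded compensating coefficients $\langle h_{\cdot^{(i)}},h_\cdot^0\rangle$ and constants $(|I|/|K|)^{1/2}$ produced in the process.

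The step I expect to be the main obstacle is the combinatorial control: all the telescopings must be organized so that the total number of standard shifts stays $\lesssim nk$ rather than exploding like $k^{n}$, which amounts to never allowing two or more ``remainder'' factors to occur simultaneously in a single term — this is exactly what forces the particular asymmetric shapes of the shifts in the statement — while simultaneously verifying, term by term, that absorbing the floating cubes and the compensating factors into the new coefficients never breaks the standard-shift normalization. The normalization checks themselves are routine once the structure above is in place, but keeping the complexity pattern and the shift count exactly as claimed is delicate.
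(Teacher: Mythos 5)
Your algebraic starting point is the same as the paper's: after writing $\langle f_j,h_{I_j}^0\rangle = e_j + p_j^{(I_j)}$ with $e_j = |I_j|^{1/2}\langle f_j\rangle_K$, the cancellation of $\prod_j e_j$ and the product-telescoping $\prod(e_j+p_j)-\prod e_j=\sum_m(\prod_{j<m}e_j)\,p_m\prod_{j>m}(e_j+p_j)$ are exactly the identities one wants, and in fact your first display is algebraically nothing but the paper's split
$$
\Big[\textstyle\prod_j\langle f_j\rangle_{I_j}-\prod_j\langle f_j\rangle_K\Big]-\Big[\textstyle\prod_j\langle f_j\rangle_{I_{n+1}}-\prod_j\langle f_j\rangle_K\Big].
$$
The difference — and the gap — is in what you do next. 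You recouple the two brackets by writing, inside the $m$-sum, $p_m^{(I_m)}\prod_{j>m}\langle f_j,h_{I_j}^0\rangle-p_m^{(I_{n+1})}\prod_{j>m}\langle f_j,h_{I_{n+1}}^0\rangle = p_m^{(I_m)}[\cdots-\cdots]+[p_m^{(I_m)}-p_m^{(I_{n+1})}]\prod_{j>m}\langle f_j,h_{I_{n+1}}^0\rangle$, and propose to iterate telescoping on the residual difference $[\cdots-\cdots]$. But that residual is another difference-of-products multiplied by the $P$-remainder $p_m^{(I_m)}$; any further telescoping of it introduces a second $P$-remainder, and once two remainders are present each carries its own sum $\sum_{i}\sum_L\langle f,h_L\rangle\langle h_L\rangle$ of length $\sim k$, so the shift count grows like $k^2$ (and in general $k^n$), not $nk$. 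You flag this as the expected obstacle, but there is no mechanism in the proposal that prevents it — the ``arranged so that multi-remainder contributions do not proliferate'' is asserted, not constructed.

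The paper's proof avoids the problem precisely by \emph{not} recoupling: it keeps the two brackets separate and handles them with two genuinely different decompositions. The first bracket $\prod_j\langle f_j\rangle_{I_j}-\prod_j\langle f_j\rangle_K$ is telescoped in $m$ as you do, and each term has exactly one $P_{K,k-1}$-remainder, yielding $S_{0,\dots,0,i,k,\dots,k}$. The second bracket $\prod_j\langle f_j\rangle_{I_{n+1}}-\prod_j\langle f_j\rangle_K$, in which every slot sits at the \emph{same} cube $I_{n+1}$, is attacked by the scale-telescoping \eqref{eq:MultilinCollapse},
$$
\prod_{j=1}^n\langle f_j\rangle_{I_{n+1}}-\prod_{j=1}^n\langle f_j\rangle_K=\sum_{i=0}^{k-1}\Big(\prod_{j=1}^n\langle f_j\rangle_{I_{n+1}^{(i)}}-\prod_{j=1}^n\langle f_j\rangle_{I_{n+1}^{(i+1)}}\Big),
$$
followed by the one-step telescoping \eqref{eq:nCollapseRelated} in $m$: here only \emph{consecutive} scales appear, so each term has a single $\Delta_{I_{n+1}^{(i+1)}}$ and the trailing averages $\langle f_j\rangle_{I_{n+1}^{(i)}}$ produce exactly the complexity-$1$ entries in $S_{0,\dots,0,1,\dots,1,i}$. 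Your sketch does not contain this scale-telescoping — you gesture at it with ``the surviving $\langle f_j\rangle_K$-factors must be unwound by one further martingale level,'' but as written your ``clean'' $p_m^{(I_{n+1})}$-term has slots $m+1,\dots,n$ tied to $I_{n+1}$ at complexity $k$, which would give $S_{0,\dots,0,*,k,\dots,k}$ again, not the second family. So the proposal is missing the decisive idea that keeps the count linear in $k$ and generates the complexity-$1$ pattern; without \eqref{eq:MultilinCollapse} the argument as outlined does not close.
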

\begin{rem}
In the proof below we decompose various martingale differences using Haar functions, which strictly speaking
leads to the fact that there is an implicit dimensional summation in the above decomposition.
\end{rem}
\begin{proof}[Proof of Lemma \ref{lem:CompModStand}]
The underlying decomposition is, in part, more sophisticated than the one in the beginning of the proof of Proposition \ref{prop:Qscalar}. See
the multilinear collapse \eqref{eq:MultilinCollapse} and \eqref{eq:DefUm}.
This feels necessary
for this result -- moreover, we will later use this decomposition strategy when we do general $\UMD$-valued estimates.

Write $b_{K, (I_j)} = |I_1|^{n/2} a_{K, (I_j)}$ so that
$$ 
a_{K, (I_j)} \Big[ \prod_{j=1}^n \langle f_j, h_{I_j}^0 \rangle - \prod_{j=1}^n \langle f_j, h_{I_{n+1}}^0 \rangle \Big]
= b_{K, (I_j)} \Big[ \prod_{j=1}^n \langle f_j \rangle_{I_j} - \prod_{j=1}^n \langle f_j \rangle_{I_{n+1}} \Big].
$$
We then write
$$
 \prod_{j=1}^n \langle f_j \rangle_{I_j} - \prod_{j=1}^n \langle f_j \rangle_{I_{n+1}}
  = \Big[ \prod_{j=1}^n \langle f_j \rangle_{I_j} - \prod_{j=1}^n \langle f_j \rangle_{K} \Big]
  + \Big[ \prod_{j=1}^n \langle f_j \rangle_{K} - \prod_{j=1}^n \langle f_j \rangle_{I_{n+1}}\Big].
$$

We start working with the first term. Notice that
$$
\langle f_j \rangle_{I_j} = \langle E_{K, k} f_j \rangle_{I_j} 
= \langle P_{K, k-1} f_j \rangle_{I_j} + \langle f_j \rangle_K.
$$
Using this we can write
\begin{equation}\label{eq:EPSplitting}
\begin{split}
\prod_{j=1}^n \langle f_j \rangle_{I_j} - \prod_{j=1}^n \langle f_j \rangle_{K} &= 
\langle P_{K, k-1} f_1 \rangle_{I_1} \prod_{j=2}^n \langle f_j \rangle_{I_j}
+ \langle f_1 \rangle_K \langle P_{K, k-1} f_2 \rangle_{I_2} \prod_{j=3}^n \langle f_j \rangle_{I_j} \\
& \qquad +\cdots +   \prod_{j=1}^{n-1} \langle f_j \rangle_{K}  \langle P_{K, k-1} f_{n} \rangle_{I_n}.
\end{split}
\end{equation}
Consider now, for $m \in \{ 1, \ldots, n\}$, the following part of the modified shift
\begin{equation}\label{eq:DefAm}
\begin{split}
\langle A_m(f_1, \ldots, f_n), f_{n+1}\rangle := &\sum_K \sum_{I_1^{(k)} =  \cdots = I_{n+1}^{(k)} =K} b_{K, (I_j)} \\
&\times \prod_{j=1}^{m-1} \langle f_j \rangle_K \cdot \langle P_{K, k-1} f_m \rangle_{I_m}
\cdot \prod_{j=m+1}^n \langle f_j \rangle_{I_j} \cdot \langle f_{n+1}, h_{I_{n+1}} \rangle.
\end{split}
\end{equation}
Next, write
$$
\langle P_{K, k-1} f_m \rangle_{I_m} = \sum_{i=0}^{k-1} \sum_{L^{(i)} = K} \langle \Delta_L f_m \rangle_{I_m} =
\sum_{i=0}^{k-1} \sum_{L^{(i)} = K} \langle f_m, h_L \rangle \langle h_L \rangle_{I_m}.
$$
We can write $\langle A_m(f_1, \ldots, f_n), f_{n+1}\rangle$ in the form
\begin{align*}
\sum_{i=0}^{k-1}  \sum_{L^{(i)} = K}  \sum_{I_{m+1}^{(k)} =  \cdots = I_{n+1}^{(k)} =K} \Big(&\sum_{I_{1}^{(k)} =  \cdots = I_{m}^{(k)} =K}   b_{K, (I_j)} |K|^{-(m-1)/2}
\langle h_L \rangle_{I_m} |I_{n+1}|^{-(n-m)/2} \Big) \\
&\times   \prod_{j=1}^{m-1} \langle f_j, h_K^0 \rangle \cdot \langle f_m, h_L \rangle \cdot \prod_{j=m+1}^n \langle f_j, h_{I_j}^0 \rangle \cdot \langle f_{n+1}, h_{I_{n+1}} \rangle.
\end{align*}
Notice the normalization estimate
\begin{align*}
\sum_{\substack{ I_{1}^{(k)} =  \cdots = I_{m}^{(k)} =K \\ I_m \subset L}} &  |b_{K, (I_j)}| |K|^{-(m-1)/2}
|L|^{-1/2} |I_{n+1}|^{-(n-m)/2} \\ 
& \le \frac{  \prod_{j=1}^{m-1} |K|^{1/2} \cdot |L|^{1/2} \cdot \prod_{j=m+1}^{n+1} |I_j|^{1/2}}{|K|^n}.
\end{align*}
We also have two cancellative Haar functions, so for every $i \in \{0, \ldots, k-1\}$, the inner sum in $A_m$
is a standard $n$-linear shift of complexity $(0, \ldots, 0, i, k, \ldots, k)$, where the $i$ is in the $m$th slot:
$$
\langle A_m(f_1, \ldots, f_n), f_{n+1}\rangle = \sum_{i=0}^{k-1} \langle S_{0, \ldots, 0, i, k, \ldots, k}(f_1, \ldots, f_m), f_{n+1}\rangle.
$$

We now turn to the part of the modified shift associated with
\begin{equation}\label{eq:MultilinCollapse}
 \prod_{j=1}^n \langle f_j \rangle_{I_{n+1}} - \prod_{j=1}^n \langle f_j \rangle_{K} = 
 \sum_{i=0}^{k-1} \Big(  \prod_{j=1}^n \langle f_j \rangle_{I_{n+1}^{(i)}}
-\prod_{j=1}^n \langle f_j \rangle_{I_{n+1}^{(i+1)}}\Big).
 \end{equation}
Further, we write
\begin{equation}\label{eq:nCollapseRelated}
\begin{split}
\prod_{j=1}^n \langle f_j \rangle_{I_{n+1}^{(i)}} -\prod_{j=1}^n \langle f_j \rangle_{I_{n+1}^{(i+1)}}
&= \langle \Delta_{I_{n+1}^{(i+1)}} f_1 \rangle_{I_{n+1}^{(i)}}  \prod_{j=2}^n \langle f_j \rangle_{I_{n+1}^{(i)}} \\
&+ \langle f_1 \rangle_{I_{n+1}^{(i+1)}} \langle \Delta_{I_{n+1}^{(i+1)}} f_2  \rangle_{I_{n+1}^{(i)}} \prod_{j=3}^n \langle f_j \rangle_{I_{n+1}^{(i)}} \\
&+ \ldots + \prod_{j=1}^{n-1} \langle f_j \rangle_{I_{n+1}^{(i+1)}} \cdot \langle \Delta_{I_{n+1}^{(i+1)}} f_n  \rangle_{I_{n+1}^{(i)}}.
\end{split}
\end{equation}
Consider now, for $m \in \{ 1, \ldots, n\}$, the following part of the modified shift
\begin{equation}\label{eq:DefUm}
\begin{split}
\langle U_m(f_1, \ldots, f_n), f_{n+1}\rangle = 
&\sum_{i=0}^{k-1} \sum_K \sum_{I_1^{(k)} =  \cdots = I_{n+1}^{(k)} =K} b_{K, (I_j)}  \prod_{j=1}^{m-1}  \langle f_j \rangle_{I_{n+1}^{(i+1)}}  \\
& \times \langle \Delta_{I_{n+1}^{(i+1)}} f_m  \rangle_{I_{n+1}^{(i)}} \cdot \prod_{j=m+1}^n \langle f_j \rangle_{I_{n+1}^{(i)}}
\cdot \langle f_{n+1}, h_{I_{n+1}} \rangle.
\end{split}
\end{equation}
We can write $\langle U_m(f_1, \ldots, f_n), f_{n+1}\rangle$ in the form
\begin{align*}
\sum_{i=0}^{k-1} \sum_K \sum_{L^{(k-i)} = K} \sum_{I_{n+1}^{(i)} = L} &\Big( \sum_{I_1^{(k)} =  \cdots = I_{n}^{(k)} =K} b_{K, (I_j)} |L^{(1)}|^{-(m-1)/2} 
\langle h_{L^{(1)}} \rangle_{L} |L|^{-(n-m)/2} \Big) \\
&\times \prod_{j=1}^{m-1}  \langle f_j, h_{L^{(1)}}^0 \rangle \cdot \langle f_m, h_{L^{(1)}} \rangle \cdot \prod_{j=m+1}^n \langle f_j, h_L^0 \rangle \cdot \langle f_{n+1}, h_{I_{n+1}} \rangle.
\end{align*}
Notice the normalization estimate
\begin{align*}
\sum_{I_1^{(k)} =  \cdots = I_{n}^{(k)} =K} &|b_{K, (I_j)}| |L^{(1)}|^{-(m-1)/2} 
|L^{(1)}|^{-1/2} |L|^{-(n-m)/2} \\
&\lesssim \frac{ |L^{(1)}|^{m/2} |L|^{(n-m)/2} |I_{n+1}|^{1/2}}{|L^{(1)}|^n}.
\end{align*} 
Therefore, for some constant $C \lesssim 1$ we get that
$$
\langle U_m(f_1, \ldots, f_n), f_{n+1}\rangle = C \sum_{i=0}^{k-1} \langle S_{0, \ldots, 0, 1, \ldots, 1, i}(f_1, \ldots, f_m), f_{n+1}\rangle,
$$
where there are $m$ zeroes in $S_{0, \ldots, 0, 1, \ldots, 1, i}$.
\end{proof}
 
\subsection*{The representation theorem}
Let $\sigma = (\sigma^i)_{i \in \Z}$, where $\sigma^i \in \{0,1\}^d$. Let $\mathcal{D}_0$ be the standard dyadic grid on $\R^d$,
$$
\calD_0 := \{2^{-k}([0,1)^d+m) \colon k \in \Z, m \in \Z^d\}.
$$
We define the new dyadic grid 
$$
\mathcal{D}_{\sigma} = \Big\{I + \sum_{i:\, 2^{-i} < \ell(I)} 2^{-i}\sigma^i: \, I \in \mathcal{D}_0\Big\} = \{I + \sigma: \, I \in \mathcal{D}_0\},
$$
where we simply have defined $$
I + \sigma := I + \sum_{i:\, 2^{-i} < \ell(I)} 2^{-i}\sigma^i.
$$
It is straightforward that $\calD_{\sigma}$ inherits the key nestedness property of $\calD_0$: if $I,J \in \calD_{\sigma}$, then $I \cap J \in \{I,J,\emptyset\}$.
Moreover, there is a natural product probability measure $\mathbb{P}_{\sigma} = \mathbb{P}$ on $(\{0,1\}^d)^{\Z}$ -- this gives us the notion
of random dyadic grids $\sigma \mapsto \mathcal{D}_{\sigma}$ over which we take the expectation $\E_{\sigma}$ below.

\begin{rem}
The assumption $\omega \in \operatorname{Dini}_{1/2}$ in the theorem below is only needed to have a converging series.
The regularity is not explicitly used in the proof of the representation.
It is required due to the estimates 
of the model operators briefly discussed above. We will state the $T1$ type corollaries, including the $\UMD$-extensions, carefully later.
\end{rem}

\begin{thm}\label{thm:rep1par}
Suppose that $T$ is an $n$-linear $\omega$-CZO, where $\omega \in \operatorname{Dini}_{1/2}$. Then we have
$$
\langle T(f_1,\ldots,f_n), f_{n+1} \rangle = C_T \E_{\sigma} \sum_{k=0}^{\infty} \sum_{u=0}^{c_{d,n}} \omega(2^{-k}) \langle V_{k,u,\sigma}(f_1,\ldots,f_n), f_{n+1} \rangle,
$$
where $V_{k,u, \sigma}$ is always either a standard $n$-linear shift $S_{k, \ldots, k}$, a modified $n$-linear shift $Q_{k}$ or an $n$-linear paraproduct (this requires $k= 0$) in the grid $\calD_{\sigma}$. Moreover, we have
$$
|C_T| \lesssim \sum_{m=0}^n \| T^{m*}(1, \ldots, 1) \|_{\BMO} + \|T\|_{\WBP} + C_K + 1.
$$
\end{thm}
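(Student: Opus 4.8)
The plan is to run the standard probabilistic $T1$ machinery — expand both $T(f_1,\dots,f_n)$ and the test function $f_{n+1}$ in Haar series over a random dyadic grid $\calD_\sigma$, split into ``good/bad'' parts, discard the bad part in expectation, and then organize the resulting sum by relative scales — but to do all the bookkeeping in terms of the \emph{modified} model operators $Q_k$ rather than a full family of shifts $S_{i_1,\dots,i_{n+1}}$. First I would fix a large dyadic cube and work with finite Haar expansions (a limiting/density argument removes this at the end), writing
$$
\langle T(f_1,\dots,f_n),f_{n+1}\rangle = \sum_{I_1,\dots,I_{n+1}\in\calD_\sigma} \langle T(\Delta_{I_1}f_1,\dots,\Delta_{I_n}f_n),\Delta_{I_{n+1}}f_{n+1}\rangle,
$$
where some of the $\Delta_{I_j}$ are replaced by averaging operators $E_{I_j}$ in the ``large-scale'' regimes. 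The key point is to classify the $(n+1)$-tuples $(I_1,\dots,I_{n+1})$ by which index carries the smallest cube (say index $j_0$) and which the next-smallest (index $j_1$), and by the two gaps $k = \log_2(\ell(I_{j_1})/\ell(I_{j_0}))$-type quantities — after the good/bad reduction only tuples with comparable ``two smallest'' scales and well-separated or nested geometry survive, and the diagonal interactions are controlled by $\|T\|_{\WBP}$ while the far interactions are controlled by the kernel bounds, producing coefficients of size $\lesssim \omega(2^{-k})$ times the required normalization $\prod_j |I_j|^{1/2}/|K|^n$ with $K$ the common $k$-th ancestor.

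The heart of the argument, and the place where the \emph{modified} shifts appear, is the treatment of the ``paraproduct-adjacent'' terms: those tuples where all but one of the functions are hit by averaging operators on a common cube $K$ and one function is genuinely cancellative. Here the classical approach would peel off $\langle T^{m*}(1,\dots,1),h_{I}\rangle$ and absorb the error into a sum of shifts $S_{k,\dots,k}$ of all intermediate complexities; instead, following the $\operatorname{Dini}_{1/2}$-philosophy from \cite{GH}, I would keep the difference $\prod_j\langle f_j\rangle_{I_j} - \prod_j\langle f_j\rangle_{I_{n+1}}$ \emph{unexpanded}, recognizing it exactly as the structure in \eqref{e:modnshift}. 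Concretely: the full-complexity paraproduct pieces (where one truly has the average over the top cube $K$) assemble into a genuine $n$-linear paraproduct $\pi$ with BMO coefficients coming from $\|T^{m*}(1,\dots,1)\|_{\BMO}$ via \eqref{eq:BMO}; the remaining ``partial paraproduct'' pieces at scale gap $k$ assemble into a modified shift $Q_k$ (in one of its $n+1$ symmetric forms), with the coefficient bound $|a_{K,(I_j)}|\le |I_1|^{(n+1)/2}/|K|^n$ obtained from the kernel Hölder estimate evaluated on a separated pair at scale $\ell(K)$, yielding the gain $\omega(2^{-k})$; and the genuinely non-degenerate (two-cancellative-Haar) terms assemble into standard shifts $S_{k,\dots,k}$ as in \eqref{e:shift}. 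Summing over the finitely many combinatorial types (the index $u\in\{0,\dots,c_{d,n}\}$ records which positions carry $h^0$ versus $h$ and which symmetric form is used, and the hidden finite $\eta$-summation over Haar signs), one arrives at the claimed formula. The bound on $|C_T|$ is then just the sum of the worst constants appearing in these three regimes: $\sum_m\|T^{m*}(1,\dots,1)\|_{\BMO}$ from the paraproducts, $C_K+1$ from the kernel/shift estimates, and $\|T\|_{\WBP}$ from the diagonal.

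I expect the main obstacle to be the \emph{correct grouping} of the partial-paraproduct terms into a valid modified shift. Two subtleties need care. First, one must verify that after collecting all tuples with a fixed smallest index $j_0$ and fixed scale gap $k$, the auxiliary functions that arise really do have the structural form required of the $H_{I,J}$ (supported on $I\cup J$, constant on children, mean zero) — this is exactly the content of the discussion around \eqref{eq:gAndH} and \eqref{eq:Q_kFORM1}–\eqref{eq:Q_kFORM2}, and in the multilinear case one uses the ``collapse'' identity \eqref{eq:MultilinCollapse} in reverse. Second, one must check the telescoping is legitimate: the key cancellation is that $\langle E_K f_j, H_{I,J}\rangle = 0$, so inserting and subtracting the top-cube average $\langle f_j\rangle_K$ does not change the sum but does separate out the honest paraproduct. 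Once these identities are in place the rest is a routine — if lengthy — organization of geometric series indexed by scales, and the convergence of $\sum_k \omega(2^{-k})\,(\text{op norm of }Q_k)$ is precisely what $\omega\in\operatorname{Dini}_{1/2}$ guarantees via \eqref{eq:diniuse} and Proposition \ref{prop:Qscalar} (though, as the preceding remark notes, this regularity is not used in the algebraic identity itself, only to sum the series).
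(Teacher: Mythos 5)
Your proposal follows essentially the same approach as the paper's own proof: martingale-difference expansion over a random grid, collapsing to a common scale in the main term, keeping the difference $\prod_j\langle f_j,h^0_{I_j}\rangle - \prod_j\langle f_j,h^0_{I_{n+1}}\rangle$ unexpanded so that it is directly recognized as the modified-shift structure \eqref{e:modnshift}, peeling off the paraproduct from the complementary $\prod_j\langle f_j\rangle_{I_{n+1}}$ piece, and verifying the coefficient bound $\lesssim\omega(2^{-k})|I|^{(n+1)/2}/|K|^n$ via the size estimate for $k\sim1$ and the kernel continuity modulus for large $k$, with the remainder (at least two equal minimal scales, hence two cancellative Haar functions) producing only $S_{k,\dots,k}$. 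The one place where your description diverges slightly from the paper is the probabilistic step: you phrase it as the classical ``split into good/bad, discard bad in expectation,'' whereas the paper, following Grau de la Herr\'an--Hyt\"onen, does not discard anything; instead, for each fixed complexity $k$ it uses the $k$-goodness condition \eqref{eq:DefkGood}, whose probability is exactly $2^{-d}$ independently of the position of $I$, and multiplies by $2^d$ to insert that restriction exactly as in \eqref{eq:AddGoodness}. Both mechanisms achieve the same goal — forcing the shifted cubes to share their $k$-th ancestor — but the paper's version is the cleaner one for this specific decomposition, and you would want to use it when filling in details.
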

\begin{proof}
We begin with the decomposition
\begin{equation*}
\begin{split}
\langle T(f_1, \ldots, f_n),f_{n+1} \rangle
&= \E_{\sigma} \sum_{I_1, \ldots, I_{n+1} } \langle T(\Delta_{I_1}f_1, \ldots, \Delta_{I_n}f_n),\Delta_{I_{n+1}}f_{n+1} \rangle \\
&= \sum_{j=1}^{n+1}  \E_{\sigma} \sum_{ \substack{ I_1, \ldots, I_{n+1} \\ \ell(I_i) > \ell(I_j) \textup{ for } i \ne j}} \langle T(\Delta_{I_1}f_1, \ldots, \Delta_{I_n}f_n),\Delta_{I_{n+1}}f_{n+1} \rangle
+  \E_{\sigma} R_{\sigma},
\end{split}
\end{equation*}
where $I_1, \ldots, I_{n+1} \in \calD_\sigma$ for some $\sigma \in (\{0,1\}^d)^{\Z}$.
We deal with the remainder term $R_{\sigma}$ later, and now focus on dealing with one of the main terms
$$
\Sigma_{j,\sigma} = \sum_{ \substack{ I_1, \ldots, I_{n+1} \\ \ell(I_i) > \ell(I_j) \textup{ for } i \ne j}} \langle T(\Delta_{I_1}f_1, \ldots, \Delta_{I_n}f_n),\Delta_{I_{n+1}}f_{n+1} \rangle,
$$
where $j \in \{1, \ldots, n+1\}$.

The main terms are symmetric, and we choose to handle $\Sigma_{\sigma} := \Sigma_{n+1,\sigma}$. 
After collapsing the sums
\begin{equation}\label{eq:collapse}
\sum_{I_i \colon\ell(I_i) >  \ell(I_{n+1})}  \Delta_{I_i} f_i  = \sum_{I_i \colon \ell(I_i) = \ell(I_{n+1})} E_{I_i} f_i,
\end{equation}
we have
$$
\Sigma_{\sigma} = \sum_{\ell(I_1) = \cdots = \ell(I_{n+1})} T(E_{I_1}f_1, \ldots, E_{I_n}f_n),\Delta_{I_{n+1}}f_{n+1} \rangle.
$$
Further, we write
\begin{align*}
\langle T(E_{I_1}&f_1, \ldots, E_{I_n}f_n),\Delta_{I_{n+1}}f_{n+1} \rangle \\
&= \langle T( h_{I_1}^0, \ldots, h_{I_n}^0), h_{I_{n+1}} \rangle
 \prod_{j=1}^n \langle f_j, h_{I_j}^0 \rangle  \langle f_{n+1}, h_{I_{n+1}} \rangle \\
 &= \langle T( h_{I_1}^0, \ldots, h_{I_n}^0), h_{I_{n+1}} \rangle \Big[ \prod_{j=1}^n \langle f_j, h_{I_j}^0 \rangle - 
\prod_{j=1}^n \langle f_j, h_{I_{n+1}}^0 \rangle \Big] \langle f_{n+1}, h_{I_{n+1}} \rangle \\
&+ \langle T( 1_{I_1}, \ldots, 1_{I_n}), h_{I_{n+1}} \rangle \prod_{j=1}^n  \langle f_j \rangle_{I_{n+1}} \langle f_{n+1}, h_{I_{n+1}} \rangle.
\end{align*}
We define the abbreviation
$$
\varphi_{I_1,\ldots,I_{n+1}}
:=\langle T( h_{I_1}^0, \ldots, h_{I_n}^0), h_{I_{n+1}} \rangle \Big[ \prod_{j=1}^n \langle f_j, h_{I_j}^0 \rangle - 
\prod_{j=1}^n \langle f_j, h_{I_{n+1}}^0 \rangle \Big] \langle f_{n+1}, h_{I_{n+1}} \rangle.
$$ 
If we now sum over $I_1, \ldots, I_{n+1}$ we may express $\Sigma_{\sigma}$ in the form
$$
\Sigma_{\sigma} = \sum_{\ell(I_1) = \cdots = \ell(I_{n+1})} \varphi_{I_1,\ldots,I_{n+1}} + \sum_I
\langle T( 1, \ldots, 1), h_I \rangle \prod_{j=1}^n  \langle f_j \rangle_I \langle f_{n+1}, h_I \rangle = \Sigma_{\sigma}^1 + \Sigma_{\sigma}^2,
$$
where we recognize that the second term $\Sigma_{\sigma}^2$ is a paraproduct. Thus, we only need to continue working with $\Sigma_{\sigma}^1$.

Since $\varphi_{I, \ldots, I}=0$, we have that
\begin{align*}
\Sigma_{\sigma}^1 &=  \sum_{ \substack{m_1, \ldots, m_n \in \Z^d \\(m_1, \ldots, m_n) \not = (0,\ldots, 0)}} \sum_I
\varphi_{I+m_1 \ell(I),\ldots, I+m_n\ell(I),I} \\
&=  \sum_{k=2}^\infty \sum_{ \substack{\max |m_j| \\ \in (2^{k-3}, 2^{k-2}]}} 
\sum_I \varphi_{I+m_1 \ell(I),\ldots, I+m_n\ell(I),I}.
\end{align*}
As in \cite{GH} we say that $I$ is $k$-good for $k \ge 2$ -- and denote this by $I \in \calD_{\sigma, \good}(k)$ -- if $I \in \calD_{\sigma}$ satisfies
\begin{equation}\label{eq:DefkGood}
d(I, \partial I^{(k)}) \ge \frac{\ell(I^{(k)})}{4} = 2^{k-2} \ell(I).
\end{equation}
Notice that for all $I \in \calD_0$ we have
$$
\mathbb{P}( \{ \sigma\colon I + \sigma \in \calD_{\sigma, \good}(k) \})  = 2^{-d}.
$$
Thus, by the independence of the position of $I$ and the $k$-goodness of $I$ we have
\begin{equation}\label{eq:AddGoodness}
\begin{split}
\E_{\sigma} \Sigma_{\sigma}^1
&=2^d \E_{\sigma}  \sum_{k=2}^\infty \sum_{ \substack{\max |m_j| \\ \in (2^{k-3}, 2^{k-2}]}} 
\sum_{I \in  \calD_{\sigma, \good}(k)} \varphi_{I+m_1 \ell(I),\ldots, I+m_n\ell(I),I} \\
&= C2^d \E_{\sigma}  \sum_{k=2}^\infty  \omega(2^{-k}) \langle  Q_k (f_1,\ldots, f_n),f_{n+1} \rangle,
\end{split}
\end{equation}
where 
$$
\langle Q_k (f_1,\ldots, f_n),f_{n+1} \rangle :=  \frac{1}{C \omega(2^{-k})} \sum_{ \substack{\max |m_j| \\ \in (2^{k-3}, 2^{k-2}]}}
\sum_{I \in  \calD_{\sigma, \good}(k)} \varphi_{I+m_1 \ell(I),\ldots, I+m_n\ell(I),I}
$$
and $C$ is large enough.

Next, the key implication of the $k$-goodness is that
\begin{equation}\label{eq:GoodnessImplies}
(I + m\ell(I))^{(k)} = I^{(k)} =: K
\end{equation}
if $|m| \le 2^{k-2}$ and $I \in \calD_{\sigma, \good}(k)$.
Indeed, notice that e.g. $c_I + m\ell(I) \in [I + m\ell(I)] \cap K$ (so that $ [I + m\ell(I)] \cap K \ne \emptyset$ which is enough) as
$$
d(c_I + m\ell(I), K^c) \ge d(c_I, K^c) - |m|\ell(I) > d(I, \partial K) - |m|\ell(I) \ge 2^{k-2}\ell(I) - 2^{k-2}\ell(I) = 0.
$$
Therefore, to conclude that $Q_k$ is a modified $n$-linear shift it only remains to prove the normalization
\begin{equation}\label{eq:normalization}
\frac{|\langle T( h_{I + m_1\ell(I)}^0, \ldots, h_{I + m_n\ell(I)}^0), h_I \rangle |}{\omega(2^{-k})}
\lesssim \frac{|I|^{(n+1)/2}}{|K|^{n}}.
\end{equation}

Suppose first that $k \sim 1$. Recall that $(m_1, \ldots, m_n) \not=(0,0)$ and  assume for example that $m_1 \not=0$. 
We have using the size estimate of the kernel that
\begin{equation}\label{eq:NearbyEst}
\begin{split}
|\langle T( h_{I + m_1\ell(I)}^0&, \ldots, h_{I + m_n\ell(I)}^0), h_I \rangle | \\
&\lesssim \int_{\R^{(n+1)d}} \frac{h^0_{I+m_1\ell(I)}(x_1) \cdots h^0_{I+m_n\ell(I)}(x_n) |h_I(x_{n+1})|}{\Big(\sum_{m=1}^{n} |x_{n+1}-x_m|\Big)^{dn}}  \ud x \\
&\lesssim \frac{1}{|I|^{(n+1)/2}} \int_{CI \setminus I} \int_{I} \frac{\ud x_1 \ud x_{n+1}}{|x_{n+1}-x_1|^{d}}
\lesssim \frac{1}{|I|^{(n-1)/2}},
\end{split}
\end{equation}
where in the second step we repeatedly used estimates of the form
\begin{equation}\label{eq:lesssimInt}
\int_{\R^d} \frac{\ud z}{(r+|z_0-z|)^{d+\alpha}}  \lesssim \frac{1}{r^{\alpha}}.
\end{equation}
Notice that this is the right upper bound \eqref{eq:normalization} in the case $k  \sim 1 $.

Suppose then that $k$ is large enough so that we can use the continuity assumption of the kernel.
In this case we have that if $x_{n+1} \in I$ and $x_1 \in I+m_1 \ell(I), \ldots, x_n \in I+m_n \ell(I)$, then
$\sum_{m=1}^{n} |x_{n+1}-x_m| \sim 2^k\ell(I)=\ell(K)$. Thus, there holds that
\begin{equation}\label{eq:FarEst}
\begin{split}
&|\langle T( h_{I + m_1\ell(I)}^0, \ldots, h_{I + m_n\ell(I)}^0), h_I \rangle |\\
&= \Big |  \int_{\R^{(n+1)d}} (K(x_{n+1}, x_1, \ldots, x_n)-K(c_I,x_1, \ldots, x_n)) \prod_{j=1}^n h^0_{I+m_j\ell(I)}(x_j) h_I(x_{n+1}) \ud x \Big | \\
&\lesssim  \int_{\R^{(n+1)d}} \omega ( 2^{-k}) \frac{1}{|K|^n}  \prod_{j=1}^n h^0_{I+m_j\ell(I)}(x_j) |h_I(x_{n+1})| \ud x \\
&= \omega ( 2^{-k}) \frac{1}{|K|^n} |I|^{n+1} |I|^{-(n+1)/2}
=\omega ( 2^{-k}) \frac{|I|^{(n+1)/2}}{|K|^n}.
\end{split}
\end{equation}
We have proved \eqref{eq:normalization}.
This ends our treatment of $\E_{\sigma} \Sigma_{\sigma}$.

We now only need to deal with the remainder term $\E_{\sigma} R_{\sigma}$. Write
$$
R_{\sigma} = \sum_{ (I_1, \ldots, I_{n+1}) \in \calI_{\sigma} }  \langle T(\Delta_{I_1}f_1, \ldots, \Delta_{I_n}f_n),\Delta_{I_{n+1}}f_{n+1} \rangle,
$$
where each $(I_1, \ldots, I_{n+1}) \in \calI_{\sigma}$ satisfies that 
if $j \in \{1, \ldots, n+1\}$ is such that $\ell(I_j) \le \ell(I_i)$ for all $i \in \{1, \ldots, n+1\}$, then $\ell(I_j) = \ell(I_{i_0})$ for at least one $i_0  \in \{1, \ldots, n+1\} \setminus \{j\}$.
The point why the remainder is simpler than the main terms
is that we can split this summation so that there are always at least two sums which we do not need to collapse -- that means we will readily have
two cancellative Haar functions. To give the idea, it makes sense to explain the bilinear case $n=2$.
In this case we can, in a natural way, decompose
\begin{align*}
 \sum_{ (I_1, I_2, I_3) \in \calI_{\sigma} } 
=  \sum_{\ell(I_2) = \ell(I_3) < \ell(I_1)} +  \sum_{\ell(I_1) = \ell(I_3) < \ell(I_2)} +  \sum_{\ell(I_1) = \ell(I_2) < \ell(I_3)} 
+  \sum_{\ell(I_1) = \ell(I_2) = \ell(I_3)},
\end{align*}
which -- after collapsing the relevant sums -- gives that
\begin{align*}
R_{\sigma} =  &\sum_{\ell(I_1) = \ell(I_2) = \ell(I_3)} \big(\langle T(E_{I_1}f_1,\Delta_{I_2}f_2),\Delta_{I_3}f_3 \rangle
+ \langle T(\Delta_{I_1}f_1,E_{I_2}f_2),\Delta_{I_3}f_3 \rangle \\ &+ \langle T(\Delta_{I_1}f_1,\Delta_{I_2}f_2),E_{I_3}f_3 \rangle
+ \langle T(\Delta_{I_1}f_1,\Delta_{I_2}f_2),\Delta_{I_3}f_3 \rangle \big) = \sum_{i=1}^4 R_{\sigma}^i.
\end{align*}
These are all handled similarly (the point is that there are at least two martingale differences remaining in all of them) so we look for example at 
\begin{equation*}
\begin{split}
R_{\sigma}^2
&=\sum_{\ell(I_1)=\ell(I_2)=\ell(I_3)}\langle T(\Delta_{I_1}f_1,E_{I_2}f_2),\Delta_{I_3}f_3 \rangle \\
&=\sum_{\substack{\ell(I_1)=\ell(I_2)=\ell(I_3) \\ I_1 \not= I_3 \text{ or } I_2 \not= I_3}}\langle T(\Delta_{I_1}f_1,E_{I_2}f_2),\Delta_{I_3}f_3 \rangle 
+\sum_{I}\langle T(\Delta_{I}f_1,E_{I}f_2),\Delta_{I}f_3 \rangle.
\end{split}
\end{equation*}
We can represent these terms as sums of standard bilinear shifts of the form $S_{k,k,k}$.
The first term is handled exactly like $\Sigma_{\sigma}^1$ above.   

The second term  is readily a zero complexity shift.  To prove the estimate for the coefficient we write
$\langle T(h_I, h^0_I), h_I \rangle $ as the sum of
\begin{equation}\label{eq:DiagSplit1}
\sum_{\substack{I_1,I_2,I_3 \subset I \\ \ell(I_i)=2^{-1} \ell(I) \\ I_1 \not= I_3 \text{ or } I_2 \not=I_3}}
\langle T(1_{I_1}h_I, 1_{I_2}h^0_I), 1_{I_3}h_I \rangle 
\end{equation}
and 
\begin{equation}\label{eq:DiagSplit2}
 \sum_{\substack{I' \subset I \\ \ell(I')=2^{-1} \ell(I)}}
\langle T(1_{I'}h_I, 1_{I'}h^0_I), 1_{I'}h_I \rangle.
\end{equation}
There are $\lesssim 1$ terms in both sums. In \eqref{eq:DiagSplit1} we use the size of the kernel of $T$
and in \eqref{eq:DiagSplit2} the weak boundedness property
$$
| \langle T(1_I,1_I), 1_I \rangle | \lesssim | I |.
$$

The general $n$-linear remainder term $R_{\sigma}$ is analogous and only yields standard $n$-linear shifts $S_{k, \ldots, k}$. We are done.
\end{proof}

We record the following $T1$ type corollary. See the proof of Theorem \ref{thm:multiCZOUMD} below for an explanation on how to obtain the full range of boundedness.
\begin{cor}\label{cor:multiCZO}
Suppose that $T$ is an $n$-linear $\omega$-CZO with $\omega \in \operatorname{Dini}_{1/2}$.
Then for all exponents $1 < p_1, \ldots, p_n \le \infty$ and $1 / q_{n+1} = \sum_{j=1}^n 1/p_j > 0$
we have
$$
\|T(f_1, \ldots, f_n) \|_{L^{q_{n+1}}} \lesssim \prod_{j=1}^{n} \| f_j \|_{L^{p_j}}.
$$
\end{cor}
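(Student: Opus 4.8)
The plan is to read the corollary straight off the representation theorem, Theorem~\ref{thm:rep1par}, together with the model operator bounds already at hand. First I would treat the \emph{Banach range}, i.e. assume $\sum_{j=1}^n 1/p_j < 1$ so that $q_{n+1} > 1$, and set $p_{n+1} := q_{n+1}' \in [1,\infty)$; then $\sum_{j=1}^{n+1} 1/p_j = 1$. By duality and a density argument it suffices to bound $|\langle T(f_1,\dots,f_n), f_{n+1}\rangle|$ uniformly over $\|f_{n+1}\|_{L^{p_{n+1}}} \le 1$. Applying Theorem~\ref{thm:rep1par} expresses this pairing as $C_T \E_{\sigma} \sum_{k=0}^\infty \sum_{u=0}^{c_{d,n}} \omega(2^{-k}) \langle V_{k,u,\sigma}(f_1,\dots,f_n), f_{n+1}\rangle$, where each $V_{k,u,\sigma}$ is a standard shift $S_{k,\dots,k}$, a modified shift $Q_k$, or a paraproduct in the grid $\calD_\sigma$, and $|C_T| \lesssim 1$ by the hypotheses on $T$.

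The heart of the argument is then a uniform-in-$\sigma$ estimate for each model operator. For a modified shift $Q_k$, Proposition~\ref{prop:Qscalar} gives $|\langle Q_k(f_1,\dots,f_n), f_{n+1}\rangle| \lesssim (k+1)^{1/2} \prod_{j=1}^{n+1} \|f_j\|_{L^{p_j}}$ (its proof, using only Hölder, maximal and square function bounds, and Lemma~\ref{lem:PEst}, also allows $p_j = \infty$ for $j \le n$, the maximal functions of $L^\infty$ data being bounded). For a standard shift $S_{k,\dots,k}$ the complexity-free multilinear shift bound gives the stronger $|\langle S_{k,\dots,k}(f_1,\dots,f_n), f_{n+1}\rangle| \lesssim \prod_{j=1}^{n+1} \|f_j\|_{L^{p_j}}$, and a paraproduct satisfies the same via its $\BMO$ normalization \eqref{eq:BMO}. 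Since $u$ ranges over $\lesssim 1$ values, summing in $k$ yields
\[
|\langle T(f_1,\dots,f_n), f_{n+1}\rangle| \lesssim |C_T| \sum_{k=0}^\infty \omega(2^{-k}) (k+1)^{1/2} \prod_{j=1}^{n+1} \|f_j\|_{L^{p_j}} \lesssim \|\omega\|_{\operatorname{Dini}_{1/2}} \prod_{j=1}^{n+1} \|f_j\|_{L^{p_j}},
\]
where the last step is \eqref{eq:diniuse} with $\alpha = 1/2$ (together with $\omega(1) \lesssim \|\omega\|_{\operatorname{Dini}_{1/2}}$ by subadditivity, for the $k=0$ term). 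Taking the supremum over $f_{n+1}$ gives the asserted bound in the Banach range.

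Finally I would upgrade to the \emph{full range} $1/q_{n+1} = \sum_{j=1}^n 1/p_j > 0$, allowing $q_{n+1} \le 1$. The observation is that Proposition~\ref{prop:Qscalar}, the standard shift bounds, and the paraproduct bounds all have weighted analogues with the same $(k+1)^{1/2}$ loss, since their proofs use only Hölder, the Fefferman--Stein inequality, the weighted square function estimate recorded after \eqref{eq:SF}, and the weighted form of Lemma~\ref{lem:PEst}; hence the computation above runs verbatim with multilinear weights, producing weighted Banach-range bounds for $T$ with constant still $\lesssim \|\omega\|_{\operatorname{Dini}_{1/2}}$. Multilinear Rubio de Francia extrapolation then extends these to the full range at no further cost in kernel regularity, exactly as in the proof of Theorem~\ref{thm:multiCZOUMD} below. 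The main obstacle is precisely this last step: one cannot reach the quasi-Banach range by estimating the $Q_k$ directly, because the available quasi-Banach model-operator bounds are not complexity-free (routing through Lemma~\ref{lem:CompModStand} would force $\operatorname{Dini}_1$), so the polynomial loss in $k$ must be kept at exactly $(k+1)^{1/2}$ and the quasi-Banach range must be obtained by extrapolation from weighted Banach-range estimates rather than head-on.
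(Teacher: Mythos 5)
Your Banach-range argument is correct and matches the paper's: dualize, apply Theorem~\ref{thm:rep1par}, use Proposition~\ref{prop:Qscalar} for $Q_k$ and the complexity-free bounds for $S_{k,\dots,k}$ and $\pi$, and sum in $k$ with \eqref{eq:diniuse}. The gap is in your passage to the quasi-Banach range. You assert that the proof of Proposition~\ref{prop:Qscalar} ``runs verbatim with multilinear weights,'' but it does not: that proof is dualized, its final step being a Hölder estimate against the $(n+1)$-th factor $\big(\sum_K|\Delta_{K,k}f_{n+1}|^2\big)^{1/2}$. In the weighted version with $w_j\in A_{p_j}$ and $w=\prod w_j^{r/p_j}$, the $(n+1)$-th slot carries the weight $w^{1-r'}$, and the upper square-function bound you then need, $\|S_{\calD}g\|_{L^{r'}(w^{1-r'})}\lesssim\|g\|_{L^{r'}(w^{1-r'})}$, requires $w^{1-r'}\in A_{r'}$, equivalently $w\in A_r$ --- which fails in general (the multilinear class only gives $w\in A_{nr}$). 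This is precisely the ``technical problem'' the paper flags in the bi-parameter section before Proposition~\ref{prop:QkBRange}. You also misattribute the mechanism: the proof of Theorem~\ref{thm:multiCZOUMD} does not extrapolate from weighted Banach-range bounds; it establishes a single Banach-range tuple, then invokes the weak-type endpoint $T\colon L^1\times\cdots\times L^1\to L^{1/n,\infty}$ (via \cite{LZ}) and interpolation/good-$\lambda$, and the paper's intent for Corollary~\ref{cor:multiCZO} is the same route.

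Your strategy \emph{can} be salvaged in the one-parameter setting, but not ``verbatim'': one must avoid duality altogether and estimate $\|Q_k(f_1,\dots,f_n)\|_{L^r(w)}$ directly via the $A_\infty$ lower square-function estimate (the one-parameter analogue of Lemma~\ref{lem:LowerFS}), exactly as the paper does for \emph{standard} bi-parameter shifts in Proposition~\ref{prop:StandShiftWeight}. Because the output slot of a one-parameter $Q_k$ always carries a fully cancellative $h_{I_{n+1}}$, one has $S_{\calD}Q_k(\vec f)^2=\sum_K|B_K(\vec f)|^2$, and pointwise bounds on $|B_K(\vec f)|$ of the form $\prod_{j\ne m}\langle|f_j|\rangle_K\cdot\langle|P_{K,k-1}f_m|\rangle_K\,1_K$ let one close the argument with Hölder in $L^{p_j}(w_j)$ only (no dual slot), keeping the $\sqrt{k+1}$ loss, after which extrapolation gives the full range. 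If you want the extrapolation route, that is the proof you need to write; otherwise, just cite the weak-type endpoint as the paper does.
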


\section{$\UMD$-valued extensions of singular integrals}\label{sec:UMD}
\subsection*{Preliminaries of Banach space theory}
An extensive treatment of Banach space theory is given in the books \cite{HNVW1, HNVW2} by Hyt\"onen, van Neerven, Veraar and Weis.

We say that $\{\varepsilon_i\}_i$ is a \emph{collection of independent random signs}, where $i$ runs over some index set, if there exists a probability space $(\calM, \mu)$ so that $\varepsilon_i \colon \calM \to \{-1,1\}$, 
$\{\varepsilon_i\}_i$ is independent and
$\mu(\{\varepsilon_i=1\})=\mu(\{\varepsilon_i=-1\})=1/2$. 
Below, $\{\varepsilon_i\}_i$ will always denote a collection of independent random signs.

Suppose $X$ is a Banach space. We denote the underlying norm by $| \cdot |_X$.
The Kahane-Khintchine inequality says that
for all $x_1,\dots, x_N \in X$ and $p,q \in (0,\infty)$ there holds that
\begin{equation}\label{eq:KK}
\Big(\E \Big | \sum_{i=1}^N \varepsilon_i x_i \Big |_X^p \Big)^{1/p}
\sim \Big(\E \Big | \sum_{i=1}^N \varepsilon_i x_i \Big |_X^q \Big)^{1/q}.
\end{equation}
Definitions related to Banach spaces often involve such random sums
and the definition may involve some fixed choice of the exponent -- but the choice is irrelevant by the Kahane-Khintchine inequality.

The Kahane contraction principle says that if $(a_i)_{i=1}^N$ is a sequence of scalars, $x_1,\dots, x_N \in X$ and $p \in (0, \infty]$, then
\begin{equation*}\label{eq:KCont}
\Big( \E \Big | \sum_{i=1}^N \varepsilon_i a_i x_i \Big |_{X}^p \Big)^{1/p}
\lesssim \max |a_i| \Big( \E \Big | \sum_{i=1}^N \varepsilon_i x_i \Big |_{X}^p \Big)^{1/p}.
\end{equation*}
%Actually, if $p \in [1, \infty]$ and $a_i \in \R$, then \eqref{eq:KCont} holds with ``$\le$'' in place of ``$\lesssim$'', 
%see \cite{HNVW1} for more details.

\begin{defn}
Let $X$ be a Banach space, let $r \in [1,2]$ and $q \in [2,\infty]$.
\begin{enumerate}
\item The space $X$ has type $r$ if there exists a finite constant $\tau \ge 0$ such that for all finite sequences $x_1,\ldots,x_N \in X$ we have
$$
\Big( \E \Big| \sum_{i=1}^N \varepsilon_i x_i \Big|_X^r \Big)^{1/r} \le \tau \Big( \sum_{i=1}^N |x_i|_X^r \Big)^{1/r}.
$$
\item  The space $X$ has cotype $q$ if there exists a finite constant $c \ge 0$ such
that for all finite sequences $x_1,\ldots,x_N \in X$ we have
$$
\Big( \sum_{i=1}^N |x_i|_X^q \Big)^{1/q} \le 
c \Big( \E \Big| \sum_{i=1}^N \varepsilon_i x_i \Big|_X^q \Big)^{1/q}.
$$
For $q = \infty$ the usual modification is used. 
\end{enumerate}
The least admissible constants are denoted by $\tau_{r,X}$ and $c_{q, X}$ -- they are the type $r$ constant and cotype $q$ constant of $X$.
\end{defn}
In \cite[Section 7]{HNVW2} the reader can find the basic theory of types and cotypes. We only need a few basic facts, however.

If $X$ has type $r$ (cotype $q$), then it also has type $u$ for all $u \in [1,r]$ (cotype $v$ for all $v \in [q, \infty]$), and we have
$
\tau_{u, X} \le \tau_{p, X} \qquad (c_{v,X} \le c_{q,X}).
$
It is also trivial that always $\tau_{1, X} = c_{\infty, X} = 1$.
We say that $X$ has non-trivial type if $X$ has type $r$ for some $r \in (1, 2]$ and finite cotype if it has cotype $q$ for some $q \in [2, \infty)$.

%We define the constant $\tau_{r,X;s}$ as the best constant in the inequality
%$$
%\Big( \E \Big| \sum_{i=1}^N \varepsilon_i x_i \Big|_X^s \Big)^{1/s} \le \tau \Big( \sum_{i=1}^N |x_i|_X^r \Big)^{1/r}.
%$$
%That is, we changed the exponent in the random sum to be $s$ instead of $r$. Of course, this does not really affect anything -- 
%by Kahane--Khintchine inequality we have that $\tau_{r,X;s} \sim \tau_{r, X}$ if $s \in [1, \infty)$. Define $c_{q,X;s}$ analogously.
For the types and cotypes of $L^p$ spaces we have the following:  if $X$ has type $r$, then $L^p(X)$ has type $\min(r,p)$, and if $X$ has cotype $q$, then $L^p(X)$ has cotype $\max(q,p)$.
%$$
%\tau_{\min(r,p), L^p(X); s} \le \tau_{r, X; s}, \qquad s \in [p, \infty)
%$$
%and
%$$
%c_{\max(q,p), L^p(X); s} \le c_{q, X; s}, \qquad s \in [1, p].
%$$

The $\UMD$ property is a necessary and sufficient condition for the boundedness of various singular integral operators on $L^p(\R^d;X) = L^p(X)$, see \cite[Sec. 5.2.c and the Notes to Sec. 5.2]{HNVW1}. 
\begin{defn}
A Banach space $X$ is said to be a $\UMD$ \emph{space}, where $\UMD$ stands for unconditional martingale differences, if for all $p \in (1,\infty)$, all
$X$-valued $L^p$-martingale difference sequences $(d_i)_{i=1}^N$ and all choices of fixed signs $\epsilon_i \in \{-1,1\}$
we have
\begin{equation}\label{eq:UMDDef}
\Big\| \sum_{i=1}^N \epsilon_i d_i \Big \|_{L^p(X)}
\lesssim \Big\| \sum_{i=1}^N d_i \Big \|_{L^p(X)}.
\end{equation}
The $L^p(X)$-norm is with respect to the measure space where the martingale differences are defined.
\end{defn}
A standard property of $\UMD$ spaces is that if \eqref{eq:UMDDef} holds for one $p_0\in (1,\infty)$ it
holds for all $p \in (1, \infty)$ \cite[Theorem 4.2.7]{HNVW1}. Moreover, if $X$ is $\UMD$ then so is the dual space $X^*$ \cite[Prop. 4.2.17]{HNVW1}.
Importantly, $\UMD$ spaces have non-trivial type and a finite cotype.

Stein's inequality says that for a $\UMD$ space $X$ we have
$$
 \E \Big\| \sum_{I \in \calD} \varepsilon_I \langle f_I \rangle_I 1_I \Big\|_{L^p(X)} \lesssim  \E \Big\| \sum_{I \in \calD} \varepsilon_I f_I \Big\|_{L^p(X)}, \qquad p \in (1,\infty).
$$
This $\UMD$-valued version of Stein's inequality is by Bourgain, for a proof see e.g. Theorem 4.2.23 in the book \cite{HNVW1}.

We now introduce some definitions related to the so called decoupling estimate.
For $K \in \calD$ 
denote by $Y_{K}$ the measure space $(K, \text{Leb}(K), \nu_{K})$. Here $\text{Leb}(K)$ is the collection of 
Lebesgue measurable subsets of $K$ and $ \nu_K=\ud x \lfloor K/|K|$, where $\ud x \lfloor K$ is the $d$-dimensional Lebesgue measure restricted to $K$.
We then define the product probability space
$$
(Y, \scrA, \nu):= \prod_{K \in \calD} Y_{K}.
$$
If $y \in Y$ and $K \in \calD$, we denote by $y_K$ the coordinate related to $Y_K$. 

In our upcoming estimates, it will be important to separate scales using the following subgrids. 
For $k \in \{0,1,\dots\}$ and $l \in \{0, \dots, k\}$ define
\begin{equation}\label{eq:SubLattice}
\calD_{k,l}:=\{ K \in \calD \colon \ell(K)=2^{m(k+1)+l} \text{ for some } m \in \Z\}.
\end{equation}
The following proposition concerning decoupling is a special case of Theorem 3.1 in \cite{HH}. It is a result that can
be stated in the generality of suitable filtrations, but we prefer to only state the following dyadic version.
\begin{prop}\label{prop:decoupling}
Let $X$ be a $\UMD$ space, $p \in (1, \infty)$, $k \in \{0,1,\dots\}$ and $l \in \{0, \dots, k\}$. Suppose $f_K$, $K \in \calD_{k,l}$, are functions such that
\begin{enumerate}
\item $f_K = 1_K f_K$,
\item $\int f_K = 0$ and
\item $f_K$ is constant on those $K' \in \calD_{k,l}$ for which $K' \subsetneq K$.
\end{enumerate}
Then we have
\begin{equation}\label{eq:decoupling}
\int_{\R^d} \Big| \sum_{K \in \calD_{k,l}} f_K(x) \Big|_X^p \ud x
\sim \E \int_{Y} \int_{\R^d} 
\Big| \sum_{K \in \calD_{k,l}} \varepsilon_K 1_K(x) f_K(y_K) \Big|_X^p \ud x \ud \nu (y),
\end{equation}
where the implicit constant is independent of $k,l$.
\end{prop}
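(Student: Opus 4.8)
The plan is to deduce Proposition~\ref{prop:decoupling} from the general Banach-valued decoupling theorem \cite{HH} by recognising $\sum_{K\in\calD_{k,l}} f_K$ as a martingale difference sum for a filtration adapted to the subgrid $\calD_{k,l}$, and then — if it is not already built into the cited statement — producing the random signs $\varepsilon_K$ at no cost from the mean-zero hypothesis.

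\emph{Step 1: the filtration and the martingale difference structure.} By \eqref{eq:SubLattice} the cubes of $\calD_{k,l}$ occur only at the side lengths $2^{m(k+1)+l}$, $m\in\Z$, and each such cube contains exactly $2^{d(k+1)}$ cubes of $\calD_{k,l}$ of the next smaller admissible side length; thus $\calD_{k,l}$ is itself a grid of ratio $2^{k+1}$. Let $\calF_m$ be the $\sigma$-algebra generated by the partition of $\R^d$ into the cubes $K\in\calD_{k,l}$ with $\ell(K)=2^{-m(k+1)+l}$; these $\calF_m$ increase with $m$. Put
\[
g_m:=\sum_{\substack{K\in\calD_{k,l}\\ \ell(K)=2^{-m(k+1)+l}}} f_K,\qquad\text{so that}\qquad \sum_{K\in\calD_{k,l}} f_K=\sum_m g_m .
\]
Hypothesis (3) says exactly that each $f_K$, hence $g_m$, is constant on the cubes of the next finer generation, i.e.\ $g_m$ is $\calF_{m+1}$-measurable; hypotheses (1)--(2) give $\E[g_m\mid\calF_m]=\sum_K\langle f_K\rangle_K 1_K=0$. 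Hence $(g_m)_m$ is a martingale difference sequence adapted to $(\calF_m)$. (Since $\R^d$ is merely $\sigma$-finite, one uses the version of \cite[Theorem 3.1]{HH} valid for $\sigma$-finite filtrations, or first reduces by a routine limiting argument to $f_K$ all supported in one fixed large cube, normalised to a probability space; the constants do not see this.)

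\emph{Step 2: decoupling and the signs.} For a $\UMD$ space $X$ and a martingale difference sequence $(g_m)$ adapted to a filtration, \cite[Theorem 3.1]{HH} yields, with a constant depending only on $p$ and the $\UMD$ constant of $X$,
\[
\Big\|\sum_m g_m\Big\|_{L^p(\R^d;X)}^p\sim \E_y\Big\|\sum_m\widehat g_m\Big\|_{L^p(\R^d;X)}^p,
\]
where $(\widehat g_m)$ is the decoupled sequence: conditionally on the full scale structure one replaces the value $g_m(x)=f_K(x)$ on a generation-$m$ cube $K$ by $f_K(y_K)$ with $y_K$ drawn uniformly from $K$, independently over $K$. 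This is precisely the coordinate structure of $(Y,\scrA,\nu)=\prod_{K\in\calD}Y_K$, so $\widehat g_m(x)=\sum_{\ell(K)=2^{-m(k+1)+l}} 1_K(x)f_K(y_K)$ and $\sum_m\widehat g_m=\sum_{K\in\calD_{k,l}}1_K f_K(y_K)$. The signs $\varepsilon_K$ in \eqref{eq:decoupling} are harmless extra randomisation: by hypothesis (2) each $\Phi_K:=1_K f_K(y_K)$ is, as a function of $y_K$, a centred $L^p(\R^d;X)$-valued random variable, the $\Phi_K$ are independent over $K$, and the standard symmetrisation inequality for sums of independent centred Banach-space-valued random variables (valid for all $p\in[1,\infty)$) gives $\E_y\|\sum_K\Phi_K\|^p\sim\E_\varepsilon\E_y\|\sum_K\varepsilon_K\Phi_K\|^p$. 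Combining the last displays is exactly \eqref{eq:decoupling}; depending on the exact formulation of \cite[Theorem 3.1]{HH}, this last step may already be subsumed in it.

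\emph{Main obstacle.} The delicate point is the identification in Step 2: checking that the abstract decoupled (tangent, conditionally independent) sequence produced by \cite[Theorem 3.1]{HH} is realised concretely by the within-cube resampling encoded in $(Y,\scrA,\nu)$, and that the constant in the resulting equivalence is genuinely independent of $k$ and $l$ — which comes down to the $\UMD$ decoupling constant being independent of the underlying filtration. The remaining ingredients — the martingale difference bookkeeping for $\calD_{k,l}$, the reduction to a probability space, and the cost-free passage to per-cube signs $\varepsilon_K$ — are routine.
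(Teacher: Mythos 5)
The paper gives no proof of Proposition~\ref{prop:decoupling} at all: it is stated as a direct special case of \cite[Theorem 3.1]{HH}, and your argument is precisely an unpacking of that citation. Your reduction is correct on all the points that need checking: the reindexing of $\calD_{k,l}$ as a grid of ratio $2^{k+1}$; the observation that hypotheses (1)--(3) make the generation-sums a martingale difference sequence for the filtration generated by the $\calD_{k,l}$-partitions; the identification of the decoupled/tangent sequence with the within-cube resampling encoded by $(Y,\scrA,\nu)$; and the fact that the constant of \cite[Theorem 3.1]{HH} depends only on $p$ and the $\UMD$ constant of $X$ (not on the filtration), which is exactly what gives uniformity in $k,l$. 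The symmetrisation step is also fine: the random variables $\Phi_K(y)=1_K f_K(y_K)$, viewed as $L^p(\R^d;X)$-valued random variables on $Y$, are independent (disjoint coordinates) and centred (hypothesis (2)), so the standard randomisation inequality lets you insert or remove the $\varepsilon_K$ at the cost of a factor $2^p$ on either side; this is needed if the version of \cite[Theorem 3.1]{HH} you invoke is stated without signs, and harmless otherwise. In short: you took the same route as the paper but actually carried out the verification the paper leaves to the reader, and everything checks out.
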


Hilbert spaces are the only Banach spaces with both type $2$ and cotype $2$. Below we will prove estimates for modified shifts like
$$
\| Q_k f \|_{L^2(X)} \lesssim (k+1)^{1/\min(r, q')} \| f \|_{L^2(X)},
$$
where the $\UMD$ space $X$ has type $r$ and cotype $q$. Therefore, in the Hilbert space case -- and thus in the scalar-valued case -- these estimates recover
the best possible regularity $\alpha=1/2$. The presented estimates are efficient in completely general $\UMD$ spaces -- however, in $\UMD$ function lattices
it is more efficient to mimic the scalar-valued theory (Proposition \ref{prop:Qscalar})  and use square function estimates instead.

In this paper we are mainly interested on these deeper general $\UMD$-valued estimates. For the same reason
we will not later pursue $\UMD$-valued theory in the bi-parameter setting, as it is known that then the additional ``property $(\alpha)$'' is required, see \cite[Sec. 8.3.e]{HNVW2}.
The function lattice assumption is formally stronger, but for main concrete examples practically the same as the ``property $(\alpha)$''  assumption.

\subsection*{The linear case}
We feel that it is too difficult to jump directly into the multilinear estimates, as they are quite involved. Thus, we first study the linear case.
We show that the framework of modified dyadic shifts gives a modern and convenient proof of the results of Figiel
\cite{Figiel1, Figiel2} concerning $\UMD$-extensions of CZOs with mild kernel regularity. 

Before moving to the main $X$-valued estimate for $Q_k$, we state the following result for paraproducts. We have
\begin{equation}\label{eq:XLinPar}
\| \pi f\|_{L^p(X)} \lesssim \|f\|_{L^p(X)}
\end{equation}
whenever $p \in (1,\infty)$ and $X$ is $\UMD$. We understand that this is usually attributed to Bourgain -- in any case, a simple proof can now be found in \cite{HH}.

\begin{rem}
The estimate in Proposition \ref{prop:ModShiftUMD} below is best used for $p=2$, since then e.g. $\min(r,p) = r$, if $r \in (1, 2]$ is an exponent such that $X$ has type $r$. Indeed, it is efficient to only move the $p=2$ estimate
for the CZO $T$, and then interpolate to get the $L^p$ boundedness under a modified Dini type assuption that is independent of $p$.
 On the $Q_k$ level improving an $L^2$ estimate into an $L^p$ estimate with good dependency on the complexity does not seem so simple. Interpolation would introduce some additional complexity dependency, since
the weak $(1,1)$ inequality of $Q_k$ is not complexity free.
\end{rem}

\begin{prop}\label{prop:ModShiftUMD}
Let $p \in (1,\infty)$ and $X$ be a $\UMD$ space. If $Q_k$ is a modified shift of the form \eqref{eq:Q_kFORM2}, 
then
$$
\| Q_k f \|_{L^p(X)} \lesssim (k+1)^{1/\min(r, p)} \| f \|_{L^p(X)},
$$
where $r \in (1, 2]$ is an exponent such that $X$ has type $r$. If $Q_k$ is a modified shift of the form \eqref{eq:Q_kFORM1}, we have
$$
\| Q_k f \|_{L^p(X)} \lesssim (k+1)^{1/\min(q', p')} \| f \|_{L^p(X)},
$$
where $q \in [2, \infty)$ is an exponent such that $X$ has cotype $q$.
\end{prop}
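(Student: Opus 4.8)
\textbf{Reduction by duality.} The two statements are dual, so I would prove only the first and deduce the second. If $Q_k$ has the form \eqref{eq:Q_kFORM2}, then a direct computation shows that its adjoint $Q_k^*$ has the form \eqref{eq:Q_kFORM1}: after relabelling $I\leftrightarrow J$ and replacing $H_{I,J}$ by $\widetilde H_{I,J}:=H_{J,I}$ (which still satisfies (1)--(3) since $|I|=|J|$), one matches $\langle Q_k^*g,f\rangle$ term by term with \eqref{eq:Q_kFORM1}. Now if $X$ is a $\UMD$ space with cotype $q$, then $X$ has non-trivial type, so $X^*$ is a $\UMD$ space with type $q'$ and $(L^p(X))^*=L^{p'}(X^*)$; hence the bound for \eqref{eq:Q_kFORM1} on $L^p(X)$ with exponent $(k+1)^{1/\min(q',p')}$ follows from the bound for \eqref{eq:Q_kFORM2} on $L^{p'}(X^*)$ applied with type $r=q'$ and $p$ replaced by $p'$. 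So from now on $Q_k$ has the form \eqref{eq:Q_kFORM2} and $X$ has type $r\in(1,2]$.

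\textbf{Separation of scales and recombination by type.} The guiding idea is that Lemma \ref{lem:PEst} and the scalar argument of Proposition \ref{prop:Qscalar} produce the factor $\sqrt{k+1}$ through a \emph{square function}; in the general $\UMD$ setting the square function is unavailable, and its role is taken over by the \emph{type} of the Bochner space $L^p(\R^d;X)$, which equals $\min(r,p)$ since $X$ has type $r$. Concretely: using the key property \eqref{eq:gAndH} write $\langle f,H_{I,J}\rangle=\langle P_{K,k}f,H_{I,J}\rangle$ for $K=I^{(k)}=J^{(k)}$, so that $Q_kf=\sum_{K\in\calD}T_K(P_{K,k}f)$ with the localized operators $T_Kh:=\sum_{I^{(k)}=J^{(k)}=K}a_{IJK}\langle h,H_{I,J}\rangle h_J$. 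I would first verify that $T_K$ is bounded on $L^2(K;X)$ uniformly in $K$ and $k$: since $T_Kh=\sum_J c_Jh_J$ with $c_J\in X$ and the $h_J$ orthonormal, $\|T_Kh\|_{L^2(K;X)}^2=\sum_J|c_J|_X^2$; splitting $\langle h,H_{I,J}\rangle=\langle h1_I,H_{I,J}\rangle+\langle h1_J,H_{I,J}\rangle$, using $|H_{I,J}|\le|I|^{-1/2}$, the coefficient normalization (so $\sum_I|a_{IJK}|^2\lesssim|I|/|K|$), and the Cauchy--Schwarz inequality over the $|K|/|I|$-element index set $\{I:I^{(k)}=K\}$, one gets $\sum_J|c_J|_X^2\lesssim\|h\|_{L^2(K;X)}^2$. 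This step uses only the triangle and Cauchy--Schwarz inequalities, the two available cancellations ($\int H_{I,J}=0$ and the cancellative $h_J$), and works for any Banach space. Next split $Q_k=\sum_{l=0}^kQ_k^{l}$ over the $(k+1)$-separated subgrids \eqref{eq:SubLattice}, $Q_k^{l}$ collecting the terms with $K\in\calD_{k,l}$; for fixed $l$ the families $\{P_{K,k}f\}_{K\in\calD_{k,l}}$ and $\{T_K P_{K,k}f\}_{K\in\calD_{k,l}}$ are supported on $K$, have mean zero, and are constant on every $\calD_{k,l}$-cube strictly inside $K$, so the decoupling inequality \eqref{eq:decoupling} of Proposition \ref{prop:decoupling} applies to both; moreover $\sum_{K\in\calD_{k,l}}P_{K,k}f=f$, because by the $(k+1)$-separation each $\Delta_Lf$ lies in $P_{K,k}f$ for exactly one $K\in\calD_{k,l}$. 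Applying the decoupling inequality and exploiting the uniform $L^2(K;X)$-boundedness of $T_K$ fibrewise in the decoupled variable, together with Stein's inequality and the paraproduct bound \eqref{eq:XLinPar}, one separates the $k$ mutually correlated scales inside a period into $O(k)$ pieces each bounded by $O(1)$ times $\|f\|_{L^p(X)}$, and recombines them using the type $\min(r,p)$ of $L^p(X)$ (with Kahane--Khintchine to fix the exponent), obtaining $\|Q_k^{l}f\|_{L^p(X)}\lesssim(k+1)^{1/\min(r,p)}\|f\|_{L^p(X)}$ — where in fact the contributions of the different $l$ are ``orthogonalised'' by $\UMD$ randomisation, so that the outer sum over $l$ costs nothing beyond a final application of the type $\min(r,p)$ of $L^p(X)$. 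Altogether this yields $\|Q_kf\|_{L^p(X)}\lesssim(k+1)^{1/\min(r,p)}\|f\|_{L^p(X)}$. (In the function-lattice case one would instead imitate Proposition \ref{prop:Qscalar} with the Fefferman--Stein inequality replacing the decoupling inequality, which is why $\min(r,q')$ can there be improved to the lattice exponent $1/2$.)

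\textbf{Main obstacle.} The delicate point is the middle step: arranging the decoupling so that the $k$ correlated scales genuinely become ``independent'', so that the type inequality (not merely the triangle inequality) governs their recombination — using the triangle inequality over those scales, or decomposing $Q_k$ into standard shifts via Lemma \ref{lem:CompModStand}, would only give $\operatorname{Dini}_1$ rather than $\operatorname{Dini}_{1/\min(r,p)}$. Equally one must check that after decoupling the operator is bounded by $O(1)$ per independent copy, i.e.\ that all of the complexity has been transferred into the \emph{number} of copies and that no residual $k$-dependence hides in the constants; this is where the precise hypotheses on $H_{I,J}$ (zero integral, constancy at the finest scale, the $|I|^{-1/2}$ bound) and the $\UMD$-valued Stein inequality are essential. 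The bookkeeping of which scales occur in $P_{K,k}f$, $H_{I,J}$, and $h_J$ relative to the $(k+1)$-periodic filtration is somewhat intricate but routine once the strategy is fixed.
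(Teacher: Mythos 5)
Your duality reduction, the split over the subgrids $\calD_{k,l}$, and the identification of decoupling, Stein's inequality and the type of $L^p(X)$ as the key tools all match the paper's proof, so the high-level plan is in the right direction. But the central step is handled differently, and your account of it contains two genuine problems.

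First, the bookkeeping of the $(k+1)^{1/\min(r,p)}$ factor is off. In the paper the \emph{only} place this factor enters is in the type inequality applied to the $(k+1)$-term sum $Q_k f = \sum_{l=0}^k \sum_{K\in\calD_{k,l}} B_K f$ (after using $\UMD$ to introduce independent signs indexed by $l$); each inner piece $\sum_{K\in\calD_{k,l}} B_K f$ is then shown to satisfy a $k$- and $l$-\emph{uniform} $O(1)$ bound, and producing this $k$-free bound is exactly what the decoupling argument achieves. You instead claim $\|Q_k^l f\|_{L^p(X)} \lesssim (k+1)^{1/\min(r,p)}\|f\|_{L^p(X)}$ for each fixed $l$ and then say the outer $l$-sum ``costs nothing beyond a final application of the type.'' But applying the type inequality to a $(k+1)$-term sum always costs a factor $(k+1)^{1/\min(r,p)}$, so as written your plan yields $(k+1)^{2/\min(r,p)}$, which is too weak.

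Second, your observation that $T_K$ is uniformly bounded on $L^2(K;X)$ is correct (the Cauchy--Schwarz argument you sketch goes through), but it is not what the paper uses, and it does not combine with decoupling ``fibrewise'' as you propose. After decoupling one is left with an $L^p(\R^d\times Y;X)$ integral of $\sum_K \varepsilon_K 1_K(x)(\,\cdot\,)(y_K)$, and an $L^2(\nu_K;X)$ operator bound in the single coordinate $y_K$ does not let you replace $T_K P_{K,k}f$ by $P_{K,k}f$ inside that $L^p$ norm. The paper instead first splits $B_K f$ into the piece with $\langle P_{K,k}f, 1_J H_{I,J}\rangle$ and the piece with $\langle P_{K,k}f, 1_I H_{I,J}\rangle$, $I\ne J$. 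For the second piece, the symbol $a_K(x,y):=|K|\sum_{I\ne J} a_{IJK} 1_I(y) H_{I,J}(y) h_J(x)$ obeys the \emph{pointwise} bound $|a_K(x,y)|\le 1_K(x)$; this pointwise bound is what licenses the Kahane contraction principle, after which decoupling collapses the expression to $\|\sum_K P_{K,k}f\|_{L^p(X)}=\|f\|_{L^p(X)}$. For the first piece one randomizes with signs $\varepsilon_J$, replaces $h_J$ by $1_J/|J|^{1/2}$, applies Stein's inequality and another Kahane contraction with a uniform pointwise symbol bound, converts $\varepsilon_J$ to $\varepsilon_K$, and finishes with $\UMD$. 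The hypothesis $|H_{I,J}|\le |I|^{-1/2}$ is used precisely to obtain those pointwise symbol bounds, not through an $L^2\to L^2$ norm of $T_K$. Also note the paraproduct bound \eqref{eq:XLinPar} plays no role here: the identity $\sum_{K\in\calD_{k,l}}P_{K,k}f=f$ closes the argument without it.
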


\begin{proof}
We assume that $Q_k$ has the form \eqref{eq:Q_kFORM2} -- the other result follows by duality. This uses that if the $\UMD$ space $X$
has cotype $q$, then the dual space $X^*$ has type $q'$ -- see \cite[Proposition 7.4.10]{HNVW2}.

If $K \in \calD$ we define
$$
B_Kf:= \sum_{I^{(k)}=J^{(k)}=K} a_{IJK} \langle f, H_{I,J} \rangle h_J.
$$
Recall the lattices $\calD_{k,l}$ from \eqref{eq:SubLattice} and write $Q_k f = \sum_{l=0}^k \sum_{K \in \calD_{k,l}} B_K f$. By using the $\UMD$ property of $X$ and the Kahane--Khintchine inequality we have
for all $s \in (0,\infty)$ that
$$
\| Q_k f \|_{L^p(X)} \sim
 \Big( \E \Big \| \sum_{l=0}^k \varepsilon_l \sum_{K \in \calD_{k,l}}
B_K f \Big \|_{L^p(X)}^s \Big)^{1/s}.
$$
We use this with the choice $s := \min(r,p)$, since $L^p(X)$ has type $s$. Using this we have
$$
 \Big( \E \Big \| \sum_{l=0}^k \varepsilon_l \sum_{K \in \calD_{k,l}} B_K f \Big \|_{L^p(X)}^s \Big)^{1/s}
  \lesssim \Big (\sum_{l=0}^k \Big\| \sum_{K \in \calD_{k,l}} B_K f \Big \|_{L^p(X)}^s \Big)^{1/s}.
$$
To end the proof, it remains to show that
\begin{equation}\label{eq:PartOfQk}
\Big \| \sum_{K \in \calD_{k,l}} B_K f \Big \|_{L^p(X)}
\lesssim \|f\|_{L^p(X)}
\end{equation}
uniformly on $l$.

Recall that $\langle f, H_{I,J} \rangle = \langle P_{K,k} f, H_{I,J} \rangle$.
We then see that
\begin{equation*}
\begin{split}
B_K f = 
\sum_{I^{(k)}=J^{(k)}=K} 
a_{IJK} \langle P_{K,k}f, H_{I,J} \rangle  h_J
&=\sum_{I^{(k)}=J^{(k)}=K} 
a_{IJK} \langle P_{K,k}f, 1_J H_{I,J} \rangle  h_J \\
&+\sum_{\substack{ I^{(k)}=J^{(k)}=K \\ I \not= J}} 
a_{IJK} \langle P_{K,k}f, 1_I H_{I,J} \rangle  h_J.
\end{split}
\end{equation*}
Accordingly, this splits the estimate of \eqref{eq:PartOfQk} into two parts.

We consider first the part related to $\langle P_{K,k}f, 1_I H_{I,J} \rangle$.
By the $\UMD$ property and the Kahane--Khintchine inequality we have
\begin{equation}\label{eq:LinearStandStart}
\begin{split}
\Big \| \sum_{K \in \calD_{k,l}}& \sum_{\substack{I^{(k)}=J^{(k)}=K \\ I \not= J}} 
a_{IJK} \langle P_{K,k}f, 1_I H_{I,J} \rangle  h_J
 \Big \|_{L^p(X)}  \\
 &\sim \Big( \E \Big \| \sum_{K \in \calD_{k,l}} \epsilon_K \sum_{\substack{I^{(k)}=J^{(k)}=K \\ I \not= J}} 
a_{IJK} \langle P_{K,k}f, 1_I H_{I,J} \rangle  h_J
 \Big \|_{L^p(X)}^p \Big)^{1/p}.
\end{split}
\end{equation}
Notice then that for
$$
a_K(x,y):= |K| \sum_{\substack{I^{(k)}=J^{(k)}=K \\ I \not= J}}  a_{IJK} 1_I(y)H_{I,J}(y)h_J(x)
$$
we have
\begin{equation*}
\begin{split}
\sum_{\substack{I^{(k)}=J^{(k)}=K \\ I \not= J}} 
a_{IJK} \langle P_{K,k}f, 1_I H_{I,J} \rangle  h_J(x)
&=\frac{1}{|K|} \int_K a_K(x,y) P_{K,k}f(y) \ud y \\
&=\int_{Y_K} a_K(x,y_K) P_{K,k}f(y_K) \ud \nu_K (y_K) \\
&=\int_{Y} a_K(x,y_K) P_{K,k}f(y_K) \ud \nu (y).
\end{split}
\end{equation*}
Using this we have by H\"older's inequality (recalling that $\nu$ is a probability measure) that
\begin{equation*}
\begin{split}
\E \Big \|& \sum_{K \in \calD_{k,l}} \epsilon_K \sum_{\substack{I^{(k)}=J^{(k)}=K \\ I \not= J}} 
a_{IJK} \langle P_{K,k}f, 1_I H_{I,J} \rangle  h_J
 \Big \|_{L^p(X)}^p\\
& \le \E \int_{\R^d} \int_Y \Big|\sum_{K \in \calD_{k,l}} \varepsilon_K 
a_K(x,y_K) P_{K,k}f(y_K) \Big |_{X}^{p}  \ud \nu(y) \ud x. 
\end{split}
\end{equation*}

Notice now that $|a_K(x,y)| \le 1_K(x)$. Thus, the Kahane contraction principle implies that for fixed $x$ and $y$ there holds that
$$
\E \Big|\sum_{K \in \calD_{k,l}} \varepsilon_K 
a_K(x,y_K) P_{K,k}f(y_K) \Big |_{X}^{p}
\lesssim \E \Big|\sum_{K \in \calD_{k,l}} \varepsilon_K 
1_K(x) P_{K,k}f(y_K) \Big |_{X}^{p}.
$$
Using this we are left with
\begin{equation*}
\begin{split}
\E \int_{\R^d} \int_Y \Big|\sum_{K \in \calD_{k,l}} \varepsilon_K 
1_K(x) P_{K,k}f(y_K) \Big |_{X}^{p}  \ud \nu(y) \ud x 
& \sim  \int_{\R^d} \Big|\sum_{K \in \calD_{k,l}} 
P_{K,k}f(x) \Big |_{X}^{p}  \ud x \\
&= \| f \|_{L^{p}(X)}^{p},
\end{split}
\end{equation*}
where we used the decoupling estimate \eqref{eq:decoupling} and noticed that $\sum_{K \in \calD_{k,l}} 
P_{K,k}f = f$.

We turn to the part  related to the terms $\langle P_{K,k}f, 1_J H_{I,J} \rangle$. We begin with
\begin{equation}\label{eq:ReadyForStein}
\begin{split}
 \Big\| & \sum_{K \in \calD_{k,l}} \sum_{I^{(k)}=J^{(k)}=K} 
a_{IJK} \langle P_{K,k}f, 1_J H_{I,J} \rangle  h_J \Big \|_{L^{p}(X)}^{p} \\
&\sim \E \Big\|  \sum_{K \in \calD_{k,l}}  \sum_{I^{(k)}=J^{(k)}=K} 
\varepsilon_J a_{IJK} \langle P_{K,k}f, 1_J H_{I,J} \rangle  \frac{1_J}{|J|^{1/2}} \Big \|_{L^{p}(X)}^{p},
\end{split}
\end{equation}
where we used the $\UMD$ property to introduce the random signs and then the fact that we can clearly replace $h_J$ by $|h_J| = 1_J / |J|^{1/2}$ due to
the random signs (the relevant random variables are identically distributed).
We write the inner sum as
$$
\sum_{I^{(k)}=J^{(k)}=K} 
\varepsilon_J a_{IJK} \langle P_{K,k}f, 1_J H_{I,J} \rangle  \frac{1_J}{|J|^{1/2}}
=\sum_{J^{(k)}=K} \varepsilon_J
 \Big\langle P_{K,k}f \sum_{I^{(k)}=K} a_{IJK}|J|^{1/2} H_{I,J} \Big\rangle_J  1_J.
$$
Notice also that
$$
\Big|\sum_{I^{(k)}=K} a_{IJK}|J|^{1/2} H_{I,J}\Big|
\le \frac{1}{|K|}\sum_{I^{(k)}=K} |I| =1.
$$
We continue from \eqref{eq:ReadyForStein}. Applying Stein's inequality  we get that
the last term in \eqref{eq:ReadyForStein} is dominated by
\begin{equation*}
\begin{split}
\E \Big\|   \sum_{K \in \calD_{k,l}}  \sum_{J^{(k)}=K} & \varepsilon_J
 P_{K,k}f \sum_{I^{(k)}=K} a_{IJK}|J|^{1/2} H_{I,J}   1_J \Big \|_{L^{p}(X)}^{p} \\
 & \lesssim \E \Big\|   \sum_{K \in \calD_{k,l}}  \sum_{J^{(k)}=K} \varepsilon_J
 P_{K,k}f   1_J \Big \|_{L^{p}(X)}^{p},
 \end{split}
 \end{equation*}
 where we used the Kahane contraction principle. From here the estimate is easily concluded by
 \begin{equation*}
 \begin{split}
 \E \Big\|   \sum_{K \in \calD_{k,l}}  \sum_{J^{(k)}=K} \varepsilon_J
 P_{K,k}f   1_J \Big \|_{L^{p}(X)}^{p}
 &= \E \Big\|   \sum_{K \in \calD_{k,l}}  \varepsilon_K \sum_{J^{(k)}=K} 
 P_{K,k}f   1_J \Big \|_{L^{p}(X)}^{p} \\
 &=\E \Big\|   \sum_{K \in \calD_{k,l}} \varepsilon_K 
 P_{K,k}f   \Big \|_{L^{p}(X)}^{p}
 \sim \| f \|_{L^{p}(X)}^{p}.
\end{split}
\end{equation*}
Here we first changed the indexing of the random signs (using that for a fixed $x$ for every $K$ there is at most one $J$ as in the sum for which $1_J(x) \ne 0$)
and then applied the $\UMD$ property. This finishes the proof.
\end{proof}

\begin{thm}
Let $T$ be a linear $\omega$-CZO and $X$ be a $\UMD$ space with type $r \in (1,2]$ and cotype $q \in [2, \infty)$.
If $\omega \in \operatorname{Dini}_{1 / \min(r, q')}$, we have
$$
\|Tf \|_{L^p(X)} \lesssim \|f\|_{L^p(X)}
$$
for all $p \in (1,\infty)$.
\end{thm}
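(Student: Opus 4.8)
\emph{The plan} is to feed the one-parameter representation theorem into the $\UMD$-valued model-operator estimates, carry everything out at the exponent $p=2$ where type and cotype behave self-dually, and then pass to the full range by vector-valued Calder\'on--Zygmund theory. Since $r\le 2$ and $q'\le 2$ we have $\operatorname{Dini}_{1/\min(r,q')}\subset\operatorname{Dini}_{1/2}\subset\operatorname{Dini}_0$, so $\omega\in\operatorname{Dini}_{1/2}$ and Theorem \ref{thm:rep1par} applies in the linear case $n=1$:
$$
\langle Tf,g\rangle=C_T\,\E_\sigma\sum_{k=0}^{\infty}\sum_{u=0}^{c_{d,1}}\omega(2^{-k})\,\langle V_{k,u,\sigma}f,g\rangle,
$$
where each $V_{k,u,\sigma}$ is a standard shift $S_{k,k}$, a modified shift $Q_k$, or (only when $k=0$) a paraproduct, in the grid $\calD_\sigma$. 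Pairing against $g\in L^{p'}(X^*)$ (note $X^*$ is again $\UMD$) reduces the theorem to bounding each $V_{k,u,\sigma}$ on $L^p(X)$ with operator norm at most $C(k+1)^{\beta}$ for a fixed $\beta$, uniformly in $\sigma$ and $u$; the convergence of the series and the validity of the identity for $X$-valued $f$ then follow from these uniform bounds together with a density argument.

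\emph{For the model operators} I would work at $p=2$. The paraproduct satisfies $\|\pi f\|_{L^2(X)}\lesssim\|f\|_{L^2(X)}$ by \eqref{eq:XLinPar}, with no complexity dependence. A standard shift $S_{k,k}$ is precisely the special case $H_{I,J}\in\{h_I,h_J\}$ of a modified shift of the form \eqref{eq:Q_kFORM1} or \eqref{eq:Q_kFORM2}, hence is covered by Proposition \ref{prop:ModShiftUMD}. Applying that proposition with $p=2$, and using $\min(r,2)=r$ and $\min(q',2)=q'$, both forms yield
$$
\|Q_k f\|_{L^2(X)}\lesssim(k+1)^{1/\min(r,q')}\|f\|_{L^2(X)},
$$
and similarly for $S_{k,k}$. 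Summing these estimates and using \eqref{eq:diniuse} gives
$$
\|Tf\|_{L^2(X)}\lesssim|C_T|\sum_{k=0}^{\infty}\omega(2^{-k})(k+1)^{1/\min(r,q')}\|f\|_{L^2(X)}\lesssim|C_T|\,\|\omega\|_{\operatorname{Dini}_{1/\min(r,q')}}\|f\|_{L^2(X)},
$$
which is finite by hypothesis, the expectation $\E_\sigma$ and the finite $u$-sum contributing only constants. Thus $T\colon L^2(X)\to L^2(X)$ boundedly.

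\emph{To reach all $p\in(1,\infty)$}, I would not interpolate the $Q_k$-bounds but instead invoke vector-valued Calder\'on--Zygmund theory. Since $\omega\in\operatorname{Dini}_0$, the scalar kernel $K$ is Dini-continuous, so $T$ is an operator-valued singular integral with kernel $K(x,y)\,\mathrm{Id}_X$ satisfying the Calder\'on--Zygmund estimates with Dini modulus $\omega$. Given the $L^2(X)$-boundedness just established, the vector-valued Calder\'on--Zygmund theorem (see \cite{HNVW1}) yields the weak-type $(1,1)$ estimate and hence $\|Tf\|_{L^p(X)}\lesssim\|f\|_{L^p(X)}$ for every $p\in(1,\infty)$, completing the proof.

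\emph{The main obstacle} is exactly the $p$-independence of the kernel-regularity requirement. Proposition \ref{prop:ModShiftUMD} at a general exponent produces the complexity exponent $1/\min(r,p)$ or $1/\min(q',p')$, which for small $p$ is strictly larger than $1/\min(r,q')$ and would force a stronger Dini condition; the device of running the model-operator estimates only at $p=2$ and then transferring to $L^p(X)$ by Calder\'on--Zygmund extrapolation is what keeps the threshold at $1/\min(r,q')$. A secondary, more routine, difficulty is making rigorous the passage from the scalar representation identity of Theorem \ref{thm:rep1par} to the genuinely $X$-valued statement, which rests on the uniform-in-$k$ bounds above and a standard approximation argument.
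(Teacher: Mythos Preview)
Your proof is correct and follows essentially the same route as the paper: apply the representation theorem, bound the model operators on $L^2(X)$ via Proposition~\ref{prop:ModShiftUMD} (exploiting that $\min(r,2)=r$ and $\min(q',2)=q'$) and \eqref{eq:XLinPar}, then upgrade to all $p\in(1,\infty)$ via the vector-valued weak $(1,1)$ estimate, which only needs $\omega\in\operatorname{Dini}_0$. Your discussion of the ``main obstacle'' matches precisely the reasoning the paper gives in the Remark preceding Proposition~\ref{prop:ModShiftUMD} for why one works at $p=2$ first.
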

\begin{proof}
Apply Theorem \ref{thm:rep1par} to simple vector-valued functions. By Proposition \ref{prop:ModShiftUMD} and the $X$-valued boundedness of the paraproducts \eqref{eq:XLinPar}
we conclude that
$$
\|Tf \|_{L^2(X)} \lesssim \|f\|_{L^2(X)}.
$$
As the weak type $(1,1)$ follows from this even with just the assumption $\omega \in \operatorname{Dini}_{0}$, we can conclude the proof by the standard interpolation and duality method.
\end{proof}

\subsection*{The multilinear case}
Let $(X_1, \ldots, X_{n+1})$ be $\UMD$ spaces and $Y_{n+1}^* = X_{n+1}$.
Assume that there is an $n$-linear mapping $X_1 \times \cdots \times X_n \to Y_{n+1}$, which we denote with the product notation
$(x_1, \ldots, x_n) \mapsto \prod_{j=1}^n x_j$, so that
$$
\Big| \prod_{j=1}^n x_j \Big|_{Y_{n+1}} \le \prod_{j=1}^n |x_j|_{X_j}.
$$
With just this setup it makes sense to extend an $n$-linear SIO (or some other suitable $n$-linear operator) $T$ using the formula
\begin{equation}
\label{e:formal1}
\begin{split}
&T(f_1, \ldots, f_n)(x) = \sum_{j_1, \ldots, j_n}  T(f_{1,j_1}, \ldots,f_{n,j_n}) (x) \prod_{k=1}^n e_{k,j_k}, \qquad x\in \R^d, 
\\ &f_{k}= \sum_{j_k}  e_{k,j_k} f_{k,j_k}, \qquad f_{k,j_k}\in L^\infty_c,\,e_{k,j_k}\in X_k.
\end{split}
\end{equation}
In the bilinear case $n=2$ the existence of such a product is the only assumption that we will need. The bilinear case is somewhat harder than the linear case,
but the $n \ge 3$ case is by far the most subtle. Indeed, for $n \ge 3$ we will
need a more complicated setting for the tuple of spaces $(X_1, \ldots, X_{n+1})$  -- the idea is to model the H\"older type structure typical of concrete examples of Banach $n$-tuples,  such as that of non-commutative $L^p$ spaces with the exponents $p$ satisfying the natural H\"older relation.
We will borrow this setting from \cite{DLMV1}. In \cite{DLMV1} it is shown in detail how natural tuples of non-commutative $L^p$ spaces fit to this abstract framework.
While we borrow the setting, the proof is significantly different. First, we have to deal with the more complicated modified shifts. Second, even in the standard shift
case the proof in \cite{DLMV1} is -- by its very design -- extremely costly on its complexity dependency. To circumvent this we need a new strategy.

For $m \in \N$ we write $\calJ_m:= \{1, \dots, m\}$ and denote the set of permutations of $\mathcal J\subset \mathcal J_m$ by $\Sigma(\mathcal J)$.   We write $\Sigma(m) = \Sigma(\calJ_m)$. %Furthermore, we say that $p_1,\ldots,p_m$ is a H\"older tuple of exponents if
%\begin{equation}
%\label{e:holder}
%1<p_1,\ldots, p_{m}<\infty, \qquad \sum_{j=1}^{m} \textstyle \frac{1}{p_j} = 1.
%\end{equation} 

Next, we fix an associative algebra $\mathcal A$ over $\mathbb C$, and denote the associative operation
$\mathcal A \times \mathcal A \to \mathcal A$ by $ (e,f)\mapsto ef$.
We  assume that there exists a subspace $\mathcal L^1 $ of $\mathcal A$ and a linear functional $\tau:\mathcal L^1\to \C$, which we refer to as \emph{trace}.
Given  an $m$-tuple $(X_1,\ldots, X_m)$ of Banach subspaces $X_j \subset \mathcal A$, we construct the seminorm  
\begin{equation}
\label{e:YJnorm2}
|e|_{  Y(X_1,\ldots, X_m)} = \sup\Big\{ \Big|\tau\Big( e \prod_{\ell=1}^{m} e_{\sigma(\ell)}\Big)\Big|: \sigma \in\Sigma(m), |e_j|_{  X_j}=1, j =1,\ldots,m \Big\}
\end{equation}
on the subspace 
\begin{equation}
\label{e:subspace}
Y(X_1,\ldots, X_m)=
\Big\{e\in \mathcal A: e\prod_{\ell=1}^{m} e_{\sigma(\ell)} \in \mathcal L^1 \, \forall \sigma \in \Sigma(m), \, e_j \in X_j,\, j=1,\ldots, m  \Big\}.
\end{equation}

For a Banach subspace $X \subset \mathcal A$ and $y \in Y(X)$ we can define the mapping $\Lambda_y \in X^*$ by the formula $\Lambda_y(x) := \tau(yx)$, since by the definition $yx \in \mathcal L^1$ and
$|\tau(yx)| \le |y|_{Y(X)} |x|_X$.
We say that a Banach subspace $X$ of $\mathcal A $ is admissible if the following holds.
\begin{enumerate}
\item $Y(X)$ is a Banach space with respect to $| \cdot|_{Y(X)}$.
\item The mapping $y \mapsto \Lambda_y$ from $Y(X)$ to $X^*$ is surjective.
\item For each $x \in X, \  y\in Y(X)$, we also have $xy\in \mathcal L^1$ and 
\begin{equation}\label{e:commtrace}
\tau(xy)=\tau(yx).
\end{equation}
\end{enumerate}
If $X$ is admissible, then the map $y \mapsto \Lambda_y$ is an isometric bijection from $Y(X)$ onto $X^*$, and we identify   
$Y(X)$ with $X^*$.
The following is \cite[Lemma 3.10]{DLMV1}.
\begin{lem} \label{lem:YJ}
Let $X$ be admissible and reflexive (for instance, $X$ is admissible and $\UMD$). If $Y(X)$ is also admissible, then $Y(Y(X))=X$ as sets and $|x|_{Y(Y(X))}= |x |_X$ for all $x \in X$.
\end{lem}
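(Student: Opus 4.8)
The plan is to iterate the identification ``$Z$ admissible $\Rightarrow$ the map $y \mapsto \Lambda_y = \tau(y\,\cdot\,)$ is an isometric bijection $Y(Z) \to Z^*$'' (recorded just above the statement), first with $Z = X$ and then with $Z = Y(X)$, and to check that all the identifications are realized by the \emph{same} concrete trace pairing, so that after composing with the canonical embedding of $X$ everything collapses to the inclusion $X \hookrightarrow Y(Y(X))$.

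First I would record the maps supplied by the hypotheses. Since $X$ is admissible, $\Phi \colon Y(X) \to X^*$, $\Phi(y) = \Lambda_y = \tau(y\,\cdot\,)$, is an isometric bijection; since $Y(X)$ is admissible, $\Psi \colon Y(Y(X)) \to Y(X)^*$, $\Psi(e) = \tau(e\,\cdot\,)$, is an isometric bijection. The Banach-space adjoint of an isometric isomorphism is again one, so $\Phi^* \colon X^{**} \to Y(X)^*$ is an isometric bijection, and since $X$ is reflexive the canonical embedding $J \colon X \to X^{**}$ is a surjective isometry. Next I would dispatch the easy inclusion $X \subseteq Y(Y(X))$ with matching norms: for $x \in X$, property (3) of admissibility of $X$ gives $xy \in \mathcal{L}^1$ and $\tau(xy) = \tau(yx) = \Lambda_y(x)$ for every $y \in Y(X)$; the membership $xy \in \mathcal{L}^1$ is precisely $x \in Y(Y(X))$, and since $\Phi$ carries the unit sphere of $Y(X)$ onto that of $X^*$,
\[
|x|_{Y(Y(X))} = \sup_{|y|_{Y(X)} \le 1} |\tau(xy)| = \sup_{|\phi|_{X^*} \le 1} |\phi(x)| = |x|_X .
\]
Thus $X \hookrightarrow Y(Y(X))$ is an isometric inclusion, and it remains only to prove it is onto.

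For the reverse inclusion I would consider the composite $\Theta := \Psi^{-1} \circ \Phi^* \circ J \colon X \to Y(Y(X))$. It is a bijection, being a composition of bijections ($J$ surjective by reflexivity, $\Phi^*$ bijective since $\Phi$ is, $\Psi^{-1}$ bijective since $Y(X)$ is admissible). The claim is that $\Theta$ is just the inclusion map. Indeed, for $x \in X$ and $y \in Y(X)$,
$$
\Phi^*(J(x))(y) = J(x)(\Phi(y)) = \Lambda_y(x) = \tau(yx) = \tau(xy) = \Psi(x)(y),
$$
where property (3) of admissibility of $X$ is used in the middle equality and $x \in Y(Y(X))$ (from the previous step) makes $\Psi(x)$ meaningful; hence $\Psi(x) = \Phi^*(J(x))$, i.e. $\Theta(x) = x$. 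Since $\Theta$ is a bijection onto $Y(Y(X))$ that coincides with the inclusion, that inclusion is surjective, so $Y(Y(X)) = X$ as sets, and the norm identity is the display above (equivalently, $\Theta$ is isometric).

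The step I expect to be the main obstacle is not any estimate but the bookkeeping: ensuring that the iterated duality is implemented by the \emph{same} pairing $w \mapsto \tau(w\,\cdot\,)$ at each stage, and that when this pairing is restricted to $X \subseteq \mathcal{A}$ it agrees with the canonical image $J(x)$ rather than merely with some abstract isometric copy of it. The trace commutation $\tau(xy) = \tau(yx)$ from admissibility is exactly the ingredient that aligns these two descriptions and lets $\Theta = \Psi^{-1}\circ\Phi^*\circ J$ be the identity inclusion; the reflexivity of $X$ is what makes $J$, and hence $\Theta$, surjective.
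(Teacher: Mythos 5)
Your proof is correct and complete. The paper does not give its own argument for this statement---it is imported verbatim as \cite[Lemma 3.10]{DLMV1}---so there is no internal proof to compare against, but the double-duality route you take is the natural one. Admissibility condition (3) for $X$ supplies both the isometric set inclusion $X\subseteq Y(Y(X))$ (since $xy\in\mathcal L^1$ for all $y\in Y(X)$) and the trace commutation $\tau(xy)=\tau(yx)$ that makes $\Phi^{*}\circ J$ and $\Psi$ agree on $X$; reflexivity then forces $\Theta=\Psi^{-1}\circ\Phi^{*}\circ J$ to be a bijection onto $Y(Y(X))$ which coincides with the inclusion, so the inclusion is onto. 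The one small point worth making explicit, which your writeup handles only implicitly, is why $\Psi^{-1}$ exists: admissibility condition (2) for $Y(X)$ gives surjectivity of $\Psi$, while injectivity comes from $\Psi$ being an isometry together with the fact that $|\cdot|_{Y(Y(X))}$ is a genuine norm, not merely a seminorm, because admissibility condition (1) for $Y(X)$ requires $Y(Y(X))$ to be a Banach space under it.
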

If $X, X_1, \dots, X_m$ are Banach spaces we write $X=Y(X_1, \dots, X_m)$ to mean that $X$ and $Y(X_1, \dots, X_m)$ coincide as sets,
$Y(X_1, \dots, X_m)$ is a Banach space with the norm $| \cdot |_{Y(X_1, \dots, X_m)}$, and that the norms are equivalent, that is,
$|x|_X \sim |x|_{Y(X_1, \dots, X_m)}$ for all $x \in X$.

\begin{defn}[$\UMD$ H\"older pair] \label{defn:productsys0} 
Let $X_1$,  $X_2$ be admissible spaces. We say that $\{X_1,X_2\}$ is a \emph{$\UMD$ H\"older pair}  
if $X_1$ is a $\UMD$ space and $X_2={Y(}X_1)$. 
%Equivalently,  $\{X_1,X_2\}$ is a $\UMD$ is H\"older pair if 
%$X_2$ is a $\UMD$ space and  $X_1={Y(}X_2)$.
\end{defn}

\begin{defn}[{$\UMD$ H\"older $m$-tuple, $m\geq 3$}] \label{defn:productsys1} 
Let $X_1,\ldots, X_m$ be admissible spaces. We say that $\{X_1,\ldots, X_{m}\}$ is a \emph{$\UMD$ H\"older $m$-tuple} if
  the following  properties hold.
\vskip2mm \noindent  \textbf{P1.} For all $j_0\in \mathcal J_m$ there holds
\[
X_{j_0}=Y\left(\left\{X_{j}: j\in \mathcal J_{m}\setminus\{j_0\}\right\}\right).
\]
\vskip2mm \noindent \textbf{P2.}
If $1\leq k \leq m-2$ and $\mathcal J=\{j_1<j_2<\cdots <j_k\}\subset \mathcal J_m$, then $Y(X_{j_1}, \dots, X_{j_k})$ is an admissible 
Banach space with the norm \eqref{e:YJnorm2}  and
\begin{equation}\label{eq:SubHolder}
\{X_{j_1}, \dots, X_{j_k}, Y(X_{j_1}, \dots, X_{j_k})\}
\end{equation}
is a $\UMD$ H\"older $(k+1)$-tuple.
\end{defn}

The following is a key consequence of the definition. 
Let  $m \ge 3$ and $\{X_1,\ldots, X_{m}\}$   be a $\UMD$ H\"older $m$-tuple.  
Then according to P2 the pair $\{X_{j_0},{Y(}X_{j_0})\}$ is a  $\UMD$ H\"older pair, which by Definition \ref{defn:productsys0}
implies that $X_{j_0}$ and $Y(X_{j_0})$ are $\UMD$ spaces. The inductive nature of the definition then ensures that  
each $Y(X_{j_1}, \dots, X_{j_k})$ appearing in \eqref{eq:SubHolder}  is a $\UMD$ space.

Notice also the following.
Let $m \ge 2$ and   $\{X_1, \dots, X_m\}$ be  a $\UMD$ $m$-H\"older tuple.  
Let $e_j \in X_j$ for $j \in \calJ_{m}$. For each $\si \in \Sigma(m)$,
as $X_{\si(1)}=Y(X_{\si(2)}, \dots , X_{\si(m)})$, we necessarily have
$\prod_{j=1}^{m} e_{\si(j)} \in \calL^1$ and 
$$
|\tau(e_{\si(1)} \cdots e_{\si(m)}) |
\le |e_{\si(1)}|_{Y(X_{\si(2)}, \dotsm, X_{\si(m)})}\prod_{j=2}^{m} | e_{\si(j)} |_{X_{\si(j)}}
\sim \prod_{j=1}^{m} | e_{j} |_{X_{j}}.
$$
Moreover, by \eqref{e:commtrace} we have
\begin{equation}\label{eq:sonice}
\tau( e_1 \cdots e_m) = \tau (e_m e_1 \cdots e_{m-1}) =  \tau (e_{m-1} e_m e_1 \cdots e_{m-2}) = \cdots .
\end{equation}
We have the following H\"older type inequality.
\begin{lem}\label{lem:nice}
Let $\{X_1,\ldots, X_{m}\}$ be a $\UMD$ H\"older tuple. Then we have
\begin{align*}
| e v|_{Y(X_{m})}
\lesssim
|e|_{Y(Y(X_1, \ldots, X_k))}  |v|_{Y(Y(X_{k+1}, \ldots, X_{m-1}))}. 
\end{align*}
\end{lem}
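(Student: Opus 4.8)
The plan is to reduce the inequality to the H\"older-type trace bound that is automatic for any $\UMD$ H\"older tuple, applied to the triple $\{Y(A),Y(B),X_m\}$, where I abbreviate $A:=Y(X_1,\dots,X_k)$ and $B:=Y(X_{k+1},\dots,X_{m-1})$; thus the claim reads $|ev|_{Y(X_m)}\lesssim |e|_{Y(A)}\,|v|_{Y(B)}$, and I may assume $1\le k\le m-2$ (the remaining cases are trivial, as then $A$ or $B$ reduces to the scalars).

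First I would assemble the structural facts. Since $1\le k\le m-2$, property P2 of Definition~\ref{defn:productsys1} applied to the index blocks $\{1,\dots,k\}$ and $\{k+1,\dots,m-1\}$ shows that $A$ and $B$ are admissible and that $\{X_1,\dots,X_k,A\}$, $\{X_{k+1},\dots,X_{m-1},B\}$ are $\UMD$ H\"older tuples; by the remark following Definition~\ref{defn:productsys1} (and Definition~\ref{defn:productsys0} when a block has one element) all of $A$, $B$, $Y(A)$, $Y(B)$ are $\UMD$, hence reflexive, so $Y(A)=A^{*}$, $Y(B)=B^{*}$ isometrically and, by Lemma~\ref{lem:YJ}, $Y(Y(A))=A$, $Y(Y(B))=B$; likewise $X_m$ is admissible with $Y(X_m)=X_m^{*}$ (P2 with $\calJ=\{m\}$). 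The substantive input is then the consistency of the $Y(\cdot)$-construction, namely the identities
$$
Y(A)=Y\big(Y(B),X_m\big),\qquad Y(B)=Y\big(Y(A),X_m\big),\qquad X_m=Y\big(Y(A),Y(B)\big)
$$
with equivalent norms; these are instances of the fact that in a $\UMD$ H\"older tuple the bidual $Y(Y(X_{j_1},\dots,X_{j_\ell}))$ of a sub-block coincides with $Y$ of the complementary block. Each is verified by unwinding the seminorm \eqref{e:YJnorm2}, using $X_m=Y(X_1,\dots,X_{m-1})$ from P1, the sub-tuple H\"older structure \eqref{eq:SubHolder}, and the duality identifications just listed --- this associativity of the construction is of the type established, in exactly this framework, in \cite{DLMV1} and may be quoted from there. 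Granting the three displayed identities, P1 holds for $\{Y(A),Y(B),X_m\}$, while P2 for this triple is immediate from $Y(Y(A))=A$, $Y(Y(B))=B$, the admissibility of $Y(X_m)$, and the fact that $\{A^{*},A\}$, $\{B^{*},B\}$, $\{X_m,Y(X_m)\}$ are $\UMD$ H\"older pairs; hence $\{Y(A),Y(B),X_m\}$ is a $\UMD$ H\"older triple.

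It then remains to apply the H\"older trace estimate recorded just before the present lemma: since $\{Y(A),Y(B),X_m\}$ is a $\UMD$ H\"older tuple, for $e\in Y(A)$, $v\in Y(B)$ and every $e'\in X_m$ we have $eve'\in\mathcal L^{1}$ and $|\tau(eve')|\lesssim |e|_{Y(A)}\,|v|_{Y(B)}\,|e'|_{X_m}$. Taking the supremum over $e'\in X_m$ with $|e'|_{X_m}\le 1$ and recalling the definition \eqref{e:YJnorm2} of $|\cdot|_{Y(X_m)}$ gives $|ev|_{Y(X_m)}\lesssim |e|_{Y(A)}|v|_{Y(B)}$. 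The one genuinely nontrivial step is the associativity/consistency identity for the $Y(\cdot)$-construction used above; everything else is bookkeeping with P1--P2, Lemma~\ref{lem:YJ}, and Definition~\ref{defn:productsys0}. One may alternatively reorganize the argument as an induction on $m$ --- for $k\ge2$ applying the case ``$k'=1$'' to the shorter $\UMD$ H\"older tuple $\{Y(A),X_{k+1},\dots,X_{m-1},X_m\}$ (using $Y(Y(Y(A)))=Y(A)$), with base case $m=3$ being the H\"older estimate for triples together with $Y(Y(X_i))=X_i$ --- but this still rests on the same associativity input in the weaker guise $Y(Y(X_1,\dots,X_k))=Y(X_{k+1},\dots,X_m)$.
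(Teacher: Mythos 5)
Your plan is genuinely different from the paper's: rather than peeling off the $Y$-norm layers directly, you try to upgrade the three spaces $Y(A)$, $Y(B)$, $X_m$ (with $A=Y(X_1,\dots,X_k)$, $B=Y(X_{k+1},\dots,X_{m-1})$) to a $\UMD$ H\"older triple and then invoke the ready-made trace--H\"older bound. The paper instead proves the estimate by a direct chain of duality tests: $|\tau(eve_m)|\le |e|_{Y(A)}\,|ve_m|_{A}$, then $|ve_m|_A$ is tested against $u_{\sigma_1(1)}\cdots u_{\sigma_1(k)}$ to pick up $|v|_{Y(B)}$, and the last factor $|e_m u_{\sigma_1(1)}\cdots u_{\sigma_1(k)}|_B$ is tested once more and closed with P1, $X_m=Y(X_1,\ldots,X_{m-1})$. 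Nothing beyond these one-directional tests is ever used.

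The gap in your version is the asserted P1 identity for the triple, in particular the two-sided equivalence $X_m = Y\bigl(Y(A),Y(B)\bigr)$. The inequality $|e|_{Y(Y(A),Y(B))}\lesssim |e|_{X_m}$ does follow by the iterated-duality mechanism; but the converse $|e|_{X_m}\lesssim |e|_{Y(Y(A),Y(B))}$ runs into an interleaving obstruction. The norm $|e|_{Y(X_1,\dots,X_{m-1})}$ is a supremum over all permutations $\sigma\in\Sigma(m-1)$, including orderings such as $e_1\,e_{k+1}\,e_2\,e_{k+2}\cdots$ that mix the two blocks. Such a product cannot be written as $f\cdot g$ or $g\cdot f$ with $f$ a block-$\{1,\dots,k\}$ product and $g$ a block-$\{k+1,\dots,m-1\}$ product, and the only rearrangement furnished by the axioms is the cyclic one \eqref{eq:sonice} from \eqref{e:commtrace}, which does not separate the blocks. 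So the identity you are leaning on is not a consequence of Definitions \ref{defn:productsys0}--\ref{defn:productsys1} and Lemma \ref{lem:YJ} alone, and it is not something you can simply "quote" from \cite{DLMV1} in this abstract generality; it would need its own argument (and may in fact fail abstractly). Relatedly, the ancillary claims $Y(A)=Y(Y(B),X_m)$ and $Y(B)=Y(Y(A),X_m)$ have the same issue.

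The fix is to observe that you never need P1 for the putative triple; to run the H\"older bound you only use the \emph{one-sided} embedding $|e|_{Y(Y(B),X_m)}\lesssim |e|_{Y(A)}$. If you prove that embedding, you will find yourself doing exactly the paper's computation: testing $|e|_{Y(Y(B),X_m)}$ against $g\,e_m$, then $|ge_m|_{Y(X_1,\dots,X_k)}$ against $u_{\sigma_1(1)}\cdots u_{\sigma_1(k)}$, then $|e_m u_{\sigma_1(1)}\cdots u_{\sigma_1(k)}|_B$ against the last block, and finally using P1 for $\{X_1,\dots,X_m\}$. So the structural repackaging does not save any work, and the claim that the triple is a full $\UMD$ H\"older tuple is an overclaim you should retract. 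Either prove and use only the one-sided embedding, or — simpler — drop the detour and run the duality chain directly as in the paper.
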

\begin{proof}
To estimate $|ev|_{Y(X_{m})}$ we need to estimate
$$
|\tau(  eve_m)|
$$
with an arbitrary $e_m$ with $|e_m|_{X_m} = 1$. First, we bound this with
$$
|e|_{Y(Y(X_1, \ldots, X_k))} | ve_m |_{Y(X_1, \ldots, X_k)}.
$$
To estimate $ |ve_m |_{Y(X_1, \ldots, X_k)}$ we need to estimate
$$
|\tau( ve_m u_{\sigma_1(1)} \cdots u_{\sigma_1(k)})|
$$
with arbitrary $|u_j|_{X_j} = 1$ and $\sigma_1 \in \Sigma(k)$. We estimate this with
$$
|v|_{Y(Y(X_{k+1}, \ldots, X_{m-1}))} | e_m u_{\sigma_1(1)} \cdots u_{\sigma_1(k)})|_{Y(X_{k+1}, \ldots, X_{m-1})}.
$$
To estimate $| e_m u_{\sigma_1(1)} \cdots u_{\sigma_1(k)})|_{Y(X_{k+1}, \ldots, X_{m-1})}$ we need to estimate
$$
|\tau( e_m u_{\sigma_1(1)} \cdots u_{\sigma_1(k)} u_{\sigma_2(k+1)} \cdots u_{\sigma_2(m-1)} )|
$$
with an arbitrary permutation $\sigma_2$ of $\{k+1, \ldots, m-1\}$ and $|u_j|_{X_j} = 1$. Finally, we estimate this with
$$
|e_m|_{Y(X_1, \ldots, X_{m-1})} \sim |e_m|_{X_m} = 1.
$$
\end{proof}

In all of the statements below an arbitrary $\UMD$ H\"older tuple $\{X_1,\ldots, X_{n+1}\}$ is given.

\begin{prop}\label{prop:MultliModUMD}
Suppose that $Q_k$ is an $n$-linear modified shift and $f_j \colon \R^d \to X_j$. Let $1 < p_j < \infty$ 
with $\sum_{j=1}^{n+1} 1/p_j = 1$.
Then we have
$$
|\langle Q_k(f_1, \ldots, f_n), f_{n+1}\rangle| \lesssim (k+1)^{\alpha} \prod_{j=1}^{n+1} \| f_j \|_{L^{p_j}(X_j)},
$$
where
$$
\alpha = \frac{1}{\min( p_1', \ldots, p_{n+1}', s_{1}', \ldots, s_{n+1}')}
$$
and $X_j$ has cotype $s_j$.
\end{prop}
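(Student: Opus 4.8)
The plan is to run the $\UMD$-valued analogue of the argument for Proposition~\ref{prop:Qscalar}, using the more robust decomposition from the proof of Lemma~\ref{lem:CompModStand} in place of the elementary splitting used there, and replacing the square-function and maximal-function estimates by their $\UMD$-valued substitutes: Kahane--Khintchine \eqref{eq:KK}, the Kahane contraction principle, Stein's inequality, the decoupling estimate of Proposition~\ref{prop:decoupling}, and the H\"older inequality for the tuple (Lemma~\ref{lem:nice}). First I would reduce to the case that $Q_k$ has the form \eqref{e:modnshift}; every other symmetric form is treated identically after permuting the indices. Writing $\langle f_j,h_{I_j}^0\rangle=|I_j|^{1/2}\langle f_j\rangle_{I_j}$ and absorbing the factors $|I_j|^{1/2}$ into renormalized coefficients exactly as in the proof of Lemma~\ref{lem:CompModStand}, the difference $\prod_{j=1}^n\langle f_j\rangle_{I_j}-\prod_{j=1}^n\langle f_j\rangle_{I_{n+1}}$ splits into $\big[\prod\langle f_j\rangle_{I_j}-\prod\langle f_j\rangle_K\big]$ and $\big[\prod\langle f_j\rangle_K-\prod\langle f_j\rangle_{I_{n+1}}\big]$; telescoping the first through \eqref{eq:EPSplitting} and the second through the multilinear collapse \eqref{eq:MultilinCollapse}--\eqref{eq:nCollapseRelated}, the form $\langle Q_k(f_1,\dots,f_n),f_{n+1}\rangle$ becomes a finite sum of pieces $\langle A_m(f_1,\dots,f_n),f_{n+1}\rangle$ and $\langle U_m(f_1,\dots,f_n),f_{n+1}\rangle$, $m\in\{1,\dots,n\}$, precisely as in \eqref{eq:DefAm} and \eqref{eq:DefUm}. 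In each such piece the slot $n+1$ carries the cancellative Haar function, exactly one of the slots $m\in\{1,\dots,n\}$ carries a martingale-difference block ($P_{K,k-1}f_m$ in $A_m$, a single block $\Delta_{I_{n+1}^{(i+1)}}f_m$ in $U_m$), and the remaining slots carry plain averages.

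Next I would estimate a representative piece, say $A_m$; the piece $U_m$ is entirely analogous, the one structural difference being that in $U_m$ the sum over the $k$ scales already sits on the outside, which is if anything advantageous. Viewing $A_m(f_1,\dots,f_n)$ as a function with values in $Y_{n+1}=X_{n+1}^*$ (which is a $\UMD$ space), I would introduce random signs indexed by the relevant dyadic cubes via the $\UMD$ property and Kahane--Khintchine, and then strip the coefficients and the bounded functions built from the $h^0$'s by the Kahane contraction principle, using their normalization. The plain averages $\langle f_j\rangle_K$ and $\langle f_j\rangle_{I_j}$ are removed by Stein's inequality, and the block $P_{K,k-1}f_m$ is handled by applying the decoupling estimate of Proposition~\ref{prop:decoupling} on the subgrids $\calD_{k,l}$: this replaces $P_{K,k-1}f_m$ by decoupled averages over the auxiliary product space and, after the identity $\sum_{K\in\calD_{k,l}}P_{K,k}f_m=f_m$, eventually by $f_m$ itself. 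The resulting traced $Y_{n+1}$-valued quantity is then factored into $\prod_{j=1}^{n+1}\|f_j\|_{L^{p_j}(X_j)}$ by an iterated use of the H\"older inequality for the tuple (Lemma~\ref{lem:nice}), which is where the $\UMD$ H\"older structure of $\{X_1,\dots,X_{n+1}\}$ and the slot $n+1$ are brought back in.

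The complexity factor $(k+1)^\alpha$ is produced when the sum over the subgrids $\calD_{k,l}$, $l\in\{0,\dots,k\}$ (for $U_m$: the outer sum over the $k$ scales), is re-assembled. After introducing an independent set of random signs over $l$, each of the $k+1$ terms is bounded uniformly by $\prod_j\|f_j\|_{L^{p_j}(X_j)}$ exactly as in the linear Proposition~\ref{prop:ModShiftUMD}, and collapsing the randomized sum of the $k+1$ of them costs $(k+1)^{1/\theta}$, where $\theta$ is the type exponent of the Bochner space carrying the estimate for the slot on which the block sits. Since $A_m$ places the block on $f_m$, integrating out $f_m$ moves this block structure onto the dual side $(L^{p_m}(X_m))^*=L^{p_m'}(X_m^*)$, whose type exponent is $\min(p_m',s_m')$ because $X_m^*$ has type $s_m'$ when $X_m$ has cotype $s_m$ and $L^{p_m'}$ of a space of type $s_m'$ has type $\min(p_m',s_m')$. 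As $m$ ranges over $\{1,\dots,n\}$ and as the symmetric forms of $Q_k$ place the cancellative factor on slots other than $n+1$ (so that, after integrating out the corresponding function, the block structure is exercised on slot $n+1$ as well), the worst case is $\min_{1\le j\le n+1}\min(p_j',s_j')$, yielding $\alpha=1/\min(p_1',\dots,p_{n+1}',s_1',\dots,s_{n+1}')$. This scale-collapse is also the main obstacle, and the reason for running the direct argument rather than merely invoking Lemma~\ref{lem:CompModStand} (which would only give $\operatorname{Dini}_1$): in a general $\UMD$ space the crude Haar-multiplier bound costs a full power $(k+1)^1$ for the over-counted scales, so one must route the over-counting through the type of the correct dual Bochner space and keep careful track, across all the pieces $A_m$, $U_m$ and all symmetric forms, of which slot absorbs the loss so that the combined exponent is exactly the announced minimum.
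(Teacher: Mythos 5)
Your high-level plan — reduce to form \eqref{e:modnshift}, split via the $A_m$/$U_m$ decomposition of Lemma~\ref{lem:CompModStand}, separate scales with the lattices $\calD_{k,l}$ and random signs, then run Stein's inequality, decoupling, and Lemma~\ref{lem:nice} to get a uniform bound in $l$ — matches the paper. But the step that actually produces the exponent $\alpha$ is wrong, and it is the crux. You claim that collapsing the $l$-sum for $A_m$ costs $(k+1)^{1/\theta}$ with $\theta=\min(p_m',s_m')$, ``the type exponent of the Bochner space carrying the estimate for the slot on which the block sits,'' i.e. the slot $m$ of the $P_{K,k-1}f_m$ block. The paper does something different: it estimates $\|A_m(f_1,\dots,f_n)\|_{L^{p_{n+1}'}(Y(X_{n+1}))}$, and uses the type of $L^{p_{n+1}'}(X_{n+1}^*)$, namely $s=\min(p_{n+1}',s_{n+1}')$, which is attached to the slot $n+1$ carrying the cancellative Haar function $h_{I_{n+1}}$ (equivalently, to the space where the output function $A_m(f_1,\dots,f_n)$ takes values), and is \emph{independent of $m$}. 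The random signs are indexed by $l$ inside the norm $\|\cdot\|_{L^{p_{n+1}'}(X_{n+1}^*)}$, so it is the type of that Bochner space which governs the collapse; the slot holding the $P_{K,k-1}$-block plays no role here. You would discover this immediately from the linear case: in Proposition~\ref{prop:ModShiftUMD}, form \eqref{eq:Q_kFORM2} has the block $H_{I,J}$ on the input $f\in L^p(X)$, yet the cost is $(k+1)^{1/\min(r,p)}$ with $r$ the type of $X$ — not $\min(p',q')$, the type of $L^{p'}(X^*)$, as your ``dualize against the block slot'' rule would predict. For e.g. $X=L^4$ with $p=2$ these exponents differ ($1/2$ versus $3/4$), so this is not a cosmetic distinction.

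A second, related error: you write that as $m$ ranges over $\{1,\dots,n\}$ the $A_m$'s and $U_m$'s exhaust the losses at slots $1,\dots,n$ and the symmetric forms add slot $n+1$. In the paper the $\min$ over \emph{all} slots arises solely from covering the $n+1$ symmetric forms of $Q_k$ (one per choice of which slot carries the cancellative Haar, handled by duality via \eqref{eq:sonice}); for the fixed form \eqref{e:modnshift} every piece $A_m$, $U_m$ incurs the same cost $(k+1)^{1/\min(p_{n+1}',s_{n+1}')}$, which is potentially much better than the stated $\alpha$. Your mechanism would in fact have to be a genuinely different proof (dualizing $A_m$ against $f_m$, tracking the $P_{K,k-1}$-cancellation on the output side rather than the Haar-cancellation), and you have not carried that out; as written, the derivation of the complexity exponent does not go through.
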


\begin{proof}
We will assume that $Q_k$ is of the form \eqref{e:modnshift} -- the other cases follow by duality using the property \eqref{eq:sonice}.
We follow the ideas of the decomposition from the proof of Lemma \ref{lem:CompModStand}: we will estimate the terms $A_m$
and $U_m$, $m \in \{1, \ldots, n\}$, from there separately.

First,
we estimate the part  $A_m$ defined in \eqref{eq:DefAm}. We have that
$$
A_m(f_1, \dots, f_n)
= \sum_K A_{m,K}(f_1, \dots, f_n),
$$ 
where
\begin{equation*}
A_{m,K}(f_1, \dots, f_n):=\sum_{I_m^{(k)} =  \cdots = I_{n+1}^{(k)} =K} b_{m,K, (I_j)} \prod_{j=1}^{m-1} \langle f_j \rangle_K \cdot \langle P_{K, k-1} f_m \rangle_{I_m}
\cdot \prod_{j=m+1}^n \langle f_j \rangle_{I_j}  \cdot h_{I_{n+1}},
\end{equation*}
$$
b_{m,K, (I_j)}= b_{m,K, I_m, \dots, I_{n+1}} := \sum_{I_1^{(k)} =  \cdots = I_{m-1}^{(k)} =K}b_{K,(I_j)}
$$
and
$$
b_{K, (I_j)} = b_{K, I_1, \ldots, I_{n+1}} = |I_{n+1}|^{n/2} a_{K, (I_j)}.
$$
Here we have the normalization
\begin{equation}\label{eq:CoefbmNorm}
|b_{m,K, (I_j)}| \le \frac{|I_{n+1}|^{n+1/2}}{|K|^n} \sum_{I_1^{(k)} =  \cdots = I_{m-1}^{(k)} =K} 1 = \frac{|I_{n+1}|^{n-m+3/2}}{|K|^{n-m+1}}.
\end{equation}

Recall the grids $\calD_{k,l}$, $l \in \{0,1, \dots, k\}$, from \eqref{eq:SubLattice}. 
Let $$Z_{1, \ldots, n} := Y(X_{n+1}) = Y(Y(X_1, \ldots ,X_n)).$$ Recalling that $X_{n+1}^*$ is identified with $Y(X_{n+1})$,
we have that $L^{p_{n+1}'}(Z_{1, \ldots, n})$ has type $s:=\min(p_{n+1}', s_{n+1}')$.
Thus, there holds that
\begin{equation}\label{eq:UseOfType}
\begin{split}
 \| A_m(f_1, \dots, f_n) \|_{L^{p_{n+1}'}(Z_{1, \ldots, n})}
&\sim \Big( \E \Big \| \sum_{l=0}^{k} \varepsilon_l \sum_{K \in \calD_{k,l}} A_{m,K}(f_1, \dots, f_n) \Big \|_{L^{p_{n+1}'}(Z_{1, \ldots, n})}^s \Big)^{1/s} \\
& \lesssim \Big( \sum_{l=0}^{k} 
\Big \|  \sum_{K \in \calD_{k,l}} A_{m,K}(f_1, \dots, f_n) \Big \|_{L^{p_{n+1}'}(Z_{1, \ldots, n})}^s \Big)^{1/s}.
\end{split}
\end{equation}
In the first step we used the $\UMD$ property of $Z_{1, \ldots, n}$ and the Kahane--Khintchine inequality.
We see that to prove the claim it suffices to show the uniform bound
\begin{equation}\label{eq:EstSubAm}
\Big \|  \sum_{K \in \calD_{k,l}} A_{m,K}(f_1, \dots, f_n) \Big \|_{L^{p_{n+1}'}(Z_{1, \ldots, n})}
\lesssim \prod_{j=1}^n \| f_j \|_{L^{p_j}(X_j)}.
\end{equation}

We turn to prove \eqref{eq:EstSubAm}. To avoid confusion with the various $Y$ spaces, we denote
the decoupling space by $(W, \nu)$ in this proof.
The decoupling estimate \eqref{eq:decoupling} gives that the left hand side of \eqref{eq:EstSubAm}
is comparable to
\begin{equation*}
\begin{split}
\Big( &\E \int_W \int_{\R^d} 
\Big | \sum_{K \in \calD_{k,l}} \varepsilon_K 1_K(x) \sum_{I_m^{(k)} =  \cdots = I_{n+1}^{(k)} =K}  \\
&  b_{m,K, (I_j)} \prod_{j=1}^{m-1} \langle f_j \rangle_K \cdot \langle P_{K, k-1} f_m \rangle_{I_m}
\cdot \prod_{j=m+1}^n \langle f_j \rangle_{I_j}  \cdot h_{I_{n+1}}(w_K) \Big|_{Z_{1, \ldots, n}}^{p_{n+1}'}
\ud x \ud \nu(w) \Big)^{1/p_{n+1}'}.
\end{split}
\end{equation*}
Suppose that $m>1$; if $m=1$ one can start directly from \eqref{eq:AfterSteinLeft} below. 
Now, with a fixed $w \in W$ can use Stein's inequality with respect to the function $f_1$ to have that the previous term is dominated by 
\begin{equation}\label{eq:eq5}
\begin{split}
\Big( &\E \int_W \int_{\R^d} 
\Big | f_1(x) \sum_{K \in \calD_{k,l}} \varepsilon_K 1_K(x) \sum_{I_m^{(k)} =  \cdots = I_{n+1}^{(k)} =K} b_{m,K, (I_j)}  \\
& \prod_{j=2}^{m-1} \langle f_j \rangle_K \cdot \langle P_{K, k-1} f_m \rangle_{I_m}
\cdot \prod_{j=m+1}^n \langle f_j \rangle_{I_j}  \cdot h_{I_{n+1}}(w_K) \Big|_{Z_{1, \ldots, n}}^{p_{n+1}'}
\ud x \ud \nu(w) \Big)^{1/p_{n+1}'}.
\end{split}
\end{equation}
To move forward, notice that by Lemma \ref{lem:nice} we have
\begin{equation}\label{eq:key1}
\Big| e_1 \sum_k \prod_{j=2}^n e_{j,k} \Big|_{Z_{1,\ldots,n}} = 
\Big| e_1 \sum_k \prod_{j=2}^n e_{j,k} \Big|_{Y(X_{n+1})} \lesssim |e_1|_{X_1} \Big| \sum_k \prod_{j=2}^n e_{j,k} \Big|_{Z_{2, \ldots, n}},
\end{equation}
where $Z_{2, \ldots, n} = Y(Y(X_2, \ldots, X_n))$.
Having established \eqref{eq:key1} we can now dominate \eqref{eq:eq5} with
\begin{equation*}
\begin{split}
 \| &f_1\|_{L^{p_1}(X_1)} \Big(\E \int_W \int_{\R^d} 
\Big | \sum_{K \in \calD_{k,l}} \varepsilon_K 1_K(x) \sum_{I_m^{(k)} =  \cdots = I_{n+1}^{(k)} =K} \\
& b_{m,K, (I_j)} \prod_{j=2}^{m-1} \langle f_j \rangle_K \cdot \langle P_{K, k-1} f_m \rangle_{I_m}
\cdot \prod_{j=m+1}^n \langle f_j \rangle_{I_j}  \cdot h_{I_{n+1}}(w_K) \Big|_{Z_{2, \dots, n}}^{q_{2, \ldots, n}}
\ud x \ud \nu(w) \Big)^{1/q_{2, \ldots, n}},
\end{split}
\end{equation*}
where $q_{2, \ldots n}$ is defined by $1/q_{2, \ldots, n}= \sum_{j=2}^{n} 1/p_j$. 
We can continue this process. In the next step we argue as above but using the H\"older tuple $(X_2, \ldots, X_n, Y(X_2, \ldots, X_n))$.
Below we write $Z_{k_1, \ldots, k_2} = Y(Y(X_{k_1}, \ldots, X_{k_2}))$ and
$1/q_{k_1, \ldots, k_2} =  \sum_{j=k_1}^{k_2} 1/p_j$.
Iterating this we arrive at $\prod_{j=1}^{m-1}  \| f_j\|_{L^{p_j}(X_j)}$ multiplied by
\begin{equation}\label{eq:AfterSteinLeft}
\begin{split}
\Big(\E & \int_W \int_{\R^d} 
\Big | \sum_{K \in \calD_{k,l}} \varepsilon_K 1_K(x) \sum_{I_m^{(k)} =  \cdots = I_{n+1}^{(k)} =K} \\
& b_{m,K, (I_j)} \langle P_{K, k-1} f_m \rangle_{I_m}
\cdot \prod_{j=m+1}^n \langle f_j \rangle_{I_j}  \cdot h_{I_{n+1}}(w_K) \Big|_{Z_{m, \dots, n}}^{q_{m, \ldots, n}}
\ud x \ud \nu(w) \Big)^{1/q_{m, \ldots, n}}.
\end{split}
\end{equation}

We assume that $m<n$; if $m=n$ one is already at \eqref{eq:AfterSteinRight} below. 
Let $K \in \calD_{k,l}$. We define the kernel
\begin{equation}\label{eq:Defbm}
b_{m,K}(x, y_m, \dots, y_n)
:=\sum_{I_m^{(k)} =  \cdots = I_{n+1}^{(k)} =K} |K|^{n-m+1}b_{m,K, (I_j)}
\prod_{j=m}^n \frac{1_{I_j}(y_j)}{|I_j|}h_{n+1}(x)
\end{equation}
so that
\begin{equation*}
\begin{split}
\sum_{I_m^{(k)} =  \cdots = I_{n+1}^{(k)} =K} 
&b_{m,K, (I_j)} \langle P_{K, k-1} f_m \rangle_{I_m}
\cdot \prod_{j=m+1}^n \langle f_j \rangle_{I_j}  \cdot h_{I_{n+1}}(w_K) \\
&= \frac{1}{|K|^{n-m+1}} 
\int_{K^{n-m+1}}b_{m,K}(w_K, y_m, \dots, y_n)P_{K, k-1} f_m(y_m) 
\prod_{j=m+1}^n  f_j(y_j) \ud y.
\end{split}
\end{equation*}
Using this representation in \eqref{eq:AfterSteinLeft} we can use Stein's inequality with respect to the function $f_n$ to have that
the term in \eqref{eq:AfterSteinLeft} is dominated by 
\begin{equation*}
\begin{split}
\Big(\E & \int_W \int_{\R^d} 
\Big | \Big( \sum_{K \in \calD_{k,l}} \varepsilon_K   \frac{1}{|K|^{n-m}}  \int_{K^{n-m}} \\ 
&b_{m, K}(w_K, y_m, \dots,y_{n-1}, x)P_{K, k-1} f_m(y_m) 
\prod_{j=m+1}^{n-1}  f_j(y_j)  \ud y \Big) f_n(x) \Big|_{Z_{m, \dots, n}}^{q_{m, \ldots, n}}
\ud x \ud \nu(w) \Big)^{1/q_{m, \ldots, n}},
\end{split}
\end{equation*}
which is (again by Lemma \ref{lem:nice}) dominated by $\| f_n \|_{L^{p_n}(X_n)}$ multiplied with
\begin{equation*}
\begin{split}
\Big(\E & \int_W \int_{\R^d} 
\Big | \sum_{K \in \calD_{k,l}} \varepsilon_K   \frac{1}{|K|^{n-m}}  \int_{K^{n-m}} \\ 
&b_{m,K}(w_K, y_m, \dots, y_{n-1}, x)P_{K, k-1} f_m(y_m) 
\prod_{j=m+1}^{n-1}  f_j(y_j)  \ud y \Big|_{Z_{m, \dots, n-1}}^{q_{m, \ldots, n-1}}
\ud x \ud \nu(w) \Big)^{1/q_{m, \ldots, n-1}}.
\end{split}
\end{equation*}

Next, we fix the point $w \in W$ and consider the term
\begin{equation}\label{eq:Fixedw}
\begin{split}
\E  \int_{\R^d} 
 &\Big | \sum_{K \in \calD_{k,l}} \varepsilon_K   \frac{1}{|K|^{n-m}}  \int_{K^{n-m}} \\ 
&b_{m,K}(w_K, y_m, \dots, y_{n-1}, x)P_{K, k-1} f_m(y_m) 
\prod_{j=m+1}^{n-1}  f_j(y_j)  \ud y \Big|_{Z_{m, \dots, n-1}}^{q_{m, \ldots, n-1}}
\ud x.
\end{split}
\end{equation}
To finish the estimate of \eqref{eq:AfterSteinLeft} it is enough to dominate \eqref{eq:Fixedw} by $\prod_{j=m}^{n-1} \| f_j \|_{L^{p_j}(X_j)}^{q_{m, \ldots, n-1}}$ uniformly in $w \in W$.
Recalling \eqref{eq:Defbm}  we can write \eqref{eq:Fixedw} as
\begin{equation*}
\begin{split}
\E  \int_{\R^d} 
 &\Big | \sum_{K \in \calD_{k,l}} \varepsilon_K 
 \sum_{I_m^{(k)} =  \cdots = I_{n+1}^{(k)} =K} \\
&|K| b_{m,K, (I_j)} \langle P_{K, k-1} f_m \rangle_{I_m}
\prod_{j=m+1}^{n-1} \langle f_j \rangle_{I_j} \frac{1_{I_{n}(x)}}{|I_n|}  h_{I_{n+1}}(w_K) \Big|_{Z_{m, \dots, n-1}}^{q_{m, \ldots, n-1}} \ud x,
\end{split}
\end{equation*}
which is further comparable to
\begin{equation}\label{eq:AfterFirstSteinRight}
\begin{split}
 \int_{\R^d} 
 &\Big | \sum_{K \in \calD_{k,l}} 
 \sum_{I_m^{(k)} =  \cdots = I_{n}^{(k)} =K} 
 \Big(\sum_{I_{n+1}^{(k)}=K} \frac{|K|}{|I_n|^{1/2}} b_{m,K, (I_j)} h_{I_{n+1}}(w_K)\Big) \\
& \langle P_{K, k-1} f_m \rangle_{I_m}
\prod_{j=m+1}^{n-1} \langle f_j \rangle_{I_j} h_{I_{n}}(x)   \Big|_{Z_{m, \dots, n-1}}^{q_{m, \ldots, n-1}} \ud x.
\end{split}
\end{equation}
In the last step we were able to replace $1_{I_n} / |I_n|^{1/2}$ with $h_{I_n}$ because of the random signs, after which we removed
the signs using $\UMD$.
Recalling the size of the coefficients $b_{m,K, (I_j)}$ from \eqref{eq:CoefbmNorm} we see that
$$
\Big|\sum_{I_{n+1}^{(k)}=K} \frac{|K|}{|I_n|^{1/2}} b_{m,K, (I_j)} h_{I_{n+1}}(w_K)\Big| 
\le \frac{|I_{n}|^{n-m+1/2}}{|K|^{n-m}},
$$
since there is only one $I_{n+1}$ such that $h_{I_{n+1}}(w_K) \not=0$.
It is seen that after applying decoupling \eqref{eq:AfterFirstSteinRight} is like \eqref{eq:AfterSteinLeft} but the degree of linearity is one less.
Therefore, iterating this we see that \eqref{eq:Fixedw} satisfies the desired bound if we can estimate 
\begin{equation}\label{eq:AfterSteinRight}
\begin{split}
\E  \int_{\R^d} 
 &\Big | \sum_{K \in \calD_{k,l}} \varepsilon_K 
 \sum_{I_m^{(k)}=I_{m+1}^{(k)} =K} b_{K, I_m, I_{m+1}}
\langle P_{K, k-1} f_m \rangle_{I_m}
 h_{I_{m+1}}(x)   \Big|_{X_m}^{p_m} \ud x,
\end{split}
\end{equation}
where
$$
|b_{K, I_m, I_{m+1}}| \le \frac{|I_m|^{3/2}}{|K|},
$$
by $\|f_m\|_{L^{p_m}(X_m)}^{p_m}$.
This is a linear estimate and bounded exactly like the right hand side of
\eqref{eq:LinearStandStart}. Therefore, we get the desired bound $\|f_m\|_{L^{p_m}(X_m)}^{p_m}$.
This finally finishes our estimate for the term $A_m$.

We turn to estimate the parts $U_m$ from \eqref{eq:DefUm}. 
Recall that $U_m(f_1,\dots, f_n)$  is by definition
\begin{equation*}
\begin{split}
 \sum_K \sum_{I_1^{(k)} =  \cdots = I_{n+1}^{(k)} =K} b_{K, (I_j)}
\sum_{i=0}^{k-1}\Big(\prod_{j=1}^{m-1}  \langle f_j \rangle_{I_{n+1}^{(i+1)}} \cdot \langle \Delta_{I_{n+1}^{(i+1)}} f_m  \rangle_{I_{n+1}^{(i)}} \cdot \prod_{j=m+1}^n \langle f_j \rangle_{I_{n+1}^{(i)}}
\Big)  h_{I_{n+1}}.
\end{split}
\end{equation*}
Similarly as with the operators $A_m$, we use the fact that 
$L^{p_{n+1}'}(Z_{1, \ldots, n})$ has type $s =\min(p_{n+1}', s_{n+1}')$ to reduce to controlling the term
\begin{equation}\label{eq:UmScaSep}
\begin{split}
 \sum_{K \in \calD_{k,l}}& \sum_{I_1^{(k)} =  \cdots = I_{n+1}^{(k)} =K}\\
 & b_{K, (I_j)}
\sum_{i=0}^{k-1}\Big(\prod_{j=1}^{m-1}  \langle f_j \rangle_{I_{n+1}^{(i+1)}} \cdot \langle \Delta_{I_{n+1}^{(i+1)}} f_m  \rangle_{I_{n+1}^{(i)}} \cdot \prod_{j=m+1}^n \langle f_j \rangle_{I_{n+1}^{(i)}}
\Big)  h_{I_{n+1}}
\end{split}
\end{equation}
uniformly on $l$.
For every $I_{n+1}$ such that  $I_{n+1}^{(k)}=K$ there holds that
$$
\Big|\sum_{I_1^{(k)} =  \cdots = I_{n}^{(k)} =K} b_{K, (I_j)}\Big| \le |I_{n+1}|^{1/2}.
$$
Therefore, using the $\UMD$ property and the Kahane contraction principle the $L^{p_{n+1}'}(Z_{1, \ldots, n})$-norm of 
\eqref{eq:UmScaSep}
is dominated by
\begin{equation*}
\begin{split}
\E\Big \|\sum_{K \in \calD_{k,l}} \varepsilon_{K}  \sum_{I^{(k)} =K}\sum_{i=0}^{k-1}\prod_{j=1}^{m-1}  \langle f_j \rangle_{I^{(i+1)}} \cdot \langle \Delta_{I^{(i+1)}} f_m  \rangle_{I^{(i)}} \cdot \prod_{j=m+1}^n \langle f_j \rangle_{I^{(i)}} 1_{I} \Big \|
_{L^{p_{n+1}'}(Z_{1, \ldots, n})}.
\end{split}
\end{equation*}

If all the averages were on the ``level $i+1$'' we could estimate this directly. Since there are the averages on the ``level $i$'' we need to further
split this. There holds that
\begin{equation*}
\begin{split}
\prod_{j=m+1}^n \langle f_j \rangle_{I^{(i)}}
=\langle f_{m+1} \rangle_{I^{(i+1)}}\prod_{j=m+2}^n \langle f_j \rangle_{I^{(i)}}
+\langle \Delta_{I^{(i+1)}} f_m  \rangle_{I^{(i)}}\prod_{j=m+2}^n \langle f_j \rangle_{I^{(i)}}.
\end{split}
\end{equation*}
Then, both of these are expanded in the same way related to $f_{m+2}$ and so on. This gives that
$$
\prod_{j=m+1}^n \langle f_j \rangle_{I^{(i)}}1_{I}
= \sum_{\varphi} \prod_{j=m+1}^{n}   D_{I^{(i+1)}} ^{\varphi(j)}f_j1_{I}.
$$
Here the summation is over functions $\varphi \colon \{m+1, \dots, n\} \to \{0,1\}$
and for a cube $I \in \calD$ we defined $D_I^0=E_I$ and $D^1_I=\Delta_I$. We also used the fact that
$\langle \Delta_{I^{(i+1)}} f_j \rangle_{I^{(i)}}1_{I}= \Delta_{I^{(i+1)}} f_j 1_{I}$.

Finally, we take one $\varphi$ and estimate the related term. It can be written as
\begin{equation}\label{eq:OnePhi}
\begin{split}
\E\Big \|\sum_{K \in \calD_{k,l}}\varepsilon_K \sum_{\substack{L \subset K \\ \ell(L) >2^{-k} \ell(K)}} \prod_{j=1}^{m-1}  \langle f_j \rangle_{L} 
\cdot  \Delta_{L} f_m  
\cdot \prod_{j=m+1}^{n}   D_{L} ^{\varphi(j)}f_j \Big \|
_{L^{p_{n+1}'}(Z_{1, \ldots, n})}.
\end{split}
\end{equation}
If $\varphi(j)=0$ for all $j=m+1, \dots, n$, 
then this can be estimated by a repeated use of Stein's inequality (similarly as above) related to the functions $f_j$, $j \not=m$.

Suppose that $\varphi(j) \not=0$ for some $j$. Notice that
\begin{equation*}
\begin{split}
\sum_{K \in \calD_{k,l}}&\varepsilon_K \sum_{\substack{L \subset K \\ \ell(L) >2^{-k} \ell(K)}} \prod_{j=1}^{m-1}  \langle f_j \rangle_{L} 
\cdot  \Delta_{L} f_m  
\cdot \prod_{j=m+1}^{n}   D_{L} ^{\varphi(j)}f_j \\
& =\E'\Big(\sum_{K \in \calD_{k,l}}\varepsilon_K \sum_{\substack{L \subset K \\ \ell(L) >2^{-k} \ell(K)}}\varepsilon_L' \prod_{j=1}^{m-1}  \langle f_j \rangle_{L} 
\cdot  \Delta_{L} f_m\Big)\Big(\sum_L \varepsilon_L'\prod_{j=m+1}^{n}   D_{L} ^{\varphi(j)}f_j \Big).
\end{split}
\end{equation*}
Therefore, by Lemma \ref{lem:nice} the term in \eqref{eq:OnePhi} is dominated by
\begin{equation}\label{eq:SeparatedLeft}
\begin{split}
\E\Big(\E'\Big  \| &\Big(\sum_{K \in \calD_{k,l}}\varepsilon_K 
\sum_{\substack{L \subset K \\ \ell(L) >2^{-k} \ell(K)}}\varepsilon_L' \prod_{j=1}^{m-1}  \langle f_j \rangle_{L} 
\cdot  \Delta_{L} f_m\Big) \Big \|^{q_{1, \dots, m}}_{L^{q_{1, \dots, m}}(Z_{1,\dots, m})}\Big)^{1/q_{1, \dots, m}}
\end{split}
\end{equation}
multiplied by
\begin{equation}\label{eq:SeparatedRight}
\Big(\E'\Big \|\sum_L \varepsilon_L'\prod_{j=m+1}^{n}   D_{L} ^{\varphi(j)}f_j  
\Big \|^{q_{m+1, \dots, n}}_{L^{q_{m+1, \dots, n}}(Z_{m+1,\dots, n})}\Big)^{1/q_{m+1, \dots, n}}.
\end{equation}

The term in \eqref{eq:SeparatedLeft} can be estimated by a repeated use of Stein's inequality. 
The term in \eqref{eq:SeparatedRight} is like the term in \eqref{eq:OnePhi}. If there is only one martingale difference, then we can again estimate 
directly with Stein's inequality. If there is at least two martingale differences, 
then one can split into two as we did when we arrived at \eqref{eq:SeparatedLeft} and \eqref{eq:SeparatedRight}. 
This process is continued until one ends up with terms that contain only one martingale difference, and such terms we can estimate.

The proof of Proposition \ref{prop:MultliModUMD} is finished.
\end{proof}

With the essentially same proof as for the terms $A_m$ above we also have the following result.
\begin{prop}\label{prop:MultliStandUMD}
Suppose that $S_{k, \ldots, k}$ is an $n$-linear shift of complexity $(k, \ldots, k)$ and $f_j \colon \R^d \to X_j$. Let $1 < p_j < \infty$ 
with $\sum_{j=1}^{n+1} 1/p_j = 1$.
Then we have
$$
|\langle S_{k, \ldots, k}(f_1, \ldots, f_n), f_{n+1}\rangle| \lesssim (k+1)^{\alpha} \prod_{j=1}^{n+1} \| f_j \|_{L^{p_j}(X_j)},
$$
where
$$
\alpha = \frac{1}{\min( p_1', \ldots, p_{n+1}', s_{1}', \ldots, s_{n+1}')}
$$
and $X_j$ has cotype $s_j$.
\end{prop}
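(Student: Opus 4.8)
\textbf{Proof of Proposition \ref{prop:MultliStandUMD}.}
The plan is to reuse, almost verbatim, the argument given above for the terms $A_m$ in the proof of Proposition \ref{prop:MultliModUMD}; indeed $S_{k,\ldots,k}$ is a genuinely easier object than $Q_k$, because it already carries two cancellative Haar functions and we never have to manufacture the difference structure $\prod_j\langle f_j,h_{I_j}^0\rangle-\prod_j\langle f_j,h_{I_{n+1}}^0\rangle$. Let $j_0,j_1\in\{1,\ldots,n+1\}$, $j_0\ne j_1$, be the two slots of $S_{k,\ldots,k}$ carrying cancellative Haar functions. By the cyclicity of the trace \eqref{eq:sonice} we may reduce to the case $j_1=n+1$, so that $\wt h_{I_{n+1}}=h_{I_{n+1}}$ is cancellative and plays the role that $h_{I_{n+1}}$ had in the $A_m$ argument. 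For every remaining slot $j\notin\{j_0,n+1\}$ with $\wt h_{I_j}=h_{I_j}^0$ we write, using $I_j^{(k)}=K$, that $\langle f_j\rangle_{I_j}=\langle f_j\rangle_K+\langle P_{K,k-1}f_j\rangle_{I_j}$ and distribute the resulting products exactly as in \eqref{eq:EPSplitting}. This expresses $S_{k,\ldots,k}$ as a sum of boundedly many pieces, each of which has the shape of the operators $A_m$ from \eqref{eq:DefAm}: in every piece at most one slot carries a factor $P_{K,k-1}f_j$, the slots $j_0$ and $n+1$ carry cancellative Haar functions (the former only \emph{helps}, giving extra cancellation), and every other slot carries a level-$K$ average $\langle f_j\rangle_K$ or a level-$I_j$ average $\langle f_j\rangle_{I_j}$; the coefficient bounds are those of a modified shift, up to the harmless constants produced by summing $a_{K,(I_j)}$ over the collapsed slots.

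From here one runs the $A_m$ estimate without change. First separate scales through the subgrids $\calD_{k,l}$, $l\in\{0,\ldots,k\}$, and pass to $L^{p_{n+1}'}(Z_{1,\ldots,n})$ with $Z_{1,\ldots,n}=Y(X_{n+1})=Y(Y(X_1,\ldots,X_n))$. As in \eqref{eq:UseOfType}, the $\UMD$ property together with Kahane--Khintchine and the fact that this space has type $s=\min(p_{n+1}',s_{n+1}')$ produces the factor $(k+1)^{1/s}$ and reduces everything to a bound, uniform in $l$, for a single $\sum_{K\in\calD_{k,l}}$; running the same argument against whichever slot realizes $\min_j\min(p_j',s_j')$ yields the factor $(k+1)^{\alpha}$ with the stated $\alpha$. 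For the uniform-in-$l$ bound we invoke the decoupling estimate \eqref{eq:decoupling}, applied as in the $A_m$ argument to the cancellative Haar structure in the output slot (which is mean zero and constant on the $\calD_{k,l}$-children, so Proposition \ref{prop:decoupling} applies), and then strip off the functions one slot at a time, using Stein's inequality on the slots carrying averages and Lemma \ref{lem:nice} to push the norm down through the H\"older tuple $Z_{1,\ldots,n}\to Z_{2,\ldots,n}\to\cdots$, precisely as in the passage from \eqref{eq:eq5} to \eqref{eq:AfterSteinRight}. What remains is a linear estimate of exactly the type treated on the right-hand side of \eqref{eq:LinearStandStart} (with $P_{K,k-1}f_j$, or a doubly cancellative pair $\langle f_{j_0},h_{I_{j_0}}\rangle\langle f_{n+1},h_{I_{n+1}}\rangle$, in place of the linear data), which carries no further loss in $k$; assembling this with the $(k+1)^{1/s}$ from the scale separation completes the proof.

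The only genuine work is the bookkeeping already indicated: verifying that after the expansion $\langle f_j\rangle_{I_j}=\langle f_j\rangle_K+\langle P_{K,k-1}f_j\rangle_{I_j}$ each resulting piece still has two cancellative Haar functions plus at most one $P_{K,k-1}$-factor — so that the Stein/Lemma \ref{lem:nice} iteration of the $A_m$ argument applies unchanged — and tracking the coefficient normalizations through the collapsed sums. I do not expect any obstacle beyond those already met in the proof of Proposition \ref{prop:MultliModUMD}; in particular, as in the linear case \eqref{eq:Q_kFORM1}--\eqref{eq:Q_kFORM2} and Proposition \ref{prop:ModShiftUMD}, the complexity dependence $(k+1)^{\alpha}$ is real, being the price one pays in a general $\UMD$ target for not having the square function estimates available in $\UMD$ function lattices or in the scalar case.
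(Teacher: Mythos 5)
Your proposal is correct and takes the same route as the paper, which simply declares the proof ``essentially the same as for the terms $A_m$'' and leaves the details to the reader; you have correctly identified that the standard shift already carries two cancellative Haar functions, so one skips the difference structure, performs the $\calD_{k,l}$ scale separation to harvest the type factor $(k+1)^{1/s}$, then decouples on the output cancellative slot and strips the remaining slots via Stein's inequality and Lemma \ref{lem:nice}.

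Two small remarks on your write-up, neither of which is an error. First, the telescoping $\langle f_j\rangle_{I_j}=\langle f_j\rangle_K+\langle P_{K,k-1}f_j\rangle_{I_j}$ on the non-cancellative slots is not actually needed: the $A_m$ machinery in the paper already handles small-cube averages $\langle f_j\rangle_{I_j}$ directly via the kernel rewriting \eqref{eq:Defbm} followed by Stein (see the passage from \eqref{eq:AfterSteinLeft} to \eqref{eq:Fixedw}), so one may take the shift as-is and only introduce the projection at the very end through $\langle f_{j_0},h_{I_{j_0}}\rangle=\langle\Delta_{K,k}f_{j_0},h_{I_{j_0}}\rangle$, which is a sub-case of the $P_{K,k}$ structure in \eqref{eq:LinearStandStart}. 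Your splitting does no harm; it just produces more pieces than necessary. Second, the phrase ``running the same argument against whichever slot realizes $\min_j\min(p_j',s_j')$'' is slightly backwards: with slot $n+1$ as output, the type argument already yields $(k+1)^{1/\min(p_{n+1}',s_{n+1}')}\le(k+1)^{\alpha}$, so the stated $\alpha$ is a uniform worst-case bound over the $n+1$ symmetric forms of the shift rather than something one must optimize for. Again this does not affect the conclusion.
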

\begin{rem}
Ordinary shifts obey a complexity free bound in the scalar-valued setting. However, we do not know how to achieve this with general $\UMD$ spaces.
It seems that a somewhat better dependency could be obtained, but it would not have any practical use for us.
\end{rem}

The following is \cite[Theorem 5.3]{DLMV1}. This
is a significantly simpler argument than the shift proof and consists of repeated use of Stein's inequality until
one is reduced to the linear case \eqref{eq:XLinPar}.
\begin{prop}\label{prop:MultliParUMD}
Suppose that $\pi$ is an $n$-linear paraproduct and $f_j \colon \R^d \to X_j$. Let $1 < p_j < \infty$ 
with $\sum_{j=1}^{n+1} 1/p_j = 1$.
Then we have
$$
|\langle \pi(f_1, \ldots, f_n), f_{n+1}\rangle| \lesssim \prod_{j=1}^{n+1} \| f_j \|_{L^{p_j}(X_j)}.
$$
\end{prop}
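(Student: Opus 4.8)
The plan is to peel the inputs $f_1,\dots,f_{n-1}$ off one at a time by repeated use of Stein's inequality, reducing everything to the linear paraproduct bound \eqref{eq:XLinPar}; this is genuinely lighter than the shift arguments of Propositions \ref{prop:MultliModUMD}--\ref{prop:MultliStandUMD} because a paraproduct carries no complexity parameter, so there is nothing like a factor $(k+1)^{\alpha}$ to track. Since the hypotheses are invariant under permuting the tuple and the exponents (properties P1 and P2 are symmetric) and the trace is cyclic by \eqref{e:commtrace}, I may assume $\pi$ has the standard form $\langle\pi(f_1,\dots,f_n),f_{n+1}\rangle=\sum_{I\in\calD}a_I\big(\prod_{j=1}^n\langle f_j\rangle_I\big)\langle f_{n+1},h_I\rangle$, read through the trace, with $\{a_I\}$ obeying \eqref{eq:BMO}; the remaining forms follow the same way after moving $h_I$ into the output slot via \eqref{e:commtrace}. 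Put $g:=\sum_{I\in\calD}a_I\big(\prod_{j=1}^n\langle f_j\rangle_I\big)h_I$. As $X_{n+1}=Y(X_1,\dots,X_n)$ by P1 and $\big|\prod_{j=1}^n x_j\big|_{Y(X_{n+1})}\lesssim\prod_j|x_j|_{X_j}$ by the H\"older structure, $g$ is valued in the $\UMD$ space $Z_{1,\dots,n}:=Y(Y(X_1,\dots,X_n))=Y(X_{n+1})$, and by $|\tau(yx)|\le|y|_{Y(X_{n+1})}|x|_{X_{n+1}}$ and H\"older in $x$ it suffices to prove
$$
\|g\|_{L^{p_{n+1}'}(Z_{1,\dots,n})}\lesssim\prod_{j=1}^n\|f_j\|_{L^{p_j}(X_j)},\qquad 1/p_{n+1}'=\sum_{j=1}^n 1/p_j.
$$

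First I would randomize. Since $g$ is its own Haar expansion, $\Delta_I g=a_I\big(\prod_j\langle f_j\rangle_I\big)h_I$, the unconditionality of the Haar system in $L^p$ of a $\UMD$ space together with Kahane--Khintchine gives $\|g\|_{L^{p_{n+1}'}(Z_{1,\dots,n})}\sim\big(\E\big\|\sum_I\varepsilon_I a_I\big(\prod_j\langle f_j\rangle_I\big)|I|^{-1/2}1_I\big\|_{L^{p_{n+1}'}(Z_{1,\dots,n})}^{p_{n+1}'}\big)^{1/p_{n+1}'}$, where I also replaced $h_I$ by $|h_I|=|I|^{-1/2}1_I$ (the arrays $(\varepsilon_I h_I(x))_I$ and $(\varepsilon_I|I|^{-1/2}1_I(x))_I$ are identically distributed for fixed $x$). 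The heart of the argument is then a one-step reduction. Write $1/q_{m,\dots,n}=\sum_{j=m}^n 1/p_j$ and $Z_{m,\dots,n}=Y(Y(X_m,\dots,X_n))$, so that $Z_{n,\dots,n}=X_n$ and $q_{n,\dots,n}=p_n$ by Lemma \ref{lem:YJ}, and set
$$
N_m:=\Big(\E\Big\|\sum_{I\in\calD}\varepsilon_I a_I\prod_{j=m}^n\langle f_j\rangle_I\,|I|^{-1/2}1_I\Big\|_{L^{q_{m,\dots,n}}(Z_{m,\dots,n})}^{q_{m,\dots,n}}\Big)^{1/q_{m,\dots,n}},
$$
so that $N_1\sim\|g\|_{L^{p_{n+1}'}(Z_{1,\dots,n})}$. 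For $1\le m\le n-1$ I claim $N_m\lesssim\|f_m\|_{L^{p_m}(X_m)}\,N_{m+1}$. Indeed, $a_I\prod_{j=m}^n\langle f_j\rangle_I|I|^{-1/2}1_I=\langle g_{m,I}\rangle_I 1_I$ with $g_{m,I}:=a_I f_m\prod_{j=m+1}^n\langle f_j\rangle_I|I|^{-1/2}1_I$ (the bracketed product being constant on $I$), so Stein's inequality in $L^{q_{m,\dots,n}}(Z_{m,\dots,n})$ -- available since $Z_{m,\dots,n}$ is $\UMD$ -- bounds $N_m$ by the same expression with $\langle f_m\rangle_I$ replaced by $f_m$; factoring $\sum_I\varepsilon_I g_{m,I}(x)=f_m(x)\sum_I\varepsilon_I v_{m,I}(x)$ with $v_{m,I}:=a_I\prod_{j=m+1}^n\langle f_j\rangle_I|I|^{-1/2}1_I$ (valued in $Z_{m+1,\dots,n}$) and applying Lemma \ref{lem:nice} to the $\UMD$ H\"older tuple $\{X_m,\dots,X_n,Y(X_m,\dots,X_n)\}$ -- the original tuple when $m=1$, and furnished by P2 when $m\ge2$ -- pulls $|f_m(x)|_{X_m}$ out pointwise; H\"older in $x$ (using $1/q_{m,\dots,n}=1/p_m+1/q_{m+1,\dots,n}$) and Jensen in the random signs (using $q_{m,\dots,n}<q_{m+1,\dots,n}$) then deliver $N_m\lesssim\|f_m\|_{L^{p_m}(X_m)}N_{m+1}$.

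Iterating this $n-1$ times gives $N_1\lesssim\prod_{m=1}^{n-1}\|f_m\|_{L^{p_m}(X_m)}\cdot N_n$, and $N_n=\big(\E\big\|\sum_I\varepsilon_I a_I\langle f_n\rangle_I|I|^{-1/2}1_I\big\|_{L^{p_n}(X_n)}^{p_n}\big)^{1/p_n}$; replacing $|I|^{-1/2}1_I$ back by $h_I$ and removing the random signs by $\UMD$, $N_n\sim\big\|\sum_I a_I\langle f_n\rangle_I h_I\big\|_{L^{p_n}(X_n)}$, a linear dyadic paraproduct acting on $f_n$ with scalar coefficients $\{a_I\}$ obeying \eqref{eq:BMO}, hence $N_n\lesssim\|f_n\|_{L^{p_n}(X_n)}$ by \eqref{eq:XLinPar}. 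Chaining the bounds yields $\|g\|_{L^{p_{n+1}'}(Z_{1,\dots,n})}\sim N_1\lesssim\prod_{j=1}^n\|f_j\|_{L^{p_j}(X_j)}$, which is what was needed. The step I expect to need the most care is the Banach-geometric bookkeeping: at each stage one must identify the space $Z_{m,\dots,n}$, check that it is $\UMD$, and check that $\{X_m,\dots,X_n,Y(X_m,\dots,X_n)\}$ is a $\UMD$ H\"older tuple so that Stein's inequality and Lemma \ref{lem:nice} may be invoked -- which is exactly what the inductive properties P1 and P2 are designed to provide. As there is no complexity parameter, nothing else needs tracking, and this is why the argument is so much shorter than the proofs of Propositions \ref{prop:MultliModUMD}--\ref{prop:MultliStandUMD}.
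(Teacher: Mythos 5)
Your proof is correct and implements exactly the strategy the paper gestures at (the paper merely cites \cite[Theorem~5.3]{DLMV1} and summarizes it as ``repeated use of Stein's inequality until one is reduced to the linear case \eqref{eq:XLinPar}''). The iterative peeling via Stein's inequality combined with Lemma~\ref{lem:nice} and the final reduction to the linear paraproduct bound are precisely what is intended, and your bookkeeping of the spaces $Z_{m,\dots,n}$ and exponents $q_{m,\dots,n}$ is sound.
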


Finally, we are ready to state our main result concerning the $\UMD$ extensions of $n$-linear $\omega$-CZOs.

\begin{thm}\label{thm:multiCZOUMD}
Suppose that $T$ is an $n$-linear $\omega$-CZO. Suppose 
$\omega \in \operatorname{Dini}_{\alpha}$,
where
$$
\alpha = \frac{1}{\min( (n+1)/n, s_{1}', \ldots, s_{n+1}')}
$$
and $X_j$ has cotype $s_j$.
Then for all exponents $1 < p_1, \ldots, p_n \le \infty$ and $1 / q_{n+1} = \sum_{j=1}^n 1/p_j > 0$
we have
$$
\|T(f_1, \ldots, f_n) \|_{L^{q_{n+1}}(X_{n+1}^*)} \lesssim \prod_{j=1}^{n} \| f_j \|_{L^{p_j}(X_j)}.
$$
\end{thm}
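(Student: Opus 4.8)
The plan is to reduce everything to the model-operator estimates already established, applied at a single well-chosen tuple of exponents, and then to upgrade to the full range by the standard multilinear Calder\'on--Zygmund machinery.

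First I would apply the representation theorem (Theorem \ref{thm:rep1par}) to $X_j$-valued simple functions, expanding each $f_j$ into its scalar components as in \eqref{e:formal1}; since the representation identity is multilinear in those components it transfers to the vector-valued forms. Note that the exponent $\alpha = 1/\min((n+1)/n, s_1',\ldots,s_{n+1}')$ is $\ge 1/2$ (each cotype $s_j \ge 2$ gives $s_j' \le 2$, and $(n+1)/n \le 2$), so $\omega \in \operatorname{Dini}_{\alpha} \subset \operatorname{Dini}_{1/2}$ and the representation is available; it writes $\langle T(f_1,\ldots,f_n), f_{n+1}\rangle$ as $C_T \E_\sigma \sum_k \sum_u \omega(2^{-k}) \langle V_{k,u,\sigma}(f_1,\ldots,f_n), f_{n+1}\rangle$ with each $V_{k,u,\sigma}$ a standard shift $S_{k,\ldots,k}$, a modified shift $Q_k$, or (only for $k=0$) a paraproduct. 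The crucial choice is then to work at the symmetric Banach tuple $p_1 = \cdots = p_{n+1} = n+1$, so that $\sum_{j=1}^{n+1} 1/p_j = 1$ and $p_j' = (n+1)/n$ for every $j$: with this choice the complexity exponent appearing in Propositions \ref{prop:MultliStandUMD}, \ref{prop:MultliModUMD} and \ref{prop:MultliParUMD},
$$
\frac{1}{\min(p_1', \ldots, p_{n+1}', s_1', \ldots, s_{n+1}')},
$$
collapses to exactly the $\alpha$ in the statement, rather than to some larger, $p$-dependent quantity. Summing the bounds $|\langle V_{k,u,\sigma}(f_1,\ldots,f_n), f_{n+1}\rangle| \lesssim (k+1)^{\alpha} \prod_j \|f_j\|_{L^{n+1}(X_j)}$ against $\omega(2^{-k})$ and invoking \eqref{eq:diniuse} yields the single strong estimate $\|T(f_1,\ldots,f_n)\|_{L^{(n+1)/n}(X_{n+1}^*)} \lesssim \prod_{j=1}^n \|f_j\|_{L^{n+1}(X_j)}$, after taking the supremum over simple $f_{n+1}$ (identifying $Y(X_{n+1})$ with $X_{n+1}^*$) and a routine density argument.

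Next I would pass from this one tuple to the full range via the vector-valued $n$-linear Calder\'on--Zygmund theory. The kernel $K$ of $T$ is in particular a standard $n$-linear CZ kernel ($\operatorname{Dini}_{\alpha} \subset \operatorname{Dini}_0$), with scalar size and regularity bounds that transfer without loss to the $Y_{n+1}$-valued setting via $|\prod_j x_j|_{Y_{n+1}} \le \prod_j |x_j|_{X_j}$; running the $n$-linear Calder\'on--Zygmund decomposition on the scalar functions $|f_j|_{X_j}$ then gives the weak endpoint bound $L^1(X_1)\times\cdots\times L^1(X_n)\to L^{1/n,\infty}(X_{n+1}^*)$, and multilinear interpolation upgrades the pair (strong bound of the first step, weak endpoint bound) to strong bounds for all $1 < p_1,\ldots,p_n < \infty$ with $1/q_{n+1} = \sum_{j=1}^n 1/p_j$, including the quasi-Banach range $q_{n+1} < 1$. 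The cases with some $p_j = \infty$ I would handle either by the endpoint part of the same theory or by duality: each adjoint $T^{m*}$ is again an $n$-linear $\omega$-CZO (the $\BMO$ and $\WBP$ conditions are symmetric in the $n+1$ slots) and $\{X_1,\ldots,X_{n+1}\}$ with the slots permuted is again a $\UMD$ H\"older tuple with unchanged cotypes, so the first step applies to $T^{m*}$ as well, and dualizing those bounds via \eqref{eq:sonice} fills in the remaining range.

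The serious analytic work is entirely contained in Propositions \ref{prop:MultliModUMD} and \ref{prop:MultliStandUMD}, so for this theorem there is no deep obstacle; the single point requiring care is the bookkeeping of the first step — recognizing that the symmetric tuple $p_j = n+1$ is precisely the choice that makes the complexity exponent equal the advertised $\alpha$ uniformly in $p$ (which is why the $T1$ estimate must be proved at a fixed exponent and then interpolated, rather than directly at a general $p$) — together with checking that the scalar nature of the kernel bounds really does let the vector-valued multilinear Calder\'on--Zygmund theory of the second step go through verbatim.
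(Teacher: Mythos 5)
Your proposal is correct and follows essentially the same route as the paper's own (very terse) proof: establish the bound at the single symmetric tuple $p_1=\cdots=p_{n+1}=n+1$, observe that this choice makes the complexity exponent from Propositions \ref{prop:MultliModUMD}, \ref{prop:MultliStandUMD} and \ref{prop:MultliParUMD} collapse to exactly the stated $\alpha$, and then upgrade to the full range via the weak endpoint $L^1(X_1)\times\cdots\times L^1(X_n)\to L^{1/n,\infty}(X_{n+1}^*)$ and interpolation/good-$\lambda$ (together with duality for the $p_j=\infty$ cases). Your additional observations — that $\alpha\ge 1/2$ so the representation theorem is applicable, and that the symmetric exponent choice is what decouples the regularity threshold from $p$ — are exactly the points the paper's proof implicitly relies on.
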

\begin{proof}
The important part is to establish the boundedness with a single tuple of exponents. We may e.g. conclude from the boundedness of the model operators
and Theorem \ref{thm:rep1par} that
$$
|\langle T(f_1, \ldots, f_n), f_{n+1}\rangle| \lesssim \prod_{j=1}^{n+1} \| f_j \|_{L^{n+1}(X_j)}
$$
if we choose $\alpha$ as in the statement of the theorem.
It is completely standard how to improve this to cover the full range: we can e.g. prove the end point estimate
$T\colon L^1(X_1) \times \cdots \times L^1(X_n) \to L^{1/n, \infty}(X_{n+1}^*)$, see \cite{LZ}, and then use interpolation or good-$\lambda$ methods. See e.g. \cite{GT, MV}.
For such arguments the spaces $X_j$ no longer play any role (the scalar-valued proofs can readily me mimicked).
\end{proof}
\begin{rem}
The exponent $(n+1)/n$ in the definition of $\alpha$ is slightly annoying, since now the
exponent $\alpha = 1/2$ valid in the scalar-valued case $X_1 = \cdots = X_{n+1} = \C$ (Corollary \ref{cor:multiCZO}) does  
not follow from this result, even though then $s_{1}' = \cdots = s_{n+1}' = 2$.
 Of course, it is way more simple to prove scalar-valued estimates directly with other methods anyway (see Section \ref{sec:scalarval}).

Notice that it is also clear that $\operatorname{Dini}_{1/2}$ suffices in suitable tuples $(X_1, \ldots, X_{n+1})$ of $\UMD$ function lattices. See e.g. \cite[Sections 2.10--2.12]{HMV}
for an account of the well-known square function and maximal function estimates valid in lattices. In lattices the simple approach of Section \ref{sec:scalarval} is much better, as then in addition to the factor $(n+1)/n$ we often have $s_j' < 2$.

In some interesting non-trivial situations the presence of $(n+1)/n$ is not an additional restriction.
Suppose each space $X_j$ is a non-commutative $L^p$ space $L^{p_j}(M)$ and $\sum_{j=1}^{n+1} 1/p_j = 1$, $1 < p_j < \infty$. Then
the cotype of $X_j$ is $s_j = \max(2, p_j) \ge p_j$ so that
$$
1 = \sum_{j=1}^{n+1} \frac{1}{p_j} \ge \sum_{j=1}^{n+1} \frac{1}{s_j} = n+1 - \sum_{j=1}^{n+1} \frac{1}{s_j'},
$$
and so there has to be an index $j$ so that $s_j' \le (n+1)/n$ anyway -- thus $\min( (n+1)/n, s_{1}', \ldots, s_{n+1}')
= \min(s_{1}', \ldots, s_{n+1}')$.
\end{rem}

\section{Bi-parameter singular integrals}\label{sec:bipar}

\subsection*{Bi-parameter SIOs}
Let $\R^d = \R^{d_1} \times \R^{d_2}$ and consider an $n$-linear operator $T$ on $\R^d$. We define what it means
for $T$ to be an $n$-linear bi-parameter SIO.
Let $\omega_i$ be a modulus of continuity on $\R^{d_i}$.
Let $f_j = f_j^1 \otimes f_j^2$, $j = 1, \ldots, n+1$.

First, we set up notation for the adjoints of $T$. We let $T^{j*}$, $j \in \{0, \ldots, n\}$, denote the full adjoints, i.e., 
$T^{0*} = T$ and otherwise
$$
\langle T(f_1, \dots, f_n), f_{n+1} \rangle
= \langle T^{j*}(f_1, \dots, f_{j-1}, f_{n+1}, f_{j+1}, \dots, f_n), f_j \rangle.
$$
A subscript $1$ or $2$ denotes a partial adjoint in the given parameter -- for example, we define
$$
\langle T(f_1, \dots, f_n), f_{n+1} \rangle
= \langle T^{j*}_1(f_1, \dots, f_{j-1}, f_{n+1}^1 \otimes f_j^2, f_{j+1}, \dots, f_n), f_j^1 \otimes f_{n+1}^2 \rangle.
$$
Finally, we can take partial adjoints with respect to different parameters in different slots also -- in that case we denote the adjoint by $T^{j_1*, j_2*}_{1,2}$. It simply interchanges
the functions $f_{j_1}^1$ and $f_{n+1}^1$ and the functions $f_{j_2}^2$ and $f_{n+1}^2$. Of course, we e.g. have $T^{j^*, j^*}_{1,2} = T^{j*}$ and $T^{0*, j^*}_{1,2} = T^{j*}_{2}$,
so everything can be obtained, if desired, with the most general notation $T^{j_1*, j_2*}_{1,2}$.
In any case, there are $(n+1)^2$ adjoints (including $T$ itself). Similarly, the dyadic model operators that we later define always have $(n+1)^2$ different forms.

\subsubsection*{Full kernel representation}
Here we assume that given $m \in \{1,2\}$ there exists $j_1, j_2 \in \{1, \ldots, n+1\}$ so that
$\operatorname{spt} f_{j_1}^m \cap \operatorname{spt} f_{j_2}^m = \emptyset$.
In this case we demand that
$$
\langle T(f_1, \ldots, f_n), f_{n+1}\rangle = \int_{\R^{(n+1)d}}  K(x_{n+1},x_1, \dots, x_n)\prod_{j=1}^{n+1} f_j(x_j) \ud x,
$$
where
$$
K \colon \R^{(n+1)d} \setminus \{ (x_1, \ldots, x_{n+1}) \in \R^{(n+1)d}\colon x_1^1 = \cdots =  x_{n+1}^1 \textup{ or }  x_1^2 = \cdots =  x_{n+1}^2\} \to \C
$$
is a kernel satisfying a set of estimates which we specify next. 
%Note that this will imply kernel representations also for the adjoints of $T$. We denote their full kernels in the natural way by $K^{j*}$, $K^{j*}_1$, etc.

The kernel $K$ is assumed to satisfy the size estimate
\begin{displaymath}
|K(x_{n+1},x_1, \dots, x_n)| \lesssim \prod_{m=1}^2 \frac{1}{\Big(\sum_{j=1}^{n} |x_{n+1}^m-x_j^m|\Big)^{d_mn}}.
\end{displaymath}

We also require the following continuity estimates -- to which we continue to refer to as H\"older estimates despite the general continuity moduli. For example, we require that we have
\begin{align*}
|K(x_{n+1}, x_1, \ldots, x_n)-&K(x_{n+1},x_1, \dots, x_{n-1}, (c^1,x^2_n))\\
&-K((x_{n+1}^1,c^2),x_1, \dots, x_n)+K((x_{n+1}^1,c^2),x_1, \dots, x_{n-1},  (c^1,x^2_n))| \\
&\qquad \lesssim \omega_1 \Big( \frac{|x_{n}^1-c^1| }{ \sum_{j=1}^{n} |x_{n+1}^1-x_j^1|} \Big) 
\frac{1}{\Big(\sum_{j=1}^{n} |x_{n+1}^1-x_j^1|\Big)^{d_1n}} \\
&\qquad\times
\omega_2 \Big( \frac{|x_{n+1}^2-c^2| }{ \sum_{j=1}^{n} |x_{n+1}^2-x_j^2|} \Big) 
\frac{1}{\Big(\sum_{j=1}^{n} |x_{n+1}^2-x_j^2|\Big)^{d_2n}}
\end{align*}
whenever $|x_n^1-c^1| \le 2^{-1} \max_{1 \le i \le n} |x_{n+1}^1-x_i^1|$
and $|x_{n+1}^2-c^2| \le 2^{-1} \max_{1 \le i \le n} |x_{n+1}^2-x_i^2|$.
Of course, we also require all the other natural symmetric estimates, where $c^1$ can be in any of the given $n+1$ slots and similarly for $c^2$. There
are, of course, $(n+1)^2$ different estimates.

Finally, we require the following mixed H\"older and size estimates. For example, we ask that
\begin{align*}
|K(x_{n+1}&, x_1, \ldots, x_n)-K(x_{n+1},x_1, \dots, x_{n-1}, (c^1,x^2_n))| \\
& \lesssim \omega_1 \Big( \frac{|x_{n}^1-c^1| }{ \sum_{j=1}^{n} |x_{n+1}^1-x_j^1|} \Big) 
\frac{1}{\Big(\sum_{j=1}^{n} |x_{n+1}^1-x_j^1|\Big)^{d_1n}} \cdot  \frac{1}{\Big(\sum_{j=1}^{n} |x_{n+1}^2-x_j^2|\Big)^{d_2n}}
\end{align*}
whenever $|x_n^1-c^1| \le 2^{-1} \max_{1 \le i \le n} |x_{n+1}^1-x_i^1|$. Again, we also require all the other natural symmetric estimates.

\subsubsection*{Partial kernel representations}
Suppose now only that there exists $j_1, j_2 \in \{1, \ldots, n+1\}$ so that
$\operatorname{spt} f_{j_1}^1 \cap \operatorname{spt} f_{j_2}^1 = \emptyset$.
 Then we assume that
$$
\langle T(f_1, \ldots, f_n), f_{n+1}\rangle = \int_{\R^{(n+1)d_1}} K_{(f_j^2)}(x_{n+1}^1, x_1^1, \ldots, x_n^1) \prod_{j=1}^{n+1} f_j^1(x^1_j) \ud x^1,
$$
where $K_{(f_j^2)}$ is a one-parameter $\omega_1$-Calder\'on--Zygmund kernel as in Section \ref{sec:1par} but with a constant depending on the fixed functions $f_1^2, \ldots, f_{n+1}^2$.
For example, this means that the size estimate takes the form
$$
|K_{(f_j^2)}(x_{n+1}^1, x_1^1, \ldots, x_n^1)| \le C(f_1^2, \ldots, f_{n+1}^2) \frac{1}{\Big(\sum_{j=1}^{n} |x_{n+1}^1-x_j^1|\Big)^{d_1n}}.
$$
The continuity estimates are analogous.

We assume the following $T1$ type control on the constant $C(f_1^2, \ldots, f_{n+1}^2)$. We have
\begin{equation}\label{eq:PKWBP}
C(1_{I^2}, \ldots, 1_{I^2}) \lesssim |I^2|
\end{equation}
and
$$
C(a_{I^2}, 1_{I^2}, \ldots, 1_{I^2}) + C(1_{I^2}, a_{I^2}, 1_{I^2}, \ldots, 1_{I^2}) + \cdots + C(1_{I^2}, \ldots, 1_{I^2}, a_{I^2}) \lesssim |I^2|
$$
for all cubes $I^2 \subset \R^{d_2}$
and all functions $a_{I^2}$ satisfying $a_{I^2} = 1_{I^2}a_{I^2}$, $|a_{I^2}| \le 1$ and $\int a_{I^2} = 0$.

Analogous partial kernel representation on the second parameter is assumed when $\operatorname{spt} f_{j_1}^2 \cap \operatorname{spt} f_{j_2}^2 = \emptyset$
for some $j_1, j_2$.

\begin{defn}
If $T$ is an $n$-linear operator with full and partial kernel representations as defined above, we call $T$ an $n$-linear bi-parameter $(\omega_1, \omega_2)$-SIO.
\end{defn}

\subsection*{Bi-parameter CZOs}
We say that $T$ satisfies the weak boundedness property if
\begin{equation}\label{eq:2ParWBP}
|\langle T(1_R, \ldots, 1_R), 1_R \rangle| \lesssim |R|
\end{equation}
for all rectangles $R = I^1 \times I^2 \subset \R^{d} = \R^{d_1} \times \R^{d_2}$.

An SIO $T$ satisfies the diagonal BMO assumption if the following holds. For all rectangles $R = I^1 \times I^2 \subset \R^{d} = \R^{d_1} \times \R^{d_2}$
and functions $a_{I^i}$ with $a_{I^i} = 1_{I^i}a_{I^i}$, $|a_{I^i}| \le 1$ and $\int a_{I^i} = 0$ we have
\begin{equation}\label{eq:DiagBMO}
|\langle T(a_{I^1} \otimes 1_{I^2}, 1_R, \ldots, 1_R), 1_R \rangle| + \cdots +  |\langle T(1_R, \ldots, 1_R), a_{I^1} \otimes 1_{I^2} \rangle| \lesssim |R|
\end{equation}
and
$$
|\langle T(1_{I^1} \otimes a_{I^2}, 1_R, \ldots, 1_R), 1_R \rangle| + \cdots +  |\langle T(1_R, \ldots, 1_R), 1_{I^1} \otimes a_{I^2} \rangle| \lesssim |R|.
$$

The product $\BMO$ space is originally by Chang and Fefferman \cite{CF1, CF2}, and
it is the right bi-parameter $\BMO$ space for many considerations. 
An SIO $T$ satisfies the product BMO assumption if it holds
$$S(1,1) \in \BMO_{\textup{prod}}$$ for all the $(n+1)^2$ adjoints $S = T^{j_1*, j_2*}_{1,2}$.
This can be interpreted in the sense that
$$
\| S1 \|_{\BMO_{\operatorname{prod}}} = \sup_{\calD = \calD^1 \times \calD^2} \sup_{\Omega} \Big(\frac{1}{|\Omega|} \sum_{ \substack{ R = I^1 \times I^2 \in \calD \\
R \subset \Omega}} |\langle S1, h_R \rangle|^2 \Big)^{1/2} < \infty, \qquad h_R = h_{I^1} \otimes h_{I^2},
$$
where the supremum is over all dyadic grids $\calD^i$ on $\R^{d_i}$ and
open sets $\Omega \subset \R^d = \R^{d_1} \times \R^{d_2}$ with $0 < |\Omega| < \infty$, and the pairings
$\langle S1, h_R\rangle$ can be defined, in a natural way, using the kernel representations.

\begin{defn}\label{defn:CZO}
An $n$-linear  bi-parameter $(\omega_1, \omega_2)$-SIO $T$
satisfying the weak boundedness property, the diagonal BMO assumption and the product BMO assumption is called an $n$-linear bi-parameter
$(\omega_1, \omega_2)$-Calder\'on--Zygmund operator ($(\omega_1, \omega_2)$-CZO). 
\end{defn}

\subsection*{General bi-parameter notation and basic operators}
A weight $w(x_1, x_2)$ (i.e. a locally integrable a.e. positive function) belongs to the bi-parameter weight class $A_p(\R^{d_1} \times \R^{d_2})$, $1 < p < \infty$, if
$$
[w]_{A_p(\R^{d_1} \times \R^{d_2})} := \sup_{R} \frac 1{|R|}\int_R w   \Bigg( \frac 1{|R|}\int_R w^{1-p'}\Bigg)^{p-1} < \infty,
$$
where the supremum is taken over $R = I^1 \times I^2$ and each $I^i \subset \R^{d_i}$ is a cube. Thus, this is the one-parameter definition but cubes are replaced by rectangles.

We have
$$
[w]_{A_p(\R^{d_1} \times \R^{d_2})} < \infty \textup { iff } \max\big( \esssup_{x_1 \in \R^{d_1}} \,[w(x_1, \cdot)]_{A_p(\R^{d_2})}, \esssup_{x_2 \in \R^{d_2}}\, [w(\cdot, x_2)]_{A_p(\R^{d_1})} \big) < \infty,
$$
and that $$
\max\big( \esssup_{x_1 \in \R^{d_1}} \,[w(x_1, \cdot)]_{A_p(\R^{d_2})}, \esssup_{x_2 \in \R^{d_2}}\, [w(\cdot, x_2)]_{A_p(\R^{d_1})} \big) \le [w]_{A_p(\R^{d_1}\times \R^{d_2})},
$$
while the constant $[w]_{A_p}$ is dominated by the maximum to some power. For basic bi-parameter weighted theory see e.g. \cite{HPW}.
We say $w\in A_\infty(\R^{d_1}\times \R^{d_2})$ if
\[
[w]_{A_\infty(\R^{d_1}\times \R^{d_2})}:=\sup_R \frac 1{|R|}\int_R w  \exp\Bigg( \frac1{|R|}\int_R \log (w^{-1})  \Bigg)<\infty.
\]
It is well-known that
$$A_\infty(\R^{d_1}\times \R^{d_2})=\bigcup_{1<p<\infty}A_p(\R^{d_1}\times \R^{d_2}).$$
We do not have any important use for the $A_{\infty}$ constant. The $w \in A_{\infty}$ assumption
can always be replaced with the explicit assumption $w \in A_s$ for some $s \in (1,\infty)$, and
then estimating everything with a dependence on $[w]_{A_s}$.

We denote a general dyadic grid in $\R^{d_i}$ by $\calD^i$. We denote cubes in $\calD^i$ by $I^i, J^i, K^i$, etc.
Thus, our dyadic rectangles take the forms $I^1 \times I^2$, $J^1 \times J^2$, $K^1 \times K^2$ etc.

If $A$ is an operator acting on $\R^{d_1}$, we can always let it act on the product space $\R^d = \R^{d_1} \times \R^{d_2}$ by setting $A^1f(x) = A(f(\cdot, x_2))(x_1)$. Similarly, we use
the notation $A^2 f(x) = A(f(x_1, \cdot))(x_2)$ if $A$ is originally an operator acting on $\R^{d_2}$. Our basic bi-parameter dyadic operators -- martingale differences and averaging operators -- are obtained by simply chaining together relevant one-parameter operators. For instance, a bi-parameter martingale difference is
$\Delta_R f = \Delta_{I^1}^1 \Delta_{I^2}^2 f$, $R = I^1 \times I^2$. Bi-parameter estimates, such as the square function bound
$$
\Big\| \Big( \sum_{R \in \calD^1 \times \calD^2} |\Delta_R f|^2 \Big)^{1/2} \Big\|_{L^p(w)} = 
\Big\| \Big( \sum_{I^i \in \calD^i} |\Delta_{I^1}^1 \Delta_{I^2}^2 f|^2 \Big)^{1/2} \Big\|_{L^p(w)} \sim \|f\|_{L^p(w)},
$$
where $p \in (1,\infty)$ and $w$ is a bi-parameter $A_p$ weight,
are easily obtained using vector-valued versions of the corresponding one-parameter estimates. The required vector-valued estimates, on the other hand, follow simply
by extrapolating the obvious weighted $L^2(w)$ estimates.

We systematically collect maximal function and square function bounds now. First, some notation.
When we integrate with respect to only one of the parameters we may e.g. write
\[
\langle f, h_{I_1} \rangle_1(x_2):=\int_{\R^{d_1}} f(x_1, x_2)h_{I_1}(x_1) \ud x_1.
\]
If $\calD = \calD^1 \times \calD^2$ we define the dyadic bi-parameter maximal function
$$
M_{\calD} f:= \sup_{R \in \calD}    1_R \bla |f|\bra_R.
$$
Now define the square functions
$$
S_{\calD} f = \Big( \sum_{R \in \calD}  |\Delta_R f|^2 \Big)^{1/2}, \,\, S_{\calD^1}^1 f =  \Big( \sum_{I^1 \in \calD^1}  |\Delta_{I^1}^1 f|^2 \Big)^{1/2}
$$
and define $S_{\calD^2}^2 f$ analogously.
Define also
$$
S_{\calD, M}^1 f = \Big( \sum_{I^1 \in \calD^1} \frac{1_{I^1}}{|I^1|} \otimes \big[M_{\calD^2} \bla f, h_{I^1} \bra_1\big]^2 \Big)^{1/2}, \,\, S_{\calD, M}^2 f = \Big( \sum_{I^2 \in \calD^2} \big[M_{\calD^1} \bla f, h_{I^2} \bra_2\big]^2 \otimes \frac{1_{I^2}}{|I^2|}\Big)^{1/2}.
$$

Let $k=(k_1,k_2)$, where $k_i \in \{0,1,2, \dots,\}$, and $K=K^1 \times K^2 \in \calD$. 
We set
$$
P^1_{K^1,k_1}f=\sum_{\substack{I^1 \in \calD^1 \\ I^1 \subset K^1 \\ \ell(I^1) \ge 2^{-k_1}\ell(K^1)}} \Delta^1_{I^1} f
$$
and define similarly $P^2_{K^2,k_2}$. Then, we define $P_{K,k}:= P^1_{K^1,k_1}P^2_{K^2,k_2}$.

\begin{lem}\label{lem:standardEst}
For $p \in (1,\infty)$ and a bi-parameter weight $w \in A_p$ we have
$$
\| f \|_{L^p(w)}
 \sim  \| S_{\calD} f\|_{L^p(w)}
\sim   \| S_{\calD^1}^1 f  \|_{L^p(w)}
\sim  \| S_{\calD^2}^2 f  \|_{L^p(w)}.
$$
For $k=(k_1,k_2)$, $k_i \in \{0,1, \dots, \}$,  we have the estimates
$$
\Big\| \Big( \sum_{K \in \calD} | P_{K,k}f|^2 \Big)^{1/2} \Big \|_{L^p(w)}
\lesssim \sqrt{k_1+1} \sqrt{k_2+1} \| f \|_{L^p(w)},
$$
$$
\Big\| \Big( \sum_{K^1 \in \calD^1} | P^1_{K^1,k_1}f|^2 \Big)^{1/2} \Big \|_{L^p(w)}
\lesssim \sqrt{k_1+1} \| f \|_{L^p(w)}
$$
and the analogous estimate with $P^2_{K^2,k_2}$.

Moreover, for $p, s \in (1,\infty)$ we have the Fefferman--Stein inequality
$$
\Big\| \Big( \sum_j |M f_j |^s \Big)^{1/s} \Big\|_{L^p(w)} \lesssim  \Big\| \Big( \sum_{j} | f_j |^s \Big)^{1/s} \Big\|_{L^p(w)}.
$$
Here $M$ can e.g. be $M_{\calD^1}^1$ or $M_{\calD}$. Finally, we have
$$
\| S_{\calD, M}^1 f\|_{L^p(w)} + \| S_{\calD, M}^2 f\|_{L^p(w)} \lesssim  \|f\|_{L^p(w)}.
$$
\end{lem}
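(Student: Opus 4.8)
The plan is to reduce every one of these estimates to a one-parameter \emph{weighted} inequality applied in a single variable, exploiting the structural fact recorded just before the lemma that a bi-parameter weight $w\in A_p(\R^{d_1}\times\R^{d_2})$ restricts, for a.e.\ fixed $x_2$, to a weight $w(\cdot,x_2)\in A_p(\R^{d_1})$ with $[w(\cdot,x_2)]_{A_p(\R^{d_1})}\le [w]_{A_p(\R^{d_1}\times\R^{d_2})}$, and symmetrically in the other variable. All the one-parameter weighted \emph{vector-valued} building blocks I will need — the $\ell^2$-valued weighted square function estimate $\|S_{\calD^i}^i g\|_{L^p(w;\ell^2)}\sim\|g\|_{L^p(w;\ell^2)}$, the $\ell^2$-valued weighted analogue of the block estimate of Lemma \ref{lem:PEst}, and the $\ell^s$-valued weighted Fefferman--Stein inequality for the one-parameter dyadic maximal function — follow from their scalar weighted $L^2$ counterparts (respectively $\|S_{\calD^i}f\|_{L^2(w)}\sim\|f\|_{L^2(w)}$ from \cite{CMP,CWW}, the weighted $L^2$ version used inside the proof of Lemma \ref{lem:PEst}, and the classical weighted maximal inequality) by Rubio de Francia extrapolation, exactly as in the proof of Lemma \ref{lem:PEst}.

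Granting these, the square function equivalences in the first line come out as follows. Writing $S_{\calD}f=S_{\calD^1}^1 h$ for the $\ell^2$-valued function $h=(\Delta_{I^2}^2 f)_{I^2\in\calD^2}$ (so that $|\Delta_{I^1}^1 h|_{\ell^2}^2=\sum_{I^2}|\Delta_{I^1}^1\Delta_{I^2}^2 f|^2$), I freeze $x_2$, apply the $\ell^2$-valued weighted square function estimate in the $x_1$-variable with the weight $w(\cdot,x_2)$ — uniformly in $x_2$ since the $A_p(\R^{d_1})$ characteristics are uniformly bounded — and integrate in $x_2$; this gives $\|S_{\calD}f\|_{L^p(w)}\sim\|S_{\calD^2}^2 f\|_{L^p(w)}$. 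Freezing $x_1$ and applying the \emph{scalar} one-parameter weighted square function estimate in the $x_2$-variable yields $\|S_{\calD^2}^2 f\|_{L^p(w)}\sim\|f\|_{L^p(w)}$, and the symmetric freezing argument gives $\|S_{\calD^1}^1 f\|_{L^p(w)}\sim\|f\|_{L^p(w)}$.

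The block estimates are handled identically: $\|(\sum_{K^1}|P^1_{K^1,k_1}f|^2)^{1/2}\|_{L^p(w)}\lesssim\sqrt{k_1+1}\,\|f\|_{L^p(w)}$ follows by freezing $x_2$ and invoking the $\ell^2$-valued weighted version of Lemma \ref{lem:PEst} in $\R^{d_1}$ (uniformly in $x_2$), with the analogous statement for $P^2_{K^2,k_2}$. For the full $P_{K,k}=P^1_{K^1,k_1}P^2_{K^2,k_2}$ I first apply the $\ell^2$-valued (over $K^2$) first-parameter estimate to extract $\sqrt{k_1+1}$, reducing to $\|(\sum_{K^2}|P^2_{K^2,k_2}f|^2)^{1/2}\|_{L^p(w)}$, and then the $P^2$-estimate to extract $\sqrt{k_2+1}$. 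The Fefferman--Stein inequality for $M^1_{\calD^1}$ is again freeze $x_2$ and apply the one-parameter $\ell^s$-valued weighted Fefferman--Stein inequality; for the bi-parameter $M_{\calD}$ one uses the pointwise domination $M_{\calD}f\le M^1_{\calD^1}M^2_{\calD^2}f$ (immediate by averaging one variable at a time and noting that the one-parameter dyadic cubes $I^1,I^2$ are admissible for the respective one-parameter maximal functions) together with two successive one-parameter $\ell^s$-valued weighted Fefferman--Stein inequalities, one in each parameter.

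Finally, for the mixed square functions I estimate $\|S_{\calD,M}^1 f\|_{L^p(w)}$ by freezing $x_1$ and applying the one-parameter weighted Fefferman--Stein inequality in the $x_2$-variable, $\ell^2$-valued over $I^1$ with the weights $\tfrac{1}{|I^1|}$, which replaces each $M_{\calD^2}\langle f,h_{I^1}\rangle_1$ by $\langle f,h_{I^1}\rangle_1$; after integrating in $x_1$ and using $\tfrac{1_{I^1}}{|I^1|}|\langle f,h_{I^1}\rangle_1|^2=|\Delta_{I^1}^1 f|^2$ (summing the suppressed $\eta$), the resulting quantity is exactly $\|S^1_{\calD^1}f\|_{L^p(w)}$, which is $\sim\|f\|_{L^p(w)}$ by the first part of the lemma; the bound for $S_{\calD,M}^2 f$ is symmetric. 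I do not expect a genuine obstacle: the only delicate points are keeping the $A_p$ characteristics of the frozen weights uniform (which is precisely the cited restriction property) and the bookkeeping of which parameter each maximal function and Haar expansion acts in; the least mechanical step is the last one, recognizing $S_{\calD,M}^i f$ after the Fefferman--Stein reduction as the martingale difference square function $S^i_{\calD^i}f$.
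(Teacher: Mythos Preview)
Your proposal is correct and follows exactly the approach the paper indicates: just before the lemma the paper states that such bi-parameter weighted estimates ``are easily obtained using vector-valued versions of the corresponding one-parameter estimates,'' with the vector-valued estimates obtained ``by extrapolating the obvious weighted $L^2$ versions.'' The paper does not spell out a proof of the lemma at all, so your sketch is in fact more detailed than what the paper provides; the freezing-one-variable-and-using-uniform-$A_p$-constants argument you describe is the standard way to implement the hint.
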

The following lower square function estimate valid for $A_{\infty}$ weights is important for us.
\begin{lem}\label{lem:LowerFS} There holds
$$
\|f\|_{L^p(w)} \lesssim \|S_{\calD^1}^1 f\|_{L^p(w)}
$$
and
$$
\|f\|_{L^p(w)} \lesssim \|S_{\calD} f\|_{L^p(w)}
$$
for all $p \in (0, \infty)$ and bi-parameter weights $w \in A_{\infty}$.
\end{lem}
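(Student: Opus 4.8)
The plan is to reduce everything to the classical one-parameter lower square function estimate for $A_\infty$ weights, applied one variable at a time. First I would invoke the remark preceding the lemma: since $A_\infty(\R^{d_1}\times\R^{d_2})=\bigcup_{1<s<\infty}A_s(\R^{d_1}\times\R^{d_2})$, we may fix $s\in(1,\infty)$ with $w\in A_s(\R^{d_1}\times\R^{d_2})$. By the slicewise characterization of bi-parameter $A_s$ recorded above, $w(\cdot,x_2)\in A_s(\R^{d_1})$ for a.e.\ $x_2$ with $\esssup_{x_2}[w(\cdot,x_2)]_{A_s(\R^{d_1})}\le [w]_{A_s(\R^{d_1}\times\R^{d_2})}$, and symmetrically $w(x_1,\cdot)\in A_s(\R^{d_2})$ uniformly in $x_1$; in particular these one-parameter slices lie in $A_\infty$ with a uniform characteristic.

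Next I would use the classical one-parameter fact (see e.g.\ \cite{CWW, CMP}): for any dyadic grid $\calD'$ on $\R^N$, any $v\in A_\infty(\R^N)$ and any $p\in(0,\infty)$ one has $\|g\|_{L^p(v)}\lesssim \|S_{\calD'}g\|_{L^p(v)}$ with implicit constant depending only on $p$, $N$ and $[v]_{A_\infty}$; moreover the same bound holds for $H$-valued $g$ (with $|\cdot|$ replaced by $|\cdot|_H$) when $H$ is a Hilbert space, as the good-$\lambda$ proof is insensitive to the scalar field. We will use this for $H=\C$ and $H=\ell^2$. For the first estimate, I would apply the scalar version in the variable $x_1$ to $f(\cdot,x_2)$ with the weight $w(\cdot,x_2)$, for a.e.\ fixed $x_2$; since $\Delta^1_{I^1}f(x_1,x_2)=\big(\Delta_{I^1}f(\cdot,x_2)\big)(x_1)$ this gives
$$
\int_{\R^{d_1}}|f(x_1,x_2)|^p w(x_1,x_2)\ud x_1\lesssim \int_{\R^{d_1}}\big(S^1_{\calD^1}f(x_1,x_2)\big)^p w(x_1,x_2)\ud x_1
$$
with a constant uniform in $x_2$, and integrating in $x_2$ by Tonelli yields $\|f\|_{L^p(w)}\lesssim\|S^1_{\calD^1}f\|_{L^p(w)}$.

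For the second estimate I would iterate: by the step just done it suffices to bound $\|S^1_{\calD^1}f\|_{L^p(w)}$ by $\|S_{\calD}f\|_{L^p(w)}$. View $g(x):=\big(\Delta^1_{I^1}f(x)\big)_{I^1\in\calD^1}$ as an $\ell^2$-valued function, so $|g|_{\ell^2}=S^1_{\calD^1}f$. Applying the $\ell^2$-valued one-parameter bound in the variable $x_2$ to $g(x_1,\cdot)$ with weight $w(x_1,\cdot)$, and noting that $\Delta^2_{I^2}g$ has $I^1$-component $\Delta^2_{I^2}\Delta^1_{I^1}f=\Delta_{I^1\times I^2}f$, so that $\sum_{I^2}|\Delta^2_{I^2}g(x_1,x_2)|_{\ell^2}^2=\sum_{R}|\Delta_R f(x_1,x_2)|^2=\big(S_{\calD}f(x_1,x_2)\big)^2$, we obtain
$$
\int_{\R^{d_2}}\big(S^1_{\calD^1}f(x_1,x_2)\big)^p w(x_1,x_2)\ud x_2\lesssim \int_{\R^{d_2}}\big(S_{\calD}f(x_1,x_2)\big)^p w(x_1,x_2)\ud x_2
$$
uniformly in $x_1$; integrating in $x_1$ and combining with the first estimate finishes the proof.

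The main obstacle is really just making the one-parameter lower square function estimate available in the $\ell^2$-valued setting over the full range $p\in(0,\infty)$ and for all $A_\infty$ weights, with a constant depending only on the $A_\infty$ characteristic — this is where one cites (or re-runs) the Chang--Wilson--Wolff / Wilson good-$\lambda$ argument, which is genuinely insensitive to whether the functions are scalar- or Hilbert-valued. Everything else is Tonelli, the elementary identity $\Delta^2_{I^2}\Delta^1_{I^1}=\Delta_{I^1\times I^2}$, and the slicewise $A_s$ characterization of bi-parameter weights already recorded in the text.
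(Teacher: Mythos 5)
Your proposal is correct and implements exactly what the paper alludes to: the paper does not write out a proof but only cites Wilson's one-parameter Theorem~2.5 and asserts that it ``improves to the bi-parameter estimate,'' which is precisely the slicing/iteration you carry out. Your identification of the one crux point is also accurate: the iteration step needs the one-parameter lower square function estimate for $\ell^2$-valued functions (equivalently, a Hilbert-space-valued Chang--Wilson--Wolff good-$\lambda$ estimate), and that is the only place where something beyond Tonelli and the identity $\Delta^2_{I^2}\Delta^1_{I^1}=\Delta_{I^1\times I^2}$ is being used.

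One small remark, since you flag this as ``the main obstacle'': if you prefer not to invoke the $\ell^2$-valued CWW theorem directly, you can reduce to the scalar case by randomization. By Khintchine (valid for all $p\in(0,\infty)$), $S^1_{\calD^1}f(x)\sim\big(\E_\epsilon\,|f_\epsilon(x)|^p\big)^{1/p}$ pointwise, where $f_\epsilon=\sum_{I^1}\epsilon_{I^1}\Delta^1_{I^1}f$; applying the \emph{scalar} one-parameter lower square function bound in $x_2$ to each $f_\epsilon(x_1,\cdot)$ (uniformly in $\epsilon$ and $x_1$), integrating in $x_1$, taking $\E_\epsilon$, and then using Khintchine once more to collapse $\E_\epsilon(S^2_{\calD^2}f_\epsilon)^p\sim(S_{\calD}f)^p$ gives $\|S^1_{\calD^1}f\|_{L^p(w)}\lesssim\|S_{\calD}f\|_{L^p(w)}$ without ever leaving the scalar setting. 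Either route is fine; yours is the cleaner statement, the Khintchine route is the more self-contained proof.
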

For a proof of the one-parameter estimate, which improves to the bi-parameter estimate, 
see \cite[Theorem 2.5]{Wi}. This is important to us
as the product weights $w := \prod_{j=1}^n w_j^{r/p_j}$, where $p_j \in (1, \infty)$, $1/r:= \sum_{j=1}^n 1/p_j$ and $w_j \in A_{p_j}$ are bi-parameter weights, are
at least $A_{\infty}$ -- in fact they belong to $A_{2r}$. However, they do not need to belong to $A_r$ making some duality based proofs problematic. This issue
is not present in the linear situation accounting for the fact that weighted estimates are much easier in the linear situation.

\subsection*{Bi-parameter model operators}
As the bi-parameter CZOs are modelled after tensor products, their representation by model operators
will e.g. involve generalisations of $Q_{k_1} \otimes Q_{k_2}$ (a modified bi-parameter shift), of $Q_{k_1} \otimes \pi$
and $\pi \otimes Q_{k_2}$ (a modified partial paraproduct) and $\pi \otimes \pi$ (a bi-parameter full paraproduct). All possible combinations
of the model operators appearing in the one-parameter result, Theorem \ref{thm:rep1par}, will appear. It may seem complicated, but
the reader should recall that while the operators $S_{k, \ldots, k}$ are formally different than the operators $Q_k$, they are always simpler, and
that in the representation we will not need standard shifts of arbitrary complexities. 
Moreover, at least in the linear situation, it can be convenient to define the operators $Q_k$ with the functions $H_{I,J}$ so that they incorporate
the linear standard shifts $S_{k, k}$.

We will use
suggestive notation, such as, $(S\pi)_i$ to denote a bi-parameter operator that behaves like an ordinary $n$-linear
shift $S_i$ on the first parameter and like an $n$-linear paraproduct on the second -- but this is just notation and our operators are not of tensor product form.
When we e.g. have an operator of the type $(QQ)_{k_1, k_2}$ -- an operator that behaves like a modified shift $Q_{k_m}$ on the parameter $m$, $m = 1,2$,
we simply write $Q_{k_1,k_2}$.

\subsubsection*{Shifts}
Let $i=(i_1, \dots, i_{n+1})$, where $i_j = (i_j^1, i_j^2) \in \{0,1,\ldots\}^2$. 
An $n$-linear bi-parameter shift $S_i$ takes the form
\begin{equation*}\label{eq:S2par}
\langle S_i(f_1, \ldots, f_n), f_{n+1}\rangle = \sum_{K} \sum_{\substack{R_1, \ldots, R_{n+1} \\ R_j^{(i_j)} = K }}
a_{K, (R_j)} \prod_{j=1}^{n+1} \langle f_j, \wt h_{R_j} \rangle.
\end{equation*}
Here $K, R_1, \ldots, R_{n+1} \in \calD = \calD^1 \times \calD^2$, $R_j = I_j^1 \times I_j^2$, $R_j^{(i_j)} := (I_j^1)^{(i_j^1)} \times (I_j^2)^{(i_j^2)}$ and 
$\wt h_{R_j} = \wt h_{I_j^1} \otimes \wt h_{I_j^2}$. Here we assume that for $m \in \{1,2\}$
there exist two indices $j_0,j_1 \in \{1, \ldots, n+1\}$, $j_0 \not =j_1$, so that $\wt h_{I_{j_0}^m}=h_{I_{j_0}^m}$, $\wt h_{I_{j_1}^m}=h_{I_{j_1}^m}$ and for the remaining indices $j \not \in \{j_0, j_1\}$ we have $\wt h_{I_j^m} \in \{h_{I_j^m}^0, h_{I_j^m}\}$.
Moreover, $a_{K,(R_j)} = a_{K, R_1, \ldots ,R_{n+1}}$ is a scalar satisfying the normalization
\begin{equation}\label{eq:Snorm2par}
|a_{K,(R_j)}| \le \frac{\prod_{j=1}^{n+1} |R_j|^{1/2}}{|K|^{n}}.
\end{equation}

We continue to define modified shifts.
Let
\begin{equation}\label{eq:DefAR}
A_{R_1, \ldots, R_{n+1}}^{j_1, j_2}(f_1, \ldots, f_{n+1}) = A_{R_1, \ldots, R_{n+1}}^{j_1, j_2} := \prod_{j=1}^{n+1} \langle f_j, \wt h_{R_j} \rangle,
\end{equation}
where $\wt h_{R_j} = \wt h_{I_j^1} \otimes \wt h_{I_j^2}$, $\wt h_{I_{j_1}^1}  = h_{I_{j_1}^1}$, $\wt h_{I_{j}^1}  = h_{I_{j}^1}^0$, $j \ne j_1$,
$\wt h_{I_{j_2}^2}  = h_{I_{j_2}^2}$, $\wt h_{I_{j}^2}  = h_{I_{j}^2}^0$, $j \ne j_2$.
A modified $n$-linear bi-parameter shift $Q_k$, $k = (k_1, k_2)$, takes the form
\begin{equation*}\label{eq:Q2par}
\begin{split}
\langle Q_{k}(f_1, \ldots, f_n), f_{n+1}\rangle = \sum_{K} \sum_{\substack{R_1, \ldots, R_{n+1} \\ R_j^{(k)} = K }}
a_{K, (R_j)} \big[&A_{R_1, \ldots, R_{n+1}}^{j_1, j_2} - A_{I_{j_1}^1 \times I_1^2, \ldots, I_{j_1}^1 \times I_{n+1}^2}^{j_1, j_2}  \\
&-A_{I_1^1 \times I_{j_2}^2, \ldots, I_{n+1}^1 \times I_{j_2}^2}^{j_1, j_2} + A_{I_{j_1}^1 \times I_{j_2}^2, \ldots, I_{j_1}^1 \times I_{j_2}^2}^{j_1, j_2}\big]
\end{split}
\end{equation*}
for some $j_1, j_2$. Moreover, $a_{K,(R_j)} = a_{K, R_1, \ldots ,R_{n+1}}$ is a scalar satisfying the usual normalization \eqref{eq:Snorm2par}.

We now define the hybrid operators that behave like a modified shift in one of the parameters and like a standard shift in the other. 
A modified/standard $n$-linear bi-parameter shift $(QS)_{k, i}$, $i = (i_1, \ldots, i_{n+1})$, $k, i_j \in \{0, 1, \ldots\}$, takes the form
\begin{equation*}\label{eq:QS}
\begin{split}
\langle (QS)_{k,i}&(f_1, \ldots, f_n), f_{n+1}\rangle  \\
&= \sum_{K} \sum_{\substack{R_1, \ldots, R_{n+1} \\ R_j^{(k, i_j)} = K }}
a_{K, (R_j)} 
\Big[ \prod_{j=1}^{n+1} \langle f_j, \wt h_{R_j} \rangle - \prod_{j=1}^{n+1} \langle f_j, \wt h_{I_{j_0}^1 \times I_j^2} \rangle \Big]
\end{split}
\end{equation*}
for some $j_0$. Here we assume that $\wt h_{I_{j_0}^1}  = h_{I_{j_0}^1}$, $\wt h_{I_{j}^1}  = h_{I_{j}^1}^0$ for $j \ne j_0$, and that
there exist two indices $j_1,j_2 \in \{1, \ldots, n+1\}$, $j_1 \not =j_2$, so that $\wt h_{I_{j_1}^2}=h_{I_{j_1}^2}$, $\wt h_{I_{j_2}^2}=h_{I_{j_2}^2}$ and for the remaining indices $j \not \in \{j_1, j_2\}$ we have $\wt h_{I_j^2} \in \{h_{I_j^2}^0, h_{I_j^2}\}$
Moreover, $a_{K,(R_j)} = a_{K, R_1, \ldots ,R_{n+1}}$ is a scalar satisfying the usual normalization \eqref{eq:Snorm2par}.
Of course, $(SQ)_{i,k}$ is defined symmetrically.

\begin{rem}\label{rem:BiParH}
Similarly as in the one-parameter situation, in the representation theorem we only encounter the particular standard shifts $S_{((k_1,k_2), \ldots, (k_1,k_2))}$ and standard/modifed shifts
$(QS)_{k_1, (k_2, \ldots, k_2)}$. In the linear situation all of these can be incorporated, if desired, by defining the modifed shifts in the following higher generality.
A linear modified bi-parameter shift $Q_{k_1, k_2}$ takes the form
\begin{equation*}\label{eq:biparQform1}
\langle Q_{k_1, k_2}f, g\rangle = \sum_{K} \sum_{\substack{ R_1, R_2 \\ R_j^{(k_1, k_2)} = K }}
a_{K, R_1, R_2} \langle f, h_{R_1} \rangle \langle g, H_{I_1^1, I_2^1} \otimes H_{I_1^2, I_2^2} \rangle
\end{equation*}
or one of the three other possible forms, where the functions $h$ and $H$ can be interchanged in each parameter. 
Here the coefficients satisfy the usual normalization and the functions $H$ are like in the one-parameter situation.
The corresponding definition can be made also with the modified partial paraproducts.
\end{rem}

\subsubsection*{Partial paraproducts}
Partial paraproducts are hybrids of $\pi$ and $S$ or $\pi$ and $Q$.

Let $i=(i_1, \dots, i_{n+1})$, where $i_j \in \{0,1,\ldots\}$.
An $n$-linear bi-parameter partial paraproduct $(S\pi)_i$ with the paraproduct component on $\R^{d_2}$ takes the form
\begin{equation}\label{eq:Spi}
\langle (S\pi)_i(f_1, \ldots, f_n), f_{n+1} \rangle = 
\sum_{K = K^1 \times K^2} \sum_{\substack{ I^1_1, \ldots, I_{n+1}^1 \\ (I_j^1)^{(i_j)} = K^1}} a_{K, (I_j^1)} \prod_{j=1}^{n+1} \langle f_j, \wt h_{I_j^1} \otimes u_{j, K^2} \rangle,
\end{equation}
where the functions $\wt h_{I_j^1}$ and $u_{j, K^2}$ satisfy the following.
There are $j_0,j_1 \in \{1, \ldots, n+1\}$, $j_0 \not =j_1$, so that $\wt h_{I_{j_0}^1}=h_{I_{j_0}^1}$, $\wt h_{I_{j_1}^1}=h_{I_{j_1}^1}$ and for the remaining indices $j \not \in \{j_0, j_1\}$ we have $\wt h_{I_j^1} \in \{h_{I_j^1}^0, h_{I_j^1}\}$. There is $j_2 \in \{1, \ldots, n+1\}$ so that $u_{j_2, K^2} = h_{K^2}$ and for the remaining indices $j \ne j_2$ we have
$u_{j, K^2} = \frac{1_{K^2}}{|K^2|}$.
Moreover, the coefficients are assumed to satisfy
$$
\| (a_{K, (I_j^1)})_{K_2} \|_{\BMO} \le \frac{\prod_{j=1}^{n+1} |I_j^1|^{1/2}}{|K^1|^{n}}.
$$
Of course, $(\pi S)_i$ is defined symmetrically.

A modified $n$-linear partial paraproduct $(Q\pi)_{k}$ with the paraproduct component on $\R^{d_2}$ takes the form
\begin{equation*}\label{eq:Qpi}
\begin{split}
\langle (Q\pi)_k&(f_1, \ldots, f_n), f_{n+1} \rangle \\ &= 
\sum_{K = K^1 \times K^2} \sum_{\substack{ I^1_1, \ldots, I_{n+1}^1 \\ (I_j^1)^{(k)} = K^1}} a_{K, (I_j^1)}
\Big[ \prod_{j=1}^{n+1} \langle f_j, \wt h_{I_j^1} \otimes u_{j, K^2} \rangle - \prod_{j=1}^{n+1} \langle f_j, \wt h_{I_{j_0}^1} \otimes u_{j, K^2} \rangle \Big]
\end{split}
\end{equation*}
for some $j_0$ -- here $\wt h_{I_{j_0}^1} = h_{I_{j_0}^1}$, $\wt h_{I_{j}^1} = h_{I_{j}^1}^0$ for $j \ne j_0$ and $u_{j, K^2}$ are like in \eqref{eq:Spi}.
The constants satisfy the same normalization.

\subsubsection*{Full paraproducts}
An $n$-linear bi-parameter full paraproduct $\Pi$ takes the form
\begin{equation*}\label{eq:pi2bar}
\langle \Pi(f_1, \ldots, f_n) , f_{n+1} \rangle = \sum_{K = K^1 \times K^2} a_{K} \prod_{j=1}^{n+1} \langle f_j, u_{j, K^1} \otimes u_{j, K^2} \rangle,
\end{equation*}
where the functions $u_{j, K^1}$ and $u_{j, K^2}$ are like in \eqref{eq:Spi}.
The coefficients are assumed to satisfy
$$
\| (a_{K} ) \|_{\BMO_{\operatorname{prod}}} = \sup_{\Omega} \Big(\frac{1}{|\Omega|} \sum_{K\subset \Omega} |a_{K}|^2 \Big)^{1/2} \le 1,
$$
where the supremum is over open sets $\Omega \subset \R^d = \R^{d_1} \times \R^{d_2}$ with $0 < |\Omega| < \infty$.

\subsection*{Comparison to the usual model operators}

As in the one-parameter case, Lemma \ref{lem:CompModStand}, the modified model operators can be written as suitable sums of the standard operators. 
The exact formulas can be guessed by e.g. decomposing a tensor product $Q_{k_1} \otimes Q_{k_2}$ using Lemma \ref{lem:CompModStand}. The general bi-parameter case
requires some work, and we only give the following formulation.

\begin{lem}\label{lem:QasSBiPar}
Let $Q_k$, $k = (k_1, k_2)$, be a modified $n$-linear bi-parameter shift. 
Then
$$
Q_{k}
=C\sum_{u=1}^{c} \sum_{i_1=0}^{k_1-1} \sum_{i_2=0}^{k_2-1} 
S^{u,i_1,i_2},
$$
where each $S = S^{u,i_1,i_2}$ is a standard $n$-linear bi-parameter shift of complexity $i^m_{S, j}$, $j \in \{1, \ldots, n+1\}$, $m \in \{1,2\}$,
satisfying 
$$
i^{m}_{S, j} \le k_m.
$$

Similarly, a modified/standard shift can be represented using standard shifts and a modified partial paraproduct can be represented using standard partial paraproducts.
\end{lem}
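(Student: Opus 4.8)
The plan is to reduce the bi-parameter statement to the one-parameter Lemma \ref{lem:CompModStand} applied successively in each of the two parameters, exploiting the fact that a modified bi-parameter shift $Q_{(k_1,k_2)}$ is built from the same ingredients as a ``tensor product'' $Q_{k_1}\otimes Q_{k_2}$ even though it is not literally a tensor product. Concretely, recall that
$$
\langle Q_{k}(f_1,\ldots,f_n), f_{n+1}\rangle = \sum_{K}\sum_{\substack{R_1,\ldots,R_{n+1}\\ R_j^{(k)}=K}} a_{K,(R_j)}\big[A^{j_1,j_2}_{(R_j)} - A^{j_1,j_2}_{I_{j_1}^1\times I_\bullet^2} - A^{j_1,j_2}_{I_\bullet^1\times I_{j_2}^2} + A^{j_1,j_2}_{I_{j_1}^1\times I_{j_2}^2}\big].
$$
First I would observe that the bracketed expression factors as a ``difference in parameter $1$'' composed with a ``difference in parameter $2$'': writing $\delta^1$ for the operation that replaces $I_j^1$ by $I_{j_1}^1$ for $j\ne j_1$ and $\delta^2$ the analogous operation in the second parameter, the bracket equals $(\mathrm{id}-\delta^1)(\mathrm{id}-\delta^2)$ applied to $A^{j_1,j_2}_{(R_j)}$. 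This is precisely the structure that, in the one-parameter case, is handled by the decomposition $\prod\langle f_j\rangle_{I_j} - \prod\langle f_j\rangle_{I_{n+1}} = [\prod\langle f_j\rangle_{I_j} - \prod\langle f_j\rangle_K] + [\prod\langle f_j\rangle_K - \prod\langle f_j\rangle_{I_{n+1}}]$ and the collapse formulas \eqref{eq:EPSplitting}, \eqref{eq:MultilinCollapse}, \eqref{eq:DefUm}.

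The key steps, in order. (1) Fix the second-parameter data and apply the one-parameter Lemma \ref{lem:CompModStand} in the first parameter: the operator $(\mathrm{id}-\delta^1)$ acting on the first-parameter Haar data of $A^{j_1,j_2}$, with all second-parameter factors frozen, has exactly the form of a one-parameter modified shift of complexity $k_1$, so it decomposes as $\sum_{m=1}^n\sum_{i_1=0}^{k_1-1} S_{0,\ldots,0,i_1,k_1,\ldots,k_1}^{(1)} - C\sum_{m=1}^n\sum_{i_1=0}^{k_1-1} S_{0,\ldots,0,1,\ldots,1,i_1}^{(1)}$, where each summand is a standard shift in the first parameter with first-parameter complexities bounded by $k_1$. (2) For each of these $\lesssim n k_1$ standard-first-parameter pieces, the second-parameter structure is still $(\mathrm{id}-\delta^2)$ applied to the second-parameter Haar data, i.e.\ a one-parameter modified shift of complexity $k_2$ in the second variable; apply Lemma \ref{lem:CompModStand} again to expand it as $\lesssim n k_2$ standard shifts of second-parameter complexity $\le k_2$. (3) Collect: the total is $C\sum_{u=1}^{c}\sum_{i_1=0}^{k_1-1}\sum_{i_2=0}^{k_2-1} S^{u,i_1,i_2}$ with $c\lesssim n^2$, each $S^{u,i_1,i_2}$ a standard $n$-linear bi-parameter shift with $i^m_{S,j}\le k_m$, as claimed; one must check in each step that the coefficient normalizations \eqref{eq:Snorm2par} are preserved, which follows from the same counting/normalization estimates used in the proof of Lemma \ref{lem:CompModStand} (there is an implicit dimensional Haar summation, as noted in the remark after that lemma, but it does not affect the structure). (4) For the modified/standard shift $(QS)_{k,i}$ and the modified partial paraproduct $(Q\pi)_k$, the argument is the same but only applied in the one parameter carrying the modification: freeze the other (already-standard or paraproduct) parameter and run Lemma \ref{lem:CompModStand} once, obtaining respectively a sum of standard shifts $S_{((\cdot,\cdot),\ldots)}$ and a sum of standard partial paraproducts $(S\pi)_i$; the paraproduct BMO normalization of the coefficients is untouched since we never touch the paraproduct parameter.

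The main obstacle I expect is bookkeeping rather than conceptual: one must verify that after the first application of Lemma \ref{lem:CompModStand} in parameter $1$, the resulting first-parameter-standard pieces still present, in the second parameter, a bona fide one-parameter modified shift to which Lemma \ref{lem:CompModStand} applies verbatim --- in particular that freezing the first-parameter indices does not destroy the ``two cancellative Haar functions'' bookkeeping in the second parameter and that the second-parameter coefficients (now depending on the frozen first-parameter data and on $i_1$) still satisfy the scalar normalization with the correct powers of $|I^2_j|/|K^2|$. This is where the non-tensor-product nature of $Q_{(k_1,k_2)}$ could in principle interfere, but because the bracket genuinely factors as $(\mathrm{id}-\delta^1)(\mathrm{id}-\delta^2)$ the two parameters decouple cleanly and the one-parameter lemma can be invoked as a black box in each; the only real work is tracking the constants $C\lesssim 1$ and the bound $c\lesssim n^2$ on the number of resulting model-operator types. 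I would therefore present the proof by first doing the pure modified shift case in full (parameter $1$ then parameter $2$), and then remark that the hybrid cases follow by applying the one-parameter lemma only in the non-standard, non-paraproduct parameter.
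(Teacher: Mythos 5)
Your approach is in essence the paper's own, presented more abstractly: both reduce to the machinery of Lemma \ref{lem:CompModStand} applied in each parameter, exploiting precisely the factorization of the bracket as $(\mathrm{id}-\delta^1)(\mathrm{id}-\delta^2)$ that you identify. The paper, however, does not invoke Lemma \ref{lem:CompModStand} as a black box: it re-runs the one-parameter decomposition jointly in both parameters via the operators $D_{I,l}(j,j_0)$ of \eqref{eq:Dnot}, and the four resulting groups it organizes the sum into --- $\sum_{m_1,m_2\le n}\Sigma^1_{m_1,m_2}$; $\sum_{m_2\le n}(\Sigma^1_{n+1,m_2}-\Sigma^2_{m_2})$; $\sum_{m_1\le n}(\Sigma^1_{m_1,n+1}-\Sigma^3_{m_1})$; $\Sigma^1_{n+1,n+1}-\Sigma^2_{n+1}-\Sigma^3_{n+1}+\Sigma^4$ --- are exactly the $(A,A)$, $(U,A)$, $(A,U)$, $(U,U)$ combinations your sequential procedure would produce. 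Your black-box invocation merits one caveat you half-acknowledge but should make explicit: Lemma \ref{lem:CompModStand} is stated for scalar coefficients, while after freezing the second parameter the first-parameter ``coefficients'' still carry the cubes $I_j^2$ and the second-parameter Haar pairings. What is actually being used is that the \emph{proof} of Lemma \ref{lem:CompModStand} is a pure algebra of first-parameter Haar and averaging operators together with a counting/normalization estimate, so it passes unchanged to such parameter-dependent coefficients --- which is exactly the verification the paper carries out in place. With that observation supplied, your modular route is correct, and arguably a cleaner way to organize the same argument, but it is not a genuinely shorter proof: the content you defer to ``bookkeeping'' is the bulk of what the paper writes down.
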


\begin{proof}
For notational convenience we consider a shift $Q_k$ of the particular form
\begin{equation}\label{eq:ParticularShift}
\begin{split}
\langle Q_{k}(f_1, \ldots, f_n), f_{n+1}\rangle = \sum_{K} & \sum_{\substack{R_1, \ldots, R_{n+1} \\ R_j^{(k)} = K }}
a_{K, (R_j)} \big[A_{R_1, \ldots, R_{n+1}}^{n+1, n+1} - A_{I_{n+1}^1 \times I_1^2, \ldots, I_{n+1}^1 \times I_{n+1}^2}^{n+1,n+1}  \\
&-A_{I_1^1 \times I_{n+1}^2, \ldots, I_{n+1}^1 \times I_{n+1}^2}^{n+1,n+1} + 
A_{I_{n+1}^1 \times I_{n+1}^2, \ldots, I_{n+1}^1 \times I_{n+1}^2}^{n+1,n+1}\big].
\end{split}
\end{equation}
There is no essential difference in the general case.

We define
$$
b_{K, (R_j)}
=|R_1|^{n/2}a_{K, (R_j)}
$$
and
$$
B_{R_1, \ldots, R_{n+1}}^{n+1, n+1}
=\prod_{j=1}^n \langle f_j \rangle_{R_j} \langle f_{n+1}, h_{R_{n+1}} \rangle.
$$
We can write the shift with these similarly as in \eqref{eq:ParticularShift} just by replacing $a$ with $b$ and $A$ with $B$.

Comparing to the one-parameter case the following decompositions correspond to \eqref{eq:EPSplitting} and later 
we do the steps corresponding to \eqref{eq:MultilinCollapse} and \eqref{eq:nCollapseRelated}.

For the moment we define the following shorthand. For a cube $I$ and integers $l,j_0 \in \{1,2, \dots \}$ we define
\begin{equation}\label{eq:Dnot}
D_{I,l}(j,j_0)=
\begin{cases}
E_I, \quad &\text{if } j \in \{1, \dots, j_0-1\}, \\ 
P_{I,l-1}, \quad &\text{if } j=j_0, \\
\Id, \quad &\text{if } j \in \{j_0+1,j_0+2, \dots\}, 
\end{cases}
\end{equation}
where $\Id$ denotes the identity operator.

Let $R_1, \dots, R_{n+1}$ be as in the summation of $Q_k$. We use the above notation in both parameters, and we denote this, as usual, with superscripts 
$D^1_{I,l}(j,j_0)$ and $D^2_{I,l}(j,j_0)$.
Using \eqref{eq:EPSplitting} in both parameters separately
we have that
\begin{equation*}
\begin{split}
B_{R_1, \ldots, R_{n+1}}^{n+1, n+1}
&=\sum_{m_1,m_2=1}^{n+1}
\prod_{j=1}^{n} \langle D^1_{K^1,k_1}(j,m_1)D^2_{K^2,k_2}(j,m_2)f_j \rangle_{R_j} \langle f_{n+1}, h_{R_{n+1}} \rangle,
\end{split}
\end{equation*}
which gives that
$$
\sum_{K} \sum_{\substack{R_1, \ldots, R_{n+1} \\ R_j^{(k)} = K }}B_{R_1, \ldots, R_{n+1}}^{n+1, n+1}
=:\sum_{m_1,m_2=1}^{n+1}\Sigma_{m_1,m_2}^1.
$$
Also, we have that 
\begin{equation*}
B_{I_{n+1}^1 \times I_1^2, \ldots, I_{n+1}^1 \times I_{n+1}^2}^{n+1,n+1}
=\sum_{m_2=1}^{n+1} \prod_{j=1}^n \langle D^2_{K^2,k_2}(j,m_2)f_j \rangle _{I^1_{n+1} \times I^2_j} \langle f_{n+1}, h_{R_{n+1}} \rangle
\end{equation*}
and
\begin{equation*}
B_{I_1^1 \times I_{n+1}^2, \ldots, I_{n+1}^1 \times I_{n+1}^2}^{n+1,n+1}
=\sum_{m_1=1}^{n+1} \prod_{j=1}^n \langle D^1_{K^1,k_1}(j,m_1)f_j \rangle _{I^1_{j} \times I^2_{n+1}} \langle f_{n+1}, h_{R_{n+1}} \rangle,
\end{equation*}
which gives that
$$
\sum_{K} \sum_{\substack{R_1, \ldots, R_{n+1} \\ R_j^{(k)} = K }}
B_{I_{n+1}^1 \times I_1^2, \ldots, I_{n+1}^1 \times I_{n+1}^2}^{n+1,n+1}
=: \sum_{m_2=1}^{n+1} \Sigma_{m_2}^2
$$
and 
$$
\sum_{K} \sum_{\substack{R_1, \ldots, R_{n+1} \\ R_j^{(k)} = K }}
B_{I_1^1 \times I_{n+1}^2, \ldots, I_{n+1}^1 \times I_{n+1}^2}^{n+1,n+1}
=: \sum_{m_1=1}^{n+1} \Sigma^3_{m_1}.
$$
Finally, we write that
$$
\sum_{K} \sum_{\substack{R_1, \ldots, R_{n+1} \\ R_j^{(k)} = K }}
B_{I_{n+1}^1 \times I_{n+1}^2, \ldots, I_{n+1}^1 \times I_{n+1}^2}^{n+1,n+1}=: \Sigma^4.
$$

Using the above decompositions we have the identity
\begin{equation*}
\begin{split}
\langle Q_{k}(f_1, \ldots, f_n), f_{n+1}\rangle
&= \sum_{m_1,m_2=1}^n \Sigma^1_{m_1,m_2}
+ \sum_{m_2=1}^n (\Sigma^1_{n+1,m_2}-\Sigma^2_{m_2})\\
&+\sum_{m_1=1}^n (\Sigma^1_{m_1,n+1}-\Sigma^3_{m_1})
+ (\Sigma^1_{n+1,n+1}-\Sigma^2_{n+1}-\Sigma^3_{n+1}+\Sigma^4).
\end{split}
\end{equation*}
The terms $\Sigma^1_{m_1,m_2}$ with $m_1,m_2 \in \{1, \dots, n\}$ and the terms inside the parentheses will be written
as sums of standard shifts.

First, we take one $\Sigma^1_{m_1,m_2}$ with $m_1,m_2 \in \{1, \dots, n\}$. For convenience of notation
we choose the case $m_1=m_2=:m$. Recall that
$$
\Sigma^1_{m,m}
=\sum_{K}  \sum_{\substack{R_1, \ldots, R_{n+1} \\ R_j^{(k)} = K }}
b_{K,(R_j)} \prod_{j=1}^{m-1} \langle f_j \rangle_K \langle P_{K,(k_1-1,k_2-1)} f_m \rangle_{R_m} 
\prod_{j=m+1}^n \langle f_j \rangle_{R_j} \langle f_{n+1}, h_{R_{n+1}} \rangle.
$$
Expanding
$$
\langle P_{K,(k_1-1,k_2-1)} f_m \rangle_{R_m}
= \sum_{i_1=0}^{k_1-1} \sum_{i_2=0}^{k_2-1}\sum_{L^{(i_1,i_2)}=K}
\langle f_m , h_L \rangle \langle h_L \rangle_{R_m}
$$
there holds that
\begin{equation*}
\begin{split}
\Sigma^1_{m,m}
=\sum_{i_1=0}^{k_1-1} \sum_{i_2=0}^{k_2-1} \sum_{K}  
\sum_{L^{(i_1,i_2)}=K}
&\sum_{\substack{R_{m+1}, \ldots, R_{n+1} \\ R_j^{(k)} = K }}
\Big(\sum_{\substack{R_1, \ldots, R_{m-1} \\ R_j^{(k)} = K }}\sum_{\substack{R_m \subset L \\ R_m^{(k)}=K}} 
\frac{b_{K,(R_j)} \langle h_{L} \rangle_{R_m} }
{|K|^{(m-1)/2} |R_{n+1}|^{(n-m)/2} }\Big) \\
&\prod_{j=1}^{m-1} \langle f_j, h^0_K \rangle 
\langle f_m , h_{L} \rangle
\prod_{j=m+1}^n \langle f_j, h_{R_j}^0 \rangle \langle f_{n+1}, h_{R_{n+1}} \rangle.
\end{split}
\end{equation*}
Since
$$
\Big|\sum_{\substack{R_1, \ldots, R_{m-1} \\ R_j^{(k)} = K }}\sum_{\substack{ R_m \subset L \\ R_m^{(k)}=K}} 
\frac{b_{K,(R_j)} \langle h_{L} \rangle_{R_m} }
{|K|^{(m-1)/2} |R_{n+1}|^{(n-m)/2} }\Big|
\le \frac{|K|^{(m-1)/2} |L|^{1/2} |R_{n+1}|^{(n-m+1)/2}}{|K|^n},
$$
we see that
$$
\Sigma^1_{m,m}
=\sum_{i_1=0}^{k_1-1} \sum_{i_2=0}^{k_2-1}
\langle S_{(0, \dots, 0,(i_1,i_2), k, \dots, k)}(f_1, \dots, f_n),f_{n+1} \rangle,
$$
where $S_{(0, \dots, 0,(i_1,i_2), k, \dots, k)}$ is a standard $n$-linear bi-parameter shift. The case
of general $m_1, m_2$ is analogous.

\begin{comment}
In the same way there holds that
$$
\Sigma^1_{m_1,m_2}
=\sum_{i_1=0}^{k_1-1} \sum_{i_2=0}^{k_2-1}
\langle S_{k^{m_1,m_2}_{i_1,i_2}}(f_1, \dots, f_n),f_{n+1} \rangle,
$$
where the complexity $k^{m_1,m_2}_{i_1,i_2}$ satisfies 
$(k^{m_1,m_2}_{i_1,i_2})^1_j=0$ for $j \in \{1, \dots, m_1-1\}$, $(k^{m_1,m_2}_{i_1,i_2})^1_{m_1}=i_1$ and 
$(k^{m_1,m_2}_{i_1,i_2})^1_j=k_1$ for $j \in \{m_1+1, \dots, n+1\}$, and similarly for $(k^{m_1,m_2}_{i_1,i_2})^2_j$.
\end{comment}

We turn to the terms $\Sigma^1_{n+1,m_2}-\Sigma^2_{m_2}$.
The terms $\Sigma^1_{m_1,n+1}-\Sigma^3_{m_1}$ are symmetrical.
Let $m_2 \in \{1, \dots, n\}$. After expanding $P^2_{K^2,k_2-1}$ in the slot $m_2$  we have that
$\Sigma^1_{n+1,m_2}-\Sigma^2_{m_2}$ can be written as
\begin{equation*}
\begin{split}
&\sum_{i_2=0}^{k_2-1} \sum_{K} \sum_{(L^2)^{(i_2)}=K^2}  
\sum_{\substack{R_1, \ldots, R_{n+1} \\ R_j^{(k)} = K }} 
b_{K,(R_j)} \langle h_{L^2} \rangle_{I^2_{m_2}}
\Big[\prod_{j=1}^{m_2-1} \langle f_j \rangle_{K} 
\Big\langle f_{m_2}, \frac{1_{K^1}}{|K_1|} \otimes h_{L^2} \Big\rangle
\prod_{j=m_2+1}^n \langle f_j \rangle_{K^1 \times I^2_j}  \\ 
&\hspace{2cm} -\prod_{j=1}^{m_2-1} \langle f_j \rangle_{I^1_{n+1} \times K^2} 
\Big\langle f_{m_2}, \frac{1_{I^1_{n+1}}}{|I^1_{n+1}|} \otimes h_{L^2} \Big\rangle
\prod_{j=m_2+1}^n \langle f_j \rangle_{I^1_{n+1} \times I^2_j} \Big] \langle f_{n+1}, h_{R_{n+1}} \rangle.
\end{split}
\end{equation*}
This splits the difference $\Sigma^1_{n+1,m_2}-\Sigma^2_{m_2}$ as
$$
\Sigma^1_{n+1,m_2}-\Sigma^2_{m_2}
=:\sum_{i_2=0}^{k_2-1} \Sigma^{1,2}_{m_2,i_2}.
$$
We fix one $i_2$ at this point.

Now, we do a splitting as in \eqref{eq:MultilinCollapse} and \eqref{eq:nCollapseRelated} 
with respect to the first parameter 
for the term inside the brackets $[ \ \cdot \ ]$ above. Let $g_j^{m_2}:=g_j= \langle f_j \rangle^2_{K^2}$ for 
$j \in \{1, \dots, m_2-1\}$, $g_{m_2}^{m_2}:=g_{m_2}= \langle f_{m_2}, h_{L^2} \rangle_2 $
and $g_j^{m_2}:=g_j= \langle f_j \rangle^2_{I^2_j}$ for $j \in \{m_2+1, \dots, n\}$. 
Using this notation we have that the term inside the brackets is 
$
\prod_{j=1}^n \langle g_j \rangle_{K^1}-\prod_{j=1}^n \langle g_j \rangle_{I^1_{n+1}}.
$
As in \eqref{eq:MultilinCollapse} we write that
\begin{equation*}
\prod_{j=1}^n \langle g_j \rangle_{K^1}-\prod_{j=1}^n \langle g_j \rangle_{I^1_{n+1}}
=-\sum_{i_1=0}^{k_1-1}\Big(\prod_{j=1}^n \langle g_j \rangle_{(I^1_{n+1})^{(i_1)}}-\prod_{j=1}^n \langle g_j \rangle_{(I^1_{n+1})^{(i_1+1)}}\Big).
\end{equation*}
Then, as in \eqref{eq:nCollapseRelated}, 
we write $\prod_{j=1}^n \langle g_j \rangle_{(I^1_{n+1})^{(i_1)}}-\prod_{j=1}^n \langle g_j \rangle_{(I^1_{n+1})^{(i_1+1)}}$
as the sum
\begin{equation*}
\sum_{m_1=1}^n
\prod_{j=1}^{m_1-1} \langle g_j \rangle_{(I^1_{n+1})^{(i_1+1)}} 
\langle \Delta_{(I^1_{n+1})^{(i_1+1)}} g_{m_1}\rangle_{I^1_{n+1}}  
\prod_{j=m_1+1}^n \langle g_j \rangle_{(I^1_{n+1})^{(i_1)}}.
\end{equation*}
Expanding 
$$
\langle \Delta_{(I^1_{n+1})^{(i_1+1)}} g_{m_1}\rangle_{I^1_{n+1}}
=\langle  g_{m_1}, h_{(I^1_{n+1})^{(i_1+1)}}  \rangle \langle h_{(I^1_{n+1})^{(i_1+1)}} \rangle_{I^1_{n+1}}
$$
we get that
$\prod_{j=1}^n \langle g_j \rangle_{K^1}-\prod_{j=1}^n \langle g_j \rangle_{I^1_{n+1}}$ equals
\begin{equation*}
-\sum_{i_1=0}^{k_1-1}
\sum_{m_1=1}^n 
\prod_{j=1}^{m_1-1} \langle g_j \rangle_{(I^1_{n+1})^{(i_1+1)}} 
\langle  g_{m_1}, h_{(I^1_{n+1})^{(i_1+1)}}  \rangle \langle h_{(I^1_{n+1})^{(i_1+1)}} \rangle_{I^1_{n+1}}  
\prod_{j=m_1+1}^n \langle g_j \rangle_{(I^1_{n+1})^{(i_1)}}.
\end{equation*}
This identity splits $\Sigma^{1,2}_{m_2,i_2}$ further as 
$\Sigma^{1,2}_{m_2,i_2}
=: -\sum_{i_1=0}^{k_1-1} \sum_{m_1=1}^n \Sigma^{1,2}_{m_1,m_2,i_1,i_2}$.

We fix some $m_1$ and $i_1$ and consider the corresponding term. For convenience of notation we 
look at the case $m_1=m_2=:m$. There holds that
\begin{equation*}
\begin{split}
&\Sigma^{1,2}_{m,m,i_1,i_2}
=\sum_{K} \sum_{(L^2)^{(i_2)}=K^2}  
\sum_{\substack{R_1, \ldots, R_{n+1} \\ R_j^{(k)} = K }} 
b_{K,(R_j)} \langle h_{(I^1_{n+1})^{(i_1+1)} \times L^2} \rangle_{I^1_{n+1}\times I^2_{m}} \\
&\prod_{j=1}^{m-1} \langle f_j \rangle_{(I^1_{n+1})^{(i_1+1)} \times K^2} 
\Big\langle f_{m}, h_{(I^1_{n+1})^{(i_1+1)} \times L^2} \Big\rangle
\prod_{j=m+1}^n \langle f_j \rangle_{(I^1_{n+1})^{(i_1)} \times I^2_j} \langle f_{n+1}, h_{R_{n+1}} \rangle.
\end{split}
\end{equation*}
This is seen as a standard shift once we reorganize the summation and verify the normalization.
We take $(I^1_{n+1})^{(i_1+1)}$ as the new ``top cube'' in the first parameter 
($(I^1_{n+1})^{(i_1+1)}$ corresponds to $(L^1)^{(1)}$ in the summation below). There holds that
$
\Sigma^{1,2}_{m,m,i_1,i_2}
$
equals
\begin{equation*}
\begin{split}
 \sum_{K^1}\sum_{(L^1)^{(k_1-i_1)}=K^1} &\sum_{(I_{n+1}^1)^{(i_1)}=L^1}
\sum_{K^2} \sum_{(L^2)^{(i_2)}=K^2} \sum_{\substack{I^2_{m+1}, \dots, I^2_{n+1} \\ (I^2_j)^{(k_2)}=K^2}} 
 c_{K^1,L^1,I^1_{n+1}, K^2, L^2, I^2_{m+1}, \dots, I^2_{n+1}}  \\
&\prod_{j=1}^{m-1} \langle f_j \rangle_{(L^1)^{(1)} \times K^2} 
\Big\langle f_{m}, h_{(L^1)^{(1)} \times L^2} \Big\rangle
\prod_{j=m+1}^n \langle f_j \rangle_{L^1 \times I^2_j} \langle f_{n+1}, h_{R_{n+1}} \rangle,
\end{split}
\end{equation*}
where 
\begin{equation*}
\begin{split}
&c_{K^1,L^1,I^1_{n+1}, K^2, L^2, I^2_{m+1}, \dots, I^2_{n+1}} \\
&= \sum_{\substack{I^1_1, \dots, I^1_{n} \\ (I^1_j)^{(k_1)}=K^1}}
\sum_{\substack{I^2_{1}, \dots, I^2_{m-1} \\ (I^2_j)^{(k_2)}=K^2}}
\sum_{\substack{ I^2_{m} \subset L^2 \\ (I^2_{m})^{(k_2)}=K^2 }} 
b_{K, (R_j)}\langle h_{(L^1)^{(1)} \times L^2} \rangle_{I^1_{n+1}\times I^2_{m}}.
\end{split}
\end{equation*}
We have the estimate
\begin{equation*}
\begin{split}
|c_{K^1,L^1,I^1_{n+1}, K^2, L^2, I^2_{m+1}, \dots, I^2_{n+1}}| 
 & \le  \frac{|(L^1)^{(1)}|^{n/2}|I^1_{n+1}|^{1/2}}{|(L^1)^{(1)}|^n}
\frac{|K^2|^{(m-1)/2} |L^2|^{1/2} |I^2_{n+1}|^{(n-m+1)/2}}{|K^2|^n} \\
&\times|(L^1)^{(1)}|^{(n-1)/2} |K^2|^{(m-1)/2} |I^2|^{(n-m)/2}.
\end{split}
\end{equation*}
Notice that the term in the first line in the right hand side is $2^{d_1(n-m)/2}$ times the right normalization of the shift, 
since in $\Sigma^{1,2}_{m,m,i_1,i_2}$ we have the cubes $L^1$  related to $f_j$ with $j \in \{m+1, \dots, n\}$.
Also, the term in the second line is almost cancelled out when one changes the averages in $\Sigma^{1,2}_{m,m,i_1,i_2}$
into pairings against non-cancellative Haar functions. 

We conclude that for some $C \ge 1$ we have
$$
C^{-1}\Sigma^{1,2}_{m,m,i_1,i_2}
=\langle S_{(0, \dots, 0,(0,i_2), (1,k_2), \dots, (1,k_2), (i_1+1,k_2)}(f_1, \dots, f_n),f_{n+1} \rangle,
$$ 
where $S$ is a standard $n$-linear bi-parameter shift of the given complexity. The case of general $m_1, m_2$ is analogous.
\begin{comment}In the same way one sees that
$$
C^{-1}\Sigma^{1,2}_{m_1,m_2,i_1,i_2}
=\langle S_{l^{m_1,m_2}_{i_1,i_2}}(f_1, \dots, f_n),f_{n+1} \rangle,
$$
where we have $(l^{m_1,m_2}_{i_1,i_2})^{1}_j=0$ for $j \in \{1, \dots, m_1\}$, $(l^{m_1,m_2}_{i_1,i_2})^{1}_{j}=1$
for $j \in \{m_1+1, \dots, n\}$, $(l^{m_1,m_2}_{i_1,i_2})^{1}_{n+1}=i_1+1$,
$(l^{m_1,m_2}_{i_1,i_2})^{2}_j=0$ for $j \in \{1, \dots, m_2-1\}$, 
$(l^{m_1,m_2}_{i_1,i_2})^{2}_{m_2}=i_2$, $(l^{m_1,m_2}_{i_1,i_2})^{2}_j=k_2$ for $j \in \{m_2+1, \dots, n+1\}$.
\end{comment}

Finally, we look at the term $\Sigma^1_{n+1,n+1}-\Sigma^2_{n+1}-\Sigma^3_{n+1}+\Sigma^4$ which by definition is
\begin{equation}\label{eq:FourTerms}
\begin{split}
&\sum_{K}  \sum_{\substack{R_1, \ldots, R_{n+1} \\ R_j^{(k)} = K }} b_{K,(R_j)} \\
&\Big[ \prod_{j=1}^n \langle f_j \rangle_K
-\prod_{j=1}^n \langle f_j \rangle_{I^1_{n+1} \times K^2}
-\prod_{j=1}^n \langle f_j \rangle_{K^1 \times I^2_{n+1}}
+ \prod_{j=1}^n \langle f_j \rangle_{R_{n+1}} \Big] \langle f_{n+1}, h_{R_{n+1}} \rangle.
\end{split}
\end{equation} 
We will perform bi-parameter versions of the steps \eqref{eq:MultilinCollapse} and 
\eqref{eq:nCollapseRelated}.

Consider the rectangles $K, R_1, \dots, R_{n+1}$ as fixed for the moment. Expanding with respect to the first 
parameter using \eqref{eq:MultilinCollapse}  and \eqref{eq:nCollapseRelated} there holds that
$\langle f_j \rangle_K-\prod_{j=1}^n \langle f_j \rangle_{I^1_{n+1} \times K^2}$ equals
\begin{equation}\label{eq:1Half}
\begin{split}
-\sum_{i_1=0}^{k_1-1}
 &\sum_{m_1=1}^n 
\langle h_{(I^1_{n+1})^{(i_1+1)}}\rangle_{I^1_{n+1}} \\
& \prod_{j=1}^{m_1-1} \langle f_j \rangle_{(I^1_{n+1})^{(i_1+1)}\times K^2} 
\Big \langle  f_{m_1}, h_{(I^1_{n+1})^{(i_1+1)}} \otimes \frac{1_{K^2}}{|K^2|} \Big \rangle  
\prod_{j=m_1+1}^n \langle f_j \rangle_{(I^1_{n+1})^{(i_1)} \times K^2}.
\end{split}
\end{equation}
Similarly, we have that $-\prod_{j=1}^n \langle f_j \rangle_{K^1 \times I^2_{n+1}}
+ \prod_{j=1}^n \langle f_j \rangle_{R_{n+1}}$ equals
\begin{equation}\label{eq:2Half}
\begin{split}
\sum_{i_1=0}^{k_1-1}
 &\sum_{m_1=1}^n 
\langle h_{(I^1_{n+1})^{(i_1+1)}}\rangle_{I^1_{n+1}} \\
& \prod_{j=1}^{m_1-1} \langle f_j \rangle_{(I^1_{n+1})^{(i_1+1)}\times I^2_{n+1}} 
\Big \langle  f_{m_1}, h_{(I^1_{n+1})^{(i_1+1)}} \otimes \frac{1_{I^2_{n+1}}}{|I^2_{n+1}|} \Big \rangle  
\prod_{j=m_1+1}^n \langle f_j \rangle_{(I^1_{n+1})^{(i_1)} \times I^2_{n+1}}.
\end{split}
\end{equation}
Let $g^{m_1,i_1}_j= \langle f_j \rangle^1_{(I^1_{n+1})^{(i_1+1)}}$ for $j \in \{1, \dots, m_1-1\}$,
$g^{m_1,i_1}_{m_1}= \langle f_{m_1}, h_{(I^1_{n+1})^{(i_1+1)}} \rangle_1$  and
$g^{m_1,i_1}_j= \langle f_j \rangle^1_{(I^1_{n+1})^{(i_1)}}$ for $j \in \{m_1+1, \dots,n\}$.
The sum of \eqref{eq:1Half} and \eqref{eq:2Half} can similarly be split as
\begin{equation}\label{eq:BiParnCollapse}
\begin{split}
\sum_{i_1=0}^{k_1-1} \sum_{i_2=0}^{k_2-1}
 &\sum_{m_1,m_2=1}^n 
\langle h_{R_{n+1}^{(i_1+1,i_2+1)} }\rangle_{R_{n+1}} \\
& \prod_{j=1}^{m_2-1} \langle g^{m_1,i_1}_j \rangle_{(I^2_{n+1})^{(i_2+1)}} 
\langle  g^{m_1,i_1}_{m_2}, h_{(I^2_{n+1})^{(i_2+1)}} \rangle  
\prod_{j=m_2+1}^n \langle g^{m_1,i_1}_j \rangle_{(I^2_{n+1})^{(i_2)}}.
\end{split}
\end{equation}
When one recalls the definition of the functions $g_j^{m_1,i_1}$
and writes this in terms of the functions $f_j$, one has that in the first parameter
$f_j$ is paired with $1_{(I_{n+1}^1)^{(i_1+1)}}/|(I_{n+1}^1)^{(i_1+1)}|$ for $j=1, \dots, m_1-1$, 
$f_{m_1}$ with $h_{(I^1_{n+1})^{(i_1+1)}}$ and $f_j$ with 
$1_{(I_{n+1}^1)^{(i_1)}}/|(I_{n+1}^1)^{(i_1)}|$ for $j=m_1+1, \dots, n$. Each $f_j$ is paired similarly in the second parameter.
In the case $m_1=m_2=:m$ the summand in \eqref{eq:BiParnCollapse} can be written as
\begin{equation}\label{eq:OneFromIV}
\langle h_{R_{n+1}^{(i_1+1,i_2+1)} }\rangle_{R_{n+1}}\prod_{j=1}^{m-1} \langle f_j \rangle_{R_{n+1}^{(i_1+1,i_2+1)} }
\langle f_{m_1}, h_{R_{n+1}^{(i_1+1,i_2+1)}} \rangle 
\prod_{j=m+1}^n \langle f_j \rangle_{R_{n+1}^{(i_1,i_2)}}.
\end{equation}

The splitting in \eqref{eq:BiParnCollapse} gives us the identity
$$
\Sigma^1_{n+1,n+1}-\Sigma^2_{n+1}-\Sigma^3_{n+1}+\Sigma^4
=: \sum_{i_1=0}^{k_1-1} \sum_{i_2=0}^{k_2-1}
 \sum_{m_1,m_2=1}^n \Sigma^{1,2,3,4}_{m_1,m_2,i_1,i_2}.
$$
We fix some $i_1$ and $i_2$ and consider the case $m_1=m_2=:m$. 
From \eqref{eq:OneFromIV} we see that
\begin{equation*}
\begin{split}
\Sigma^{1,2,3,4}_{m,m,i_1,i_2}
&= \sum_{K} \sum_{L^{(k_1-i_1, k_2-i_2)}=K}
\sum_{R_{n+1}^{(i_1,i_2)}=L} c_{K, L,R_{n+1}}\\
 &\prod_{j=1}^{m-1} \langle f_j \rangle_{L^{(1,1)}}
\langle f_{m_1}, h_{L^{(1,1)}} \rangle 
\prod_{j=m+1}^n \langle f_j \rangle_{L}
\langle f_{n+1}, h_{R_{n+1}} \rangle,
\end{split}
\end{equation*}
where
$$
c_{K,L,R_{n+1}}
= \sum_{\substack{R_1, \dots, R_n \\ R_j^{(k)}=K}} 
b_{K,(R_j)} \langle h_{L^{(1,1)}}\rangle_{R_{n+1}}.
$$
The coefficient satisfies the estimate
\begin{equation*}
|c_{K,L,R_{n+1}}| \le  \frac{|R_{n+1}|^{1/2}}{|L^{(1,1)}|^{1/2}}=\frac{|L^{(1,1)}|^{n/2}|R_{n+1}|^{1/2}}{|L^{(1,1)}|^{n}} |L^{(1,1)}|^{(n-1)/2}.
\end{equation*}
Thus, we see that $C^{-1}\Sigma^{1,2,3,4}_{m,m,i_1,i_2}$ is a standard $n$-linear bi-parameter shift. 
The complexity of the shift is $((0,0), \dots, (0,0),(1,1),\dots,(1,1), (i_1+1,i_2+1))$ with $m$ zeros.
The case of general $m_1$ and $m_2$ is analogous.
\end{proof}

\subsection*{Estimates for model operators}
We would, ideally, like to prove that the \emph{modified} operators are weighted bounded with a bound that depends on the square root of the complexity. A weighted bound
even with some fixed exponents always yields the full multilinear range of boundedness via extrapolation.
However, due to technical problems we cannot achieve these weighted bounds currently. In fact, it seems to be a problem to achieve the boundedness of the modified model operators in the full multilinear range
with a bound that depends on the square root of the complexity. Another strategy would be to prove weak type estimates in the quasi Banach range in addition to the Banach range boundedness,
and then interpolate. This strategy also has some obstacles that we cannot overcome currently.
 In \cite{LMV} we successfully used both of these strategies with the standard operators in the bilinear bi-parameter setting.

With multilinear bi-parameter \emph{modified} model operators, due to the above reasons,  
we prove only the Banach range boundedness with the square root complexity dependence. 
This, together with estimates for standard operators, will lead to Banach range boundedness of $n$-linear bi-parameter 
$(\omega_1, \omega_2)$-CZOs with  
$\omega_i \in \operatorname{Dini}_{1/2}$ (Corollary \ref{cor:Dini12nlinbipar}).

Then, we prove weighted estimates for standard $n$-linear bi-parameter model operators. 
Via the decomposition of modified operators into standard operators we get the weighted boundedness of modified model operators with a linear dependence on the complexity. This
in turn leads to weighted boundedness of $n$-linear bi-parameter $(\omega_1, \omega_2)$-CZOs with  $\omega_i \in \operatorname{Dini}_{1}$ (Corollary \ref{cor:Dini1nlinbipar}).
These estimates extrapolate to the full range.

Finally, we prove the weighted boundedness of \emph{linear} modified bi-parameter shifts with a bound that depends on
the square root of the complexity. This demonstrates why the linear weighted estimates are easier. These estimates  then transfer to the weighted 
boundedness of linear bi-parameter 
$(\omega_1, \omega_2)$-CZOs with  $\omega_i \in \operatorname{Dini}_{1/2}$ (Equation \eqref{eq:Dini12biparw}).

\subsubsection*{Unweighted boundedness of modified model operators in the Banach range}

\begin{prop}\label{prop:QkBRange}
Let $p_j \in (1, \infty)$, $j=1, \dots,n+1$, be such that $\sum_{j=1}^{n+1} 1/p_j=1$.
Suppose that $Q_k$ is a modified $n$-linear bi-parameter shift. 
Then the estimate
$$
|\langle Q_k(f_1, \ldots, f_n), f_{n+1} \rangle|
 \lesssim \sqrt k_1 \sqrt k_2 \prod_{j=1}^{n+1} \|f_j\|_{L^{p_j}}
$$
holds. 

Suppose that $(QS)_{k,i}$ is a modified/standard shift (here $k \in \{1,2, \dots \}$ and $i=(i_1, \dots, i_{n+1})$). Then the estimate
$$
|\langle (QS)_{k,i}(f_1, \ldots, f_n), f_{n+1} \rangle|
 \lesssim \sqrt k \prod_{j=1}^{n+1} \|f_j\|_{L^{p_j}}
$$
holds. 
\end{prop}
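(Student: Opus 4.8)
The plan is to run the scalar‑valued one‑parameter argument of Proposition~\ref{prop:Qscalar} simultaneously in both parameters, exploiting that the four‑term bracket defining a modified bi‑parameter shift $Q_k$ is an iterated difference: a difference in the first‑parameter cubes of a difference in the second‑parameter cubes. As in the proof of Lemma~\ref{lem:QasSBiPar} I would first rewrite the coefficients, absorbing powers of $|R_1|$ so that the cancellative slot carries a genuine Haar function and every other slot carries a plain average, reducing the bracket to $[\prod_{j\le n}\langle f_j\rangle_{R_j}-\prod_{j\le n}\langle f_j\rangle_{I^1_{j_1}\times I^2_j}-\prod_{j\le n}\langle f_j\rangle_{I^1_j\times I^2_{j_2}}+\prod_{j\le n}\langle f_j\rangle_{I^1_{j_1}\times I^2_{j_2}}]\cdot\langle f_{n+1},h_{R_{n+1}}\rangle$ (with the obvious modifications for the other symmetric forms).

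Next I would carry out the algebraic decomposition. Treating the second parameter first, write each second‑parameter average $\langle\cdot\rangle_{I^2_j}=\langle\cdot\rangle_{K^2}+\langle P^2_{K^2,k_2-1}(\cdot)\rangle_{I^2_j}$ and telescope, so that the piece in which every factor is averaged over $K^2$ cancels two of the four terms; by the expansions \eqref{eq:EPSplitting} and \eqref{eq:MultilinCollapse}--\eqref{eq:nCollapseRelated} the remainder is a sum of boundedly many (depending only on $n$) terms, each of which carries, on exactly one slot $m_2$, either a factor $\langle P^2_{K^2,k_2-1}f_{m_2}\rangle_{I^2_{m_2}}$ or a telescoped martingale difference $\langle\Delta^2_{(I^2_{n+1})^{(i_2+1)}}f_{m_2}\rangle$ summed over $i_2\in\{0,\dots,k_2-1\}$. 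Repeating the same manoeuvre in the first parameter on the surviving product yields again a sum of boundedly many terms, each of which on exactly one slot $m_1$ carries a first‑parameter factor of the same two types. Crucially, when reassembled over the telescoping scales these pieces are controlled by $P^m_{K^m,k_m}$‑blocks, which is what will produce $\sqrt{k_m+1}$ rather than $k_m+1$.

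For the estimation, take a representative term, pass to absolute values, sum the coefficients against the normalization \eqref{eq:Snorm2par}, and bound the result by an integral of a product over $j$: the slots carrying only plain averages contribute $M_{\calD}f_j$; the slot $f_{n+1}$ contributes the honest square function $\big(\sum_K|\Delta_K f_{n+1}|^2\big)^{1/2}$ coming from the surviving cancellative Haar function $h_{R_{n+1}}$; and the one or two slots carrying the first/second‑parameter $P$‑blocks contribute $S^1_{\calD,M}$‑ and $S^2_{\calD,M}$‑type square functions, or, when $m_1=m_2=m$, the single object $\big(\sum_K|P^1_{K^1,k_1-1}P^2_{K^2,k_2-1}f_m|^2\big)^{1/2}$. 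Then H\"older's inequality with exponents $p_1,\dots,p_{n+1}$, the Fefferman--Stein inequality and the square function bounds of Lemma~\ref{lem:standardEst}, together with the estimate $\big\|\big(\sum_K|P_{K,k}f|^2\big)^{1/2}\big\|_{L^p}\lesssim\sqrt{k_1+1}\sqrt{k_2+1}\|f\|_{L^p}$ (again Lemma~\ref{lem:standardEst}, with $w=1$), give the bound $\sqrt{k_1+1}\sqrt{k_2+1}\prod_j\|f_j\|_{L^{p_j}}$ for each term; summing the finitely many terms proves the claim for $Q_k$.

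For $(QS)_{k,i}$ the second parameter is a \emph{standard} shift, whose complexity is harmless in the unweighted scalar setting (the complexity‑free bound recalled before Lemma~\ref{lem:CompModStand}): the cancellative Haar functions $\wt h_{I^2_j}$ are already in place, so one does not expand in the second parameter at all, and only the modified first parameter undergoes a telescoping and hence contributes a single $\sqrt{k}$. Running the first‑parameter expansion exactly as above and then handling the second‑parameter shift structure directly through its size normalization and the maximal/square function bounds yields $|\langle(QS)_{k,i}(f_1,\dots,f_n),f_{n+1}\rangle|\lesssim\sqrt{k}\prod_j\|f_j\|_{L^{p_j}}$. The main obstacle is not analytic but combinatorial bookkeeping: tracking, in both parameters at once, which slots carry cancellative versus non‑cancellative Haar functions; reassembling the scale‑by‑scale telescoped martingale differences in the collapse terms into $P^m$‑blocks \emph{before} estimating, so that the square‑root (and not linear) dependence on $k_1,k_2$ is obtained; and checking that in every surviving term at least one cancellative Haar function remains to furnish an honest square function in place of a mere maximal function.
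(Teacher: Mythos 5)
Your core strategy is the paper's: apply the $E$--$P$ decomposition $\langle\cdot\rangle_{I^m_j}=\langle\cdot\rangle_{K^m}+\langle P^m_{K^m,k_m-1}(\cdot)\rangle_{I^m_j}$ parameter by parameter to Leibniz-telescope the product in $j$ (the $D^m_{K^m,k_m}(j,m_0)$ notation of \eqref{eq:Dnot}), observe that the pieces with $m_1=n+1$ or $m_2=n+1$ cancel across the four bracket terms, and bound each of the remaining $U_1,\dots,U_4$ by a product of maximal functions, a $P$-block square function, and $S_{\calD}f_{n+1}$, closing with H\"older, Fefferman--Stein and Lemma~\ref{lem:standardEst}. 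Your handling of $(QS)_{k,i}$ (expand only the modified parameter; the standard parameter already carries two cancellative Haar functions) also matches the paper, which indeed treats it as a sub-case of the $Q_k$ argument.

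The one place you should be careful is the detour through \eqref{eq:MultilinCollapse}--\eqref{eq:nCollapseRelated} and the claim that the scale-by-scale martingale differences can simply be ``reassembled'' into $P$-blocks. The collapse produces, for each scale $i$, a term whose \emph{other} slots are averaged over $(I_{n+1})^{(i)}$ or $(I_{n+1})^{(i+1)}$, cubes whose size depends on $i$. Once you dominate those slots by maximal functions (to make them uniform in $i$) you have already taken absolute values, and the remaining $\sum_i|\langle\Delta_{(I_{n+1})^{(i+1)}}f_m\rangle_{I_{n+1}}|$ no longer coincides with $|\langle P_{K,k-1}f_m\rangle_{I_{n+1}}|$; the naive Cauchy--Schwarz in $i$ followed by the square-function estimate then produces a factor $k$, not $\sqrt{k}$. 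This is precisely the loss the paper flags in the discussion preceding Lemma~\ref{lem:CompModStand}: the $\sim nk$ standard-shift decomposition leads only to $\operatorname{Dini}_1$. In the paper's proof of Proposition~\ref{prop:QkBRange} the collapse \eqref{eq:MultilinCollapse}--\eqref{eq:nCollapseRelated} is never used; the $E$--$P$ decomposition applied to \emph{all four} bracket terms already hands you intact $P_{K,(k_1-1,k_2-1)}$-blocks, and then \eqref{eq:ShiftUpperB} follows by the one-parameter computations \eqref{eq:StaCom1}--\eqref{eq:StaCom2} in each variable. Drop the \eqref{eq:MultilinCollapse} route (it is needed only for the algebraic identity in Lemmas~\ref{lem:CompModStand} and \ref{lem:QasSBiPar}), or, if you keep it, make precise how you preserve the sign structure of the $P$-block before estimating the other slots.
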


\begin{proof}
We only prove the statement for the operator $Q_k$. This essentially contains the proof for $(QS)_{k,i}$.

We assume $Q_k$ has the explicit form
\begin{align*}
\langle Q_k(f_1, \ldots, f_n), f_{n+1}\rangle = \sum_{K} \sum_{\substack{R_1, \ldots, R_{n+1} \\ R_j^{(k)} = K }}&
a_{K, (R_j)} \Big[ \prod_{j=1}^{n} \langle f_j, h_{R_j}^0 \rangle - \prod_{j=1}^{n} \langle f_j, h_{I_{n+1}^1 \times I_j^2}^0 \rangle \\
&- \prod_{j=1}^{n} \langle f_j, h_{I_j^1 \times I_{n+1}^2}^0 \rangle + \prod_{j=1}^{n} \langle f_j, h_{R_{n+1}}^0 \rangle
 \Big] \langle f_{n+1}, h_{R_{n+1}} \rangle.
\end{align*}
Using the notation \eqref{eq:Dnot} (and recalling \eqref{eq:EPSplitting}) there holds that
\begin{equation*}
\prod_{j=1}^{n} \langle f_j, h_{R_j}^0 \rangle
=\sum_{m_1,m_2=1}^{n+1} \prod_{j=1}^{n} \langle D^1_{K^1,k_1}(j,m_1)D^2_{K^2,k_2}(j,m_2)f_j, h_{R_j}^0 \rangle.
\end{equation*}
We do the same decomposition with the other three terms inside the bracket $[ \, \cdot \,]$. This splits $[\, \cdot \, ]$
into a sum over $m_1,m_2 \in \{1, \dots, n+1\}$.
Then, we notice that all the terms in the sum with $m_1=n+1$ or $m_2=n+1$ cancel out. 
Thus, we get a  splitting of $\langle Q_k(f_1, \ldots, f_n), f_{n+1} \rangle$ into a sum over $m_1,m_2 \in \{1,  \dots, n\}$.
All the terms with different $m_1$ and $m_2$ are estimated separately.

\begin{comment}
\begin{align*}
&\sum_{m_1,m_2=1}^{n} \Big[ \prod_{j=1}^{n} \langle D^1_{K^1,k_1}(j,m_1)D^2_{K^2,k_2}(j,m_2)f_j, h_{R_j}^0 \rangle 
 - \prod_{j=1}^{n} \langle D^1_{K^1,k_1}(j,m_1)D^2_{K^2,k_2}(j,m_2)f_j, h_{I_{n+1}^1 \times I_j^2}^0 \rangle  \\ 
&- \prod_{j=1}^{n} \langle D^1_{K^1,k_1}(j,m_1)D^2_{K^2,k_2}(j,m_2)f_j, h_{I_j^1 \times I_{n+1}^2}^0 \rangle 
+  \prod_{j=1}^{n} \langle D^1_{K^1,k_1}(j,m_1)D^2_{K^2,k_2}(j,m_2)f_j, h_{R_{n+1}}^0 \rangle \Big].
\end{align*}
This follows by noting that all the terms, where $m_1 = n+1$ or $m_2 = n+1$ cancel out.
\end{comment}

In what follows -- for notational convenience -- we will focus on the case $m_1 = m_2 =: m \in \{1, \ldots, n\}$, and we define
$D^1_{K^1,k_1}(j,m)D^2_{K^2,k_2}(j,m) =: D_{K, k}(j,m)$.  The term in the splitting of 
$\langle Q_k(f_1, \ldots, f_n), f_{n+1} \rangle$  corresponding to $m=m_1=m_2$
can be written as the sum 
$$
\sum_{i=1}^4 \langle U_i(f_1, \ldots, f_n), f_{n+1} \rangle,
$$
where 
$$
\langle U_1(f_1, \ldots, f_n), f_{n+1}\rangle = \sum_{K} \sum_{\substack{R_1, \ldots, R_{n+1} \\ R_j^{(k)} = K }} a_{K, (R_j)} 
\prod_{j=1}^{n} \langle D_{K, k}(j,m)f_j, h_{R_j}^0 \rangle 
 \langle f_{n+1}, h_{R_{n+1}} \rangle,
$$
and $U_2$, $U_3$ and $U_4$ are defined similarly just by replacing $h^0_{R_j}$, $j \in \{1, \dots, n\}$, by
$h_{I_{n+1}^1 \times I_j^2}^0$, $h_{I_{j}^1 \times I_{n+1}^2}^0$ and $h_{R_{n+1}}^0$, respectively.

With some direct calculations it can be shown that for all $i \in \{1, \ldots, 4\}$ we have
\begin{equation}\label{eq:ShiftUpperB}
|\langle U_i(f_1, \ldots, f_n), f_{n+1}\rangle| 
\le \int \prod_{\substack{ j=1 \\ j \ne m}}^n Mf_j \Big( \sum_K |MP_{K, (k_1-1, k_2-1)} f_m|^2 \Big)^{1/2}
S_{\calD} f_{n+1},
\end{equation}
see \eqref{eq:StaCom1}, \eqref{eq:StaCom2} and the two estimates preceeding \eqref{eq:StaCom2} for the corresponding steps in the one-parameter case. From here the estimate 
can be finished by H\"older's inequality, the Fefferman--Stein inequality and square function estimates, 
see Lemma \ref{lem:standardEst}.
\end{proof}

Next, we look at the modified partial paraproducts. We will use the well known one-parameter $H^1$-$\BMO$ duality estimate
\begin{equation}\label{eq:1ParH1BMO}
\sum_I |a_I b_I| 
\lesssim \| (a_I) \|_{\BMO}
\Big \| \Big( \sum_{I} |b_I|^2 \frac{1_I}{|I|} \Big)^{1/2} \Big\|_{L^1},
\end{equation}
where the cubes $I$ are in some dyadic grid. 

\begin{prop}\label{prop:QpiBRange}
Let $p_j \in (1, \infty)$, $j=1, \dots,n+1$, be such that $\sum_{j=1}^{n+1} 1/p_j=1$.
Suppose $(Q\pi)_k$ is a modified $n$-linear partial paraproduct.
Then the estimate
$$
|\langle (Q\pi)_k(f_1, \ldots, f_n), f_{n+1} \rangle|
 \lesssim \sqrt k \prod_{j=1}^{n+1} \|f_j\|_{L^{p_j}}
$$
holds.
\end{prop}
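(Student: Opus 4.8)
The plan is to bound $(Q\pi)_k$ by combining two ingredients already in the paper: the \emph{multilinear collapse} of the bracket $[\,\cdot\,]$ in the first parameter, exactly as in the proofs of Lemma \ref{lem:CompModStand} and Proposition \ref{prop:MultliModUMD} (this is what produces a single factor $\sqrt{k}$, through a surviving block $P^1_{K^1,k-1}$), together with the one-parameter $H^1$--$\BMO$ duality estimate \eqref{eq:1ParH1BMO} applied in the second parameter to absorb the paraproduct coefficients, using the hypothesis $\|(a_{K,(I_j^1)})_{K^2}\|_{\BMO}\le \prod_{j=1}^{n+1}|I_j^1|^{1/2}/|K^1|^n$. (Morally this is the multilinear version of the partial-paraproduct estimate sketched earlier in this section.) There are several subcases depending on the position of the index $j_0$ in the definition of $(Q\pi)_k$ and of the index $j_2$ with $u_{j_2,K^2}=h_{K^2}$ in \eqref{eq:Spi}; all are handled by the same two techniques, so I would fix a representative one, say $j_0=j_2=n+1$.

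First I would expand, following Lemma \ref{lem:CompModStand}, the first-parameter part of the bracket, namely $\prod_{j=1}^n\langle f_j\rangle^1_{I_j^1}-\prod_{j=1}^n\langle f_j\rangle^1_{I_{n+1}^1}$ (the cancellative pairing of $f_{n+1}$ against $h_{R_{n+1}}$ and the second-parameter pairings of $f_1,\dots,f_n$ against $1_{K^2}/|K^2|$ being carried along), as a telescoping sum of $O(n)$ pieces $A_m$, $m\in\{1,\dots,n\}$, in each of which the slot $f_m$ carries the block $P^1_{K^1,k-1}f_m$ averaged over $I_m^1$ while the other slots carry $\langle f_j\rangle^1_{K^1}$ or $\langle f_j\rangle^1_{I_j^1}$; together with the $U_m$-type pieces obtained via the steps \eqref{eq:MultilinCollapse}--\eqref{eq:nCollapseRelated}, which carry a $\Delta^1_{(I_{n+1}^1)^{(i+1)}}$ and still cost at most $\sqrt k$ after summing $i=0,\dots,k-1$. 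For a fixed such piece, with the first-parameter cubes frozen, I would apply \eqref{eq:1ParH1BMO} in the variable $x_2$: this turns the $u_{n+1,K^2}=h_{K^2}$ slot into a second-parameter square function $S_{\calD^2}^2$ and each $u_{j,K^2}=1_{K^2}/|K^2|$ slot into a maximal function $M_{\calD^2}^2$. Combining this with $\langle f_j,h_{I_j^1}^0\rangle_1=\langle \Delta^1_{K^1,k}f_j,h_{I_j^1}^0\rangle_1$ and the analogous first-parameter collapse for the averaged slots, one is led to a pointwise estimate analogous to \eqref{eq:ShiftUpperB}, of the schematic form
$$
|\langle (Q\pi)_k(f_1,\dots,f_n),f_{n+1}\rangle|\lesssim \int_{\R^d}\Big(\prod_{\substack{j=1\\ j\ne m}}^{n}Mf_j\Big)\Big(\sum_{K^1}|MP^1_{K^1,k-1}f_m|^2\Big)^{1/2}\big(\, \text{second-parameter } S^2/M^2 \text{ factor} \,\big)\,S_{\calD^1}^1 f_{n+1},
$$
where $M$ denotes appropriate dyadic (possibly mixed) maximal or square-function operators acting in both parameters.

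From here I would finish by Hölder's inequality with exponents $p_1,\dots,p_{n+1}$, the Fefferman--Stein and square function estimates of Lemma \ref{lem:standardEst} for the $Mf_j$ and $S_{\calD^1}^1 f_{n+1}$ factors, Stein's inequality together with the $P^1_{K^1,k-1}$ bound of Lemma \ref{lem:standardEst} for the $f_m$ factor (this is the only place the factor $\sqrt k$ enters, and it enters only to the first power), and the bounds for $S_{\calD,M}^i$ and $M_{\calD^2}^2$ for the second-parameter terms; summing the $O(n)$ pieces yields the claim.

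The main obstacle I expect is the bookkeeping where the first-parameter collapse and the second-parameter $H^1$--$\BMO$ peeling interact: after collapsing, the surviving first-parameter averages $\langle f_j\rangle^1$ still carry second-parameter data on which the duality must act, so the order of these operations must be arranged so that exactly one $P^1$-block survives and is hit by Lemma \ref{lem:standardEst}, producing $\sqrt k$ and not $k$ or worse. In particular one should not invoke Lemma \ref{lem:QasSBiPar}, since writing $(Q\pi)_k$ as $\sim k$ standard partial paraproducts, each obeying only a $\sqrt k$-type bound, would be far too lossy. One must also check that in the remaining positions of $j_0,j_2$ the same maximal/square-function structure is produced, which is routine since the second-parameter paraproduct structure is symmetric in the slots and the first-parameter collapse is exactly as in Lemma \ref{lem:CompModStand}.
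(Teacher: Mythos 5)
Your overall plan correctly identifies the two ingredients the paper combines: the one-parameter $H^1$--$\BMO$ duality \eqref{eq:1ParH1BMO} absorbed in the second parameter, and a first-parameter decomposition that produces a single $P^1_{K^1,k-1}$ block whose square function estimate (Lemma~\ref{lem:standardEst}) yields the $\sqrt{k}$. Your remark that one must \emph{not} pass through Lemma~\ref{lem:QasSBiPar} is also correct and is exactly the point the paper makes.

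However, your choice of first-parameter decomposition is wrong and this is a genuine gap for $n\ge2$. You propose the $A_m$/$U_m$ splitting from Lemma~\ref{lem:CompModStand}, i.e.\ writing
$\prod\langle f_j\rangle^1_{I_j^1}-\prod\langle f_j\rangle^1_{I_{n+1}^1}
=\big[\prod\langle f_j\rangle^1_{I_j^1}-\prod\langle f_j\rangle^1_{K^1}\big]
+\big[\prod\langle f_j\rangle^1_{K^1}-\prod\langle f_j\rangle^1_{I_{n+1}^1}\big]$,
expanding the first bracket via \eqref{eq:EPSplitting} (the $A_m$'s, which do carry $P^1_{K^1,k-1}$) and the second via the multilinear collapse \eqref{eq:MultilinCollapse}--\eqref{eq:nCollapseRelated} (the $U_m$'s, which carry a single $\Delta^1_{(I^1_{n+1})^{(i+1)}}$ for each $i$). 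You then assert without justification that the $U_m$'s ``still cost at most $\sqrt{k}$ after summing $i=0,\dots,k-1$''. For $n\ge2$ this does not follow from the toolbox you invoke: for each fixed $i$ one obtains a standard shift of uniformly bounded norm, and summing $k$ such bounds gives $O(k)$; a Cauchy--Schwarz over the $(i,K,I_{n+1})$-indices also gives $O(k)$ because the martingale differences of both $f_m$ and $f_{n+1}$ get smeared across $k$ scales. The $\sqrt{k}$ bound for $U_m$ can be salvaged by the type/decoupling machinery of Proposition~\ref{prop:MultliModUMD}, but that produces $(k+1)^{1/\min(2,p)}$, which is worse than $\sqrt{k}$ when $p<2$, whereas the statement requires $\sqrt{k}$ for \emph{all} admissible tuples.

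What the paper does instead is the symmetric $D$-decomposition of Proposition~\ref{prop:Qscalar} (and Proposition~\ref{prop:QkBRange}): apply the operator family $D^1_{K^1,k}(j,m)$ from \eqref{eq:Dnot} to \emph{both} products in the bracket, note that the $m=n+1$ terms (where every slot is averaged over $K^1$) coincide and cancel, and observe that every surviving term --- whether it comes from the ``$h^0_{I^1_j}$'' product (the paper's $U_1$) or from the ``$h^0_{I^1_{n+1}}$'' product ($U_2$) --- carries exactly one $P^1_{K^1,k-1}$ block. After the $H^1$--$\BMO$ peeling in $x_2$ this leads to the single bound \eqref{eq:PPDom}, with the $\sqrt{k}$ entering once through $\big(\sum_{K^1}|M^1M^2P^1_{K^1,k-1}f_m|^2\big)^{1/2}$. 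For $n=1$ your two decompositions coincide, so the issue is specific to the multilinear case, but that is exactly the case at stake. Replacing your $A_m$/$U_m$ split by the symmetric $D$-split repairs the argument and matches the paper's proof.
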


\begin{proof}
We assume that $\langle (Q\pi)_k(f_1, \ldots, f_n), f_{n+1}\rangle$ has the form
\begin{equation*}
\begin{split}
\sum_{K} \sum_{\substack{I^1_1, \ldots, I^1_{n+1} \\ (I^1_j)^{(k)} = K^1 }}
a_{K, (I^1_j)} \Big[ \prod_{j=1}^{n} \Big \langle f_j,  h^0_{I^1_j} \otimes \frac{1_{K^2}}{|K^2|} \Big \rangle 
- \prod_{j=1}^{n} \Big \langle f_j, h^0_{I_{n+1}^1} \otimes \frac{1_{K^2}}{|K^2|} \Big\rangle 
 \Big] \langle f_{n+1}, h_{I^1_{n+1} \times K^2} \rangle.
\end{split}
\end{equation*}
We decompose
$$
\prod_{j=1}^{n} \Big \langle f_j,  h^0_{I^1_j} \otimes \frac{1_{K^2}}{|K^2|} \Big \rangle
=\sum_{m=1}^{n+1}
\prod_{j=1}^{n} \Big \langle D^1_{K^1, k_1}(j,m) f_j,  h^0_{I^1_j} \otimes \frac{1_{K^2}}{|K^2|} \Big \rangle
$$
and similarly with the other term inside the bracket $[ \, \cdot \, ]$. Notice that the terms with $m=n+1$
cancel out. Thus, we get a decomposition of $\langle (Q\pi)_k(f_1, \ldots, f_n), f_{n+1}\rangle$
into a sum over $m \in \{1, \dots, n\}$. The terms with different $m$ are estimated separately.

Fix one $m$. The term from the decomposition of $\langle (Q\pi)_k(f_1, \ldots, f_n), f_{n+1}\rangle$
related to $m$
is
\begin{equation*}
\sum_{i=1}^2\langle U_1(f_1, \ldots, f_n), f_{n+1}\rangle,
\end{equation*}
where $\langle U_1(f_1, \ldots, f_n), f_{n+1}\rangle$ equals
\begin{equation}\label{eq:DefU1}
\sum_{K} \sum_{\substack{I^1_1, \ldots, I^1_{n+1} \\ (I^1_j)^{(k)} = K^1 }}
a_{K, (I^1_j)} \prod_{j=1}^{n} \Big \langle D^1_{K^1, k_1}(j,m) f_j,  h^0_{I^1_j} \otimes \frac{1_{K^2}}{|K^2|} \Big \rangle
\langle f_{n+1}, h_{I^1_{n+1} \times K^2} \rangle
\end{equation}
and $\langle U_2(f_1, \ldots, f_n), f_{n+1}\rangle$ is defined similarly just be replacing $h^0_{I^1_j}$, $j=1, \dots, n$, 
with $h^0_{I^1_{n+1}}$.

We consider $U_1$ first. From the one-parameter $H^1$-$\BMO$ duality estimate \eqref{eq:1ParH1BMO} we have that,
with fixed $K^1$ and $I^1_1, \dots, I^1_{n+1}$, the sum over $K^2$ of the absolute value of the summand in 
\eqref{eq:DefU1} is dominated by
\begin{equation*}
\begin{split}
&\frac{|I^1_{n+1}|^{(n+1)/2}}{|K^1|^n}\int _{\R^{d_2}} 
\Big( \sum_{K^2}  \Big|\prod_{j=1}^{n} \Big \langle D^1_{K^1, k_1}(j,m) f_j,  h^0_{I^1_j} \otimes \frac{1_{K^2}}{|K^2|} \Big \rangle
\langle f_{n+1}, h_{I^1_{n+1}\times K^2} \rangle\Big|^2 \frac{1_{K^2}}{|K^2|} \Big)^{1/2} \\
& \le \frac{|I^1_{n+1}|^{(n+1)/2}}{|K^1|^n} \int _{\R^{d_2}} \prod_{j=1}^{n}  \langle  M^2 D^1_{K^1, k_1}(j,m) f_j,  h^0_{I^1_j} \rangle_1
\langle S^2_{\calD_2} \Delta_{K^1,k_1} f_{n+1} , h^0_{I^1_{n+1}} \rangle_1.
\end{split}
\end{equation*}
The sum of this over $K^1$ and $I^1_1, \dots, I^1_{n+1}$ such that $(I^1_j)^{(k)}=K^1$ is less than
\begin{equation}\label{eq:PPDom}
\int_{\R^d} \prod_{\substack{j=1 \\ j \not=m}}^n M^1 M^2 f_j
\Big(\sum_{K^1} (M^1 M^2 P^1_{K^1, k_1-1}f_m)^2 \Big)^{1/2} 
\Big(\sum_{K^1} (S^2_{\calD^2} \Delta_{K^1,k_1} f_{n+1})^2\Big)^{1/2}.
\end{equation}

Notice that the square function related to $f_{n+1}$ is just the bi-parameter square function $S_\calD$. To finish the estimate it remains to
use the Fefferman--Stein inequality and square function estimates, see  Lemma \ref{lem:standardEst}. 

The second term $|\langle U_2(f_1, \ldots, f_n), f_{n+1}\rangle|$ satisfies the same upper bound
\eqref{eq:PPDom}, and can therefore be estimated in the same way. The proof is concluded.
\end{proof}

\subsubsection*{Weighted estimates of standard model operators}
In \cite{LMV} we proved weighted estimates for standard bilinear bi-parameter shifts and partial paraproduts.
Since here we work in the $n$-linear setting we give the corresponding proofs below. The proofs are similar.
In \cite{LMV} we were not yet able to prove weighted estimates for full paraproducts. The bilinear case was proved later
in \cite{ALMV}, and we again give the $n$-linear version here for reader's convenience. We note that weighted estimates
in the linear case are well known, see for example \cite{HPW}.

\begin{prop}\label{prop:StandShiftWeight}
Let $p_j \in (1, \infty)$ and $1/r:= \sum_{j=1}^n 1/p_j$.
Let $w_j \in A_{p_j}$, $j=1, \dots, n$, be bi-parameter weights, and define
$w=\prod_{j=1}^n w_j^{r/p_j}$.

Suppose $S_i$ is a standard $n$-linear bi-parameter shift. Then we have
$$
\| S_i(f_1, \dots, f_n) \|_{L^r(w)} 
\lesssim \prod_{j=1}^n \| f_j \|_{L^{p_j}(w_j)}.
$$
\end{prop}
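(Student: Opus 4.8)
The plan is to prove this by the $n$-linear version of the argument used in \cite{LMV} for bilinear bi-parameter shifts: dominate $S_i(f_1,\dots,f_n)$ pointwise by a product of dyadic maximal and square functions of the inputs, and then finish by a weighted H\"older inequality together with the weighted square-function and Fefferman--Stein bounds collected in Lemma \ref{lem:standardEst}. The only genuinely new feature compared with the linear theory is that the product weight $w=\prod_{j=1}^{n}w_j^{r/p_j}$ is in general only an $A_\infty$ weight (in fact $A_{2r}$), so the target space $L^r(w)$ cannot be dualized; I would bypass this with the lower square-function estimate of Lemma \ref{lem:LowerFS}, which holds for all $p\in(0,\infty)$ and all $w\in A_\infty$.

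First, since all $(n+1)^2$ partial and full adjoints of $S_i$ are again standard $n$-linear bi-parameter shifts, I would relabel the slots so that the output slot carries a cancellative Haar function in the first parameter, i.e.\ $\wt h_{I_{n+1}^1}=h_{I_{n+1}^1}$. Writing $g=S_i(f_1,\dots,f_n)$, Lemma \ref{lem:LowerFS} reduces the task to bounding $\|S^1_{\calD^1}g\|_{L^r(w)}$. Since $\Delta^1_{I^1}$ annihilates every term of $g$ with $I_{n+1}^1\neq I^1$, and since $(I^1)^{(i_{n+1}^1)}=K^1$ is then forced, the martingale block $\Delta^1_{I^1}g$ is an explicit finite sum over the remaining cubes.

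Next comes the pointwise domination. For every slot $j\le n$ and parameter $m$ with $\wt h_{I_j^m}=h^0_{I_j^m}$ one estimates $|\langle f_j,h^0_{I_j^m}\rangle|=|I_j^m|^{1/2}|\langle f_j\rangle_{I_j^m}|$ and bounds the average by the appropriate dyadic maximal function using $I_j^m\subset K^m$; for the cancellative slots (there are at least two in each parameter) one uses the normalization \eqref{eq:Snorm2par} together with the Cauchy--Schwarz inequality to convert the internal sums over cubes into dyadic square functions, the cancellation of the output slot being absorbed by the $S^1_{\calD^1}$ already present. Carrying out this bookkeeping in both parameters -- exactly as in the one-parameter computations \eqref{eq:StaCom1}--\eqref{eq:StaCom2} and in the proof of Proposition \ref{prop:QkBRange}, but now with \emph{no} loss in the complexity since $S_i$ is a standard shift -- yields a bound
\[
S^1_{\calD^1}S_i(f_1,\dots,f_n)(x)\ \lesssim\ \prod_{j=1}^{n}G_j(x),
\]
where each $G_j$ is one of $M_{\calD}f_j$, $S^1_{\calD,M}f_j$, $S^2_{\calD,M}f_j$ or $S_{\calD}f_j$, the choice depending on whether slot $j$ is cancellative in the first parameter, the second, both, or neither.

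Finally, since $1/r=\sum_{j=1}^{n}1/p_j$ and $w=\prod_{j=1}^{n}w_j^{r/p_j}$, the weighted H\"older inequality gives $\big\|\prod_{j=1}^{n}G_j\big\|_{L^r(w)}\le\prod_{j=1}^{n}\|G_j\|_{L^{p_j}(w_j)}$, and because each $w_j\in A_{p_j}$ every factor on the right is $\lesssim\|f_j\|_{L^{p_j}(w_j)}$ by Lemma \ref{lem:standardEst}, which proves the proposition. I expect the main obstacle to be precisely the failure of $w$ to be an $A_r$ weight: it rules out the clean duality argument and forces the estimate through Lemma \ref{lem:LowerFS}, which in turn makes the two-parameter accounting -- which functions get a full maximal function and which get a one- or two-parameter square function, and how the output cancellation in the first parameter is shared between $S^1_{\calD^1}g$ and the cancellative input slots -- the delicate part; everything else is a routine if lengthy adaptation of the one-parameter calculations above.
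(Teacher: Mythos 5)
There is a genuine gap at the outset. You propose to pass to one of the $(n+1)^2$ adjoints so that the output slot $n+1$ carries a cancellative Haar function in the first parameter, and treat this as a harmless relabeling. In the weighted multilinear setting it is not: taking an adjoint moves $f_{n+1}$ into an input slot, where the relevant exponent is $r'$ and the relevant weight is $w^{1-r'}$. Since $1/r=\sum_{j=1}^n 1/p_j$ allows $r\le 1$, the conjugate $r'$ may not exist; and even when $r>1$, the product weight $w=\prod_{j=1}^n w_j^{r/p_j}$ is only guaranteed to lie in $A_{2r}$, not in $A_r$, so $w^{1-r'}$ need not be in $A_{r'}$ and the dualized estimate is not an instance of the same proposition. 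This is precisely the duality obstruction you identify two sentences earlier and which Lemma \ref{lem:LowerFS} is introduced to sidestep -- the relabeling step smuggles duality back in.

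The correct move, and what the paper does, is to drop the reduction and split directly into three cases according to how many cancellative Haar functions sit in slot $n+1$: if two, apply the bi-parameter lower square function estimate of Lemma \ref{lem:LowerFS}; if one, apply the one-parameter lower square function estimate in that cancellative parameter (this is the configuration your outline then demonstrates); if none, apply no lower square function to the output at all -- simply move absolute values inside the shift sum and run the maximal/square-function bookkeeping entirely on the inputs, where the two guaranteed cancellative slots in each parameter now necessarily live. With this substitution the remainder of your outline -- pointwise domination via the normalization \eqref{eq:Snorm2par}, then the weighted H\"older inequality exploiting $w=\prod_j w_j^{r/p_j}$, and finally the weighted Fefferman--Stein and square-function estimates of Lemma \ref{lem:standardEst} -- is the paper's argument.
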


\begin{proof}
The first step of the estimate is determined by how many cancellative Haar functions there are in the slot $n+1$.
If there are two, then one uses the bi-parameter lower square function estimate. If there are none, then one just
puts the absolute values inside the summations of the shift. 
If there is one, then one uses the lower square function estimate in the cancellative parameter.

We demonstrate the argument with a shift that has one cancellative Haar function in the slot $n+1$.
Let us assume that $S_i$ has  the form
\begin{equation*}
S_i(f_1, \dots, f_n)
=\sum_{K} \sum_{\substack{R_1, \dots, R_{n+1} \\ R_j^{(i_j)}=K}}
a_{K,(R_j)} \langle f_1, h_{R_1} \rangle 
\langle f_2, \wt h_{I^1_2} \otimes  h_{I^2_2} \rangle
\prod_{j=3}^n \langle f_j,  \wt h_{R_j} \rangle  \cdot h_{I^1_{n+1}} \otimes \wt h_{I^2_{n+1}}.
\end{equation*}

First, we record that
\begin{equation*}
\begin{split}
\sum_{\substack{R_1, \dots, R_{n+1} \\ R_j^{(i_j)}=K}}
 | a_{K,(R_j)}\langle f_1, h_{R_1} \rangle &
\langle f_2, \wt h_{I^1_2} \otimes  h_{I^2_2} \rangle
\prod_{j=3}^n \langle f_j,  \wt h_{R_j} \rangle   h_{I^1_{n+1}} \otimes \wt h_{I^2_{n+1}} | \\
&\le M  \Delta_{K,i_1} f_1 M \Delta^2_{K^2,i^2_2} f_2 
\prod_{j=3}^n M f_j.
\end{split}
\end{equation*}
According to our usual bi-parameter notational conventions, here $\Delta_{K, i_1} = \Delta_{K^1, i_1^1}^1 \Delta_{K^2, i_1^2}^2$.
Recall that with fixed $x_2 \in \R^{d_2}$ the weight $x_1 \mapsto w(x_1,x_2)$ is uniformly in $A_{2r}(\R^{d_1}) \subset A_\infty(\R^{d_1})$.  
This allows us to use the lower square function estimate to get that
\begin{equation*}
\begin{split}
\| S_i(f_1, \dots, f_n) \|_{L^r(w)} 
&\lesssim \Big \| \sum_{K^1} 
\Big(\sum_{K^2} 
M  \Delta_{K,i_1} f_1 M \Delta^2_{K^2,i^2_2} f_2 
 \Big)^2 \Big)^{1/2}\prod_{j=3}^n M f_j
\Big \|_{L^r(w)} \\
& \le  \Big \| \Big( \sum_{K} 
( M  \Delta_{K,i_1} f_1 )^2 \Big)^{1/2}
\Big( \sum_{K^2} (M \Delta^2_{K^2,i^2_2} f_2)^2 \Big)^{1/2}
 \prod_{j=3}^n  M f_j
\Big \|_{L^r(w)}.
\end{split}
\end{equation*}

Since $w=\prod_{j=1}^n w_j^{r/p_j}$, the last quantity is, by H\"older's inequality, less than
\begin{equation*}
\Big \|\Big( \sum_{K} ( M  \Delta_{K,i_1} f_1 )^2 \Big)^{1/2} \Big\|_{L^{p_1}(w_1)}
\Big\| \Big( \sum_{K^2} (M \Delta^2_{K^2,i^2_2} f_2)^2 \Big)^{1/2}\Big\|_{L^{p_2}(w_2)}
\prod_{j=3}^n \| Mf_j \|_{L^{p_j}(w_j)}.
\end{equation*}
From here the estimate is concluded with the weighted Fefferman--Stein inequality and weighted square function estimates, see
Lemma \ref{lem:standardEst}.
\end{proof}

Next, we turn to standard partial paraproducts. For this 
we recall a certain estimate concerning one-parameter paraproducts.

Suppose that $\pi$ is an $n$-linear one-parameter paraproduct (normalized as in \eqref{eq:BMO}) and that $v$ is a one-parameter $A_\infty$ weight.
It is a consequence of the so-called sparse domination that
$$
\int | \pi(f_1, \dots, f_n)| v
\lesssim \int  M(f_1, \dots, f_n) v,
$$
where $M(f_1, \dots, f_n):= \sup_Q \prod_{j=1}^n \langle |f_j| \rangle_Q1_Q$ is the  $n$-linear dyadic maximal function 
and the supremum is taken over the cubes in the dyadic grid related to $\pi$.
For references and discussion related to this well known fact see \cite{DLMV1}.

The $A_\infty$ extrapolation result \cite[Theorem 2.1]{CUMP} then implies that
$$
\int | \pi(f_1, \dots, f_n)|^p v
\lesssim \int M(f_1, \dots, f_n)^p v
$$
for all $p \in (0, \infty)$ and $v \in A_\infty$, which directly gives that
$$
\int \Big( \sum_{k} | \pi_k(f_{1,k}, \dots, f_{n,k})|^p \Big)^{p/p} v
\lesssim \int \Big( \sum_{k}  M(f_{1,k}, \dots, f_{n,k})^p \Big)^{p/p} v,
$$
where we have a family of paraproducts $\pi_k$. Extrapolating once more yields the
estimate
\begin{equation}\label{eq:SpaCon}
\int \Big( \sum_{k} | \pi_k(f_{1,k}, \dots, f_{n,k})|^p \Big)^{q/p} v
\lesssim \int \Big( \sum_{k}  M(f_{1,k}, \dots, f_{n,k})^p \Big)^{q/p} v
\end{equation}
for all $p,q \in (0,\infty)$ and $v \in A_\infty$.

\begin{prop}\label{prop:StandPPWeight}
Let $p_j \in (1, \infty)$ and $1/r:= \sum_{j=1}^n 1/p_j$.
Let $w_j \in A_{p_j}$, $j=1, \dots, n$, be bi-parameter weights, and define
$w=\prod_{j=1}^n w_j^{r/p_j}$.

Suppose $(S\pi)_i$ is a standard $n$-linear partial paraproduct. Then we have
$$
\| (S\pi)_i(f_1, \dots, f_n) \|_{L^r(w)} 
\lesssim \prod_{j=1}^n \| f_j \|_{L^{p_j}(w_j)}.
$$
\end{prop}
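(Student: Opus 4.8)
The plan is to exploit the hybrid structure of $(S\pi)_i$: in the first parameter it is a standard $n$-linear shift (two of the $n+1$ slots carry cancellative Haar functions $\wt h_{I_{j_0}^1},\wt h_{I_{j_1}^1}$), while for frozen first-parameter data it is, in the second parameter, a one-parameter $n$-linear paraproduct with $\BMO$-normalized coefficients $(a_{K,(I_j^1)})_{K^2}$ of size $\prod_{j=1}^{n+1}|I_j^1|^{1/2}/|K^1|^n$ in the sense of \eqref{eq:BMO}. First I would fix $K^1$ and cubes $I_1^1,\dots,I_{n+1}^1$ with $(I_j^1)^{(i_j)}=K^1$, set $g_j:=\langle f_j,\wt h_{I_j^1}\rangle_1$, and observe that the remaining sum over $K^2$ is, up to the first-parameter output factor $\wt h_{I_{n+1}^1}$, a one-parameter $n$-linear paraproduct, or an adjoint of one when the distinguished second-parameter slot $j_2$ has $j_2\le n$, applied to $(g_1,\dots,g_n)$ in the second variable. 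As in the treatment of the modified partial paraproduct in Proposition \ref{prop:QpiBRange}, the positions of $\{j_0,j_1\}$ and of $j_2$ relative to the output slot $n+1$ split the argument into finitely many cases, one of which I would carry out in detail and declare the rest analogous.

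The point at which the argument genuinely departs from the unweighted case — and the reason \eqref{eq:SpaCon} is recorded just above — is that the product weight $w=\prod_{j=1}^n w_j^{r/p_j}$ need not lie in $A_r$: it is only an $A_\infty$ weight (in fact $A_{2r}$), so the second-parameter paraproduct cannot be handled by plain $L^r(w)$--$L^{r'}(w^{1-r'})$ duality. Instead I would apply \eqref{eq:SpaCon} fiberwise in the second variable, with $v=w(x_1,\cdot)$ (uniformly $A_\infty$ for a.e.\ $x_1$) and the summation index there taken to be the first-parameter data, so that the second-parameter paraproduct inside $(S\pi)_i(f_1,\dots,f_n)$ is replaced by the one-parameter $n$-linear maximal function $M^2$ of the $g_j$'s, with $S^2_{\calD^2}$ attached to the $j_2$-th slot when $j_2\le n$. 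This reduces the estimate to a purely first-parameter, shift-type bound in which the inputs $f_1,\dots,f_n$ have been replaced by their second-parameter maximal or square functions, which I would finish exactly as in the proof of Proposition \ref{prop:StandShiftWeight}: use the lower square function estimate of Lemma \ref{lem:LowerFS} in whichever parameter(s) the output slot $n+1$ is cancellative (bi-parameter if $n+1\in\{j_0,j_1\}$ and $j_2=n+1$, one-parameter if only one of these holds, and otherwise simply put absolute values inside the sums), bound what remains pointwise by products of $M^1,M^2,M_{\calD},S^1_{\calD^1},S^2_{\calD^2},S_{\calD}$ applied to the various $f_j$, apply H\"older's inequality to split $w=\prod_{j=1}^n w_j^{r/p_j}$, and conclude with the weighted Fefferman--Stein and weighted square function estimates of Lemma \ref{lem:standardEst}. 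In each case exactly one of these operators is attached to each $f_j$, and it is bounded on $L^{p_j}(w_j)$ precisely because $w_j\in A_{p_j}$.

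The step I expect to be the main obstacle is exactly this weight issue — reconciling the second-parameter paraproduct with a weight that is only $A_\infty$, which forces the use of the sparse-domination consequence \eqref{eq:SpaCon} in place of duality — together with the care needed so that the fiberwise application of \eqref{eq:SpaCon} leaves the first-parameter shift data (the coefficients $a_{K,(I_j^1)}$, the pairings against $\wt h_{I_j^1}$, and the outer sum over $K^1$) in a form to which the Proposition \ref{prop:StandShiftWeight} argument applies verbatim. The case analysis over the positions of $j_0,j_1,j_2$ is routine but somewhat lengthy, and matching each resulting maximal or square operator to the correct $A_{p_j}$ weight requires some attention.
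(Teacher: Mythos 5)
Your ingredients are the right ones (the $A_\infty$ lower square function estimate of Lemma \ref{lem:LowerFS}, the sparse-domination consequence \eqref{eq:SpaCon} applied fiberwise, the reductions $M\langle f_j,\wt h_{I_j^1}\rangle_1 \le \langle M^2 f_j, h^0_{I_j^1}\rangle_1$ and $M\langle f_1,h_{I_1^1}\rangle_1 \le \langle M^2\Delta_{K^1,k}^1 f_1, h^0_{I_1^1}\rangle_1$, then H\"older, weighted Fefferman--Stein and the weighted square function bounds), and you have correctly diagnosed why one cannot dualize: $w$ is only $A_\infty$, not $A_r$. But the \emph{order} of the two key steps is reversed, and the order matters. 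You propose to apply \eqref{eq:SpaCon} first and only afterwards invoke the lower square function estimate. However, $(S\pi)_i(f_1,\dots,f_n)(x_1,\cdot) = \sum_{K^1, I_j^1} h_{I_{n+1}^1}(x_1)\, \pi_{K^1,(I_j^1)}(g_1,\dots,g_n)(\cdot)$ is a \emph{signed} linear combination of one-parameter paraproducts, whereas \eqref{eq:SpaCon} bounds quantities of the form $\int\big(\sum_k |\pi_k(\cdot)|^p\big)^{q/p}v$ — it needs a positive $\ell^p$-sum to act on. When the output slot $n+1$ carries a cancellative first-parameter Haar (which is the main case), one must first pass from the signed sum to a square sum over $K^1$ via the lower square function estimate; only then does the quantity have the $\ell^2(\ell^1)$-positive form to which \eqref{eq:SpaCon} applies fiberwise. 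Applying \eqref{eq:SpaCon} first would force a triangle inequality on the signed $I^1_{n+1}$-sum, destroying exactly the cancellation the lower square function estimate is there to exploit, and the resulting bound would blow up in the complexity $i_{n+1}$. The paper's proof therefore runs: lower square function in parameter $1$ (Lemma \ref{lem:LowerFS}) $\rightarrow$ fiberwise \eqref{eq:SpaCon} inside the resulting norm $\rightarrow$ the pointwise reductions $\rightarrow$ H\"older and Lemma \ref{lem:standardEst}. Only in the degenerate case where the output slot $n+1$ carries a non-cancellative first-parameter Haar can one skip the lower square function step and apply \eqref{eq:SpaCon} directly, as the paper also notes.

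A secondary inaccuracy: you attach $S^2_{\calD^2}$ to the $j_2$-th slot when $j_2\le n$ and invoke a two-parameter lower square function distinction depending on whether $j_2=n+1$. Once \eqref{eq:SpaCon} has been applied, the entire second-parameter paraproduct structure — wherever the distinguished slot $j_2$ sits — is swallowed by the $n$-linear maximal function and subsequently by the product $\prod_j M^2 f_j$ (with $M^2\Delta^1_{K^1,k}$ on the two cancellative first-parameter slots); no second-parameter square function arises, and there is no second-parameter lower square function step. The choice of $j_2$ only matters in that \eqref{eq:SpaCon} must be known for all $n+1$ forms of the one-parameter paraproduct, which it is.
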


\begin{proof}
We begin by considering the case that $(S\pi)_i(f_1, \dots, f_n)$ has the form
\begin{equation*}
\sum_{K^1} \sum_{\substack{ I^1_1, \dots, I^1_{n+1} \\ (I^1_j)^{(i_j)}=K^1}}
h_{I^1_{n+1}} \otimes \pi_{K^1,(I_j^1)} (\langle f_1, h_{I^1_1}  \rangle_1,
 \langle f_2,  \wt h_{I^1_2} \rangle_1,  \dots, \langle f_n,  \wt h_{I^1_n} \rangle_1),
\end{equation*}
where $\pi_{K^1, (I^1_j)}$ is an $n$-linear one-parameter paraproduct, whose collection of coefficients is in $\BMO$ with 
the normalization $\prod_{j=1}^{n+1} |I^1_j|^{1/2} / |K^1|^n$. 
The important point about this form is that the Haar functions related to the cubes $I^1_{n+1}$ are cancellative. 
First, the lower square function estimate in the first parameter
gives that
\begin{equation*}
\begin{split}
\| & (S\pi)_i(f_1, \dots, f_n) \|_{L^r(w)} \\
&\lesssim \Big\| \Big(\sum_{K^1} 
\Big(\sum_{\substack{ I^1_1, \dots, I^1_{n+1} \\ (I^1_j)^{(i_j)}=K^1}}
|h_{I^1_{n+1}} \otimes \pi_{K^1,(I_j^1)} (\langle f_1, h_{I^1_1}  \rangle_1,
\langle f_2,  \wt h_{I^1_2} \rangle_1,  \dots, \langle f_n,  \wt h_{I^1_n} \rangle_1)| \Big)^2 
\Big)^{1/2} \Big\|_{L^r(w)}.
\end{split}
\end{equation*}

In the above norm we can with fixed $x_1$ use \eqref{eq:SpaCon}. This gives that the function inside $( \, \cdot \, )^2$
may be replaced by
\begin{equation}\label{eq:Replacement}
\sum_{\substack{ I^1_1, \dots, I^1_{n+1} \\ (I^1_j)^{(i_j)}=K^1}}
\frac{\prod_{j=1}^{n+1} |I^1_j|^{1/2}}{|K^1|^n}
h^0_{I^1_{n+1}} \otimes  M\langle f_1, h_{I^1_1}  \rangle_1
 \prod_{j=2}^n M \langle f_j,  \wt h_{I^1_j} \rangle_1. 
\end{equation}
The estimates 
$M\langle f_1, h_{I^1_1}  \rangle_1
\le \langle M^2 \Delta_{K^1,k} f_1, h^0_{I^1_1} \rangle_1$
and $M\langle f_j, \wt h_{I^1_j}  \rangle_1
\le \langle M^2  f_j, h^0_{I^1_j} \rangle_1$
give that \eqref{eq:Replacement} is less than
\begin{equation*}
M^1M^2 \Delta_{K^1,k}f_1 \prod_{j=2}^n M^1M^2f_j.
\end{equation*}

In conclusion, we have shown that
\begin{equation*}
\| (S\pi)_i(f_1, \dots, f_n) \|_{L^r(w)}
\lesssim \| \Big( \sum_{K^1} (M^1M^2 \Delta_{K^1,k}f_1)^2 \Big)^{1/2} \prod_{j=2}^n M^1M^2f_j \Big\|_{L^r(w)}.
\end{equation*}
From here the estimate is concluded with H\"older's inequality, weighted Fefferman--Stein inequality and the weighted square function estimate (Lemma \ref{lem:standardEst}).

If $(S\pi)_i(f_1, \dots, f_n)$ is of the form where there are 
non-cancellative Haar functions related to the cubes $I^1_{n+1}$, 
then one can directly use the estimate \eqref{eq:SpaCon}
without first applying the lower square function estimate. After this,
one can proceed with similar estimates as before.
\end{proof}

Now, we consider the full paraproducts.
The following bi-parameter $H^1$-$\BMO$ duality (a weighted version appears in Proposition 4.1 of \cite{HPW})
\begin{equation}\label{eq:BiParH1BMO}
\sum_{R} |a_{R}| |b_{R}| \lesssim \| (a_{R} ) \|_{\BMO_{\operatorname{prod}}} \Big\| \Big( \sum_{R} |b_{R}|^2 \frac{1_{R}}{|R|} \Big)^{1/2} \Big \|_{L^1}
\end{equation}
holds. This key estimate is enough to handle the linear situation $\Pi f$  with a duality based proof. We will need the following generalization
\begin{equation}\label{eq:genH1BMO}
\sum_{R} |a_{R}| \langle w \rangle_R |b_{R}| \lesssim \| (a_{R} ) \|_{\BMO_{\operatorname{prod}}} \Big\| \Big( \sum_{R} |b_{R}|^2 \frac{1_{R}}{|R|} \Big)^{1/2} \Big \|_{L^1(w)}
\end{equation}
whenever $w \in A_{\infty}$ is a bi-parameter weight. This is \cite[Corollary 3.5]{ALMV}.

\begin{prop}\label{prop:FullPWeight}
Let $p_j \in (1, \infty)$ and $1/r:= \sum_{j=1}^n 1/p_j$. Let $w_j \in A_{p_j}$ be bi-parameter weights and define $w := \prod_{j=1}^n w_j^{r/p_j}$.
Then we have
$$
\|\Pi(f_1, \ldots, f_n)\|_{L^r(w)} \lesssim \prod_{j=1}^n \|f_j\|_{L^{p_j}(w_j)}.
$$
\end{prop}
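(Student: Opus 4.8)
The plan is to run the standard duality argument for full paraproducts, but with the subtlety that the product weight $w = \prod_{j=1}^n w_j^{r/p_j}$ belongs only to $A_\infty$ (in fact $A_{2r}$), not necessarily to $A_r$, so a straightforward dualization of $L^r(w)$ is not available. Instead I would fix a form $\langle \Pi(f_1,\ldots,f_n), g\rangle = \sum_{K} a_K \prod_{j=1}^n \langle f_j, u_{j,K^1}\otimes u_{j,K^2}\rangle \langle g, u_{n+1,K^1}\otimes u_{n+1,K^2}\rangle$, where exactly one of the $u_{n+1,K^i}$ is the cancellative $h_{K^i}$ (say both, for the representative case $u_{n+1,K^1}=h_{K^1}$, $u_{n+1,K^2}=h_{K^2}$, which is genuinely a product-BMO paraproduct; the mixed cases where $g$ carries a non-cancellative factor in one parameter are handled by the same scheme after first applying a lower square function estimate in the cancellative parameter). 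First I would insert $\langle w\rangle_R$ by writing $|\langle g, h_R\rangle| \le \langle w\rangle_R^{-1}\langle w\rangle_R |\langle g, h_R\rangle|$ is not quite the move — rather, I apply the generalized $H^1$-$\BMO_{\mathrm{prod}}$ duality \eqref{eq:genH1BMO} with the weight $w$, which is legitimate since $w \in A_\infty$.

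Concretely, the key steps in order are: (1) Bound $|\langle \Pi(f_1,\ldots,f_n), g\rangle| \le \sum_R |a_R| \, \big|\prod_{j=1}^n \langle f_j\rangle_R\big| \, |\langle g, h_R\rangle|$, and write $\prod_{j=1}^n |\langle f_j\rangle_R| \le \langle w\rangle_R^{-1} \langle w\rangle_R \prod_{j=1}^n |\langle f_j\rangle_R|$; more usefully, apply \eqref{eq:genH1BMO} directly in the form
$$
\sum_R |a_R|\, \langle w\rangle_R \, |c_R|\, |\langle g, h_R\rangle| \lesssim \|(a_R)\|_{\BMO_{\mathrm{prod}}} \Big\| \Big(\sum_R |c_R|^2 |\langle g,h_R\rangle|^2 \langle w\rangle_R^2 \frac{1_R}{|R|}\Big)^{1/2}\Big\|_{L^1(w^{-1})}
$$
— but to keep the weights straight it is cleaner to test against $g \in L^{r'}(w^{1-r'})$ is again blocked. (2) The correct route, following \cite{ALMV}, is: test the $L^r(w)$ norm via the lower square function estimate of Lemma \ref{lem:LowerFS} (valid for $w \in A_\infty$), $\|\Pi(f_1,\ldots,f_n)\|_{L^r(w)} \lesssim \|S_{\calD}\Pi(f_1,\ldots,f_n)\|_{L^r(w)}$, and compute $\Delta_R \Pi(f_1,\ldots,f_n) = a_R \prod_{j=1}^n \langle f_j\rangle_R \, h_R$ in the case where $g$'s slot carries two cancellative factors, so that $S_{\calD}\Pi(f_1,\ldots,f_n) = \big(\sum_R |a_R|^2 \prod_j |\langle f_j\rangle_R|^2 \frac{1_R}{|R|}\big)^{1/2}$. (3) Now dualize this $L^r(w)$ norm of a square function against $h \in L^{(r)'}(w)$ with $h \ge 0$, $\|h\|_{L^{(r)'}(w)} = 1$: $\|S_{\calD}\Pi\|_{L^r(w)}^r$ — actually raise to the appropriate power and pair $\sum_R |a_R|^2 \prod_j |\langle f_j\rangle_R|^2 \frac{1_R}{|R|}$ against $h$, which by \eqref{eq:genH1BMO} applied with weight $w$ (or its refinement for the quadratic expression) is controlled by $\|(a_R)\|_{\BMO_{\mathrm{prod}}}^2$ times $\big\|\big(\sum_R \prod_j |\langle f_j\rangle_R|^2 \langle\text{(stuff)}\rangle \frac{1_R}{|R|}\big)^{1/2}\big\|$ against the bi-parameter maximal functions $M_{\calD}f_j$. (4) Finally bound $|\langle f_j\rangle_R| \le \langle M_{\calD}f_j\rangle_R$ pointwise on $R$, collapse the square sum $\sum_R \frac{1_R}{|R|}(\cdots)$ into a product of pointwise maximal functions via the standard $H^1$-$\BMO_{\mathrm{prod}}$ trick, and close with Hölder's inequality in $L^r(w) = \prod L^{p_j}(w_j)$ together with the weighted Fefferman--Stein inequality $\|M_{\calD}f_j\|_{L^{p_j}(w_j)} \lesssim \|f_j\|_{L^{p_j}(w_j)}$ from Lemma \ref{lem:standardEst}. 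For the mixed forms where one or both of $g$'s Haar factors are non-cancellative (so $\Delta_R$ of the paraproduct vanishes and instead there is a paraproduct structure in $g$'s slot), I would first apply the lower square function estimate in whichever parameter $g$ \emph{is} cancellative, reducing to a one-parameter-paraproduct-in-$g$ situation treated by the sparse-domination consequence \eqref{eq:SpaCon} just as in Proposition \ref{prop:StandPPWeight}.

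The main obstacle I anticipate is precisely the weight bookkeeping: because $w = \prod_{j=1}^n w_j^{r/p_j}$ is not in $A_r$ one cannot simply write $\|\Pi\|_{L^r(w)} = \sup\{|\langle \Pi, g w\rangle| : \|g\|_{L^{r'}(w)}\le 1\}$ and run a clean duality; the role of \eqref{eq:genH1BMO} (Corollary 3.5 of \cite{ALMV}), which inserts the average $\langle w\rangle_R$ for an $A_\infty$ weight, is exactly to circumvent this, and getting the exponents to match when $r \le 1$ (quasi-Banach range) requires care — one should carry out the duality after raising to a power $\ge 1$, or equivalently extrapolate. Since this is the $n$-linear analogue of the bilinear result already established in \cite{ALMV}, none of these steps is genuinely new; the only thing to check is that the product-BMO estimate and the Fefferman--Stein / maximal function bounds all hold in the $n$-linear bi-parameter setting, which they do by Lemma \ref{lem:standardEst} and \eqref{eq:genH1BMO}.
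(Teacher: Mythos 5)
Your diagnosis of the key obstacle (that $w$ is only $A_\infty$, not $A_r$, so naive dualization of $L^r(w)$ fails) and of the two relevant tools (the lower square function bound of Lemma~\ref{lem:LowerFS} and the weighted $H^1$--$\BMO_{\mathrm{prod}}$ duality \eqref{eq:genH1BMO}) is correct, and the general shape of steps (2)--(4) is recognizable as the right kind of argument for \emph{some} of the forms of $\Pi$. But there are genuine gaps.

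First, your case split omits the form where \emph{neither} $u_{n+1,K^1}$ nor $u_{n+1,K^2}$ is cancellative, i.e., both cancellative Haar factors land on the inputs $f_1,\dots,f_n$ and the output of $\Pi$ is $\sum_K c_K \frac{1_K}{|K|}$. You describe "both cancellative" and "mixed" cases on the $g$-slot but never the fully non-cancellative one (and your opening sentence even contradicts itself: "exactly one of the $u_{n+1,K^i}$ is cancellative (say both\dots)"). For this missing case, applying $\|\Pi\|_{L^r(w)} \lesssim \|S_{\calD}\Pi\|_{L^r(w)}$ is not useful at all: $\Delta_R\big(\sum_K c_K \tfrac{1_K}{|K|}\big)$ sums over $K \subsetneq R$ and does not produce a clean coefficient-by-coefficient square function. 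The paper handles this case by a direct $L^1(w)$ estimate — pull $\langle w\rangle_K$ out of the integral, apply \eqref{eq:genH1BMO} to the scalar sum $\sum_K |a_K|\langle w\rangle_K(\cdots)$, recognize the two cancellative factors as $S^1_{\calD,M}$- and $S^2_{\calD,M}$-type square functions, and then extrapolate (\cite{GM}) from the $L^1$ Hölder tuple. That is an essentially different mechanism from the lower-square-function route.

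Second, in the mixed case your proposed reduction to "a one-parameter paraproduct in $g$ treated by the sparse-domination consequence \eqref{eq:SpaCon}, just as in Proposition~\ref{prop:StandPPWeight}" does not work. Equation \eqref{eq:SpaCon} and the argument in Proposition~\ref{prop:StandPPWeight} rely on the partial paraproduct coefficients $(a_{K,(I^1_j)})_{K^2}$ being \emph{one-parameter} $\BMO$ in $K^2$ for each fixed $K^1, I^1_j$. For a full paraproduct the coefficients $(a_{K^1,K^2})$ satisfy only the product $\BMO$ condition, which does \emph{not} imply a uniform one-parameter $\BMO$ bound slice-by-slice. After the first-parameter lower square function the remaining $K^2$-sum therefore cannot be treated by \eqref{eq:SpaCon}. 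The paper instead extracts $n-1$ maximal functions, isolates a genuinely \emph{linear} bi-parameter full paraproduct in $L^{p_n}(w_n)$ — where $w_n$ honestly belongs to $A_{p_n}$, not merely $A_\infty$ — and closes by dualizing against $L^{p_n'}(w_n^{1-p_n'})$ with the ordinary product $H^1$--$\BMO$ duality \eqref{eq:BiParH1BMO}.

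Third, your step (3) invokes "\eqref{eq:genH1BMO} \dots or its refinement for the quadratic expression." No such refinement is stated in the paper, and you do not verify it. Handling $\sum_R |a_R|^2(\dots)\frac{1_R}{|R|}$ with $(a_R)\in\BMO_{\mathrm{prod}}$ is a Carleson embedding statement, which is a different (and in the quasi-Banach weighted setting not automatic) fact. The paper avoids ever needing such a quadratic inequality precisely by the reduction to the linear paraproduct described above, where the ordinary linear $H^1$--$\BMO$ duality suffices.

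So the plan correctly names the hard point and the two tools, but the case analysis is incomplete, the sparse-domination shortcut is not available, and the quadratic extension of \eqref{eq:genH1BMO} is an unverified gap.
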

\begin{proof}
\textbf{Case 1.} Suppose that $\Pi$ has a form, where we have $\langle f_{n+1} \rangle_K$. For example, we consider the explicit case that
$$
\Pi(f_1, \ldots, f_n) = \sum_{K = K^1 \times K^2} a_K \prod_{j=1}^{n-2} \langle f_j \rangle_K  \Big \langle f_{n-1}, h_{K^1} \otimes \frac{1_{K^2}}{|K^2|} \Big\rangle
 \Big \langle f_{n}, \frac{1_{K^1}}{|K^1|} \otimes h_{K^2} \Big\rangle \frac{1_K}{|K|}.
$$
Let $p_j \in (1, \infty)$ satisfy $\sum_{j=1}^n 1/p_j = 1$. We now have
\begin{align*}
\| \Pi(f_1, \ldots, f_n)  \|_{L^1(w)} \le \sum_{K = K^1 \times K^2} |a_K| \langle w \rangle_K \prod_{j=1}^{n-2} \langle |f_j| \rangle_K \Big| \Big \langle f_{n-1}, h_{K^1} \otimes \frac{1_{K^2}}{|K^2|} \Big\rangle
 \Big \langle f_{n}, \frac{1_{K^1}}{|K^1|} \otimes h_{K^2} \Big\rangle \Big|.
\end{align*}
Using \eqref{eq:genH1BMO} this can be dominated by
\begin{align*}
\Big\| \Big(&  \sum_{K = K^1 \times K^2} \prod_{j=1}^{n-2} \langle |f_j| \rangle_K^2 \Big| \Big \langle f_{n-1}, h_{K^1} \otimes \frac{1_{K^2}}{|K^2|} \Big\rangle
 \Big \langle f_{n}, \frac{1_{K^1}}{|K^1|} \otimes h_{K^2} \Big\rangle \Big|^2 \frac{1_K}{|K|} \Big)^{1/2} \Big\|_{L^1(w)} \\
 &\le \Big\| \prod_{j=1}^{n-2} M_{\calD_{\sigma}} f_j \cdot S_{\calD_{\sigma}, M}^1 f_{n-1} S_{\calD_{\sigma}, M}^2 f_{n} \Big\|_{L^1(w)} \\
 &\le \prod_{j=1}^{n-2} \|M_{\calD_{\sigma}} f_j\|_{L^{p_j}(w_j)} \cdot \|S_{\calD_{\sigma}, M}^1 f_{n-1}\|_{L^{p_{n-1}}(w_{n-1})}
 \|S_{\calD_{\sigma}, M}^2 f_{n}\|_{L^{p_{n}}(w_{n})} 
 \lesssim  \prod_{j=1}^{n} \| f_j\|_{L^{p_j}(w_j)}.
\end{align*}
To achieve the full range of exponents we extrapolate using \cite{GM}.

\textbf{Case 2.} We are not in Case 1. This means that there is at most one Haar function associated with the functions $f_1, \ldots, f_n$.
The key implication of this is that we have for at least $n-1$ indices $j \in \{1, \ldots, n\}$ the term $\langle f_j \rangle_K$ with a full average.
We consider the explicit case
$$
\Pi(f_1, \ldots, f_n) =  \sum_{K = K^1 \times K^2} a_K \prod_{j=1}^{n-1} \langle f_j \rangle_K  \Big \langle f_{n}, \frac{1_{K^1}}{|K^1|} \otimes h_{K^2} \Big\rangle
h_{K^1} \otimes \frac{1_{K^2}}{|K^2|}.
$$
Using the weighted lower square function estimate in the first parameter, Lemma \ref{lem:LowerFS}, we dominate $\|\Pi(f_1, \ldots, f_n)\|_{L^r(w)}$ with
\begin{align*}
\Big\| \Big(& \sum_{K^1} \Big[ \sum_{K^2} |a_K|  \prod_{j=1}^{n-1} \langle |f_j| \rangle_K  \Big| \Big \langle f_{n}, \frac{1_{K^1}}{|K^1|} \otimes h_{K^2} \Big\rangle \Big| 
\frac{1_{K^2}}{|K^2|} \Big]^2 \frac{1_{K^1}}{|K^1|} \Big)^{1/2} \Big\|_{L^r(w)} \\
&\le \Big\|   \prod_{j=1}^{n-1} M_{\calD_{\sigma}} f_j \Big( \sum_{K^1} \Big[ \sum_{K^2} |a_K|  \Big| \Big \langle f_{n}, \frac{1_{K^1}}{|K^1|} \otimes h_{K^2} \Big\rangle \Big| 
\frac{1_{K^2}}{|K^2|} \Big]^2 \frac{1_{K^1}}{|K^1|} \Big)^{1/2} \Big\|_{L^r(w)}  \\
&\le \prod_{j=1}^{n-1} \|M_{\calD_{\sigma}} f_j\|_{L^{p_j}(w_j)} \Big\| \Big( \sum_{K^1} \Big[ \sum_{K^2} |a_K|  \Big| \Big \langle f_{n}, \frac{1_{K^1}}{|K^1|} \otimes h_{K^2} \Big\rangle \Big| 
\frac{1_{K^2}}{|K^2|} \Big]^2 \frac{1_{K^1}}{|K^1|} \Big)^{1/2} \Big\|_{L^{p_n}(w_n)}.
\end{align*} 
We can, of course, remove the maximal functions. For the last term we notice that by the weighted upper square function estimate we have
\begin{align*}
 \Big\| \Big(& \sum_{K^1} \Big[ \sum_{K^2} |a_K|  \Big| \Big \langle f_{n}, \frac{1_{K^1}}{|K^1|} \otimes h_{K^2} \Big\rangle \Big| 
\frac{1_{K^2}}{|K^2|} \Big]^2 \frac{1_{K^1}}{|K^1|} \Big)^{1/2} \Big\|_{L^{p_n}(w_n)} \\
&\lesssim \Big\|  \sum_{K = K^1 \times K^2} |a_K|  \Big| \Big \langle f_{n}, \frac{1_{K^1}}{|K^1|} \otimes h_{K^2} \Big\rangle \Big| h_{K^1} \otimes \frac{1_{K^2}}{|K^2|}\Big\|_{L^{p_n}(w_n)}.
\end{align*}
Now, this is a linear bi-parameter full paraproduct, and thus bounded. For the reader's convenience, we highlight that this is easy by \emph{duality} (which is the key reason why we wanted to reduce to the linear situation): 
\begin{align*}
 \sum_{K = K^1 \times K^2}& |a_K|  \Big| \Big \langle f_{n}, \frac{1_{K^1}}{|K^1|} \otimes h_{K^2} \Big\rangle \Big| \Big| \Big\langle g, h_{K^1} \otimes \frac{1_{K^2}}{|K^2|} \Big\rangle \Big| \\
 &\lesssim \Big\| \Big( \sum_{K = K^1 \times K^2}  \Big| \Big \langle f_{n}, \frac{1_{K^1}}{|K^1|} \otimes h_{K^2} \Big\rangle \Big|^2 \Big|
 \Big\langle g, h_{K^1} \otimes \frac{1_{K^2}}{|K^2|} \Big\rangle \Big|^2 \frac{1_K}{|K|} \Big)^{1/2} \Big\|_{L^1} \\
 &\le \|S_{\calD_{\sigma}, M}^2 f_n \cdot S_{\calD_{\sigma}, M}^1 g \|_{L^1} = \int  S_{\calD_{\sigma}, M}^2 f_n \cdot S_{\calD_{\sigma}, M}^1 g \cdot w_n^{1/p_n} w_n^{-1/p_n}
  \\ &\le \|S_{\calD_{\sigma}, M}^2 f_n \|_{L^{p_n}(w_n)} 
 \|S_{\calD_{\sigma}, M}^1 g \|_{L^{p_n'}(w_n^{1-p_n'})} \lesssim \| f_n \|_{L^{p_n}(w_n)} 
 \| g \|_{L^{p_n'}(w_n^{1-p_n'})}.
\end{align*} 
\end{proof}

\subsubsection*{Weighted estimates of linear modified model operators}
Here we discuss the weighted estimates of linear modified model operators with a bound depending on the square root of the complexity. 
Notice that in principle we have already done all the necessary work. For example, if we  want to estimate 
$\| Q_k f \|_{L^p(w)}$, we study the unweighted pairings $\langle Q_k f,g \rangle$. Then, 
we proceed as in the linear case of Proposition \ref{prop:QkBRange}. Depending on the form
of the shift this leads us to terms corresponding to \eqref{eq:ShiftUpperB} such as
$$
\int \Big( \sum_K |MP_{K, (k_1-1, k_2-1)} f|^2 \Big)^{1/2}
S_{\calD} g.
$$
By H\"older's inequality this is less than
$$
\Big\|\Big( \sum_K |MP_{K, (k_1-1, k_2-1)} f|^2 \Big)^{1/2} \Big \|_{L^p(w)}
\|S_{\calD} g \|_{L^{p'}(w^{1-p'})}
\lesssim \sqrt k_1 \sqrt k_2 \| f\|_{L^p(w)} \| g \|_{L^{p'}(w^{1-p'})}.
$$

Recall that in the linear case we may use the formalism involving the functions $H_{I,J}$ to present the modified operators
as described in Remark \ref{rem:BiParH}. 
To demonstrate how this formalism works we give the proof of the weighted estimates of modified shifts below. 
Because of the above reason we only state the weighted estimate of modified partial paraproducts but omit the proof.

We point out to avoid confusion, that in the formalism involving the functions $H_{I,J}$ there are also shifts with zero complexity, whereas
in the formalism we use in the general $n$-linear case modified shifts with either $k_1=0$ or $k_2=0$ are automatically zero.
For this reason  we have $\sqrt{k_i+1}$ in the next statement, but on the other hand we had $\sqrt k_i$ in Proposition \ref{prop:QkBRange}.

\begin{prop}\label{prop:LinQkWeight}
For every $p \in (1, \infty)$ and bi-parameter $A_p$ weight $w$ we have
$$
\|Q_{k} f\|_{L^p(w)} \lesssim \sqrt{k_1+1}\sqrt{k_2+1} \|f\|_{L^p(w)}. 
$$
\end{prop}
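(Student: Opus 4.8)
The plan is to prove the weighted bound for $Q_{k}$, $k = (k_1, k_2)$, using the linear formalism described in Remark \ref{rem:BiParH}, i.e. we may assume
$$
\langle Q_{k_1, k_2}f, g\rangle = \sum_{K} \sum_{\substack{ R_1, R_2 \\ R_j^{(k_1, k_2)} = K }}
a_{K, R_1, R_2} \langle f, h_{R_1} \rangle \langle g, H_{I_1^1, I_2^1} \otimes H_{I_1^2, I_2^2} \rangle
$$
(or one of the three symmetric forms, where $h$ and $H$ are interchanged in each parameter; these are all handled in the same way, the only asymmetry being which of $f,g$ carries the cancellation, and this is settled by duality). I will estimate the unweighted bilinear form $|\langle Q_{k_1,k_2}f, g\rangle|$ pointwise by an integral of maximal/square functions and then close with H\"older and Lemma \ref{lem:standardEst}.

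First I would record the two structural identities, exactly as in the one-parameter discussion preceding \eqref{eq:gAndH}: since $H_{I_1^m, I_2^m}$ is constant on the children of $I_1^m$ and $I_2^m$ and has zero average over $K^m$, we get the bi-parameter analogue
$$
\langle g, H_{I_1^1, I_2^1} \otimes H_{I_1^2, I_2^2} \rangle = \langle P_{K, k} g, H_{I_1^1, I_2^1} \otimes H_{I_1^2, I_2^2} \rangle,
$$
where $P_{K,k} = P^1_{K^1, k_1} P^2_{K^2, k_2}$, and similarly $\langle f, h_{R_1}\rangle = \langle \Delta_{K, k} f, h_{R_1}\rangle$. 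Then, bounding $|H_{I_1^m, I_2^m}| \le |I_1^m|^{-1/2} 1_{I_1^m \cup I_2^m}$, $|h_{R_1}| \le |R_1|^{-1/2} 1_{R_1}$, and using the normalization $|a_{K, R_1, R_2}| \le |R_1|^{1/2}|R_2|^{1/2}/|K|$ together with the observation that for fixed $K$ the cubes $R_1, R_2$ with $R_j^{(k)} = K$ tile $K$, I would arrive at the pointwise estimate
$$
|\langle Q_{k_1, k_2} f, g\rangle| \lesssim \int \Big( \sum_{K} |M_{\calD}\,\Delta_{K, k} f|^2 \Big)^{1/2} \Big( \sum_K |P_{K, k} g|^2 \Big)^{1/2},
$$
in direct analogy with \eqref{eq:StaCom1}, \eqref{eq:StaCom2} and \eqref{eq:ShiftUpperB} (splitting $\langle P_{K,k}g, 1_{I_1^m}H\rangle$ versus $\langle P_{K,k}g, 1_{I_2^m}H\rangle$ in each parameter if one wants to be completely careful, but each resulting piece obeys the same bound). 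Here one uses $\sum_K |\Delta_{K,k}f|^2 = \sum_{R_1}|\Delta_{R_1}f|^2$, so the $f$-factor is essentially the ordinary bi-parameter square function after a Fefferman--Stein step.

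To finish, apply H\"older's inequality with exponents $p, p'$:
$$
|\langle Q_{k_1, k_2}f, g\rangle| \lesssim \Big\| \Big( \sum_K |M_{\calD}\Delta_{K,k}f|^2\Big)^{1/2} \Big\|_{L^p(w)} \Big\| \Big( \sum_K |P_{K,k}g|^2 \Big)^{1/2} \Big\|_{L^{p'}(w^{1-p'})}.
$$
The first factor is $\lesssim \|f\|_{L^p(w)}$ by the Fefferman--Stein inequality and the weighted square function estimate of Lemma \ref{lem:standardEst}. The second factor is $\lesssim \sqrt{k_1+1}\sqrt{k_2+1}\,\|g\|_{L^{p'}(w^{1-p'})}$ by the $P_{K,k}$ estimate in Lemma \ref{lem:standardEst} (recall $w^{1-p'} \in A_{p'}$). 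Taking the supremum over $g$ with $\|g\|_{L^{p'}(w^{1-p'})} \le 1$ yields the claim. I expect the only genuinely fiddly point — not really an obstacle — to be the bookkeeping for the forms of $Q_{k_1,k_2}$ where the cancellative Haar function sits on $f$ in one or both parameters: then the roles of the $f$- and $g$-square functions swap in that parameter, so the $\sqrt{k_m+1}$ factor attaches to whichever of $f,g$ carries the $P_{K^m, k_m}$, but in all cases one factor is a bounded square function and the other produces exactly $\sqrt{k_1+1}\sqrt{k_2+1}$; by the usual duality this reduces to the single displayed form above. The mixed/boundary terms arising from $1_{I_1^m}$ vs $1_{I_2^m}$ are handled exactly as in the one-parameter Proposition \ref{prop:Qscalar} and contribute nothing new.
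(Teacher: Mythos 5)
Your argument follows essentially the same route as the paper's: reduce to the bilinear form, invoke the bi-parameter version of \eqref{eq:gAndH} to replace $f$ and $g$ by $\Delta_{K,k}f$ and $P_{K,k}g$ respectively, bound pointwise by a product of square functions, and close with H\"older and Lemma \ref{lem:standardEst}. Two small remarks. First, you chose the form of $Q_{k}$ in which $f$ carries $h_{R_1}$ and $g$ carries $H\otimes H$, so all of $\sqrt{k_1+1}\sqrt{k_2+1}$ lands on the $g$-factor; the paper instead demonstrates the mixed form ($h\otimes H$ on $f$, $H\otimes h$ on $g$), where the complexity cost splits as $\sqrt{k_2+1}$ on $f$ and $\sqrt{k_1+1}$ on $g$. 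Both are legitimate, and as you note the remaining forms reduce to these by duality. Second, your displayed pointwise bound
$$
|\langle Q_{k_1, k_2} f, g\rangle| \lesssim \int \Big( \sum_{K} |M_{\calD}\,\Delta_{K, k} f|^2 \Big)^{1/2} \Big( \sum_K |P_{K, k} g|^2 \Big)^{1/2}
$$
is not quite what the corner-splitting of $H_{I_1^m,I_2^m}$ produces: the four choices of corner rectangle give terms of the form $\sum_K\langle E_{K,k}|\Delta_{K,k}f|,\, E'|P_{K,k}g|\rangle$ where $E'$ is an averaging operator (e.g.\ $E^1_{K^1,k_1}E^2_{K^2}$) depending on the corner, so the $g$-factor should carry an $M_{\calD}$ (or case-by-case $E$) as well. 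This is the bookkeeping you already flagged as the ``fiddly point,'' and it is harmless, since Stein's inequality or the Fefferman--Stein inequality absorbs the extra averaging before applying the $P_{K,k}$ bound of Lemma \ref{lem:standardEst}. With that correction the proof is complete.
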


\begin{proof}
Assume that the shift is e.g. of the form
$$
\langle Q_{k}f, g\rangle 
= \sum_{K} \sum_{\substack{ R_1,R_2 \\  R_j^{(k)}=K}}
a_{K,R_1,R_2} \langle f, h_{I^1_1} \otimes H_{I^2_1,I^2_2} \rangle \langle g, H_{I^1_1, I^1_2} \otimes h_{I^2_2} \rangle.
$$
We write for the moment that $\varphi_{K}f:=|\Delta^1_{K^1,k_1}P^2_{K^2,k_2}f|$ and 
$\phi_{K}g:= | P^1_{K^1,k_1}\Delta^2_{K^2,k_2}g|$. First, recalling \eqref{eq:gAndH} we estimate
\begin{equation*}
\begin{split}
&|\langle Q_{k}f, g\rangle| \\
&\le \sum_{K} \sum_{\substack{ R_1,R_2 \\  R_j^{(k)}=K}}
\frac{|R_1|}{|K|} \langle \varphi_{K}f, h^0_{I_1^1} \otimes (h^0_{I^2_1}+h^0_{I^2_2}) \rangle
 \langle \phi_{K}g, (h^0_{I^1_1}+h^0_{I^1_2}) \otimes h^0_{I^2_2} \rangle.
\end{split}
\end{equation*}
This is split into four terms according to the sums inside the pairings.

For brevity we explicitly demonstrate the estimate only with the term
\begin{equation*}
\begin{split}
\sum_{K} & \sum_{\substack{ R_1,R_2 \\  R_j^{(k)}=K}}
\frac{|R_1|}{|K|} \langle \varphi_{K}f, h^0_{I^1_1} \otimes h^0_{I^2_2} \rangle
\langle \phi_{K}g, h^0_{I^1_1} \otimes h^0_{I^2_2} \rangle,
\end{split}
\end{equation*}
which can be written as
$$
\sum_{K} \langle E_{K, k} \varphi_{K}f, \phi_{K}g \rangle,
$$
where $E_{K,k}:= E_{K^1, k_1}^1 E_{K^2, k_2}^2$.
This is dominated by
$$
\Big \| \Big( \sum_{K} (E_{K,k} |\Delta^1_{K^1,k_1}P^2_{K^2,k_2}f|)^2\Big)^{1/2} \Big\|_{L^p(w)}
\Big \| \Big( \sum_{K} | P^1_{K^1,k_1}\Delta^2_{K^2,k_2}g|^2\Big)^{1/2} \Big\|_{L^{p'}(w^{1-p'})}.
$$
From here the proof is finished by using weighted Stein's inequality (or the maximal function estimate)  and weighted square function estimates.
\end{proof}

\begin{prop}\label{prop:LinQpiWeight}
For every $p \in (1, \infty)$ and bi-parameter $A_p$ weight $w$ we have
$$
\|(Q\pi)_{k} f\|_{L^p(w)}+\|(\pi Q)_{k} f\|_{L^p(w)} \lesssim \sqrt{k+1} \|f\|_{L^p(w)}. 
$$
\end{prop}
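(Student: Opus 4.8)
The plan is to run the argument of Proposition~\ref{prop:LinQkWeight} with the paraproduct parameter handled by the one-parameter $H^1$--$\BMO$ duality \eqref{eq:1ParH1BMO}, exactly as in the bi-parameter partial paraproduct estimates of \cite{LMV}. By Remark~\ref{rem:BiParH} I may present $(Q\pi)_k$, say with the paraproduct component on $\R^{d_2}$, in the $H_{I,J}$ formalism:
\begin{equation*}
\langle (Q\pi)_k f, g\rangle = \sum_{K = K^1 \times K^2} \sum_{\substack{I^1, J^1 \\ (I^1)^{(k)} = (J^1)^{(k)} = K^1}} a_{K, I^1, J^1}\, \langle f, h_{I^1} \otimes h_{K^2} \rangle \Big\langle g, H_{I^1, J^1} \otimes \tfrac{1_{K^2}}{|K^2|} \Big\rangle,
\end{equation*}
or one of the three other forms, where $h$ and $H$ are interchanged in the first parameter and $h_{K^2}$ and $1_{K^2}/|K^2|$ are interchanged in the second, the functions $H_{I^1,J^1}$ are as in Section~\ref{sec:1par}, and $\|(a_{K,I^1,J^1})_{K^2}\|_{\BMO} \le |I^1|/|K^1|$ for each fixed $I^1, J^1, K^1$ with $(I^1)^{(k)} = (J^1)^{(k)} = K^1$. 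Since $(\pi Q)_k$ is the symmetric operator obtained by interchanging the two parameters, it suffices to treat $(Q\pi)_k$, and among its forms it is enough to carry out the displayed one in detail.

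For this form I would first apply \eqref{eq:1ParH1BMO} in the second parameter, with $K^1, I^1, J^1$ fixed, to dominate the $K^2$-sum of the absolute value of the corresponding summand by
\begin{equation*}
\frac{|I^1|}{|K^1|}\int_{\R^{d_2}} S^2_{\calD^2}\langle f, h_{I^1}\rangle_1 \cdot M^2_{\calD^2}\langle g, H_{I^1,J^1}\rangle_1.
\end{equation*}
As in the one-parameter discussion around \eqref{eq:gAndH}, I then use $\langle f, h_{I^1}\rangle_1 = \langle \Delta^1_{K^1,k_1}f, h_{I^1}\rangle_1$, hence $S^2_{\calD^2}\langle f, h_{I^1}\rangle_1 \le \langle S^2_{\calD^2}\Delta^1_{K^1,k_1}f, h^0_{I^1}\rangle_1$, together with \eqref{eq:gAndH}, which gives $\langle g, H_{I^1,J^1}\rangle_1 = \langle P^1_{K^1,k_1}g, H_{I^1,J^1}\rangle_1$ and therefore $M^2_{\calD^2}\langle g, H_{I^1,J^1}\rangle_1 \le \langle M^2_{\calD^2}P^1_{K^1,k_1}g, h^0_{I^1} + h^0_{J^1}\rangle_1$. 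Splitting the estimate along this last sum produces two pieces. In the $h^0_{I^1}$-piece the $J^1$-summation collapses the $I^1$-average into $E^1_{K^1,k_1}$, leaving
\begin{equation*}
\sum_{K^1}\int_{\R^d}\big(E^1_{K^1,k_1}S^2_{\calD^2}\Delta^1_{K^1,k_1}f\big)\big(M^2_{\calD^2}P^1_{K^1,k_1}g\big),
\end{equation*}
while in the $h^0_{J^1}$-piece the $I^1$- and $J^1$-sums separate and collapse to
\begin{equation*}
\sum_{K^1}\int_{\R^d}\big\langle S^2_{\calD^2}\Delta^1_{K^1,k_1}f\big\rangle_{K^1,1}\,M^2_{\calD^2}P^1_{K^1,k_1}g.
\end{equation*}
Both are finished by Cauchy--Schwarz in the $K^1$-sum, weighted $L^p$--$L^{p'}$ Hölder, the weighted Stein inequality (to remove $E^1_{K^1,k_1}$, respectively to handle the partial average $\langle\,\cdot\,\rangle_{K^1,1}$), the weighted Fefferman--Stein inequality, and the weighted square function bounds of Lemma~\ref{lem:standardEst}. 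The factor $\sqrt{k+1}$ enters only through $\big\|(\sum_{K^1}|P^1_{K^1,k_1}g|^2)^{1/2}\big\|_{L^{p'}(w^{1-p'})}\lesssim \sqrt{k_1+1}\,\|g\|_{L^{p'}(w^{1-p'})}$, while the $f$-side reassembles into $\|S_{\calD}f\|_{L^p(w)}\sim\|f\|_{L^p(w)}$ because the cubes $I^1$ with $(I^1)^{(k_1)} = K^1$ have disjoint supports, so $\sum_{K^1}|S^2_{\calD^2}\Delta^1_{K^1,k_1}h|^2 = \sum_{R\in\calD}|\Delta_R h|^2$ pointwise.

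The three remaining forms of $(Q\pi)_k$, and then $(\pi Q)_k$, are treated by the same scheme: when $H_{I^1,J^1}$ is paired with $f$ rather than $g$ the roles of $f$ and $g$ in the first parameter are interchanged, and when the cancellative $h_{K^2}$ is paired with $g$ one applies \eqref{eq:1ParH1BMO} with the square function on $g$ and the maximal function on $f$ and then proceeds verbatim. I expect no genuine obstacle here, since all the ingredients are already in place; the only point requiring care is the standard bookkeeping of which slot in each parameter carries the two cancellative Haar functions, which dictates whether one invokes the lower square function estimate, the Stein inequality, or merely places absolute values inside the sums --- together with the observation that the $\BMO$-normalization of the partial-paraproduct coefficients enters solely through \eqref{eq:1ParH1BMO} applied fiberwise in the paraproduct variable.
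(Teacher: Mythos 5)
Your plan is correct and matches what the paper has in mind (the paper explicitly omits this proof, pointing to the same ingredients you assemble: the $H_{I,J}$ formalism of Remark~\ref{rem:BiParH}, the one-parameter $H^1$--$\BMO$ duality \eqref{eq:1ParH1BMO} applied fiberwise in the paraproduct variable, the key identity $\langle g, H_{I^1,J^1}\rangle_1 = \langle P^1_{K^1,k}g, H_{I^1,J^1}\rangle_1$ from \eqref{eq:gAndH}, and then Stein, Fefferman--Stein and the weighted square function bounds of Lemma~\ref{lem:standardEst}, with $\sqrt{k+1}$ coming from the $P^1_{K^1,k}g$ square function). The only blemish is notational: you occasionally write $k_1$ where the proposition's complexity parameter is simply $k$.
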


\subsection*{Bi-parameter representation theorem}
We set $$
\sigma = (\sigma_1, \sigma_2) \in (\{0,1\}^{d_1})^{\Z} \times (\{0,1\}^{d_2})^{\Z}, \qquad
\sigma_i = (\sigma^k_i)_{k \in \Z},
$$
and denote the expectation over the product probability space by $$\E_{\sigma} = \E_{\sigma_1} \E_{\sigma_2} = \E_{\sigma_2} \E_{\sigma_1} =
\iint \ud \mathbb{P}_{\sigma_1} \ud \mathbb{P}_{\sigma_2}.$$
We also set $\calD_0 = \calD^1_0 \times \calD^2_0$, where $\calD_0^i$ is the standard dyadic grid of $\R^{d_i}$.
As in the one-parameter case we use the notation
$$
I_i + \sigma_i := I_i + \sum_{k:\, 2^{-k} < \ell(I_i)} 2^{-k}\sigma_i^k, \qquad I_i \in \calD_0^i.
$$
 Given $\sigma = (\sigma_1, \sigma_2)$ and $R = I_1 \times I_2 \in \calD_0$ we set
$$
R + \sigma = (I_1+\sigma_1) \times (I_2+\sigma_2) \qquad \textup{and} \qquad \calD_{\sigma} = \{R + \sigma\colon \, R \in \calD_0\} = \calD_{\sigma_1} \times \calD_{\sigma_2}.
$$
\begin{thm}\label{thm:rep2par}
Suppose that $T$ is an $n$-linear bi-parameter $(\omega_1, \omega_2)$-CZO, where $\omega_i \in \operatorname{Dini}_{1/2}$. Then we have
$$
\langle T(f_1,\ldots,f_n), f_{n+1} \rangle= C \E_{\sigma} \sum_{k = (k_1, k_2) \in \N^2} \sum_{u=0}^{c_{d,n}} \omega_1(2^{-k_1})\omega_2(2^{-k_2}) \langle V_{k,u,\sigma}(f_1,\ldots,f_n), f_{n+1} \rangle,
$$
where
\begin{align*}
V_{k,u, \sigma} \in \{Q_k, S_{((k_1, k_2), \ldots, (k_1, k_2))}, &(QS)_{k_1, (k_2, \ldots, k_2)}, (SQ)_{(k_1, \ldots, k_1), k_2}, \\
& (Q\pi)_{k_1}, (\pi Q)_{k_2}, (S\pi)_{(k_1, \ldots k_1)}, (\pi S)_{(k_2, \ldots, k_2)}, \Pi\}
\end{align*}
defined in $\calD_{\sigma}$, and if the operator does not depend on $k_1$ or $k_2$ then that particular $k_i = 0$.
\end{thm}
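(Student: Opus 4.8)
The plan is to run the one-parameter argument of Theorem~\ref{thm:rep1par} \emph{separately in each of the two parameters}, so that the ``type'' of each resulting model operator is a pair: an independent one-parameter type (standard shift / modified shift / paraproduct) for $\R^{d_1}$ and one for $\R^{d_2}$. First expand
\[
\langle T(f_1,\ldots,f_n),f_{n+1}\rangle = \E_\sigma \sum_{R_1,\ldots,R_{n+1}} \langle T(\Delta_{R_1}f_1,\ldots,\Delta_{R_n}f_n),\Delta_{R_{n+1}}f_{n+1}\rangle,
\]
where $R_j = I_j^1\times I_j^2 \in \calD_\sigma = \calD_{\sigma_1}\times\calD_{\sigma_2}$ and $\Delta_{R_j} = \Delta^1_{I_j^1}\Delta^2_{I_j^2}$. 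For each $m\in\{1,2\}$ split the sum according to whether (a) there is a unique slot $j$ with $\ell(I_j^m)$ strictly smallest, or (b) $\min_j\ell(I_j^m)$ is attained at $\ge 2$ slots; in case (a) we also record the slot. This decomposes the full sum into finitely many pieces indexed by a \emph{pair} of one-parameter configurations, and by the full symmetry of $T$ through its $(n+1)^2$ adjoints it suffices to treat one representative of each pair, namely the ``slot $n+1$'' representative in case (a).

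Next, reduce one parameter at a time. In a parameter $m$ of type (a) with smallest slot $n+1$, collapse the over-large martingale differences via \eqref{eq:collapse}, leaving $\Delta^m_{I_{n+1}^m}$ and averages $E^m_{I_j^m}f_j$; then split $T(\ldots,E^m_{I_j^m}f_j,\ldots)$ in the $m$th variable exactly as in the proof of Theorem~\ref{thm:rep1par} into a ``difference'' part $\prod_j\langle f_j,h^0\rangle-\prod_j\langle f_j,h^0_{I_{n+1}^m}\rangle$ and a ``$T(1,\ldots,1)$'' part. Adding $m$th-parameter $k_m$-goodness of the collapsed cube, as in \eqref{eq:AddGoodness} (with probability $2^{-d_m}$ and independence of the $k_m$-goodness event from the position of the cube and from everything happening in the other parameter), turns the difference part into a modified-shift factor $Q_{k_m}$ and the $T(1)$ part into a paraproduct factor in $\R^{d_m}$. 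In a parameter $m$ of type (b), the standard splitting leaves at least two cancellative Haar functions in $\R^{d_m}$ and directly produces a standard shift factor $S$ of complexity $(k_m,\ldots,k_m)$ there, the ``near-diagonal'' $k_m\sim 1$ terms being handled via the size of the kernel and the weak boundedness / diagonal $\BMO$ hypotheses as in \eqref{eq:DiagSplit1}--\eqref{eq:DiagSplit2}. Running over all nine combinations of \{modified shift, standard shift, paraproduct\} over the two parameters produces exactly the list $\{Q_k,\ S_{((k_1,k_2),\ldots,(k_1,k_2))},\ (QS)_{k_1,(k_2,\ldots,k_2)},\ (SQ)_{(k_1,\ldots,k_1),k_2},\ (Q\pi)_{k_1},\ (\pi Q)_{k_2},\ (S\pi)_{(k_1,\ldots,k_1)},\ (\pi S)_{(k_2,\ldots,k_2)},\ \Pi\}$, with $k_i=0$ whenever the operator has no shift component in parameter $i$.

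The third task is the normalization of the coefficients. A factor produced from goodness together with the $m$th-parameter continuity modulus of the kernel contributes a gain $\omega_m(2^{-k_m})$, and altogether one must verify $|a_{K,(R_j)}|\lesssim \omega_1(2^{-k_1})\omega_2(2^{-k_2})\prod_j|R_j|^{1/2}/|K|^n$, i.e. \eqref{eq:Snorm2par} up to the two $\omega$ factors; in the ``mixed'' cells, where one parameter sits on the diagonal (weak boundedness / diagonal $\BMO$) or in a paraproduct configuration while the other is expanded through the kernel, only a one-parameter Hölder estimate is available, which is exactly why the \emph{mixed Hölder/size} kernel estimates were imposed. For the paraproduct components one uses the partial kernel representations together with the $T1$-type control \eqref{eq:PKWBP} on the constants $C(f_1^2,\ldots,f_{n+1}^2)$ to obtain the $\BMO$ normalization of $(S\pi)_{i}$ and $(Q\pi)_{k_1}$ (and symmetrically), and the product $\BMO$ hypothesis $S(1,1)\in\BMO_{\operatorname{prod}}$ over all $(n+1)^2$ adjoints $S=T^{j_1*,j_2*}_{1,2}$ for the $\BMO_{\operatorname{prod}}$ normalization of the full paraproduct $\Pi$. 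Absolute convergence of the double sum, applied to simple compactly supported $f_j$, is then immediate from these normalizations and crude size bounds; as in the remark preceding Theorem~\ref{thm:rep1par}, the condition $\omega_i\in\operatorname{Dini}_{1/2}$ is not used in the representation itself and enters only a posteriori, to sum the model-operator bounds of Propositions~\ref{prop:QkBRange}, \ref{prop:QpiBRange} and \ref{prop:StandShiftWeight}.

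I expect the main obstacle to be the two-parameter bookkeeping: simultaneously collapsing and re-indexing martingale differences in both parameters, keeping the $k_1$- and $k_2$-goodness events independent of one another and of the cube positions, and — most delicately — handling the cells in which one parameter is already diagonal while the other must be expanded through the kernel. It is precisely in these mixed cells that the mixed Hölder/size kernel estimates and the partial kernel representations are needed, and checking that they produce coefficients meeting \eqref{eq:Snorm2par} (and its $\BMO$ and $\BMO_{\operatorname{prod}}$ analogues) with the full gain $\omega_1(2^{-k_1})\omega_2(2^{-k_2})$ is the crux of the argument.
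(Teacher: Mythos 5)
Your proposal tracks the paper's proof very closely and is essentially the same argument. The paper likewise (i) splits the $\Delta_{R_1},\ldots,\Delta_{R_{n+1}}$ sum per parameter into a unique-minimizer part and a tied-minimizer part, (ii) collapses over-large martingale differences and performs the one-parameter difference/$T1$ split iteratively in each parameter (the double add-and-subtract \eqref{eq:FirstAddSub}--\eqref{eq:ThirdSubAdd} is exactly your ``one parameter at a time'' iteration, producing the bracket $[\,\cdot\,]$ that becomes $Q_k$, the two single-collapse pieces that become $(Q\pi)$ and $(\pi Q)$, and the fully collapsed piece $\Pi$), (iii) uses $k_i$-goodness per parameter with the product probability structure, and (iv) handles the remainder via the hybrid operators. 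Your identification of the crux -- verifying the $\omega_1(2^{-k_1})\omega_2(2^{-k_2})$ gain in the normalization, especially in the mixed cells where one parameter is diagonal or in a paraproduct configuration while the other is expanded through the kernel, via the mixed H\"older/size kernel estimates, the partial kernel representations, \eqref{eq:PKWBP}, \eqref{eq:DiagBMO} and \eqref{eq:2ParWBP} -- is exactly where the paper spends most of its effort (the estimates \eqref{eq:BiParNorma}, \eqref{eq:PPBMO}, \eqref{eq:PPBMOTest} and the separations \eqref{eq:TestSplit1}--\eqref{eq:TestSplit2}). One small caution: the paper's chosen representative is $\Sigma_{n,n+1,\sigma}$ with \emph{different} slots in the two parameters -- be sure your ``slot $n+1$'' reduction allows the distinguished slot to vary across parameters, since otherwise you miss those cases and cannot invoke the full family of $(n+1)^2$ partial adjoints cleanly.
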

\begin{rem}
We do not write the dependence of the constant $C$ on the various kernel and $T1$ assumptions as explicitly as in the one-parameter case, but the
dependence is analogous.
\end{rem}
\begin{proof}
We decompose
\begin{equation*}
\begin{split}
\langle T(f_1, \ldots, f_n),f_{n+1} \rangle
&= \E_{\sigma} \sum_{R_1, \ldots, R_{n+1} } \langle T(\Delta_{R_1}f_1, \ldots, \Delta_{R_n}f_n),\Delta_{R_{n+1}}f_{n+1} \rangle \\
&= \sum_{j_1, j_2 =1}^{n+1}  \E_{\sigma} \sum_{ \substack{ R_1, \ldots, R_{n+1} \\ \ell(I_{i_1}^1) > \ell(I_{j_1}^1) \textup{ for } i_1 \ne j_1
 \\ \ell(I_{i_2}^2) > \ell(I_{j_2}^2) \textup{ for } i_2 \ne j_2}} \langle T(\Delta_{R_1}f_1, \ldots, \Delta_{R_n}f_n),\Delta_{R_{n+1}}f_{n+1} \rangle \\
& \hspace{4cm}+  \E_{\sigma} \operatorname{Rem}_{\sigma},
\end{split}
\end{equation*}
where $R_1, \ldots, R_{n+1} \in \calD_\sigma = \calD_{\sigma_1} \times \calD_{\sigma_2}$ for some $\sigma =  (\sigma_1, \sigma_2)$
and $R_j = I_j^1 \times I_j^2$.
\subsubsection*{The main terms}
For $j_1, j_2$ we let
$$
\Sigma_{j_1, j_2, \sigma} = \sum_{ \substack{ R_1, \ldots, R_{n+1} \\ \ell(I_{i_1}^1) > \ell(I_{j_1}^1) \textup{ for } i_1 \ne j_1
 \\ \ell(I_{i_2}^2) > \ell(I_{j_2}^2) \textup{ for } i_2 \ne j_2}} \langle T(\Delta_{R_1}f_1, \ldots, \Delta_{R_n}f_n),\Delta_{R_{n+1}}f_{n+1} \rangle.
$$
These are symmetric and we choose to deal with $\Sigma_{\sigma} := \Sigma_{n, n+1, \sigma}$. After collapsing the relevant sums using \eqref{eq:collapse} we have
$$
\Sigma_{\sigma} =  \sum_{ \substack{ R_1, \ldots, R_{n+1} \\  \ell(R_1) = \cdots = \ell(R_{n+1}) }}
\langle T(E_{R_1} f_1, \ldots, E_{R_{n-1}} f_{n-1}, \Delta_{I_n^1}^1 E_{I_n^2}^2 f_n),  E_{I_{n+1}^1}^1 \Delta_{I_{n+1}^2}^2 f_{n+1} \rangle,
$$
where $\ell(R_j) := ( \ell(I_j^1), \ell(I_j^2))$ for $R_j = I_j^1 \times I_j^2$. 

For $R = I^1 \times I^2$ we define $$
h_R = h_{I^1} \otimes h_{I^2}, \,\, h_R^0 = h_{I^1}^0 \otimes h_{I^2}^0,\,\, h_R^{1,0} = h_{I^1} \otimes h_{I^2}^0\, \textup{ and } \,
h_R^{0,1} = h_{I^1}^0 \otimes h_{I^2}.
$$
Using this notation we write
\begin{equation*}
\begin{split}
\langle &T(E_{R_1} f_1, \ldots, E_{R_{n-1}} f_{n-1}, \Delta_{I_n^1}^1 E_{I_n^2}^2 f_n),  E_{I_{n+1}^1}^1 \Delta_{I_{n+1}^2}^2 f_{n+1} \rangle \\
&= \langle T( h_{R_1}^0, \ldots, h_{R_{n-1}}^0, h_{R_n}^{1,0}), h_{R_{n+1}}^{0,1} \rangle 
A_{R_1, \dots, R_{n+1}}^{n,n+1}(f_1, \dots, f_{n+1}), 
\end{split}
\end{equation*}
where $A_{R_1, \dots, R_{n+1}}^{n,n+1}(f_1, \dots, f_{n+1})=A_{R_1, \dots, R_{n+1}}^{n,n+1}$ is defined in \eqref{eq:DefAR}.

Next, we start adding and subtracting suitable terms as we did in the proof of 
the one-parameter representation theorem \ref{thm:rep1par}, but here we do it separately in each parameter.
First,  we have
\begin{equation}\label{eq:FirstAddSub}
A_{R_1, \dots, R_{n+1}}^{n,n+1}
=A_{R_1, \dots, R_{n+1}}^{n,n+1}
-A_{I_n^1 \times I^2_1, \dots, I^1_n \times I^2_{n+1}}^{n,n+1}+A_{I_n^1 \times I^2_1, \dots, I^1_n \times I^2_{n+1}}^{n,n+1}
\end{equation}
and 
\begin{equation}\label{eq:SecondSubAdd}
A_{I_n^1 \times I^2_1, \dots, I^1_n \times I^2_{n+1}}^{n,n+1}
=(A_{I_n^1 \times I^2_1, \dots, I^1_n \times I^2_{n+1}}^{n,n+1}
-A_{I_n^1 \times I^2_{n+1}, \dots, I^1_n \times I^2_{n+1}}^{n,n+1})
+A_{I_n^1 \times I^2_{n+1}, \dots, I^1_n \times I^2_{n+1}}^{n,n+1}.
\end{equation}
Then, we further have that the difference of the first two terms in the right hand side of \eqref{eq:FirstAddSub} equals
\begin{equation}\label{eq:ThirdSubAdd}
\begin{split}
[A_{R_1, \dots, R_{n+1}}^{n,n+1}
-A_{I_n^1 \times I^2_1, \dots, I^1_n \times I^2_{n+1}}^{n,n+1}
&-A_{I^1_1 \times I^2_{n+1}, \dots, I^1_{n+1} \times I^2_{n+1}}^{n,n+1}
+A_{I^1_n \times I^2_{n+1}, \dots, I^1_n \times I^2_{n+1}}^{n,n+1}] \\
&+\{A_{I^1_1 \times I^2_{n+1}, \dots, I^1_{n+1} \times I^2_{n+1}}^{n,n+1}
-A_{I^1_n \times I^2_{n+1}, \dots, I^1_n \times I^2_{n+1}}^{n,n+1}\}.
\end{split}
\end{equation} 
This gives us the decomposition 
\begin{equation}\label{eq:Decomposition}
A_{R_1, \dots, R_{n+1}}^{n,n+1}
= [\, \cdot \,] + \{\, \cdot \, \} + (\, \cdot \,) + A_{I^1_n \times I^2_{n+1}, \dots, I^1_n \times I^2_{n+1}}^{n,n+1},
\end{equation}
where inside the brackets we have the corresponding term as in \eqref{eq:SecondSubAdd} and  \eqref{eq:ThirdSubAdd}.

The identity \eqref{eq:Decomposition} splits $\Sigma_\sigma$ into four terms 
$\Sigma_\sigma=\Sigma_\sigma^1+\Sigma_\sigma^2+\Sigma_\sigma^3+\Sigma_\sigma^4$.

\subsubsection*{The shift case $\Sigma_\sigma^1$}
We begin by looking at $\Sigma_\sigma^1$, that is, the term coming from $[\, \cdot \,] $ in \eqref{eq:Decomposition}. 
Let us further define the abbreviation
\begin{equation}\label{eq:Phi2Par}
\begin{split}
&\varphi_{R_1, \dots, R_{n+1}} 
:=\langle T( h_{R_1}^0, \ldots, h_{R_{n-1}}^0, h_{R_n}^{1,0}), h_{R_{n+1}}^{0,1} \rangle \\
&\times[A_{R_1, \dots, R_{n+1}}^{n,n+1}
-A_{I_n^1 \times I^2_1, \dots, I^1_n \times I^2_{n+1}}^{n,n+1}
-A_{I^1_1 \times I^2_{n+1}, \dots, I^1_{n+1} \times I^2_{n+1}}^{n,n+1}
+A_{I^1_n \times I^2_{n+1}, \dots, I^1_n \times I^2_{n+1}}^{n,n+1}]
\end{split}
\end{equation}
so that
$$
\Sigma_{\sigma}^1 =  \sum_{ \substack{ R_1, \ldots, R_{n+1} \\  \ell(R_1) = \cdots = \ell(R_{n+1}) }}\varphi_{R_1, \dots, R_{n+1}}.
$$

If $R=I^1\times I^2$ is a rectangle and $m=(m^1,m^2) \in \Z^{d_1} \times \Z^{d_2}$, then we define 
$I^i \dot{+} m^i:=I^i+m^i\ell(I^i)$ and
$R \dot{+} m:= (I^1\dot{+}m^1)\times (I^2\dot{+}m^2)$.
Notice that if $I^1_i=I^1_j$  for all $i,j$ or $I^2_i=I^2_j$ for all $i,j$ then
$\varphi_{R_1, \dots, R_{n+1}}=0$. Thus, there holds that
\begin{equation*}
\begin{split}
\Sigma_\sigma^1
=& \sum_{\substack{m_1, \dots, m_{n+1} \in \Z^{d_1}\times \Z^{d_2} \\ (m_1^1, \dots, m_{n+1}^1) \not=0, \ m^1_n=0 \\ 
(m_1^2, \dots, m_{n+1}^2) \not=0, \ m^2_{n+1}=0}} \sum_{R}
\varphi_{R \dot{+} m_1, \dots,R \dot{+} m_{n+1} } \\
&=\sum_{k_1,k_2=2}^\infty 
\sum_{\substack{m_1, \dots, m_{n+1} \in \Z^{d_1}\times \Z^{d_2} \\ \max |m^1_j| \in (2^{k_1-3}, 2^{k_1-2}], \ m^1_n=0 \\ 
\max |m^2_j| \in (2^{k_2-3}, 2^{k_2-2}], \ m^2_{n+1}=0}}
\sum_{R} \varphi_{R \dot{+} m_1, \dots,R \dot{+} m_{n+1} }.
\end{split}
\end{equation*}

Next, we consider $\E_\sigma \Sigma^1_\sigma$ and add goodness to the rectangles $R$. 
Recall that $\E_\sigma=\E_{\sigma_1}\E_{\sigma_2}$.
We write $\calD_{\sigma, \good}(k_1,k_2):= \calD_{\sigma_1, \good}(k_1) \times \calD_{\sigma_2, \good}(k_2)$ 
and refer to Equation \eqref{eq:DefkGood} for the definition of the collections $\calD_{\sigma_i, \good}(k_i)$ of $k_i$-good cubes.
Similarly as in \eqref{eq:AddGoodness} there holds that
$$
\E_\sigma \sum_{R \in \calD_\sigma} \varphi_{R \dot{+} m_1, \dots,R \dot{+} m_{n+1} }
=2^{d} \E_\sigma \sum_{ R \in \calD_{\sigma,\good}(k_1,k_2)} \varphi_{R \dot{+} m_1, \dots,R \dot{+} m_{n+1} }.
$$
Therefore, we have shown that
\begin{equation}\label{eq:Sigma1}
\E_\sigma \Sigma^1_\sigma
=2^{d}C\sum_{k_1,k_2=2}^\infty\omega_1(2^{-k_1})\omega_2(2^{-k_2})
\langle  Q_{k_1,k_2}(f_1, \dots, f_n), f_{n+1} \rangle,
\end{equation}
where
\begin{equation*}
\begin{split}
\langle & Q_{k_1,k_2}(f_1, \dots, f_n), f_{n+1} \rangle \\
&:= \frac{1}{C\omega_1(2^{-k_1})\omega_2(2^{-k_2})}
\sum_{\substack{m_1, \dots, m_{n+1} \in \Z^{d_1}\times \Z^{d_2} \\ \max |m^1_j| \in (2^{k_1-3}, 2^{k_1-2}], \ m^1_n=0 \\ 
\max |m^2_j| \in (2^{k_2-3}, 2^{k_2-2}], \ m^2_{n+1}=0}}
\sum_{R \in \calD_{\sigma,\good}(k_1,k_2)} \varphi_{R \dot{+} m_1, \dots,R \dot{+} m_{n+1} }
\end{split}
\end{equation*}
and $C$ is a large  enough constant. 

Let $m_1, \dots, m_{n+1}$ and $R=I^1\times I^2$ be as in the definition of $ Q_{k_1, k_2}$. 
By \eqref{eq:GoodnessImplies} the goodness of the rectangle $R$ implies that $(R \dot{+} m_j)^{(k_1, k_2)} = R^{(k_1, k_2)} =: K$ 
for all $j \in \{1, \ldots, n+1\}$.
Recall the definition of $\varphi_{R \dot{+} m_1, \dots,R \dot{+} m_{n+1} }$ from \eqref{eq:Phi2Par}. Therefore, to conclude that
$ Q_{k_1,k_2}$ is a modified bi-parameter $n$-linear shift it remains to prove the normalization
\begin{equation}\label{eq:BiParNorma}
|\langle T( h_{R \dot{+}m_1}^0, \ldots, h_{R\dot{+}m_{n-1}}^0, h_{R\dot{+}m_n}^{1,0}), h_{R\dot{+}m_{n+1}}^{0,1} \rangle|
\lesssim \omega_1(2^{-k_1})\omega_2(2^{-k_2}) \frac{ |R|^{(n+1)/2}}{|K|^n}.
\end{equation}

Let us first assume that $k_1 \sim 1 \sim k_2$. Since $m^1_i \not=0$ and $m^2_j \not=0$ for some $i$ and $j$ we may use the
full kernel representation of $T$ to have that the left hand side of \eqref{eq:BiParNorma} is less than
\begin{equation*}
\begin{split}
\int \displaylimits_{\R^{(n+1)d}} | K(x_{n+1},x_1, \dots, x_n)| 
\prod_{j=1}^{n+1} h_{R \dot{+}m_j}^0(x_j) \ud x.
\end{split}
\end{equation*}
Applying the size of the kernel $K$ this is further dominated by
\begin{equation*}
\begin{split}
& \int \displaylimits_{\R^{(n+1)d_1}}  \frac{1}{\Big(\sum_{j=1}^n |x_{n+1}^1-x_j^1|\Big)^{nd_1}}
\prod_{j=1}^{n+1} h_{I^1 \dot{+} m_j^1}^0(x_j^1)
\ud x^1  \\
&  \times \int \displaylimits_{\R^{(n+1)d_2}} \frac{1}{\Big(\sum_{j=1}^n |x_{n+1}^2-x_j^2|\Big)^{nd_2}}
\prod_{j=1}^{n+1} h_{I^2 \dot{+} m_j^2}^0(x_j^2)
\ud x^2 
\lesssim \frac{1}{|I^1|^{(n-1)/2}|I^2|^{(n-1)/2}}.
\end{split}
\end{equation*}
The estimates for these one-parameter integrals appeared already in \eqref{eq:NearbyEst}.
Notice that this is the right estimate, since $\omega_i(2^{-k_i}) \sim 1$ and $|K|= |R^{(k_1,k_2)}| \sim |R|= |I^1| |I^2|$.

Suppose then that $k_1$ and $k_2$ are large enough so that we can use the continuity assumption of the full kernel $K$.
Using the zero integrals of $h_{I^1}$ and $h_{I^2}$ there holds that the left hand side of \eqref{eq:BiParNorma}
equals
\begin{equation}\label{eq:UseOfFullHolder}
\begin{split}
\Big| & \int \displaylimits_{\R^{(n+1)d}}\Big(
K(x_{n+1},x_1, \dots, x_n)-K(x_{n+1},x_1, \dots, x_{n-1}, (c_{I^1},x^2_n))\\
&-K((x_{n+1}^1,c_{I^2}),x_1, \dots, x_n)+K((x_{n+1}^1,c_{I^2}),x_1, \dots, x_{n-1}, (c_{I^1},x^2_n))\Big) \\
& \times \prod_{j=1}^{n-1} h_{R \dot{+}m_j}^0(x_j)h_{R \dot{+}m_n}^{1,0}(x_n) h_{R\dot{+}m_{n+1}}^{0,1}(x_{n+1})
\ud x \Big|,
\end{split}
\end{equation}
where $c_{I^i}$ denotes the center of the corresponding cube. Here one can use the continuity assumption of $K$
which leads to a product of two one-parameter integrals which can be estimated as in \eqref{eq:FarEst}.

What remains is the case that for example $k_1 \sim 1$ and $k_2$ is large.
This is done similarly as the above two cases using the mixed size and continuity assumption of $K$. This concludes the proof of
\eqref{eq:BiParNorma} and we are done dealing with $\E_\sigma \Sigma_\sigma^1$.

\subsubsection*{The partial paraproduct cases $\Sigma_\sigma^2$ and $\Sigma_\sigma^3$} Next, we look at the symmetric terms $\E_\sigma \Sigma_\sigma^2$ and $\E_\sigma \Sigma_\sigma^3$. We explicitly consider
$\E_\sigma \Sigma_\sigma^2$ here. Recall that $\Sigma_\sigma^2$ equals
$$
\sum_{ \substack{ R_1, \ldots, R_{n+1} \\  \ell(R_1) = \cdots = \ell(R_{n+1}) }}
\langle T( h_{R_1}^0, \ldots, h_{R_{n-1}}^0, h_{R_n}^{1,0}), h_{R_{n+1}}^{0,1} \rangle
\{A_{I^1_1 \times I^2_{n+1}, \dots, I^1_{n+1} \times I^2_{n+1}}^{n,n+1}
-A_{I^1_n \times I^2_{n+1}, \dots, I^1_n \times I^2_{n+1}}^{n,n+1}\}.
$$
Since the difference $A_{I^1_1 \times I^2_{n+1}, \dots, I^1_{n+1} \times I^2_{n+1}}^{n,n+1}
-A_{I^1_n \times I^2_{n+1}, \dots, I^1_n \times I^2_{n+1}}^{n,n+1}$ 
depends only on the cube $I_{n+1}^2$ in the second parameter 
we can further rewrite this as
\begin{equation}\label{eq:LeadsToPartialP}
\begin{split}
\Sigma_\sigma^2=\sum_{ \substack{ I_1^1, \ldots, I_{n+1}^1, I^2 \\  \ell(I_1^1) = \cdots = \ell(I_{n+1}^1) }}
&\langle T( h_{I_1^1}^0\otimes 1, \ldots, h_{I_{n-1}^1}^0\otimes 1, h_{I_n^1}\otimes 1), h_{I^1_{n+1}\times I^2}^{0,1} \rangle \\
&\times\Big\{\prod_{j=1}^{n-1} \Big\langle f_j, h^0_{I_j^1} \otimes \frac{1 _{I^2}}{|I^2|} \Big\rangle 
\cdot  \Big\langle f_n, h_{I_n^1} \otimes \frac{1 _{I^2}}{|I^2|} \Big\rangle 
\langle f_{n+1}, h_{I^1_{n+1}\times I^2}^{0,1}\rangle \\
&-\prod_{j=1}^{n-1} \Big\langle f_j, h^0_{I_n^1} \otimes \frac{1 _{I^2}}{|I^2|} \Big\rangle 
\cdot  \Big\langle f_n, h_{I_n^1} \otimes \frac{1 _{I^2}}{|I^2|} \Big\rangle 
\langle f_{n+1}, h_{I^1_{n}\times I^2}^{0,1}\rangle\Big\}.
\end{split}
\end{equation}

Let us write the summand in \eqref{eq:LeadsToPartialP} as $\varphi_{I_1^1, \dots, I_{n+1}^1,I^2}$. 
%For $k=2,3, \dots$ we define the grid
%$\calD_{\sigma, \good,1}(k):= \calD_{\sigma_1, \good}(k) \times \calD_{\sigma_2}$. 
By proceeding in the same way as above with
$\E_\sigma \Sigma_\sigma^1$ we have that
\begin{equation}\label{eq:ReWSigma2}
\begin{split}
\E_\sigma \Sigma_\sigma^2
=2^{d_1}C\E_\sigma\sum_{k=2}^\infty \omega_1(2^{-k}) \langle  (Q\pi)_k (f_1, \dots, f_n), f_{n+1} \rangle,
\end{split}
\end{equation}
where
\begin{equation*}
\begin{split}
\langle (Q\pi)_k & (f_1, \dots, f_n), f_{n+1} \rangle \\
&:= \frac{1}{C\omega_1(2^{-k})}\sum_{\substack{m \in \Z^{(n+1)d_1} \\ \max | m_j | \in (2^{k-3}, 2^{k-2} ] \\ m_n=0}}
\sum_{\substack{I^1\in \calD_{\sigma_1,\good}(k) \\ I^2 \in \calD_{\sigma_2}}} \varphi_{I^1\dot{+}m_1, \dots, I^1\dot{+}m_{n+1},I^2}.
\end{split}
\end{equation*}
The $k$-goodness of $I^1$ implies that here $(I^1\dot{+}m_j)^{(k)}=(I^1)^{(k)}=:K^1$ for all $j$. Therefore, to conclude that
$(Q\pi)_k$ is a modified partial paraproduct with the paraproduct component in $\R^{d_2}$ it remains to show that if we fix 
$m_1, \dots, m_{n+1}$ and $I^1$ as in the above sum then
\begin{equation}\label{eq:PPBMO}
\begin{split}
\| (\langle  T( h_{I^1\dot{+}m_1}^0\otimes 1, \ldots, h_{I^1 \dot{+}m_{n-1}}^0\otimes 1,  h_{I^1}\otimes 1), & h_{(I^1\dot{+}m_{n+1})\times I^2}^{0,1} \rangle )_{I_2 \in \calD_{\sigma_2}} \|_{\BMO}  \\
&  \lesssim  \omega_1(2^{-k}) \frac{|I^1|^{(n+1)/2}}{|K^1|^n}. 
\end{split}
\end{equation}

We verify the above $\BMO$ condition by taking a cube $I^2$ and a function $a_{I^2}$ such that $a_{I^2} = a_{I^2}1_{I^2}$, 
$|a_{I^2}| \le 1$ and $\int a_{I^2}=0$, and showing that
\begin{equation}\label{eq:PPBMOTest}
\begin{split}
|\langle  T( h_{I^1\dot{+}m_1}^0\otimes 1, \ldots, h_{I^1 \dot{+}m_{n-1}}^0\otimes 1,& h_{I^1}\otimes 1), 
 h_{(I^1\dot{+}m_{n+1})}^0 \otimes a_{I^2} \rangle | \\
& \lesssim \omega_1(2^{-k}) \frac{|I^1|^{(n+1)/2}}{|K^1|^n} |I^2|.
\end{split}
\end{equation} 
For a suitably large constant $C$ (so that we can use the continuity assumption of the kernel below) 
we split the pairing as
\begin{equation}\label{eq:TestSplit1}
\begin{split}
&\langle  T( h_{I^1\dot{+}m_1}^0\otimes 1_{(CI^2)^c}, h_{I^1\dot{+}m_2}^0\otimes 1, \ldots, h_{I^1 \dot{+}m_{n-1}}^0\otimes 1, h_{I^1}\otimes 1), 
 h_{(I^1\dot{+}m_{n+1})}^0 \otimes a_{I^2} \rangle \\
 &+\langle  T( h_{I^1\dot{+}m_1}^0\otimes 1_{CI^2}, h_{I^1\dot{+}m_2}^0\otimes 1, \ldots, h_{I^1 \dot{+}m_{n-1}}^0\otimes 1, h_{I^1}\otimes 1), 
 h_{(I^1\dot{+}m_{n+1})}^0 \otimes a_{I^2} \rangle.
 \end{split}
 \end{equation}

Let us show that the first term in \eqref{eq:TestSplit1} is dominated by 
$\omega_1(2^{-k}) |I^1|^{(n+1)/2}|I^2|/|K^1|^n$.
We have two cases. The case that $k \sim 1$  is handled with the mixed size and continuity assumption of $K$. The case
that  $k$ is large is handled with the continuity assumption of $K$. We show the details for the case $k \sim 1$.
The other case is done  similarly (see also the paragraph containing \eqref{eq:UseOfFullHolder}).

We assume that $k \sim 1$. Since $a_{I^2}$ has zero integral the pairing that we are estimating equals (by definition)
\begin{equation*}
\begin{split}
\int \displaylimits _ {\R^{(n+1)d}}
&\Big(K(x_{n+1},x_1, \dots, x_n)-K((x^1_{n+1},c_{I^2}),x_1, \dots, x_n)\Big) \\
&\times \prod_{j=1}^{n-1} h_{I^1\dot{+}m_j}^0(x_j^1)h_{I^1}(x_n^1) 
 h_{(I^1\dot{+}m_{n+1})}^0(x_{n+1}^1)1_{(CI^2)^c}(x_1^2) a_{I^2}(x_{n+1}^2) \ud x.
\end{split}
\end{equation*}
The mixed size and continuity property of $K$ implies that the absolute value of the last integral is dominated by
\begin{equation*}
\begin{split}
& \int \displaylimits_{\R^{(n+1)d_1}}  \frac{1}{\Big(\sum_{j=1}^n |x_{n+1}^1-x_j^1|\Big)^{nd_1}}
\prod_{j=1}^{n+1} h_{I^1 \dot{+} m_j^1}^0(x_j^1)
\ud x^1  \\
&  \times \int \displaylimits_{\R^{(n+1)d_2}} \omega_2\Big( 
\frac{|x_{n+1}^2-c_{I^2}|}{\sum_{j=1}^n |c_{I^2}-x_j^2|} \Big) 
\frac{1}{\Big(\sum_{j=1}^n |c_{I^2}-x_j^2|\Big)^{nd_2}}
1_{(CI^2)^c}(x_1^2)1_{I^2}(x_{n+1}^2)
\ud x^2.
\end{split}
\end{equation*}
By \eqref{eq:NearbyEst} we know that the integral related to $\R^{d_1}$ is dominated by $|I^1|^{-(n-1)/2}$.

Consider the integral related to $\R^{d_2}$. By first estimating that
$$
\omega_2\Big( \frac{|x_{n+1}^2-c_{I^2}|}{\sum_{j=1}^n |c_{I^2}-x_j^2|} \Big)
\le \omega_2\Big( \frac{|x_{n+1}^2-c_{I^2}|}{|c_{I^2}-x_1^2|} \Big)
$$
and then repeatedly using estimates of the form \eqref{eq:lesssimInt} one sees that the integral over $\R^{(n+1)d_2}$
is dominated by
\begin{equation*}
\begin{split}
 \int_{I^2} &\int _{(CI^2)^c} \omega_2\Big( \frac{|x_{n+1}^2-c_{I^2}|}{|c_{I^2}-x_1^2|} \Big) 
\frac{1}{|c_{I^2}-x_1^2|^{d_2}}
\ud x_1^2 \ud x_{n+1}^2 \\
&\lesssim |I^2| \int _{(CI^2)^c} \omega_2\Big( \frac{\ell(I^2)}{|c_{I^2}-x_1^2|} \Big) 
\frac{1}{|c_{I^2}-x_1^2|^{d_2}}
\ud x_1^2
\lesssim |I^2| \sum_{k=0}^\infty \omega_2(2^{-k})
\lesssim |I^2|.
\end{split}
\end{equation*}
In conclusion, we showed that the first term in \eqref{eq:TestSplit1} is dominated by
$|I^1|^{-(n-1)/2}|I^2|$, which is the right estimate in the case $k \sim 1$.

We turn to consider the second term in \eqref{eq:TestSplit1}. We again split it into two by writing
$1=1_{(CI^2)^c}+1_{CI^2}$ in the second slot. The part with $1_{(CI^2)^c}$ is estimated in the same way as above and then
one continues with the part related to $1_{CI^2}$. This is repeated until we are only left with the term
\begin{equation}\label{eq:TestSplit2}
\langle  T( h_{I^1\dot{+}m_1}^0\otimes 1_{CI^2}, \ldots, h_{I^1 \dot{+}m_{n-1}}^0\otimes 1_{CI^2}, h_{I^1}\otimes 1_{CI^2}), 
 h_{(I^1\dot{+}m_{n+1})}^0 \otimes a_{I^2} \rangle.
\end{equation}
The estimate for this uses the partial kernel representations of $T$. Again, we have the two cases that either $k \sim 1$ or $k$ is large. These are handled in the same way using either the size or the continuity of the partial kernels. We consider explicitly the case that $k$ is large. Using the zero integral of $h_{I^1}$ we have that the above pairing equals
\begin{equation*}
\begin{split}
 \int_{\R^{(n+1)d_1}} \Big(K_{1_{CI^2}, \dots, 1_{CI^2}, a_{I^2}}(x_{n+1}^1,x_1^1,& \dots, x_n^1)
 -K_{1_{CI^2}, \dots, 1_{CI^2}, a_{I^2}}(x_{n+1}^1,x_1^1, \dots, c_{I^1})\Big) \\
 &\times \prod_{j=1}^{n-1} h^0_{I^1\dot{+}m_j}(x^1_j) h_{I^1}(x_n) h^0_{I^1\dot{+}m_{n+1}}(x^1_{n+1}) \ud x^1.
 \end{split}
\end{equation*}
Taking absolute values and using the continuity of the partial kernel leads to
\begin{equation*}
\begin{split}
C(1_{CI^2},\dots,1_{CI^2},a_{I^2})
&\int_{\R^{(n+1)d_1}}  \omega_1\Big(\frac{|x_n^1-c_{I^1}|}{\sum_{j=1}^n|x_{n+1}^1-x_j^1|}\Big) \\
&\times\frac{1}{\Big(\sum_{j=1}^n|x_{n+1}^1-x_j^1|\Big)^{nd_1}}
\prod_{j=1}^{n+1} h^0_{I^1\dot{+}m_j}(x^1_j) \ud x^1.
\end{split}
\end{equation*}
By assumption there holds that $C(1_{CI^2},\dots,1_{CI^2},a_{I^2}) \lesssim |I^2|$
and by \eqref{eq:FarEst} the integral is dominated by $\omega_1(2^{-k}) |I^1|^{(n+1)/2}{|K^1|^n}$. 
This concludes the proof of \eqref{eq:PPBMOTest} and also finishes our treatment of $\E_\sigma \Sigma_\sigma^2$.

\subsubsection*{The full paraproduct $\Sigma_\sigma^4$}
Recall that
\begin{align*}
\Sigma_\sigma^4 = \sum_{ \substack{ R_1, \ldots, R_{n+1} \\  \ell(R_1) = \cdots = \ell(R_{n+1}) }} \langle T(1_{R_1}, \ldots, 1_{R_{n-1}}, &h_{I_n^1} \otimes 1_{I_n^2}), 1_{I_{n+1}^1} \otimes h_{I_{n+1}^2} \rangle
\prod_{j=1}^{n-1} \langle f_j \rangle_{I_n^1 \times I_{n+1}^2} \\
& \times \Big \langle f_n, h_{I_n^1} \otimes \frac{1_{I_{n+1}^2}}{|I_{n+1}^2|} \Big\rangle
\Big \langle f_{n+1}, \frac{1_{I_{n}^1}}{|I_{n}^1|} \otimes h_{I_{n+1}^2} \Big\rangle,
\end{align*}
which equals
\begin{align*}
&\sum_{R = K^1 \times K^2} \langle T(1, \ldots, 1, h_{K^1} \otimes 1), &1 \otimes h_{K^2} \rangle
\prod_{j=1}^{n-1} \langle f_j \rangle_{R}\Big \langle f_n, h_{K^1} \otimes \frac{1_{K^2}}{|K^2|} \Big\rangle
\Big \langle f_{n+1}, \frac{1_{K^1}}{|K^1|} \otimes h_{K^2} \Big\rangle.
\end{align*}
This is directly a full paraproduct as
$$
 \langle T(1, \ldots, 1, h_{K^1} \otimes 1), 1 \otimes h_{K^2} \rangle =  \langle T^{n*}_1(1, \ldots, 1), h_R \rangle,
$$
and so we are done with this term. Therefore, we are done with the main terms, and no more full paraproducts will appear.

\subsubsection*{The remainder $\operatorname{Rem}_{\sigma}$}
To finish the proof of the bi-parameter representation theorem it remains to discuss the remainder term
$\operatorname{Rem}_{\sigma}$. We recall the collections $\calI_{\sigma_i}$ from the proof of the 
one-parameter representation theorem.
An $(n+1)$-tuple $(I^i_1, \dots, I_{n+1}^i)$ of cubes $I^i_j \in \calD_{\sigma_i}$ belongs to $\calI_{\sigma_i}$ if
the following holds: 
 if $j$ is an index such that $\ell(I^i_j) \le \ell(I^i_k)$ for all $k$, then there exists at least one index $k_0 \not = j$ so that 
$\ell(I^i_j) = \ell(I^i_{k_0})$. The remainder term can be written as
\begin{equation*}
\begin{split}
\operatorname{Rem}_\sigma
&=\sum_{j_1=1}^{n+1} \sum_{\substack{I^1_1, \dots, I^1_{n+1} \\ \ell(I^1_i) >\ell(I^1_{j_1}) \text{ for } i \not= j_1}} 
\sum_{(I^2_1, \dots, I^2_{n+1}) \in \calI_{\sigma_2}} 
\langle T(\Delta_{R_1}f_1, \ldots, \Delta_{R_n}f_n),\Delta_{R_{n+1}}f_{n+1} \rangle \\
&+\sum_{j_2=1}^{n+1}\sum_{\substack{I^2_1, \dots, I^2_{n+1} \\ \ell(I^2_i) >\ell(I^2_{j_2}) \text{ for } i \not= j_2}}
\sum_{(I^1_1, \dots, I^1_{n+1}) \in \calI_{\sigma_1}} 
\langle T(\Delta_{R_1}f_1, \ldots, \Delta_{R_n}f_n),\Delta_{R_{n+1}}f_{n+1} \rangle \\
&+\sum_{\substack{(I^1_1, \dots, I^1_{n+1}) \in \calI_{\sigma_1} \\ (I^2_1, \dots, I^2_{n+1}) \in \calI_{\sigma_2}}}
\langle T(\Delta_{R_1}f_1, \ldots, \Delta_{R_n}f_n),\Delta_{R_{n+1}}f_{n+1} \rangle,
\end{split}
\end{equation*}
where as usual $R_i=I^1_i \times I^2_i$.
Let us write this as 
$$
\operatorname{Rem}_\sigma=\sum_{j_1=1}^{n+1}\operatorname{Rem}_{\sigma,j_1}^1
+\sum_{j_2=1}^{n+1}\operatorname{Rem}_{\sigma,j_2}^2+\operatorname{Rem}_{\sigma}^3.
$$ 

First, we look at the terms $\operatorname{Rem}_{\sigma,j_1}^1$ and $\operatorname{Rem}_{\sigma,j_2}^2$ which are analogous.
Consider for example $\operatorname{Rem}_{\sigma,n+1}^1$. We further divide $\calI_{\sigma_2}$ into subcollections by specifying 
the slots where the smallest cubes are. For example, we consider here the part of the sum with the tuples $(I^2_1, \dots, I^2_{n+1})$ such that
$\ell(I^2_i)>\ell(I^2_n)=\ell(I^2_{n+1})$ for all $i=1, \dots,n-1$. By collapsing the relevant sums of martingale differences
the term we are dealing with can be written as
\begin{equation}\label{eq:ExRem}
\begin{split}
\sum_{\substack{R_1, \dots, R_{n+1} \\ \ell(R_i) =\ell(R_j)}}
\langle T(E_{R_1}f_1, \dots, E_{R_{n-1}}f_{n-1}, E^1_{I^1_n} \Delta^2_{I^2_n}f_n), \Delta_{R_{n+1}}f_{n+1} \rangle.
\end{split}
\end{equation}

In the first parameter there is only one martingale difference and in the second parameter there are two (in the general case at least two). 
Thus, the strategy is that we will write this in terms of model operators that have a  modified shift or 
a paraproduct structure in the first parameter and a standard shift structure in
the second parameter.

First, we add and subtract a suitable term which splits \eqref{eq:ExRem} into the sum of
\begin{equation}\label{eq:Mod/Stand}
\begin{split}
\sum_{\substack{R_1, \dots, R_{n+1} \\ \ell(R_i) =\ell(R_j)}}
\langle T(h^0_{R_1}, \dots, h^0_{R_{n-1}},& h^{0,1}_{R_{n}}), h_{R_{n+1}} \rangle 
\Big[\prod_{j=1}^{n-1} \langle f_j, h^0_{R_j} \rangle \langle f_n, h^{0,1}_{R_n} \rangle
\langle f_{n+1}, h_{R_{n+1}} \rangle \\
&-\prod_{j=1}^{n-1} \langle f_j, h^0_{I^1_{n+1} \times I^2_j} \rangle \langle f_n, h^{0,1}_{I^1_{n+1} \times I^2_n} \rangle
\langle f_{n+1}, h_{R_{n+1}} \rangle\Big]
\end{split}
\end{equation}
and
\begin{equation}\label{eq:P/Stand}
\begin{split}
&\sum_{\substack{R_1, \dots, R_{n+1} \\ \ell(R_i) =\ell(R_j)}}
\langle T(h^0_{R_1}, \dots,  h^0_{R_{n-1}}, h^{0,1}_{R_{n}}), h_{R_{n+1}} \rangle \\
& \quad \quad \quad  \times \prod_{j=1}^{n-1} \langle f_j, h^0_{I^1_{n+1} \times I^2_j} \rangle \langle f_n, h^{0,1}_{I^1_{n+1} \times I^2_n} \rangle
\langle f_{n+1}, h_{R_{n+1}} \rangle \\
&=\sum_{\substack{I^1,I^2_1, \dots, I^2_{n+1} \\
\ell(I^2_i)=\ell(I^2_j)}}
\langle T(1\otimes h^0_{I^2_1}, \dots, 1\otimes h^0_{I_{n-1}^2}, 1\otimes h_{I^2_n}), h_{I^1\times I^2_{n+1}} \rangle \\
&\quad \quad \quad \times \prod_{j=1}^{n-1} \Big\langle f_j,\frac{1_{I^1}}{|I^1|}\otimes h^0_{I^2_j} \Big \rangle 
\Big \langle f_n, \frac{1_{I^1}}{|I^1|}\otimes h_{ I^2_n} \Big \rangle
\langle f_{n+1}, h_{R_{n+1}} \rangle.
\end{split}
\end{equation}
Let us denote the summands in \eqref{eq:Mod/Stand} by $\varphi_{R_1, \dots, R_{n+1}}$. 

In the same way as we did in \eqref{eq:Sigma1} 
the expectation $\E_\sigma$ of \eqref{eq:Mod/Stand} can be written as
\begin{equation*}
\begin{split}
2^{d}\E_\sigma & \sum_{k_1=2}^\infty
\sum_{\substack{m_1, \dots, m_{n} \in \Z^{d_1}\times \Z^{d_2} \\ \max |m^1_j| \in (2^{k_1-3}, 2^{k_1-2}] \\ 
|m^2_j| \le 1}}
\sum_{R \in \calD_{\sigma,\good}(k_1,2)} \varphi_{R \dot{+} m_1, \dots,R \dot{+} m_{n}, R} \\
&+ 2^{d}\E_\sigma  \sum_{k_1=2}^\infty \sum_{k_2=3}^\infty
\sum_{\substack{m_1, \dots, m_{n} \in \Z^{d_1}\times \Z^{d_2} \\ \max |m^i_j| \in (2^{k_i-3}, 2^{k_i-2}] }}
\sum_{R \in \calD_{\sigma,\good}(k_1,k_2)} \varphi_{R \dot{+} m_1, \dots,R \dot{+} m_{n}, R} \\
&=2^dC\E_\sigma \sum_{k_1,k_2=2}^\infty \omega_1(2^{-k_1}) \omega_2(2^{-k_2})
\langle (QS)_{k_1,(k_2, \dots, k_2)} (f_1, \dots, f_n), f_{n+1} \rangle,
\end{split}
\end{equation*}
where $(QS)_{k_1,(k_2, \dots, k_2)}$ is an $n$-linear modified/standard shift. 

We discuss how to prove the right estimate for the pairings
$$
\langle T(h^0_{R\dot{+}m_1}, \dots,  h^0_{R\dot{+}m_{n-1}}, h^{0,1}_{R\dot{+}m_{n}}), h_{R} \rangle.
$$ 
In \eqref{eq:Sigma1} we always had that $\max |m^i_j| \not=0$ for both $i=1,2$.
Here we always have that $\max |m^1_j| \not=0$ but if $k_2=2$ then we may have
$\max |m^2_j|=0$. 
In the case that $\max |m^2_j| \not=0$ the normalization  is proved similarly as  \eqref{eq:BiParNorma}.

Suppose that $\max |m^2_j|=0$ which means that we have the same cube $I^2$ (here $R=I^1\times I^2$) in every slot in the second parameter.
Then we split all the Haar functions in the second parameter similarly as we did in \eqref{eq:DiagSplit1} and \eqref{eq:DiagSplit2},
which leads to two terms. 
The estimate for the term corresponding to \eqref{eq:DiagSplit1} 
is proved similarly as the case $k_2 \sim 1$ of \eqref{eq:BiParNorma}.
With the term corresponding to \eqref{eq:DiagSplit2} one uses the partial kernel representation of $T$ and 
the size estimate
$C(1_{I^2}, \dots, 1_{I^2}) \lesssim |I^2|$ of the partial kernel constant from \eqref{eq:PKWBP}.

Then we look at \eqref{eq:P/Stand}. Similarly as we did with $\E_\sigma \Sigma^2_\sigma$ in 
\eqref{eq:ReWSigma2} we can organize the summation and add goodness to
write the right hand side of \eqref{eq:P/Stand}
in the form
\begin{equation*}
2^{d_2}C \E_\sigma \sum_{k=2}^\infty \omega_2(2^{-k})
\langle  (\pi  S)_{(k,\dots, k)}(f_1, \dots, f_n), f_{n+1} \rangle,
\end{equation*}
where $(\pi  S)_{(k,\dots, k)}$ is a standard partial paraproduct. Notice that when $k=2$ we may have that
$\max |m^2_j|=0$;  in \eqref{eq:ReWSigma2}, where the parameters were in opposite roles,  we always had that $\max |m^1_j|\not=0$.

We discuss the normalization of the coefficients of the partial paraproduct. This means a $\BMO$ estimate of the form
\eqref{eq:PPBMO}, which is equivalent to an estimate of the form \eqref{eq:PPBMOTest}. If there holds
that $\max |m^2_j| \not=0$ then the normalization is proved
in the same way as we proved \eqref{eq:PPBMOTest}. If $\max |m^2_j| =0$ then
we again split the Haar functions in the second parameter as we did in \eqref{eq:DiagSplit1} and \eqref{eq:DiagSplit2}.
The estimate in the case corresponding to \eqref{eq:DiagSplit1} is proved similarly as the case $k \sim 1$ of 
\eqref{eq:PPBMOTest}. In the case corresponding to \eqref{eq:DiagSplit2} one splits into  terms  with separation 
as in \eqref{eq:TestSplit1}
and  a local term as \eqref{eq:TestSplit2}. The terms as in \eqref{eq:TestSplit1} are 
handled using the partial kernel representation of $T$. 
The estimate for the local  term as in \eqref{eq:TestSplit2} follows from the diagonal $\BMO$ assumption
$$
| \langle T(1_R, \dots, 1_R), a_{I^1} \otimes 1_{I^2} \rangle | \lesssim | R |
$$
 from \eqref{eq:DiagBMO}.
This concludes our discussion of $\operatorname{Rem}_{\sigma, n+1}^1$.

Finally, we consider $\operatorname{Rem}_{\sigma}^3$.  This is also divided into several cases by specifying 
the places of the smallest cubes in both parameters. For example,  for notational convenience we take the part where $\ell(I^1_1)=\ell(I^1_{n+1}) < \ell(I^1_i)$ and 
$\ell(I^2_1)=\ell(I^2_{n+1}) < \ell(I^2_i)$ for all $i=2, \dots, n$. Notice that in general the places and the number of the smallest
cubes do not need to be the same in both parameters. After collapsing the relevant sums of martingale differences the term we are looking at
is
\begin{equation}\label{eq:ExRem3}
\begin{split}
\E_\sigma \sum_{\substack{R_1, \dots, R_{n+1} \\ \ell(R_i) =\ell(R_j)}}
\langle T(\Delta_{R_1}f_1, E_{R_2} f_2, \dots, E_{R_{n}}f_{n}), \Delta_{R_{n+1}}f_{n+1} \rangle.
\end{split}
\end{equation}
Here we have two (in the general case at least two) martingale differences in each parameter so this will be written in terms of standard bi-parameter $n$-linear shifts.

In the same way as we did with $\E_\sigma \Sigma^1_\sigma$ in \eqref{eq:Sigma1} 
we can rewrite  \eqref{eq:ExRem3} as
$$
2^dC\E_\sigma\sum_{k_1,k_2=2}^\infty \omega_1(2^{-k_1}) \omega_2(2^{-k_2}) 
\langle  S_{((k_1,k_2), \dots, (k_1,k_2))}(f_1, \dots, f_n),f_{n+1} \rangle,
$$ 
where $S_{((k_1,k_2), \dots, (k_1,k_2))}$ is a standard bi-parameter $n$-linear shift.
Again, if $k_i=2$ then the case $\max |m^i_j|=0$ is included.

We discuss the estimate of
$$
\langle T(h_{R \dot{+}m_1}, h_{R \dot{+}m_2}^0 \dots,h_{R \dot{+}m_n}^0 ), h_R \rangle.
$$ 
If $\max |m^i_j| \ne 0$ for at least one $i \in \{1,2\}$, then we have essentially already seen how to estimate this.
If $\max |m^i_j| =0$ for $i=1,2$ then one splits the Haar functions in both parameters, which leads to four terms.
Three of them are already familiar. The term where we have the case \eqref{eq:DiagSplit2} 
in both parameters, is estimated with the weak boundedness property \eqref{eq:2ParWBP}.
This concludes our consideration of $\operatorname{Rem}_\sigma^3$
and also finishes the proof of Theorem \ref{thm:rep2par}.
\end{proof}

We use the representation theorem, and then the boundedness properties of the model operators from above to obtain the following corollaries. 
The latter corollary also requires the representation of the modified model operators as sums of standard operators (Lemma \ref{lem:QasSBiPar}).
\begin{cor}\label{cor:Dini12nlinbipar}
Let $p_j \in (1, \infty)$, $j=1, \dots,n+1$, be such that $\sum_{j=1}^{n+1} 1/p_j=1$.
Suppose that $T$ is an $n$-linear bi-parameter $(\omega_1, \omega_2)$-CZO, where $\omega_i \in \operatorname{Dini}_{1/2}$.
Then we have the Banach range estimate
\begin{equation}\label{eq:Dini12nlinBRange}
|\langle T(f_1, \ldots, f_n), f_{n+1} \rangle|
 \lesssim \prod_{j=1}^{n+1} \|f_j\|_{L^{p_j}}.
\end{equation}

In the linear case $n=1$ we have the weighted estimate
\begin{equation}\label{eq:Dini12biparw}
\|Tf\|_{L^p(w)} \lesssim \|f\|_{L^p(w)}
\end{equation}
whenever $p \in (1,\infty)$ and $w \in A_p$ is a bi-parameter weight.
\end{cor}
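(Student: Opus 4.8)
The plan is to combine the bi-parameter representation of Theorem~\ref{thm:rep2par} with the boundedness of the model operators established above; there is essentially nothing else to do. As usual I would first prove both estimates for $f_1,\dots,f_{n+1}$ ranging over a dense subclass, say finite linear combinations of Haar functions, for which the series in Theorem~\ref{thm:rep2par} converges absolutely, and then pass to the general case by density.

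For the Banach range estimate \eqref{eq:Dini12nlinBRange} I would apply Theorem~\ref{thm:rep2par} and take absolute values, obtaining
\begin{equation*}
|\langle T(f_1,\dots,f_n), f_{n+1} \rangle| \lesssim \E_{\sigma} \sum_{k=(k_1,k_2)\in \N^2} \sum_{u=0}^{c_{d,n}} \omega_1(2^{-k_1})\omega_2(2^{-k_2}) |\langle V_{k,u,\sigma}(f_1,\dots,f_n), f_{n+1} \rangle|.
\end{equation*}
Each $V_{k,u,\sigma}$ is one of the families listed in Theorem~\ref{thm:rep2par}. For the modified operators $Q_k$, $(QS)_{k_1,(k_2,\dots,k_2)}$, $(SQ)_{(k_1,\dots,k_1),k_2}$, $(Q\pi)_{k_1}$ and $(\pi Q)_{k_2}$ I would invoke Propositions~\ref{prop:QkBRange} and~\ref{prop:QpiBRange} to get $|\langle V_{k,u,\sigma}(f_1,\dots,f_n),f_{n+1}\rangle|\lesssim \sqrt{k_1+1}\sqrt{k_2+1}\prod_{j=1}^{n+1}\|f_j\|_{L^{p_j}}$ (reading a missing complexity parameter as $0$), and for the standard shifts $S_{((k_1,k_2),\dots,(k_1,k_2))}$, the standard partial paraproducts $(S\pi)_{(k_1,\dots,k_1)}$, $(\pi S)_{(k_2,\dots,k_2)}$ and the full paraproduct $\Pi$ I would use Propositions~\ref{prop:StandShiftWeight}, \ref{prop:StandPPWeight} and~\ref{prop:FullPWeight} with $w_1=\dots=w_n=1$ to get the complexity-free bound $\lesssim \prod_{j=1}^{n+1}\|f_j\|_{L^{p_j}}$. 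Since all these bounds are uniform in $\sigma$ and $u$, the expectation and the finite $u$-sum cost only a constant, and I would finish by observing that $\sum_{k_1,k_2\ge 0}\omega_1(2^{-k_1})\omega_2(2^{-k_2})\sqrt{k_1+1}\sqrt{k_2+1}\lesssim \|\omega_1\|_{\operatorname{Dini}_{1/2}}\|\omega_2\|_{\operatorname{Dini}_{1/2}}$ by \eqref{eq:diniuse}, which is finite since $\omega_1,\omega_2\in\operatorname{Dini}_{1/2}$.

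For the linear weighted estimate \eqref{eq:Dini12biparw} ($n=1$) the argument is identical in structure, measured in $L^p(w)$. Here I would first use Remark~\ref{rem:BiParH} to take the representation of $Tf$ to involve only the generalised modified bi-parameter shifts $Q_{k_1,k_2}$, $k_1,k_2\ge 0$ (which absorb the standard shifts and the standard/modified shifts through the functions $H_{I,J}$), the generalised modified partial paraproducts $(Q\pi)_{k_1}$, $(\pi Q)_{k_2}$, and the full paraproduct $\Pi$. Taking $L^p(w)$-norms and using the triangle inequality for $\E_\sigma$ and for the series, Propositions~\ref{prop:LinQkWeight}, \ref{prop:LinQpiWeight} and~\ref{prop:FullPWeight} (the last with $n=1$) give $\|V_{k,u,\sigma}f\|_{L^p(w)}\lesssim \sqrt{k_1+1}\sqrt{k_2+1}\|f\|_{L^p(w)}$ uniformly in $\sigma,u$, and the same summation against $\omega_1(2^{-k_1})\omega_2(2^{-k_2})$ via \eqref{eq:diniuse} finishes the proof.

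I do not expect a genuine obstacle here: the work sits entirely in Theorem~\ref{thm:rep2par} and in the model operator bounds, and the proof is bookkeeping. The only point to watch is that \emph{every} model operator family must obey a bound growing at most like $\sqrt{k_1+1}\sqrt{k_2+1}$ in the complexity, so that the double series converges \emph{exactly} under $\omega_1,\omega_2\in\operatorname{Dini}_{1/2}$; in the linear weighted case this is why one routes the standard and standard/modified shifts through the $H_{I,J}$-formalism of Remark~\ref{rem:BiParH}, and it is precisely the weaker, linear-in-complexity weighted bounds for the genuinely $n$-linear modified operators --- which only become available after decomposing them into standard operators via Lemma~\ref{lem:QasSBiPar} --- that force $\operatorname{Dini}_1$ rather than $\operatorname{Dini}_{1/2}$ in the weighted $n\ge 2$ case.
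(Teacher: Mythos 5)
Your proposal matches the paper's proof: both parts invoke Theorem \ref{thm:rep2par} and then feed in exactly the same model-operator bounds -- Propositions \ref{prop:QkBRange} and \ref{prop:QpiBRange} for the modified operators, Propositions \ref{prop:StandShiftWeight}, \ref{prop:StandPPWeight}, \ref{prop:FullPWeight} with trivial weights for the standard ones, and (in the linear weighted case) Propositions \ref{prop:LinQkWeight}, \ref{prop:LinQpiWeight}, \ref{prop:FullPWeight} via the $H_{I,J}$-formalism of Remark \ref{rem:BiParH} -- before summing the double series against $\omega_1(2^{-k_1})\omega_2(2^{-k_2})$ using \eqref{eq:diniuse}. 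The only cosmetic difference is that the paper phrases the weighted $n=1$ estimate through the dual pairing $\langle Tf,g\rangle$, which is equivalent to taking $L^p(w)$-norms as you do.
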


\begin{proof}
Consider first \eqref{eq:Dini12nlinBRange}. 
By Theorem \ref{thm:rep2par} it is enough to have the corresponding estimate 
for the model operators with a bound that depends on the square roots of the complexities.
For modified shifts  this estimate is proved in Proposition \ref{prop:QkBRange} and for
modified partial paraproducts in Proposition \ref{prop:QpiBRange}.
For standard model operators we have the complexity free weighted estimates from propositions
 \ref{prop:StandShiftWeight},  \ref{prop:StandPPWeight} and \ref{prop:FullPWeight}.

Consider then the dualized form $|\langle Tf,g \rangle | \lesssim \| f \|_{L^p(w)} \| g \|_{L^{p'}(w^{1-p'})}$ of \eqref{eq:Dini12biparw}. 
The representation theorem again reduces the estimate to weighted  bounds of model operators with the square root dependence on the complexities.
For modified shifts the bounds are in Proposition \ref{prop:LinQkWeight} and for modified partial paraproducts in 
Proposition \ref{prop:LinQpiWeight}. Recall that in the linear situation this encompasses everything except the full paraproduct as the proofs are ran with the general $H$ formalism.
For the full paraproducts see Proposition \ref{prop:FullPWeight}.
\end{proof}

\begin{cor}\label{cor:Dini1nlinbipar}
Let $p_j \in (1, \infty)$ and $1/r:= \sum_{j=1}^n 1/p_j$. Let $w_j \in A_{p_j}$ be bi-parameter weights and define $w := \prod_{j=1}^n w_j^{r/p_j}$.
Suppose that $T$ is an $n$-linear bi-parameter $(\omega_1, \omega_2)$-CZO, where $\omega_i \in \operatorname{Dini}_{1}$.
Then we have
\begin{equation}\label{eq:Dini1nlinbipar}
\|T(f_1, \ldots, f_n)\|_{L^r(w)} \lesssim \prod_{j=1}^n \|f_j\|_{L^{p_j}(w_j)}.
\end{equation}
\end{cor}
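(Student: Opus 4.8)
The plan is to combine the bi-parameter representation theorem with the weighted bounds for the model operators proved above, using Lemma \ref{lem:QasSBiPar} to handle the modified operators, and then to pass to the quasi-Banach range by multilinear extrapolation. First I would apply Theorem \ref{thm:rep2par}, which expresses, for each dyadic system,
\[
\langle T(f_1,\ldots,f_n), f_{n+1}\rangle = C\,\E_{\sigma} \sum_{k=(k_1,k_2)\in\N^2} \sum_{u=0}^{c_{d,n}} \omega_1(2^{-k_1})\omega_2(2^{-k_2})\, \langle V_{k,u,\sigma}(f_1,\ldots,f_n), f_{n+1}\rangle,
\]
where each $V_{k,u,\sigma}$ is one of the nine model operators listed in Theorem \ref{thm:rep2par}, acting in the grid $\calD_{\sigma}$, and $k_i=0$ whenever $V_{k,u,\sigma}$ is independent of that parameter. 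It then suffices to bound every $V_{k,u,\sigma}$ in the weighted norm $L^{p_1}(w_1)\times\cdots\times L^{p_n}(w_n)\to L^r(w)$, uniformly in $\sigma$, with an operator constant growing at most polynomially in $(k_1,k_2)$: in the Banach range $r\ge 1$ the triangle inequality in $L^r(w)$ and Minkowski's inequality applied to $\E_{\sigma}$ then reduce \eqref{eq:Dini1nlinbipar} to the numerical convergence of the series in the complexities.

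For the standard shifts $S_{((k_1,k_2),\ldots,(k_1,k_2))}$, the standard partial paraproducts $(S\pi)_{(k_1,\ldots,k_1)}$, $(\pi S)_{(k_2,\ldots,k_2)}$, and the full paraproduct $\Pi$, Propositions \ref{prop:StandShiftWeight}, \ref{prop:StandPPWeight}, and \ref{prop:FullPWeight} give precisely the required bound for the class $w_j\in A_{p_j}$, $w=\prod_{j}w_j^{r/p_j}$, and \emph{with no dependence on the complexity}. For the modified operators $Q_k$, $(QS)_{k_1,(k_2,\ldots,k_2)}$, $(SQ)_{(k_1,\ldots,k_1),k_2}$, $(Q\pi)_{k_1}$, and $(\pi Q)_{k_2}$ I would invoke Lemma \ref{lem:QasSBiPar} (and its analogues for the hybrid operators): each of these is a sum of $\lesssim (k_1+1)(k_2+1)$ (respectively $\lesssim k_i+1$) standard operators whose complexities in each parameter are bounded by $k_m$. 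Composing this decomposition with the complexity-free weighted bounds for the standard operators yields weighted bounds for the modified operators with a \emph{linear} dependence on the complexities, i.e. $\lesssim (k_1+1)(k_2+1)$ and $\lesssim k_i+1$.

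Substituting these estimates and summing, still under $\sum_j 1/p_j\le 1$, gives
\[
\|T(f_1,\ldots,f_n)\|_{L^r(w)} \lesssim \Big(\sum_{k_1\ge 0}\omega_1(2^{-k_1})(k_1+1)\Big)\Big(\sum_{k_2\ge 0}\omega_2(2^{-k_2})(k_2+1)\Big)\prod_{j=1}^n\|f_j\|_{L^{p_j}(w_j)},
\]
and both factors are finite exactly because $\omega_i\in\operatorname{Dini}_1$, by \eqref{eq:diniuse}; this proves \eqref{eq:Dini1nlinbipar} for every tuple $(p_1,\ldots,p_n)$ with $r\ge 1$. Finally, to cover the quasi-Banach range $r<1$, where one can no longer sum the representation series directly in $L^r(w)$, I would apply multilinear extrapolation (see e.g. \cite{GM, LMO, LMMOV, Nieraeth}) starting from the Banach-range weighted estimate just obtained, which delivers \eqref{eq:Dini1nlinbipar} for the full range of exponents.

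The main obstacle is not any individual step but the $\operatorname{Dini}_1$ threshold: routing the modified bi-parameter operators through Lemma \ref{lem:QasSBiPar} costs a factor of order $k_1k_2$, and it is precisely this loss that forces $\operatorname{Dini}_1$ in place of the optimal $\operatorname{Dini}_{1/2}$. Obtaining the corollary with $\operatorname{Dini}_{1/2}$ would require direct weighted bounds for the modified multilinear bi-parameter operators with only a square-root dependence on the complexity — this is achieved above only in the linear case (Proposition \ref{prop:LinQkWeight}) and, unweighted, in the Banach range (Proposition \ref{prop:QkBRange}), and remains out of reach in the genuinely multilinear weighted setting.
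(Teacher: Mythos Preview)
Your proposal is correct and follows essentially the same route as the paper: apply the representation theorem, use Lemma \ref{lem:QasSBiPar} to write each modified operator as a sum of $\lesssim (k_1+1)(k_2+1)$ standard model operators, invoke the complexity-free weighted bounds of Propositions \ref{prop:StandShiftWeight}, \ref{prop:StandPPWeight}, \ref{prop:FullPWeight}, and then sum via \eqref{eq:diniuse} using $\omega_i \in \operatorname{Dini}_1$. The only cosmetic difference is that the paper dualizes first (noting that $w^{1-r'}$ need not lie in $A_{r'}$, which is harmless for this trivial use of duality), whereas you phrase things directly in the $L^r(w)$ norm and then invoke multilinear extrapolation explicitly for the quasi-Banach range; the paper leaves that last step implicit, having already noted earlier that the weighted Banach-range estimate extrapolates to the full range.
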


\begin{proof}
The estimate \eqref{eq:Dini1nlinbipar} is equivalent with the estimate
$$
|\langle T(f_1, \ldots, f_n), f_{n+1} \rangle|
 \lesssim \prod_{j=1}^{n} \|f_j\|_{L^{p_j}(w_j)} \| f_{n+1} \|_{L^{r'}(w^{1-r'})}.
$$
Notice that here $w^{1-r'}$ is not necessarily in $A_{r'}$, but that for this trivial usage of duality (to be able to apply the representation theorem) it does not matter.
Because of the representation theorem it suffices to have the estimate 
\begin{equation}\label{eq:ModDini1W}
|\langle V_{k,u,\sigma}(f_1, \ldots, f_n), f_{n+1} \rangle|
 \lesssim (k_1+1)(k_2+1)\prod_{j=1}^{n} \|f_j\|_{L^{p_j}(w_j)} \| f_{n+1} \|_{L^{r'}(w^{1-r'})},
\end{equation}
where $V_{k,u,\sigma}$ is an arbitrary model operator appearing in the representation theorem.

From Lemma \ref{lem:QasSBiPar} we know that each modified model operator is the sum of at most $ck_1k_2$
standard model operators. Furthermore, from propositions \ref{prop:StandShiftWeight},  \ref{prop:StandPPWeight} and \ref{prop:FullPWeight}
we know that standard model operators satisfy complexity free  weighted estimates. Thus, \eqref{eq:ModDini1W} follows and the corollary is proved.
\end{proof}

\section{Commutator estimates}
The basic form of a commutator is $[b,T]\colon f \mapsto bTf - T(bf)$. We are interested in various iterated versions in the multi-parameter setting and with mild kernel regularity.

For a bi-parameter weight $w \in A_2(\R^{d_1} \times \R^{d_2})$ and a locally integrable function $b$ we define
the weighted product $\BMO$ norm
\begin{equation}\label{eq:eq4}
\|b\|_{\BMO_{\textup{prod}}(w)} = \sup_{\calD} \sup_{\Omega}\Bigg(\frac{1}{w(\Omega)}\sum_{\substack{R\in \calD\\ R\subset \Omega}}  \frac {|\langle b, h_R\rangle|^2}{\bla w \bra_R}\Bigg)^{\frac 12},
\end{equation}
where the supremum is over all dyadic grids $\calD^i$ on $\R^{d_i}$ and $\calD = \calD^1 \times \calD^2$, and over all open sets
$\Omega \subset \R^d := \R^{d_1} \times \R^{d_2}$ for which $0 < w(\Omega) < \infty$. The following theorem, which is the two-weight Bloom version of \cite{DaO}, was proved in
\cite{LMV:Bloom2} with $\omega_i(t) = t^{\gamma_i}$.
\begin{thm}\label{thm:comv1}
Suppose that $T_i$ is an $\omega_i$-CZO, where $\omega_i \in \operatorname{Dini}_{3/2}$.
Let $b \colon \R^d \to \C$, $p \in (1, \infty)$, $\mu, \lambda \in A_p(\R^d)$ be bi-parameter weights and $\nu = \mu^{1/p} \lambda^{-1/p} \in A_2(\R^d)$ be the
associated bi-parameter Bloom weight. Then we have
$$
\| [T_1, [T_2, b]] \|_{L^p(\mu) \to L^p(\lambda)} \lesssim \|b\|_{\BMO_{\textup{prod}}(\nu)}.
$$
\end{thm}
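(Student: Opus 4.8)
The plan is to reduce the double commutator, by applying the one-parameter representation theorem in each of the two parameters, to commutator estimates for dyadic model operators with explicit control of the complexity growth, and then to sum the resulting series against the two Dini conditions; this follows the scheme of \cite{LMV:Bloom2} but with the modified shifts in place of the standard ones.

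Since $T_i$ acts on $\R^{d_i}$ we lift it to an operator on $\R^d=\R^{d_1}\times\R^{d_2}$ acting in the $i$th variable; the two lifts act on different variables, hence commute, and
\[
[T_1,[T_2,b]]f=T_1T_2(bf)-T_1(bT_2f)-T_2(bT_1f)+bT_1T_2f .
\]
Applying Theorem~\ref{thm:rep1par} (linear case $n=1$) to each $T_i$ in its own variable, we obtain
\[
\langle[T_1,[T_2,b]]f,g\rangle=C_{T_1}C_{T_2}\,\E_{\sigma_1}\E_{\sigma_2}\sum_{k_1,k_2\ge0}\omega_1(2^{-k_1})\omega_2(2^{-k_2})\langle[V^1_{k_1},[V^2_{k_2},b]]f,g\rangle,
\]
where, for $i=1,2$, $V^i_{k_i}=V^i_{k_i,\sigma_i}$ is a model operator in the random grid $\calD^i_{\sigma_i}$ on $\R^{d_i}$ acting only in the $i$th variable, of one of the types of Theorem~\ref{thm:rep1par}: a modified shift $Q_{k_i}$, a standard shift $S_{k_i,k_i}$, or (only when $k_i=0$) a paraproduct $\pi$. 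It is convenient to use the $H_{I,J}$ formalism of \eqref{eq:Q_kFORM1}--\eqref{eq:Q_kFORM2}, which absorbs the standard shifts into the modified ones, so that essentially only the cases $V^i_{k_i}=Q_{k_i}$ and $V^i_{k_i}=\pi$ need attention.

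The theorem then follows once we establish, uniformly over the random grids, the model operator estimate
\[
\|[V^1_{k_1},[V^2_{k_2},b]]\|_{L^p(\mu)\to L^p(\lambda)}\lesssim(k_1+1)^{3/2}(k_2+1)^{3/2}\|b\|_{\BMO_{\textup{prod}}(\nu)},
\]
since summing over $k_1,k_2$ and invoking \eqref{eq:diniuse} with $\alpha=3/2$ then yields $\|[T_1,[T_2,b]]\|_{L^p(\mu)\to L^p(\lambda)}\lesssim\|\omega_1\|_{\operatorname{Dini}_{3/2}}\|\omega_2\|_{\operatorname{Dini}_{3/2}}\|b\|_{\BMO_{\textup{prod}}(\nu)}$. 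To prove the model operator estimate I would expand the two commutators one parameter at a time. Working in the second variable first: for each top cube $K^2$ occurring in the defining sum of $Q_{k_2}$ one writes $b=\langle b\rangle^2_{K^2}+(b-\langle b\rangle^2_{K^2})$ in the $\R^{d_2}$ variable; since the operator lives at scales between $2^{-k_2}\ell(K^2)$ and $\ell(K^2)$, only a bounded number of paraproduct-type pieces survives, and — exactly as in the boundedness proof of $Q_{k_2}$ — the identity \eqref{eq:gAndH} together with Lemma~\ref{lem:PEst} produces a $\sqrt{k_2+1}$, while the commutator itself introduces the further $O(k_2+1)$ loss familiar from the standard-shift case, for a total of $(k_2+1)^{3/2}$; the same treatment of the outer commutator in the first variable contributes $(k_1+1)^{3/2}$. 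After both expansions one is left with a bi-parameter paraproduct-type operator acting on $f$ and a sum over dyadic rectangles $R$ of $|\langle b,h_R\rangle|$ times weighted dyadic averages; collecting these via the bi-parameter $H^1$--$\BMO_{\textup{prod}}$ duality \eqref{eq:BiParH1BMO}, in its weighted form \eqref{eq:genH1BMO}, extracts exactly $\|b\|_{\BMO_{\textup{prod}}(\nu)}$, and the passage from $L^p(\mu)$ to $L^p(\lambda)$ is absorbed through the Bloom weight $\nu=\mu^{1/p}\lambda^{-1/p}$ using the weighted maximal and (lower) square function estimates of Lemma~\ref{lem:standardEst}. The cases $V^i=\pi$ are handled in the same way with simpler complexity accounting, and the diagonal remainder terms of the representation are treated analogously.

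The main obstacle I expect is the bookkeeping in the double commutator expansion: each of $[Q_{k_1},\cdot]$ and $[Q_{k_2},\cdot]$ splits into several structurally different pieces according to whether a martingale block of $b$ sits at the top cube, at one of the small cubes, or strictly in between (the last forcing the use of $P_{K,k}$ and Lemma~\ref{lem:PEst}), and one must verify that in each of the $O(1)$ combined pieces the complexity cost is at most $(k_1+1)^{3/2}(k_2+1)^{3/2}$, that the position of the cancellative Haar functions lets one apply the square function and lower square function estimates in the correct parameter, and — most delicately — that after $H^1$--$\BMO$ duality the $b$-dependence collapses to the \emph{product} BMO norm rather than merely to little $\bmo$.
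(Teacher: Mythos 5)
Your high-level strategy matches the paper's: apply Theorem~\ref{thm:rep1par} in each parameter to $T_1,T_2$, reduce to a double commutator of one-parameter dyadic model operators, prove a bound of the shape $(k_1+1)^{3/2}(k_2+1)^{3/2}\|b\|_{\BMO_{\mathrm{prod}}(\nu)}$ for each model-operator pair, and sum against $\omega_1,\omega_2\in\operatorname{Dini}_{3/2}$ using \eqref{eq:diniuse}. The final target estimate $(k_1+1)^{3/2}(k_2+1)^{3/2}$ is also correct.

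Where your plan diverges and has a genuine gap is the model-operator step. You propose to estimate $[Q_{k_1},[Q_{k_2},b]]$ directly using \eqref{eq:gAndH}, Lemma~\ref{lem:PEst} and $H^1$--$\BMO$ duality, attributing a $\sqrt{k_i+1}$ to the $P_{K,k}$ mechanism and a further $O(k_i+1)$ to ``the commutator loss familiar from the standard-shift case.'' This accounting does not correspond to any established estimate, and the paper explicitly says it seems non-trivial to exploit the $Q_k$ structure directly in a commutator; that is why the authors instead pass through Lemma~\ref{lem:CompModStand}. Concretely, their route is: (i) decompose $Q_{k}$ into $\sim k$ standard shifts $S_{j,k}$, $j<k$; (ii) prove for each pair of standard shifts the key \emph{new} estimate \eqref{eq:eq3},
$\|[S_{u_1,v_1},[S_{u_2,v_2},b]]\|_{L^p(\mu)\to L^p(\lambda)}\lesssim(1+\max(u_1,v_1))^{1/2}(1+\max(u_2,v_2))^{1/2}\|b\|_{\BMO_{\mathrm{prod}}(\nu)}$,
which improves the previously known $(1+\max(u_1,v_1))(1+\max(u_2,v_2))$ from \cite{LMV:Bloom2} by running the weighted $H^1$--$\BMO$ duality \eqref{eq:wei1ParH1BMO} on the \emph{entire} cross-scale sum $\sum_{J\subsetneq L\subset K}$ at once rather than scale by scale; (iii) sum $\sim k_i$ shift commutators, each costing $\sqrt{k_i}$, to obtain $k_i^{3/2}$. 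The square-root save of step (ii) is the heart of the theorem and is entirely absent from your proposal. Note also that the factor $\sqrt{k+1}$ from $P_{K,k}$ is already absorbed inside the $L^p$-boundedness of $Q_k$; the commutator cost for a shift is not a multiplicative factor on top of boundedness (standard shifts are complexity-free bounded, yet $[S_{i,j},b]$ costs $(\max(i,j))^{1/2}$), so the arithmetic $\sqrt{k}\times k$ you write is not how the exponent arises. Incidentally, the remark at the end of the paper's proof points out that if one could run a genuinely direct $Q_k$ analysis as you suggest, the expected cost would be $\alpha=1$, not $3/2$; the fact that your direct attack ends with $3/2$ is a symptom that the decomposition you sketched is not actually the right structural analysis of $[Q_k,b]$. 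Finally, the diagonal-vs.-off-diagonal expansion of $b$ against $Q_k$ is genuinely more delicate than for $S_{i,j}$ because the output functions are the broad-supported $H_{I,J}$, not Haar functions; the clean $A_1,A_2,A_3$ paraproduct splitting used in the paper's one-parameter model argument does not carry over verbatim, and you would need to spell out how those paraproducts interact with $H_{I,J}$. All of this needs to be supplied before the proposal becomes a proof.
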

\begin{proof}
Let $\|b\|_{\BMO_{\textup{prod}}(\nu)} = 1$.
By Theorem \ref{thm:rep1par} we need to e.g. bound $\| [Q_{k_1}, [Q_{k_2}, b]]f\|_{L^p(\lambda)}$.
It seems non-trivial to fully exploit the operators $Q_{k}$ here and we content on using
Lemma \ref{lem:CompModStand} to reduce to bounding
$$
\sum_{j_1 = 0}^{k_1} \sum_{j_2 = 0}^{k_2} \| [S_{k_1, j_1}, [S_{k_2, j_2}, b]]f\|_{L^p(\lambda)}
$$
and other similar terms, where $S_{k_i, j_i}$ is a linear one-parameter shift on $\R^{d_i}$ of complexity $(k_i, j_i)$.
 Reaching $\operatorname{Dini}_{1}$ would
require replacing this step with a sharper estimate.

On page 11 of \cite{LMV:Bloom2} it is recorded that
$$
\| [S_{u_1, v_1}, [S_{u_2, v_2}, b]]f\|_{L^p(\lambda)} \lesssim (1+\max(u_1, v_1))(1+\max(u_2, v_2)) \|f\|_{L^p(\mu)}.
$$
Interestingly, this part of the argument can be improved: there actually holds that
\begin{equation}\label{eq:eq3}
\|  [S_{u_1, v_1}, [S_{u_2, v_2}, b]]f\|_{L^p(\lambda)} \lesssim (1+\max(u_1, v_1))^{1/2}(1+\max(u_2, v_2))^{1/2} \|f\|_{L^p(\mu)}.
\end{equation}
We will get back to this after completing the proof.
Therefore, we have
$$
\sum_{j_1 = 0}^{k_1} \sum_{j_2 = 0}^{k_2} \| [S_{k_1, j_1}, [S_{k_2, j_2}, b]]f\|_{L^p(\lambda)} \lesssim (1+k_1)^{3/2} (1+k_2)^{3/2} \|f\|_{L^p(\mu)}.
$$
Handling the other terms of the shift expansion of $[Q_{k_1}, [Q_{k_2}, b]]$  similarly, we get
$$
\| [Q_{k_1}, [Q_{k_2}, b]]f\|_{L^p(\lambda)} \lesssim (1+k_1)^{3/2} (1+k_2)^{3/2} \|f\|_{L^p(\mu)}.
$$
Controlling commutators like $[Q_{k_1}, [\pi, b]]$ similarly we get the claim.

We return to \eqref{eq:eq3} now. Decompositions are very involved in the bi-commutator case, and we prefer
to give the idea of the improvement \eqref{eq:eq3} by studying the simpler one-parameter situation $[b, S_{i,j}]$, where
$$
S_{i,j} = \sum_{K} \sum_{I^{(i)} = J^{(j)} = K} a_{IJK} \langle f, h_I \rangle h_J
$$
is a one-parameter shift on $\R^d$ and $b \in \BMO(\nu)$;
\begin{align*}
\|b\|_{\BMO(\nu)} &:= \sup_{I \subset \R^d \textup{ cube}} \frac{1}{\nu(I)} \int_I |b - \langle b \rangle_I| 
\sim \sup_{\calD} \sup_{I_0 \in \calD}\Bigg(\frac{1}{\nu(I_0)}\sum_{\substack{I \in \calD\\ I \subset I_0}}  \frac {|\langle b, h_I\rangle|^2}{\bla \nu \bra_I}\Bigg)^{\frac 12}
 < \infty.
\end{align*}
Here we only have use for the expression on the right-hand side, which is the analogue of the bi-parameter definition \eqref{eq:eq4}.
However, it is customary to define things as on the left-hand side in this one-parameter situation. 
The equivalence follows from the weighted John-Nirenberg \cite{MW}
$$
\sup_{I \subset \R^d \textup{ cube}} \frac{1}{\nu(I)} \int_I |b - \langle b \rangle_I| \sim \sup_{I \subset \R^d \textup{ cube}} \Big( \frac{1}{\nu(I)} \int_I |b - \langle b \rangle_I|^2 \nu^{-1} \Big)^{1/2}, \qquad \nu \in A_2.
$$

Of course, one-parameter commutators
$[b,T]$ can be handled even with $\operatorname{Dini}_{0}$, but e.g. sparse domination proofs \cite{LOR1, LOR2} are restricted to one-parameter, unlike these decompositions.
To get started, we define the one-parameter paraproducts (with some implicit dyadic grid)
$$
A_1(b,f) = \sum_{I} \Delta_{I} b \Delta_{I} f, \, \,
A_2(b,f) = \sum_{I} \Delta_{I} b E_{I} f \,\, \textup{ and } \,\, A_3(b, f) = \sum_{I} E_{I} b \Delta_{I} f.
$$
By writing $b = \sum_{I} \Delta_{I} b$ and $f = \sum_{J} \Delta_{J} f$, and collapsing sums such as
$1_I \sum_{J \colon I \subsetneq J} \Delta_{J} f = E_{I} f$,
we formally have
$$
bf = \sum_{I} \Delta_{I} b \Delta_{I} f 
 + \sum_{I \subsetneq J} \Delta_{I} b \Delta_{J} f + \sum_{J \subsetneq I } \Delta_{I} b \Delta_{J} f
= \sum_{k=1}^3 A_k(b,f).
$$

We now decompose the commutator as follows
\begin{align*}
[b, S_{i,j}]f &= b S_{i,j} f - S_{i,j}(bf) \\
&= \sum_{k=1}^2  A_k(b,S_{i,j} f) - \sum_{k=1}^2 S_{i,j}(A_k(b,f))
+ [A_3(b,S_{i,j} f) - S_{i,j}(A_3(b,f))].
\end{align*}
We have the well-known fact that $\|A_k(b, f)\|_{L^p(\lambda)} \lesssim \|b\|_{\BMO(\nu)} \|f\|_{L^p(\mu)}$ for $k=1,2$ -- this can be seen by using the weighted $H^1$-$\BMO$ duality \cite{Wu}
(with $a_I = \langle b, h_I\rangle$)
\begin{equation}\label{eq:wei1ParH1BMO}
\sum_I |a_I| |b_I| \lesssim \|(a_I)\|_{\BMO(\nu)} \Big\| \Big( \sum_I |b_I|^2 \frac{1_I}{|I|} \Big)^{1/2} \Big \|_{L^1(\nu)},
\end{equation}
where
$$
\|(a_I)\|_{\BMO(\nu)} = \sup_{I_0 \in \calD}\Bigg(\frac{1}{\nu(I_0)}\sum_{\substack{I \in \calD\\ I \subset I_0}}  \frac {|a_I|^2}{\bla \nu \bra_I}\Bigg)^{\frac 12}.
$$
Combining this with the well-known estimate $\|S_{i,j} f\|_{L^p(w)} \lesssim \|f\|_{L^p(w)}$ for all $w \in A_p$ it follows that
$$
\Big\| \sum_{k=1}^2  A_k(b,S_{i,j} f) - \sum_{k=1}^2 S_{i,j}(A_k(b,f)) \Big\|_{L^p(\lambda)} \lesssim \|b\|_{\BMO(\nu)} \|f\|_{L^p(\mu)}.
$$

The complexity dependence is coming from the remaining term
$$
A_3(b,S_{i,j} f) - S_{i,j}(A_3(b,f)) = \sum_{K} \sum_{I^{(i)} = J^{(j)} = K} [\langle b \rangle_J - \langle b \rangle_I] a_{IJK} \langle f, h_I \rangle h_J.
$$
There are many ways to bound this, but the following way based on the $H^1$-$\BMO$ duality -- and executed in the particular way that we do below --
gives the best dependence that we are aware of:
$$
\| A_3(b,S_{i,j} f) - S_{i,j}(A_3(b,f))\|_{L^p(\lambda)} \lesssim (1+\max(i,j))^{1/2} \|b\|_{\BMO(\nu)} \|f\|_{L^p(\mu)}.
$$
We write
$$
\langle b \rangle_J - \langle b \rangle_I = [\langle b \rangle_J - \langle b \rangle_K] - [\langle b \rangle_I - \langle b \rangle_K],
$$
where we further write
$$
\langle b \rangle_J - \langle b \rangle_K = \sum_{J \subsetneq L \subset K} \langle \Delta_L b \rangle_J = 
\sum_{J \subsetneq L \subset K} \langle b, h_L \rangle \langle h_L \rangle_J,
$$
and similarly for $\langle b \rangle_I - \langle b \rangle_K$. We dualize and e.g. look at
\begin{align*}
\sum_{K} & \sum_{I^{(i)} = J^{(j)} = K} \sum_{J \subsetneq L \subset K} |\langle b, h_L \rangle| \langle |h_L| \rangle_J |a_{IJK}| |\langle f, h_I \rangle| | \langle g, h_J\rangle| \\
&= \sum_{K} \sum_{\substack{L \subset K \\ \ell(L) > 2^{-j}\ell(K)}} |\langle b, h_L \rangle| |L|^{-1/2} \sum_{\substack{ I^{(i)} = J^{(j)} = K \\ J \subset L}}  |a_{IJK}| |\langle f, h_I \rangle| | \langle g, h_J\rangle| \\
&\lesssim \|b\|_{\BMO(\nu)} \sum_{K} \int \Big(  \sum_{\substack{L \subset K \\ \ell(L) > 2^{-j}\ell(K)}} \frac{1_L}{|L|^2} \Big[ \sum_{\substack{ I^{(i)} = J^{(j)} = K \\ J \subset L}}  |a_{IJK}| |\langle f, h_I \rangle| | \langle g, h_J\rangle| \Big]^2 \Big)^{1/2} \nu,
\end{align*}
where we used the weighted $H^1$-$\BMO$ duality.
Here
$$
\sum_{\substack{ I^{(i)} = J^{(j)} = K \\ J \subset L}}  |a_{IJK}| |\langle f, h_I \rangle| | \langle g, h_J\rangle| \le \frac{1}{|K|} \int_K |\Delta_{K, i} f| \int_L |\Delta_{K,j}g|,
$$
and we can bound
\begin{align*}
\sum_{K} \int \Big(  \sum_{\substack{L \subset K \\ \ell(L) > 2^{-j}\ell(K)}} &1_L \langle  |\Delta_{K, i} f| \rangle_K^2  \langle  |\Delta_{K, j} g| \rangle_L^2 \Big)^{1/2} \nu \\
&\le j^{1/2} \sum_{K} \int (M \Delta_{K, i} f) (M \Delta_{K, j} g) \nu \\
&\le j^{1/2} \Big\| \Big( \sum_K |M \Delta_{K, i} f|^2 \Big)^{1/2} \Big\|_{L^p(\mu)} \Big\| \Big( \sum_K |M \Delta_{K, j} g|^2 \Big)^{1/2} \Big\|_{L^{p'}(\lambda^{1-p'})} \\
&\lesssim j^{1/2} \|f\|_{L^p(\mu)} \|g\|_{L^{p'}(\lambda^{1-p'})}.
\end{align*}
We are done with the one-parameter case -- the desired bi-parameter case can now be done completely similarly by tweaking the proof in \cite{LMV:Bloom2} using the above idea.
\end{proof}
\begin{rem}
The previous way to use the $H^1$-$\BMO$ duality was to look at
$$
 \sum_{K} \sum_{\substack{L \subset K \\ \ell(L) = 2^{-l}\ell(K)}} |\langle b, h_L \rangle| |L|^{-1/2} \sum_{\substack{ I^{(i)} = J^{(j)} = K \\ J \subset L}}  |a_{IJK}| |\langle f, h_I \rangle| | \langle g, h_J\rangle|,
$$
where $l = 0, \ldots, j-1$ is fixed, and to apply the $H^1$-$\BMO$ duality to the whole $K, L$ summation. With $l$ fixed this yields a uniform estimate, and there
is also a curious 'extra' cancellation present -- we can even bound
$$
\sum_{\substack{ I^{(i)} = J^{(j)} = K \\ J \subset L}}  |a_{IJK}| |\langle f, h_I \rangle| | \langle g, h_J\rangle| \le \frac{1}{|K|} \int_K |\Delta_{K, i} f| \int_L |g|,
$$
that is, forget the $\Delta_{K,j}$ from $g$. Then it remains to sum over $l$ which yields the dependence $j$ instead of $j^{1/2}$. The way in our proof above
is more efficient and we see that we utilize all of the cancellation as well.
\end{rem}
\begin{rem}
An interesting question is can we have $\alpha = 1$ instead of $\alpha = 3/2$
by somehow more carefully exploiting the operators $Q_k$ -- this would appear to be the optimal result theoretically obtainable by the current methods.

We also note that it is certainly possible to handle higher order commutators, such as, $[T_1, [T_2, [b, T_3]]]$.
\end{rem}
We will continue with more multi-parameter commutator estimates -- the difference to the above is that now even the singular integrals
are allowed to be multi-parameter.

For a weight $w$ on $\R^d := \R^{d_1} \times \R^{d_2}$ we say that a locally integrable function $b \colon \R^d \to \C$ belongs to the weighted little BMO space $\bmo_{}(w)$ if
$$
\|b\|_{\bmo(w)} := \sup_{R} \frac{1}{w(R)} \int_R |b - \langle b \rangle_R| < \infty,
$$
where the supremum is over rectangles $R=I^1 \times I^2 \subset \R^d$. If $w=1$ we denote
the unweighted little $\BMO$ space by $\bmo$.
There holds that
\begin{equation}\label{eq:BMOfixedvar}
\| b \|_{\bmo(w)} \sim \max \big( \esssup_{x_1 \in \R^{d_1}} \| b(x_1, \cdot) \|_{\BMO(w(x_1, \cdot))}, 
\esssup_{x_2 \in \R^{d_2}} \| b(\cdot, x_2) \|_{\BMO(w(\cdot, x_2))} \big),
\end{equation}
see \cite{HPW}. Here $\BMO(w(x_1, \cdot))$ and $\BMO(w(\cdot, x_2))$ are the one-parameter weighted $\BMO$ spaces. For example,
$$
\| b(x_1, \cdot) \|_{\BMO(w(x_1, \cdot))} 
:=\sup_{I^2} \frac{1}{ w(x_1, \cdot) (I^2)} \int_{I^2} | b(x_1, y_2)-\ave{b(x_1, \cdot)}_{I^2}| \ud y_2,
$$
where the supremum is over cubes $I^2 \subset \R^{d_2}$.

We first record the following one-weight estimates. If $U$ is an $n$-linear operator, $k \in \{1, \ldots, n\}$ and $b \colon \R^d \to \C$, we formally define the commutator
$$
[b,U]_k(f_1, \ldots, f_n) = bU(f_1, \ldots, f_n) - U(f_1, \ldots, f_{k-1}, bf_k, f_{k+1}, \ldots, f_n).
$$
\begin{thm}\label{thm:CauchyTrick}
Let $n, m \in \{1, 2, \ldots\}$, $k_i \in \{1, \ldots, n\}$, $i = 1, \ldots, m$, and $b_1, \ldots, b_m \in \bmo$.
Let $p_j \in (1, \infty)$ and $1/r:= \sum_{j=1}^n 1/p_j$.
Suppose that $U$ is an $n$-linear operator satisfying for some increasing function $C$ that
$$
\|U(f_1, \ldots, f_n)\|_{L^r(w)} \lesssim \prod_{j=1}^n C([w_j]_{A_{p_j}}) \prod_{j=1}^n \|f_j\|_{L^{p_j}(w_j)}
$$
for all bi-parameter weights $w_j \in A_{p_j}$, $j=1, \dots, n$, and $w :=\prod_{j=1}^n w_j^{r/p_j}$.
Then for all weights like above we have
$$
\| [b_m,\cdots[b_2, [b_1, U]_{k_1}]_{k_2}\cdots]_{k_m}(f_1, \ldots, f_n)\|_{L^r(w)} \lesssim \prod_{i=1}^m\|b_i\|_{\bmo}  \prod_{j=1}^n \|f_j\|_{L^{p_j}(w_j)}.
$$

In particular, suppose that $T$ is an $n$-linear bi-parameter $(\omega_1, \omega_2)$-CZO. Then, the following holds.
\begin{enumerate}
\item If $n=1$ and $\omega_i \in \operatorname{Dini}_{1/2}$, we have
$$
\| [b_m,\cdots[b_2, [b_1, T]]\cdots]f\|_{L^p(w)} \lesssim \prod_{i=1}^m\|b_i\|_{\bmo} \cdot \|f\|_{L^p(w)}
$$
for all $p \in (1,\infty)$ and all bi-parameter $A_p$ weights $w$.
\item If $n \ge 2$ and $\omega_i \in \operatorname{Dini}_{1}$, we have
$$
\| [b_m,\cdots[b_2, [b_1, T]_{k_1}]_{k_2}\cdots]_{k_m}(f_1, \ldots, f_n)\|_{L^r(w)} \lesssim \prod_{i=1}^m\|b_i\|_{\bmo}  \prod_{j=1}^n \|f_j\|_{L^{p_j}(w_j)}
$$
for all $p_j \in (1, \infty)$ and $1/r:= \sum_{j=1}^n 1/p_j$, and for all bi-parameter weights $w_j \in A_{p_j}$, $j=1, \dots, n$, and $w :=\prod_{j=1}^n w_j^{r/p_j}$.
\end{enumerate}
\end{thm}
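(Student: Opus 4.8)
The plan is to use the exponential conjugation / Cauchy integral method of Coifman--Rochberg--Weiss, iterated and adapted to the bi-parameter $n$-linear setting. Since the iterated commutator is multilinear in each symbol $b_i$ separately, we may normalise $\|b_i\|_{\bmo}=1$ for all $i$, and it then suffices to establish the bound with right-hand side $\prod_{j=1}^n\|f_j\|_{L^{p_j}(w_j)}$; the general case follows by pulling out $\prod_i\|b_i\|_{\bmo}$. For $z=(z_1,\ldots,z_m)\in\C^m$ define the $n$-linear operator
\[
U_z(f_1,\ldots,f_n):=e^{z_1b_1+\cdots+z_mb_m}\,U(g_1^z,\ldots,g_n^z),\qquad g_j^z:=\exp\Big(-\!\!\sum_{i\,:\,k_i=j}\!\!z_ib_i\Big)f_j .
\]
A straightforward induction on $m$, differentiating in one variable at a time and using that each intermediate operator $[b_\ell,\cdots[b_1,U]_{k_1}\cdots]_{k_\ell}$ is again $n$-linear, shows that $\partial_{z_1}\cdots\partial_{z_m}U_z\big|_{z=0}=[b_m,\cdots[b_2,[b_1,U]_{k_1}]_{k_2}\cdots]_{k_m}$, the conjugation factors in a common slot simply adding. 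Hence, by the several-variable Cauchy integral formula, for any radii $\varepsilon_1,\ldots,\varepsilon_m>0$,
\[
[b_m,\cdots[b_1,U]_{k_1}\cdots]_{k_m}(f_1,\ldots,f_n)=\frac{1}{(2\pi i)^m}\int_{|z_1|=\varepsilon_1}\!\!\cdots\!\int_{|z_m|=\varepsilon_m}\frac{U_z(f_1,\ldots,f_n)}{z_1^2\cdots z_m^2}\,dz_1\cdots dz_m ,
\]
so that $\|[b_m,\cdots]_{k_m}(f_1,\ldots,f_n)\|_{L^r(w)}\lesssim (\varepsilon_1\cdots\varepsilon_m)^{-2}\sup_{|z_i|=\varepsilon_i}\|U_z(f_1,\ldots,f_n)\|_{L^r(w)}$.

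The core step is a uniform weighted bound for $U_z$. Since $|e^{\sum z_ib_i}|=e^{\sum(\Re z_i)b_i}$, we have $\|U_z(f_1,\ldots,f_n)\|_{L^r(w)}=\|U(g_1^z,\ldots,g_n^z)\|_{L^r(we^{r\sum(\Re z_i)b_i})}$. Setting $\tilde w_j:=w_j\exp\big(p_j\sum_{i:k_i=j}(\Re z_i)b_i\big)$, one computes $\prod_{j=1}^n\tilde w_j^{r/p_j}=w\,e^{r\sum_i(\Re z_i)b_i}$, while $|g_j^z|^{p_j}\exp\big(p_j\sum_{i:k_i=j}(\Re z_i)b_i\big)=1$ pointwise, so $\|g_j^z\|_{L^{p_j}(\tilde w_j)}=\|f_j\|_{L^{p_j}(w_j)}$. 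Applying the hypothesis with the weights $\tilde w_j$ (legitimate once $\tilde w_j\in A_{p_j}$) gives
\[
\|U_z(f_1,\ldots,f_n)\|_{L^r(w)}\lesssim\prod_{j=1}^nC\big([\tilde w_j]_{A_{p_j}}\big)\prod_{j=1}^n\|f_j\|_{L^{p_j}(w_j)}.
\]
It remains to fix the radii. The exponent $\phi_j:=p_j\sum_{i:k_i=j}(\Re z_i)b_i$ has bi-parameter $\bmo$ norm $\le p_jm\max_i\varepsilon_i$; by the fixed-variable characterisations of bi-parameter $A_p$ and $\bmo$ (see the discussion around \eqref{eq:BMOfixedvar}) together with the standard one-parameter fact that $v\in A_p$ and $\psi\in\BMO$ with $\|\psi\|_{\BMO}$ small enough (depending on $[v]_{A_p}$, $p$, $d$) force $ve^{\psi}\in A_p$ with $[ve^{\psi}]_{A_p}\le 2[v]_{A_p}$, we may take $\varepsilon_1=\cdots=\varepsilon_m=:\varepsilon$ small, depending only on $m$, $d$, the $p_j$ and the $[w_j]_{A_{p_j}}$, so that $\tilde w_j\in A_{p_j}$ with $[\tilde w_j]_{A_{p_j}}\le 2[w_j]_{A_{p_j}}$ whenever $|z_i|=\varepsilon$. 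Combining the displays (and undoing the normalisation $\|b_i\|_{\bmo}=1$) yields the asserted estimate, with an implicit constant of the form $\varepsilon^{-2m}\prod_jC\big(2[w_j]_{A_{p_j}}\big)$.

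For the particular cases we feed the earlier weighted boundedness of bi-parameter CZOs into this scheme. In case (1) the linear weighted estimate \eqref{eq:Dini12biparw} of Corollary \ref{cor:Dini12nlinbipar} gives $\|Tf\|_{L^p(w)}\lesssim\|f\|_{L^p(w)}$ for all $w\in A_p$; in case (2) the estimate \eqref{eq:Dini1nlinbipar} of Corollary \ref{cor:Dini1nlinbipar} gives $\|T(f_1,\ldots,f_n)\|_{L^r(w)}\lesssim\prod_j\|f_j\|_{L^{p_j}(w_j)}$. In both cases the implicit constants depend in an increasing way on the weight characteristics --- this is visible from the proofs, e.g.\ from the weighted model operator estimates of Propositions \ref{prop:StandShiftWeight}, \ref{prop:StandPPWeight} and \ref{prop:FullPWeight}, whose constants come from the weighted square function and Fefferman--Stein inequalities and are increasing in $[w_j]_{A_{p_j}}$ --- so the hypothesis of the first part applies with a suitable increasing $C$, and the two displayed commutator bounds follow. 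I expect the main bookkeeping burden to be the combinatorics of $U_z$ when several of the slots $k_i$ coincide (so the exponential factors in that slot genuinely combine) and the inductive verification that $\partial_{z_1}\cdots\partial_{z_m}U_z|_{0}$ reproduces the iterated commutator; the quantitative stability of $A_p$ under multiplication by $e^\psi$ with $\psi$ of small $\bmo$ norm is classical, but must be transferred to the bi-parameter setting through the fixed-variable characterisations.
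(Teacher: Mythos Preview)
Your approach is essentially the paper's: the Coifman--Rochberg--Weiss exponential conjugation combined with the bi-parameter stability of $A_p$ under multiplication by $e^{\psi}$ with $\psi$ of small $\bmo$ norm, the latter reduced to the one-parameter statement via the fixed-variable characterisations. The only organisational difference is that the paper first iterates to reduce to $m=1$ and then runs a single-variable Cauchy integral, whereas you handle all $m$ commutators at once with a several-variable Cauchy integral; both are valid and lead to the same estimate.

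One small technical point you should add: when $n\ge 2$ the target exponent $r$ can satisfy $r<1$, and then $L^r(w)$ is only quasi-Banach, so bringing the $L^r(w)$-norm inside the contour integral via Minkowski is not immediate. The paper handles this by first invoking multilinear extrapolation to reduce to the Banach range $r>1$; you should insert the same reduction. (Also, a harmless slip: the contour integral produces a factor $(\varepsilon_1\cdots\varepsilon_m)^{-1}$, not $(\varepsilon_1\cdots\varepsilon_m)^{-2}$.)
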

\begin{proof}
By \eqref{eq:Dini12biparw} and \eqref{eq:Dini1nlinbipar} the second part of the theorem concerning CZOs follows from the abstract claim.
On the other hand, by iteration, it is clearly enough to prove the case $m=1$ of the abstract claim. Then we may, for notational convenience, agree that $k_1 = 1$ and
denote $b_1 = b$. Lastly, by extrapolation we may assume that $r > 1$.

The claim follows from the weighted estimate by the well-known Cauchy trick \cite{CRW} for commutators. Here it is key that this
is a weighted estimate, even if we were just interested in the unweighted estimate for the commutator. Equally important is that the $\BMO$ space $\bmo$
is a simple one here -- it behaves like the one-parameter $\BMO$.

We give the details for the reader's convenience. 
For $z \in \C$ define the operator
$$
F(z)f = \exp(bz) U( \exp(-bz)f_1, f_2, \ldots, f_n).
$$
Next, we write
$$
[b, U]_1 =  F'(0) 
=  C_0 \oint F(z)  \frac{ \ud z}{z^2},
$$
where the integral is over some closed path around the origin. Of course, here we used the
Cauchy integral formula. It follows that, for any $\delta > 0$, we have
\begin{equation*}
\begin{split}
&\| [b, U]_{1}\|_{\prod_{j = 1}^n L^{p_j}(w_j) \to L^r(w)} \\ 
&\lesssim  \oint_{|z| = \delta}  \| F(z) \|_{\prod_{j = 1}^n L^{p_j}(w_j) \to L^r(w)}
\frac{ |\ud z|}{|z|^2} \\
&\le \oint_{|z| = \delta} \|U\|_{L^{p_1}( \exp( p_1\Re(bz))w_1) \times \prod_{j = 2}^n L^{p_j}(w_j) \to L^r( \exp(r \Re(b z))w)}
\frac{ |\ud z|}{\delta^2} \\
&\lesssim  \prod_{j = 2}^n C([w_j]_{A_{p_j}} )  \oint_{|z| = \delta}  C( [ \exp( p_1 \Re( b z))w_1 ]_{A_{p_1}})  
\frac{ |\ud z|}{\delta^2},
\end{split}
\end{equation*}
where we used the weighted bound with the weights $ \exp( p_1 \Re( b z))w_1, w_2, \ldots, w_n$ and
with $ \exp(r \Re(b z))w =  (\exp( p_1 \Re( b z))w_1)^{r/p_1} \prod_{j=2}^n w_j^{r/p_j}$.

Now, it remains to choose the radius $\delta$ intelligently.
This is based on the following standard fact. There is an $\epsilon$ (depending only on $p$ and $d$) so that
\begin{equation}\label{eq:eq1}
[e^{\Re(bz)} \omega]_{A_p} \le C([\omega]_{A_p})
\end{equation}
whenever $z \in \C$ satisfies
$$
|z| \le \frac{\epsilon}{(\omega)_{A_p} \|b\|_{\bmo}}, \qquad (\omega)_{A_p} := \max( [\omega]_{A_p}, [\omega^{1-p'}]_{A_{p'}} ) = [\omega]_{A_p}^{\max(1, p'-1)}.
$$
This follows from Lemma 2.1 of \cite{Hy3}, which is the above statement with the usual $\BMO$ and one-parameter weights $\omega$, and the right-hand side of
\eqref{eq:eq1} can even be replaced with $C_{p, d} [\omega]_{A_p}$. Indeed, simply notice that
\begin{align*}
[e^{\Re(bz)}&\omega]_{A_p} \\ &
\lesssim \max\big( \esssup_{x_1 \in \R^{d_1}} \,[e^{\Re(b(x_1, \cdot) z)}\omega(x_1, \cdot)]_{A_p(\R^{d_2})}, \esssup_{x_2 \in \R^{d_2}}\, [e^{\Re(b(\cdot, x_2)z)}\omega(\cdot, x_2)]_{A_p(\R^{d_1})} \big)^{\gamma},
\end{align*}
where e.g. by Lemma 2.1 of \cite{Hy3} we have
\begin{align*}
[e^{\Re(b(x_1, \cdot) z)}\omega(x_1, \cdot)]_{A_p(\R^{d_2})} \lesssim [\omega(x_1, \cdot)]_{A_p(\R^{d_2})} \le [\omega]_{A_p}
\end{align*}
whenever
$$
|z| \le \frac{\epsilon}{(\omega(x_1, \cdot))_{A_p(\R^{d_2})} \|b(x_1, \cdot)\|_{\BMO(\R^{d_2})}}.
$$
It remains to notice that
$$
(\omega)_{A_p} \|b\|_{\bmo} \gtrsim (\omega(x_1, \cdot))_{A_p(\R^{d_2})} \|b(x_1, \cdot)\|_{\BMO(\R^{d_2})}.
$$

Keeping the above in mind with $\omega = w_1$, for a suitable $\epsilon$ we set
$$
\delta = \frac{\epsilon}{(w_1)_{A_{p_1}} \|b\|_{\bmo}}.
$$
We now get that
\begin{align*}
\| [b, U]_{1}\|_{\prod_{j = 1}^n L^{p_j}(w_j) \to L^r(w)} &\lesssim \prod_{j = 1}^n C([w_j]_{A_{p_j}}) \frac{1}{\delta} 
\lesssim C([w_j]_{A_{p_j}}) \|b\|_{\bmo}.
\end{align*}

\end{proof}
We return to the linear theory. The following theorem was proved in \cite{LMV:Bloom} with $\omega_i(t) = t^{\gamma_i}$.
The first order case $[b,T]$ appeared before in \cite{HPW}. This two-weight Bloom case requires
a proof based on the analysis of the commutators of model operators, and this requires a higher $\alpha$ in the $\operatorname{Dini}_{\alpha}$ than what
is required in Theorem \ref{thm:CauchyTrick}, which is based on the Cauchy trick.
See also \cite{LMV:Bloom2} for the optimality of the space $\bmo(\nu^{1/m})$ in the case $b_1 = \cdots = b_m = b$.

\begin{thm}\label{thm:comv2}
Let $p \in (1,\infty)$, $\mu, \lambda \in A_p$ be bi-parameter weights and $\nu := \mu^{1/p}\lambda^{-1/p}$.
Suppose that $T$ is a bi-parameter $(\omega_1, \omega_2)$-CZO and $m \in \N$.
Then we have
$$
\| [b_m,\cdots[b_2, [b_1, T]]\cdots]\|_{L^p(\mu) \to L^p(\lambda)} \lesssim \prod_{i=1}^m\|b_i\|_{\bmo(\nu^{1/m})}
$$
if one of the following conditions holds:
\begin{enumerate}
\item $T$ is paraproduct free and $\omega_i \in \operatorname{Dini}_{m/2+1}$;
\item $m=1$ and $\omega_i \in \operatorname{Dini}_{3/2}$;
\item $\omega_i \in \operatorname{Dini}_{m+1}$.
\end{enumerate}
\end{thm}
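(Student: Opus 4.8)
The plan is to deduce Theorem~\ref{thm:comv2} from the bi-parameter representation Theorem~\ref{thm:rep2par} by analysing the iterated commutators of the model operators, in the spirit of \cite{LMV:Bloom}, but run through the \emph{modified} model operators $Q_k,(QS),(SQ),(Q\pi),(\pi Q)$ and with the more efficient use of weighted $H^1$-$\BMO$ duality isolated in the proof of Theorem~\ref{thm:comv1}. Normalise $\prod_{i=1}^m\|b_i\|_{\bmo(\nu^{1/m})}=1$. Since $U\mapsto[b_m,\cdots[b_1,U]\cdots]$ is linear in $U$, Theorem~\ref{thm:rep2par} reduces the claim to showing, for each model operator $V_{k,u,\sigma}$ occurring there,
$$
\|[b_m,\cdots[b_1,V_{k,u,\sigma}]\cdots]\|_{L^p(\mu)\to L^p(\lambda)}\lesssim(1+k_1)^{\alpha}(1+k_2)^{\alpha},
$$
after which summing against $\omega_1(2^{-k_1})\omega_2(2^{-k_2})$ and invoking \eqref{eq:diniuse} gives the theorem under $\omega_i\in\operatorname{Dini}_{\alpha}$. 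Next, by Lemma~\ref{lem:QasSBiPar} every modified shift $Q_k$ is a sum of $\lesssim k_1k_2$ standard shifts of complexity at most $(k_1,k_2)$, and likewise for $(QS),(SQ),(Q\pi),(\pi Q)$ with the relevant single complexity; so, again by linearity of the commutator, it suffices to prove the displayed bound for \emph{standard} linear bi-parameter model operators $S_i$, $(S\pi)_i$, $(\pi S)_i$, $\Pi$, with $(1+k_1)^{\alpha}(1+k_2)^{\alpha}$ replaced by $(1+k_1)^{\alpha-1}(1+k_2)^{\alpha-1}$, the extra power in each variable absorbing the $\lesssim k_1k_2$ (resp.\ $\lesssim k$) loss from Lemma~\ref{lem:QasSBiPar}. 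Under the paraproduct-free hypothesis of (1) only the shift-type operators $S_i$, $(QS)$, $(SQ)$ occur in Theorem~\ref{thm:rep2par}, which is ultimately what lets (1) beat (3).

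For a standard model operator I would use the classical paraproduct splitting of the commutator. Writing, in each parameter, $bf=A_1(b,f)+A_2(b,f)+A_3(b,f)$ with $A_1,A_2,A_3$ the one-parameter dyadic paraproducts from the proof of Theorem~\ref{thm:comv1}, and expanding the $m$-fold commutator, one gets a finite sum of terms of two types. The terms involving at least one $A_1(b_i,\cdot)$ or $A_2(b_i,\cdot)$ factor carry no complexity: they are controlled by the complexity-free weighted bounds for standard model operators (Propositions~\ref{prop:StandShiftWeight}, \ref{prop:StandPPWeight}, \ref{prop:FullPWeight}) together with the weighted one- and bi-parameter $H^1$-$\BMO$ inequalities \eqref{eq:wei1ParH1BMO}, \eqref{eq:BiParH1BMO}, \eqref{eq:genH1BMO}, after absorbing the $b_i$ by Hölder against the $A_\infty$ weight $\nu$. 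The single remaining term is the ``all $A_3$'' one: the model operator with its coefficient multiplied by $\prod_{i=1}^m[\langle b_i\rangle_{J}-\langle b_i\rangle_{I}]$, the difference being telescoped along the chain $I\subset\cdots\subset K$ dictated by the shift structure. For this term one writes $\langle b_i\rangle_J-\langle b_i\rangle_I=[\langle b_i\rangle_J-\langle b_i\rangle_K]-[\langle b_i\rangle_I-\langle b_i\rangle_K]$, expands each bracket as a martingale sum over the intermediate cubes, and applies weighted $H^1$-$\BMO$ once per index $i$, arranged exactly as in the improved computation \eqref{eq:eq3} of Theorem~\ref{thm:comv1}. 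For shift operators each such application costs only a factor $(1+\text{complexity})^{1/2}$, giving $(1+k_1)^{m/2}(1+k_2)^{m/2}$ and hence $\alpha=m/2+1$; for the partial and full paraproducts the paraproduct parameter forces the cruder $(1+\text{complexity})^1$ per index once $m\ge 2$, giving the $\alpha=m+1$ of (3), whereas for $m=1$ the efficient version still applies to all model operators, producing the $\alpha=3/2$ of (2). The $m$ separate weighted $H^1$-$\BMO$ estimates, carried out with the weight $\nu^{1/m}$ (note $(\nu^{1/m})^m=\nu\in A_2$), are finally recombined into one $L^1(\nu)$-estimate by the $A_\infty$ reverse-Hölder inequality, exactly as in \cite{LMV:Bloom, LMV:Bloom2, HLW}.

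I expect the main obstacle to be twofold. First, the $m$-fold commutator expansion in two parameters produces many terms, and one must organise them so as to isolate cleanly the single complexity-bearing ``all $A_3$'' piece and to check that every other piece is complexity-free; this bookkeeping is the genuinely technical part, essentially that of \cite{LMV:Bloom} but now for the modified operators and with two parameters interacting. Second, and more delicate, is verifying that the square-root saving of Theorem~\ref{thm:comv1} genuinely \emph{compounds} under iteration for shift-type model operators --- so that $m$ layers cost $(1+k_i)^{m/2}$ rather than $(1+k_i)^{m}$ --- and, when $m=1$, that the partial paraproducts $(S\pi)_i$, $(\pi S)_i$ and the full paraproduct $\Pi$ also admit the $(1+k)^{1/2}$ (resp.\ complexity-free) estimate; the failure of this compounding for paraproducts when $m\ge 2$ is precisely what makes the paraproduct-free hypothesis necessary in (1). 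A last, routine point is the Bloom weight arithmetic already flagged above: merging the $m$ averages of $\nu^{1/m}$ into $\langle\nu\rangle$-type quantities, which is standard $A_\infty$ technology. Once these are in place, assembling the three cases is just a matter of reading off the exponent $\alpha\in\{m/2+1,\ 3/2,\ m+1\}$ in the displayed reduction.
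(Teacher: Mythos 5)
Your plan lines up with the paper's proof in all essentials: representation theorem plus linearity of $U\mapsto[b_m,\cdots[b_1,U]\cdots]$ to reduce to model operators, Lemma~\ref{lem:QasSBiPar} to pass to standard operators at a cost of $(1+k_1)(1+k_2)$ (resp.\ $(1+k)$), and then the improved weighted $H^1$-$\BMO$ duality argument from the proof of Theorem~\ref{thm:comv1} applied to the complexity-bearing piece of the commutator of a standard shift to earn a $(1+\text{complexity})^{1/2}$ factor per $b_i$. Two small points of divergence are worth noting. First, for case (3) the paper does not redo a paraproduct splitting; it simply quotes the complexity bounds for commutators of standard model operators from \cite{LMV:Bloom} and sums the resulting $(1+k_1)(1+k_2)(1+\max(k_1,k_2))^m$ series, which is more economical than re-running the $A_1,A_2,A_3$ expansion. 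Second, your assertion that ``the paraproduct parameter forces the cruder $(1+\text{complexity})^1$ per index once $m\ge 2$'' overstates what the paper establishes: the paper's stated reason for restricting to paraproduct-free $T$ in (1) is not a demonstrated obstruction but that \cite{LMV:Bloom} is not built on the $H^1$-$\BMO$ strategy, and the authors were only able to import the square-root save for shifts, not for partial or full paraproducts. Finally, both you and the paper leave the compounding of the square-root save for general $m$ in case (1) at the level of ``handle higher orders similarly'': the paper works out $m=2$ explicitly (including the additional four-way decomposition of each $\langle b_i\rangle_{R_2}-\langle b_i\rangle_{R_1}$ and the two representative terms $U^{b_1,b_2}_{3,4}$, $U^{b_1,b_2}_{1,1}$) and then asserts the general case, which is the same honest gap you flag.
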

\begin{proof}
The proof is similar in spirit to that of Theorem \ref{thm:comv1}. We use Lemma \ref{lem:QasSBiPar} and estimates for the commutators of the usual bi-parameter
model operators. If we use the bounds from \cite{LMV:Bloom} directly, we e.g. immediately get
\begin{equation}\label{eq:IterBloomQk}
\begin{split}
\| [b_m,\cdots[b_2,& [b_1, Q_{k_1, k_2}]]\cdots]\|_{L^p(\mu) \to L^p(\lambda)} \\ 
& \lesssim (1+k_1)(1+k_2)(1+\max(k_1, k_2))^{m} \prod_{i=1}^m\|b_i\|_{\bmo(\nu^{1/m})}.
\end{split}
\end{equation}
Similarly, we can read an estimate for all the other model operators from \cite{LMV:Bloom}. This gives us the result under the higher regularity assumption (3).
Indeed, when using the estimate \eqref{eq:IterBloomQk} in connection with the representation theorem one ends up with the series
\begin{equation*}
\begin{split}
\sum_{k_1=0}^\infty \sum_{k_2=0}^\infty \omega_1(2^{-k_1})\omega_2(2^{-k_2}) (1+k_1)(1+k_2)(1+\max(k_1, k_2))^{m}.
\end{split}
\end{equation*}
We split this into two according to whether $k_1 \le k_2$ or $k_1>k_2$ and, for example, there holds that
\begin{equation*}
\begin{split}
\sum_{k_1=0}^\infty \omega_1(2^{-k_1})(1+k_1)\sum_{k_2=k_1}^\infty \omega_2(2^{-k_2})(1+k_2)^{m+1}
& \lesssim \sum_{k_1=0}^\infty \omega_1(2^{-k_1})(1+k_1)\| \omega_2 \|_{\operatorname{Dini}_{m+1}} \\
& \lesssim \| \omega_1 \|_{\operatorname{Dini}_{1}}\| \omega_2 \|_{\operatorname{Dini}_{m+1}}.
\end{split}
\end{equation*}

The first order case $m=1$ with the desired regularity (assumption (2)) follows as the papers \cite{EA, ALMV, HPW} dealing with commutators of the form $[T_1, [T_2, \ldots [b, T_k]]]$, where each $T_k$ can be multi-parameter, include the proof of the first order case with the $H^1$-$\BMO$ duality strategy. And this strategy can be improved to give the additional square root save as in Theorem \ref{thm:comv1}.

For $m \ge 2$ the new square root save becomes tricky. 
The paper \cite{LMV:Bloom} is not at all
based on the $H^1$-$\BMO$ duality strategy on which this save is based on (see the proof of Theorem \ref{thm:comv1}).
We can improve the strategy of \cite{LMV:Bloom} for shifts. Thus,
we are able to make the square root save for paraproduct free $T$ (assumption (1)).
By this we mean that (both partial and full) paraproducts in the dyadic representation of $T$ vanish, which could also be stated in terms of (both partial and full) ``$T1=0$'' type conditions.
The reader can think of convolution form SIOs.

We start considering $[b_2, [b_1, S_i]]$, where $i=(i_1,i_2)$, $i_j=(i_j^1,i_j^2)$ and $S_i$ is a standard bi-parameter shift of complexity $i$. 
The reductions in pages 23 and 24 of \cite{LMV:Bloom} (Section 5.1) give that we only need to bound the key term
\begin{align*}
\langle U^{b_1, b_2}f, g \rangle :=
\sum_{K} \sum_{\substack{ R_1,R_2 \\  R_j^{(i_j)}=K}}&
a_{K,R_1,R_2}   [\langle b_1 \rangle_{R_2} - \langle b_1 \rangle_{R_1}]
[\langle b_2 \rangle_{R_2} - \langle b_2 \rangle_{R_1}]
\langle f, h_{R_1} \rangle \langle g, h_{R_2} \rangle,
\end{align*}
where as usual $K=K^1 \times K^2$ and $R_j=I^1_j \times I^2_j$.

We write
\begin{equation*}
\begin{split}
\langle b_i \rangle_{R_2} - \langle b_i \rangle_{R_1} &=
[\langle b_i \rangle_{R_2} - \langle b_i \rangle_{K^1 \times I^2_2}]
+ [\langle b_i \rangle_{K^1 \times I^2_2} - \langle b_i \rangle_{K}] \\
&+ [\langle b_i \rangle_{K} -  \langle b_i \rangle_{K^1 \times I_1^2}] +
[\langle b_i \rangle_{K^1 \times I_1^2} - \langle b_i \rangle_{R_1}].
\end{split}
\end{equation*}
This splits $U^{b_1, b_2}$ into $16$ different terms $U^{b_1, b_2}_{m_1, m_2}$, where
$m_i \in \{1, \ldots, 4\}$ tells which one of the above terms we have for $b_i$.
These can be handled quite similarly, but there are some variations in the arguments.
We will handle two representative ones. 

We begin by looking at the term
\begin{align*}
\langle U^{b_1, b_2}_{3,4} f, g \rangle :=
\sum_{K} \sum_{\substack{ R_1,R_2 \\  R_j^{(i_j)}=K}}&
a_{K,R_1,R_2}   [\langle b_1 \rangle_{K^1 \times I_1^2}-\langle b_1 \rangle_{K}]
[\langle b_2 \rangle_{R_1}-\langle b_2 \rangle_{K^1 \times I_1^2}]
\langle f, h_{R_1} \rangle \langle g, h_{R_2} \rangle.
\end{align*}
Write
\begin{equation}\label{eq:split}
\begin{split}
\langle b_1 \rangle_{K^1 \times I_1^2}-\langle b_1 \rangle_{K} &= \sum_{I_1^2 \subsetneq L^2 \subset K^2} \langle \Delta_{L^2} \langle b_1 \rangle_{K^1, 1} \rangle_{I_1^2} \\
&= \sum_{I_1^2 \subsetneq L^2 \subset K^2} \Big\langle b_1, \frac{1_{K^1}}{|K^1|} \otimes h_{L^2} \Big \rangle \langle h_{L^2} \rangle_{I_1^2}
\end{split}
\end{equation}
and
$$
\langle b_2 \rangle_{R_1}-\langle b_2 \rangle_{K^1 \times I_1^2} = \sum_{I_1^1 \subsetneq L^1 \subset K^1} \langle \Delta_{L^1} \langle b_2 \rangle_{I_1^2, 2} \rangle_{I_1^1} = 
\sum_{I_1^1 \subsetneq L^1 \subset K^1} \Big\langle b_2, h_{L^1} \otimes \frac{1_{I_1^2}}{|I_1^2|} \Big \rangle \langle h_{L^1} \rangle_{I_1^1}.
$$
Writing $\big\langle b_1, \frac{1_{K^1}}{|K^1|} \otimes h_{L^2} \big \rangle = \int_{\R^{d_1}} \langle b_1, h_{L^2} \rangle_2 \frac{1_{K^1}}{|K^1|}$ and similarly for
$\big\langle b_2, h_{L^1} \otimes \frac{1_{I_1^2}}{|I_1^2|} \big \rangle$ we arrive at
\begin{align*}
\int_{\R^d} \sum_{K} & \sum_{\substack{L=L^1 \times L^2 \subset K \\ \ell(L^j) > 2^{-i_1^j}\ell(K^j)}}  
|\langle b_1, h_{L^2} \rangle_2| |L^2|^{-1/2} |\langle b_2, h_{L^1} \rangle_1| |L^1|^{-1/2} \\
&\sum_{\substack{ R_1^{(i_1)}=R_2^{(i_2)}=K \\ R_1 \subset L}} 
| a_{K,R_1,R_2} \langle f, h_{R_1} \rangle \langle g, h_{R_2} \rangle| \frac{1_{K^1}}{|K^1|} \frac{1_{I_1^2}}{|I_1^2|}.
\end{align*}
The last line can be dominated by
$$
 |L^1| \langle M^2 \Delta_{K,i_1} f \rangle_{L^1,1} \langle  |\Delta_{K, i_2}  g|  \rangle_{K} \frac{1_{K^1}}{|K^1|} 1_{L^2}.
$$

We have now reached the term
\begin{align*}
\int_{\R^d} \sum_{K} \langle  |\Delta_{K, i_2}  g| & \rangle_{K} \frac{1_{K^1}}{|K^1|} 
\sum_{\substack{L^2 \subset K^2 \\ \ell(L^2) > 2^{-i_1^2}\ell(K^2)}} |\langle b_1, h_{L^2} \rangle_2| |L^2|^{-1/2} 1_{L^2} \\
&\sum_{\substack{L^1 \subset K^1 \\ \ell(L^1) > 2^{-i_1^1}\ell(K^1)}} |\langle b_2, h_{L^1} \rangle_1| |L^1|^{1/2}  \langle M^2 \Delta_{K, i_1} f \rangle_{L^1,1}. 
\end{align*}
Recall that with fixed $x_2$ we have $b(\cdot, x_2) \in \BMO(\nu^{1/2}(\cdot,x_2))$, see \eqref{eq:BMOfixedvar}. 
By weighted $H^1$-$\BMO$ duality we now have that
\begin{align*}
&\sum_{\substack{L^1 \subset K^1 \\ \ell(L^1) > 2^{-i_1^1}\ell(K^1)}} |\langle b_2, h_{L^1} \rangle_1(x_2) | |L^1|^{1/2}  
\langle M^2 \Delta_{K, i_1} f \rangle_{L^1,1}(x_2) \\
&\lesssim \|b_2\|_{\bmo(\nu^{1/2})} \int_{\R^{d_1}} \Big( \sum_{\substack{L^1 \subset K^1 \\ \ell(L^1) > 2^{-i_1^1}\ell(K^1)}} 1_{L^1}(y_1) (\langle M^2 \Delta_{K, i_1}  f \rangle_{L^1,1}(x_2))^2 \Big)^{1/2} \nu^{1/2}(y_1, x_2) \ud y_1 \\
&\le (i_1^1)^{1/2}  \|b_2\|_{\bmo(\nu^{1/2})} |K^1| \langle M^1 M^2 \Delta_{K, i_1}  f \cdot \nu^{1/2} \rangle_{K^1,1}(x_2).
\end{align*}
The term $(i_1^1)^{1/2}  \|b_2\|_{\bmo(\nu^{1/2})}$ is fine and we do not drag it along in the following estimates. We are left with the task of bounding
\begin{align*}
\int_{\R^d} \sum_{K} \langle  |\Delta_{K, i_2}  g|  \rangle_{K}1_{K^1}
\sum_{\substack{L^2 \subset K^2 \\ \ell(L^2) > 2^{-i_1^2}\ell(K^2)}}& |\langle b_1, h_{L^2} \rangle_2| |L^2|^{-1/2} 1_{L^2} \\
& M^1( M^1 M^2 \Delta_{K, i_1} f \cdot \nu^{1/2}).
\end{align*}
We now put the $\int_{\R^{d_2}}$ inside and get the term
$$
\int_{\R^{d_2}} 1_{L^2} M^1( M^1 M^2 \Delta_{K, i_1}  f \cdot \nu^{1/2}) 
= |L^2| \langle M^1( M^1 M^2 \Delta_{K, i_1}  f \cdot \nu^{1/2}) \rangle_{L^2, 2}.
$$
Then, we are left with
\begin{align*}
\int_{\R^{d_1}} \sum_{K} \langle  |\Delta_{K, i_2}  g|  \rangle_{K}1_{K^1}
\sum_{\substack{L^2 \subset K^2 \\ \ell(L^2) > 2^{-i_1^2}\ell(K^2)}}& |\langle b_1, h_{L^2} \rangle_2| |L^2|^{1/2}  \\
&\langle M^1( M^1 M^2 \Delta_{K, i_1}  f \cdot \nu^{1/2}) \rangle_{L^2, 2}.
\end{align*}

By weighted $H^1$-$\BMO$ duality we have analogously as above that
\begin{align*}
\sum_{\substack{L^2 \subset K^2 \\ \ell(L^2) > 2^{-i_1^2}\ell(K^2)}}& |\langle b_1, h_{L^2} \rangle_2| |L^2|^{1/2}  
\langle M^1( M^1 M^2 \Delta_{K, i_1}  f \cdot \nu^{1/2}) \rangle_{L^2, 2}  \\
&\lesssim  (i_1^2)^{1/2}  \|b_1\|_{\bmo(\nu^{1/2})} \int_{\R^{d_2}} 1_{K^2}  M^2 M^1( M^1 M^2 \Delta_{K, i_1} f \cdot \nu^{1/2}) \nu^{1/2}.
\end{align*}
Forgetting the factor $(i_1^2)^{1/2}  \|b_1\|_{\bmo(\nu^{1/2})}$, which is as desired, we are then left with
\begin{align*}
\int_{\R^{d}} \sum_{K}& \langle  |\Delta_{K, i_2}  g|  \rangle_{K}1_{K}  
M^2 M^1( M^1 M^2 \Delta_{K, i_1}  f \cdot \nu^{1/2}) \nu^{1/2} \\ 
&\le \int_{\R^{d}} \sum_{K} M^2 M^1( M^1 M^2 \Delta_{K, i_1}  f \cdot \nu^{1/2}) \cdot M^1 M^2 \Delta_{K, i_2} g \cdot \nu^{1/2}.
\end{align*}
Writing $\nu^{\frac{1}{2}} = \mu^{\frac{1}{2p}} \lambda^{\frac{1}{2p}} \cdot \lambda^{-\frac{1}{p}}$ we bound this with
$$
\Big\| \Big( \sum_{K} [M^2 M^1( M^1 M^2 \Delta_{K, i_1} f \cdot \nu^{1/2})]^2 \Big)^{1/2} \Big\|_{L^p(\mu^{1/2}\lambda^{1/2})} 
$$
multiplied by
$$
\Big\| \Big( \sum_{K} [M^1 M^2 \Delta_{K, i_2} g]^2 \Big)^{1/2} \Big\|_{L^{p'}(\lambda^{1-p'})}.
$$
It remains to use square function bounds together with the Fefferman--Stein inequality. For the more complicated term with the function $f$
the key thing to notice is that first $\mu^{1/2}\lambda^{1/2} \in A_p$ and then that
$\nu^{p/2} \mu^{1/2}\lambda^{1/2} = \mu$. We have controlled $\langle U^{b_1, b_2}_{3,4} f, g \rangle$. 

The bound for $\langle U^{b_1, b_2}f, g \rangle$ follows by handling the other similar terms $U^{b_1, b_2}_{m_1, m_2}$. 
There is a slight variation in the argument needed, for example, in the following term
\begin{align*}
\langle U^{b_1, b_2}_{1,1} f, g \rangle :=
\sum_{K} \sum_{\substack{ R_1,R_2 \\  R_j^{(i_j)}=K}}&
a_{K,R_1,R_2}   [\langle b_1 \rangle_{R_2}-\langle b_1 \rangle_{K^1 \times I^2_2}]
[\langle b_2 \rangle_{R_2}-\langle b_2 \rangle_{K^1 \times I^2_2}]
\langle f, h_{R_1} \rangle \langle g, h_{R_2} \rangle.
\end{align*}
We expand the differences of averages as 
\begin{equation*}
\begin{split}
 [\langle b_1 \rangle_{R_2}-&\langle b_1 \rangle_{K^1 \times I^2_2}]
 [\langle b_2 \rangle_{R_2}-\langle b_2 \rangle_{K^1 \times I^2_2}] \\
& = \sum_{I^1_2 \subsetneq U^1 \subset K^1}\sum_{I^1_2 \subsetneq V^1 \subset K^1}
\Big\langle b_1, h_{U^1} \otimes \frac{1_{I^2_2}}{|I^2_2|} \Big\rangle \langle h_{U^1} \rangle_{I^1_2}
\Big\langle b_2, h_{V^1} \otimes \frac{1_{I^2_2}}{|I^2_2|} \Big\rangle \langle h_{V^1} \rangle_{I^1_2}.
\end{split}
\end{equation*}
The key difference to the above term $U^{b_1, b_2}_{3,4}$ is that 
we need to further split this into two by comparing whether we have $V^1 \subset U^1$ or $U^1 \subsetneq V^1$.
The related two terms are handled symmetrically. The absolute value of the one coming from ``$V^1 \subset U^1$''
can be written as
\begin{equation*}
\begin{split}
\int_{\R^{d_2}} &\int_{\R^{d_2}}
\sum_{K}  \sum_{\substack{U^1 \subset K^1 \\ \ell(U^1) > 2^{-i_2^1}\ell(K^1)}} \sum_{\substack{V^1 \subset U^1 \\ \ell(V^1) > 2^{-i_2^1}\ell(K^1)}}
  |\langle b_1, h_{U^1} \rangle_1(x_2)| |{U^1}|^{-1/2} |\langle b_2, h_{V^1} \rangle_1(y_2)| |{V^1}|^{-1/2}  \\
& \sum_{\substack{ (I_1^1)^{(i_1^1)} = (I^1_2)^{(i_2^1)} = K^1 \\  I^1_2 \subset V^1}} \sum_{(I_1^2)^{(i_1^2)} = (I^2_2)^{(i_2^2)} = K^2}
| a_{K,R_1,R_2} \langle f, h_{R_1} \rangle \langle g, h_{R_2} \rangle| \frac{1_{I^2_2}(x_2)}{|I^2_2|} \frac{1_{I^2_2}(y_2)}{|I^2_2|}.
\end{split}
\end{equation*}
The last line can be dominated by
\begin{equation*}
\begin{split}
\langle | \Delta_{K,i_1}f | \rangle_{K}  
|V^1| \sum_{(I^2_2)^{(i_2^2)} = K^2} \langle  |\Delta_{K,i_2} g|  \rangle_{V^1 \times I^2_2} \frac{1_{I^2_2}(x_2)}{|I^2_2|} 1_{I^2_2}(y_2).
\end{split}
\end{equation*}
Using the weighted $H^1$-$\BMO$ duality as above we have
\begin{align*}
\int_{\R^{d_2}}  &\sum_{\substack{V^1 \subset U^1 \\ \ell(V^1) > 2^{-i_2^1}\ell(K^1)}} |\langle b_2, h_{V^1} \rangle_1(y_2)| |{V^1}|^{1/2} 
\langle  |\Delta_{K,i_2} g|  \rangle_{V^1 \times I^2_2} 1_{I^2_2}(y_2) \ud y_2 \\
%& \lesssim \| b_2 \|_{\bmo(\nu^{1/2})} \int_{\R^d} \Big( \sum_{\substack{V^1 \subset U^1 \\ \ell(V^1) > 2^{-i_2^1}\ell(K^1)}} 
%\langle  |\Delta^1_{K^1,i_2^1}\Delta^2_{K^2,i_2^2} g|  \rangle_{V^1 \times I^2_2}^2 1_{V^1} \Big)^{1/2} 1_{I^2_2} \nu^{1/2} \\
&\le (i_2^1)^{1/2} \| b_2 \|_{\bmo(\nu^{1/2})} |U^1| |I^2_2| \langle M^1 M^2 \Delta_{K,i_2} g \cdot \nu^{1/2} \rangle_{U^1 \times I^2_2}.
\end{align*}
Forgetting the factor $ (i_2^1)^{1/2} \| b_2 \|_{\bmo(\nu^{1/2})}$ we have reached the term
\begin{align*}
\int_{\R^{d_2}} 
\sum_{K} \langle | \Delta_{K,i_1}f | \rangle_{K}  
 \sum_{(I^2_2)^{(i_2^2)} = K^2} 1_{I^2_2} &\sum_{\substack{U^1 \subset K^1 \\ \ell(U^1) > 2^{-i_2^1}\ell(K^1)}} 
|\langle b_1, h_{U^1} \rangle_1| |{U^1}|^{1/2} \\
&\langle M^1 M^2 \Delta_{K,i_2} g \cdot \nu^{1/2} \rangle_{U^1 \times I^2_2},
\end{align*}
which -- after using the $H^1$-$\BMO$ duality -- produces $(i_2^1)^{1/2} \| b_1 \|_{\bmo(\nu^{1/2})}$ multiplied by
\begin{align*}
\int_{\R^d} \sum_{K} \langle | \Delta_{K,i_1} f | \rangle_{K}  
M^1M^2 (M^1 M^2 \Delta_{K,i_2} g \cdot \nu^{1/2}) \nu^{1/2} 1_{K}.
\end{align*}
Similarly as with $U^{b_1,b_2}_{3,4}$, this term is under control. The term with $U^1 \subsetneq V^1$ is symmetric, and so we are
also done with $U^{b_1,b_2}_{1,1}$. 

This ends our treatment of $U^{b_1, b_2}$, since the above arguments
showcased the only major difference between the various terms  $U^{b_1, b_2}_{m_1, m_2}$.
Thus, we are done with $[b_2, [b_1, S_{i}]]$.
By Lemma \ref{lem:QasSBiPar} we conclude that
$$
\| [b_2, [b_1, Q_{k_1, k_2}]]\|_{L^p(\mu) \to L^p(\lambda)} 
\lesssim (1+k_1)(1+k_2) (1+\max(k_1, k_2)) \prod_{i=1}^2 \|b_i\|_{\bmo(\nu^{1/2})}.
$$
By handling the higher order commutators similarly, we get the claim related to assumption (1). We omit these details.

\end{proof}
\begin{rem}
The new square root save from the $H^1$-$\BMO$ arguments reduces the required regularity from $m+1$ to $m/2+1$. In these higher order commutators
this is more significant than the save that could theoretically be obtained by not using Lemma \ref{lem:QasSBiPar}.
This could change the $+1$ to $+1/2$. 
\end{rem}

Theorem \ref{thm:comv1} involves only one-parameter CZOs in its estimate
$$
\| [T_1, [T_2, b]] \|_{L^p(\mu) \to L^p(\lambda)} \lesssim \|b\|_{\BMO_{\textup{prod}}(\nu)},
$$
while the basic estimate
$$
\| [b, T]\|_{L^p(\mu) \to L^p(\lambda)} \lesssim \|b\|_{\bmo(\nu)}
$$
of Theorem \ref{thm:comv2} involves a bi-parameter CZO $T$. A joint generalization -- considered in the unweighted case in \cite{OPS} -- is an estimate for
$$
\| [T_1, [T_2, \ldots [b, T_k]]] \|_{L^p(\mu) \to L^p(\lambda)},
$$
where each $T_i$ can be a completely general $m$-parameter CZO. Then the appearing $\BMO$ norm is some suitable combination of little $\BMO$ and product $\BMO$.
See \cite{EA, ALMV} for a fully satisfactory Bloom type upper estimate in this generality -- however, only for CZOs with the standard kernel regularity.
The general case of \cite{EA, ALMV} is hard to digest, but let us formulate a model theorem of this type with mild kernel regularity.
\begin{thm}\label{thm:comv3}
Let $\R^d = \prod_{i=1}^4 \R^{d_i}$ be a product space of four parameters and let $\calI = \{\calI_1, \calI_2\}$, where $\calI_1 = \{1,2\}$ and
$\calI_2 = \{3,4\}$, be a partition of the parameter space $\{1, 2, 3, 4\}$.
Suppose that $T_i$ is a bi-parameter $(\omega_{1,i}, \omega_{2,i})$-CZO on $\prod_{j \in \calI_i} \R^{d_j}$, where $\omega_{j, i} \in \operatorname{Dini}_{3/2}$.
Let $b \colon \R^d \to \C$, $p \in (1, \infty)$, $\mu, \lambda \in A_p(\R^d)$ be $4$-parameter weights and $\nu = \mu^{1/p} \lambda^{-1/p}$ be the
associated Bloom weight. Then we have
$$
\| [T_1, [T_2, b]] \|_{L^p(\mu) \to L^p(\lambda)} \lesssim \|b\|_{\bmo^{\calI}(\nu)}.
$$
Here $\bmo^{\calI}(\nu)$ is the following weighted little product $\BMO$ space:
$$
\|b\|_{\bmo^{\calI}(\nu)} = \sup_{\bar u} \|b\|_{\BMO_{\operatorname{prod}}^{\bar u}(\nu)},
$$
where $\bar u = (u_i)_{i=1}^2$ is such that $u_i \in \calI_i$ and $\BMO_{\operatorname{prod}}^{\bar u}(\nu)$ is
the natural weighted bi-parameter product $\BMO$ space on the parameters $\bar u$. For example,
$$
\|b\|_{\BMO_{\operatorname{prod}}^{(1,3)}(\nu)}
:= \sup_{x_2 \in \R^{d_2}, x_4 \in \R^{d_4}} \| b(\cdot, x_2, \cdot, x_4) \|_{\BMO_{\operatorname{prod}}(\nu(\cdot, x_2, \cdot, x_4))},
$$
where the last weighted product $\BMO$ norm is defined in \eqref{eq:eq4}.
\end{thm}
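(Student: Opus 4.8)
The plan is to run the model-operator argument of \cite{EA, ALMV} through the new representation theorem \ref{thm:rep2par}, while inserting the refined $H^1$--$\BMO$ bookkeeping from the proofs of Theorems \ref{thm:comv1} and \ref{thm:comv2}. Fix $p$, $\mu,\lambda,\nu$ and normalize $\|b\|_{\bmo^{\calI}(\nu)}=1$; it suffices to bound $|\langle [T_1,[T_2,b]]f,g\rangle|$ by $\|f\|_{L^p(\mu)}\|g\|_{L^{p'}(\lambda^{1-p'})}$. Since $T_1$ is a bi-parameter $(\omega_{1,1},\omega_{2,1})$-CZO on $\prod_{j\in\calI_1}\R^{d_j}$ and $T_2$ is a bi-parameter $(\omega_{1,2},\omega_{2,2})$-CZO on $\prod_{j\in\calI_2}\R^{d_j}$, we apply Theorem \ref{thm:rep2par} to each of them separately (the two extra parameters are inert when representing $T_i$). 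This writes $[T_1,[T_2,b]]$ as
$$
C\,\E_{\sigma}\E_{\sigma'}\sum_{k_1,k_2=0}^{\infty}\sum_{k_3,k_4=0}^{\infty}\omega_{1,1}(2^{-k_1})\omega_{2,1}(2^{-k_2})\omega_{1,2}(2^{-k_3})\omega_{2,2}(2^{-k_4})\sum_{u,u'}\big[V^{(1)}_{k_1,k_2,u,\sigma},\,[V^{(2)}_{k_3,k_4,u',\sigma'},\,b]\big],
$$
where $V^{(1)}$ runs over the bi-parameter model operators of Theorem \ref{thm:rep2par} in the grid $\calD_\sigma$ on $\prod_{j\in\calI_1}\R^{d_j}$ and $V^{(2)}$ over those in $\calD_{\sigma'}$ on $\prod_{j\in\calI_2}\R^{d_j}$.

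Next, by Lemma \ref{lem:QasSBiPar} each modified $V^{(1)}$ is a sum of $\lesssim k_1k_2$ standard bi-parameter model operators whose complexity in the $j$-th parameter of $\calI_1$ is at most $k_j$, and similarly for $V^{(2)}$. Hence it is enough to prove, for standard bi-parameter model operators $V^{(1)}$ (shift, partial paraproduct, or full paraproduct) of complexity $i$ on $\prod_{j\in\calI_1}\R^{d_j}$ and $V^{(2)}$ of complexity $i'$ on $\prod_{j\in\calI_2}\R^{d_j}$, the two-weight bound
$$
\|[V^{(1)},[V^{(2)},b]]\|_{L^p(\mu)\to L^p(\lambda)}\lesssim\prod_{m=1}^{2}(1+\|i^m\|_\infty)^{1/2}\prod_{m=1}^{2}(1+\|(i')^m\|_\infty)^{1/2},
$$
with the convention that a direction in which the operator carries no complexity contributes the factor $1$ (the partial- and full-paraproduct cases being the easier ones, with fewer or no complexity parameters). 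Combining this with the $\lesssim k_1k_2$ (resp. $\lesssim k_3k_4$) terms produced by Lemma \ref{lem:QasSBiPar} gives, for the summand above, the bound $(1+k_1)^{3/2}(1+k_2)^{3/2}(1+k_3)^{3/2}(1+k_4)^{3/2}$; each of the four resulting series $\sum_k\omega_{j,i}(2^{-k})(1+k)^{3/2}$ converges precisely because $\omega_{j,i}\in\operatorname{Dini}_{3/2}$, which is where the hypothesis is used.

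For the core estimate on $[V^{(1)},[V^{(2)},b]]$, the approach is to expand $b$ in its four-parameter Haar/paraproduct series and carry out the commutator computation as in \cite{LMV:Bloom, EA, ALMV}: commuting with $V^{(1)}$ produces a difference of averages $\langle b\rangle_R-\langle b\rangle_{R'}$ in the parameters of $\calI_1$, commuting with $V^{(2)}$ a corresponding difference in $\calI_2$, and telescoping each difference through the appropriate ``top cube'' (as in the proofs of Theorems \ref{thm:comv1} and \ref{thm:comv2}) splits the double commutator into (i) paraproduct-type terms, in which $b$ enters through its martingale differences paired with the model operators' own Haar functions, bounded losslessly by the weighted product-$H^1$--$\BMO$ duality \eqref{eq:genH1BMO} together with the weighted estimates of standard model operators (Propositions \ref{prop:StandShiftWeight}, \ref{prop:StandPPWeight}, \ref{prop:FullPWeight}), and (ii) a single ``double diagonal'' term in which $b$ is tested against a genuinely cancellative $h_{I^{u_1}}\otimes h_{I^{u_2}}$ with $u_1\in\calI_1$, $u_2\in\calI_2$ and full averages in the remaining two parameters. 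For term (ii) one applies the refined weighted $H^1$--$\BMO$ argument of Theorem \ref{thm:comv1} once in the direction $u_1$ and once in the direction $u_2$: each application costs only a square root of the corresponding complexity and produces the weight $\nu^{1/2}\cdot\nu^{1/2}=\nu$, after which square function and Fefferman--Stein bounds finish the estimate once one writes $\nu=\mu^{1/p}\lambda^{1/p}\cdot\lambda^{-1/p}$ exactly as at the end of the proof of Theorem \ref{thm:comv2}. Since $u_1,u_2$ are not known in advance, the norm forced on $b$ is $\sup_{u_1\in\calI_1,\,u_2\in\calI_2}\|b\|_{\BMO_{\operatorname{prod}}^{(u_1,u_2)}(\nu)}=\|b\|_{\bmo^{\calI}(\nu)}$, matching the statement.

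The main obstacle is the bookkeeping in the previous paragraph: organizing the commutator of two \emph{independent} bi-parameter model operators with $b$ so that the paraproduct-type pieces are genuinely lossless, the complexity is confined to the lone ``double diagonal'' term, and the complexity factors come out as a true \emph{product} over the two structures $\calI_1,\calI_2$. This is the four-parameter analogue of the bilinear expansion on pages~23--24 of \cite{LMV:Bloom}; relative to \cite{EA, ALMV} the only genuinely new ingredient is the insertion of the refined $H^1$--$\BMO$ step, which both yields the square-root saving and is exactly what allows $\operatorname{Dini}_{3/2}$ in place of $\operatorname{Dini}_{2}$, and that step is the one already carried out in detail in the proof of Theorem \ref{thm:comv1}.
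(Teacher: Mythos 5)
Your proposal is essentially the same as the paper's (very brief) argument: apply the bi-parameter representation theorem to each $T_i$ on its own pair of parameters, use Lemma~\ref{lem:QasSBiPar} to convert the resulting modified model operators into standard ones, invoke the commutator machinery of \cite{EA,ALMV} for the standard operators, and insert the refined $H^1$--$\BMO$ bookkeeping from Theorems~\ref{thm:comv1}--\ref{thm:comv2} to turn the factor $(1+\max)$ per parameter into $(1+\max)^{1/2}$, which after the $\lesssim k_1k_2$ (resp.~$k_3k_4$) loss from Lemma~\ref{lem:QasSBiPar} gives the threshold $\operatorname{Dini}_{3/2}$ in each of the four $\omega_{j,i}$. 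The paper explicitly emphasizes that, unlike Theorem~\ref{thm:comv2}, the references \cite{EA,ALMV} are entirely $H^1$--$\BMO$-based, so the square-root save ports over without new obstructions; your structuring of the argument reflects this correctly.

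One small correction to the bookkeeping at the ``double diagonal'' term: the weight $\nu$ should come from a \emph{single} application of the weighted bi-parameter product $H^1$--$\BMO$ duality in the pair of directions $(u_1,u_2)$, dual to the norm $\BMO_{\operatorname{prod}}^{(u_1,u_2)}(\nu)$, as in the proof of Theorem~\ref{thm:comv1} and \cite{LMV:Bloom2} -- not from two sequential one-parameter applications each producing $\nu^{1/2}$. That factorization of $\nu$ is the device used in Theorem~\ref{thm:comv2}, where the iterated commutator involves two \emph{distinct} symbols $b_1,b_2\in\bmo(\nu^{1/2})$; here there is a single $b$ tested against $h_{I^{u_1}}\otimes h_{I^{u_2}}$, and the bi-parameter product duality is the right tool. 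The square-root saving \emph{in the complexity} in each of $u_1$ and $u_2$ is a separate matter and is correctly claimed. With this adjustment, the argument is the one intended in the paper.
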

The proof is again a combination of Lemma \ref{lem:QasSBiPar} with the known estimates for the commutators of standard model operators \cite{EA, ALMV}.
However, there is again the additional square root save. There are no new significant challenges with this, which was not the case with Theorem \ref{thm:comv2} above, since
these references are completely based on the $H^1$-$\BMO$ strategy. In this regard the situation is closer to that of Theorem \ref{thm:comv1}.

 \end{document}